\newtheorem{thm}{Theorem}[section]
\crefname{thm}{Theorem}{Theorems}
\newtheorem{cor}[thm]{Corollary}
\newtheorem{prop}[thm]{Proposition}
\crefname{prop}{Proposition}{Propositions}
\newtheorem{lem}[thm]{Lemma}
\crefname{lem}{Lemma}{Lemmas}
\newtheorem{clm}[thm]{Claim}
\newtheorem{defn}[thm]{Definition}
\crefname{defn}{Definition}{Definitions}
\newtheorem{rmk}[thm]{Remark}
\newtheorem{obs}[thm]{Observation}
\newtheorem*{ack*}{Acknowledgements}
\numberwithin{equation}{section}
\newcommand{\co}{\operatorname{co}}
\newcommand{\M}{M_{\lambda, p}}
\newcommand{\R}{\mathbb{R}}
\newcommand{\pvh}{\textcolor{red}}
\title{Sharp Quantitative Stability for the\\Pr\'ekopa-Leindler and
Borell-Brascamp-Lieb Inequalities
}
\author{Alessio Figalli
\thanks{Department of Mathematics, ETH Z\"urich, Switzerland. email: alessio.figalli@math.ethz.ch},
Peter van Hintum
\thanks{Institute for Advanced Study, Princeton, USA. email: pvanhintum@ias.edu }, 
Marius Tiba
\thanks{King's College, London, UK. email: marius.tiba@kcl.ac.uk}}
\begin{document}

\maketitle

\begin{abstract}
The Borell-Brascamp-Lieb inequality is a classical extension of the Prékopa-Leindler inequality, which in turn is a functional counterpart of the Brunn-Minkowski inequality. The stability of these inequalities has received significant attention in recent years. Despite substantial progress in the geometric setting, 
a sharp quantitative stability result for the Prékopa-Leindler inequality has remained elusive, even in the special case of log-concave functions.
In this work, we provide a unified and definitive stability framework for these foundational inequalities. By establishing the optimal quantitative stability for the Borell-Brascamp-Lieb inequality in full generality, we resolve the conjectured sharp stability for the Prékopa-Leindler inequality as a particular case. Our approach builds on the recent sharp stability results for the Brunn-Minkowski inequality obtained by the authors in \cite{BMStab, OTBMStab}.
\end{abstract}

\setcounter{tocdepth}{2}
\tableofcontents

\section{Introduction}

The stability of geometric inequalities has received a lot of attention over recent years. This study goes back to the isoperimetric inequality stating that among all bodies of a given volume, the Euclidean ball minimizes the surface, a classical result attributed to Queen Dido of Carthage. In the landmark result \cite{FMP08} (resp. \cite{Figalli10amass}), the authors settled the quantitative stability of that inequality (resp. its anisotropic version) by showing that bodies with nearly minimal surface area must be close to a ball (resp. a Wulff shape) in a sharp quantitative sense. 
However, the isoperimetric is merely the base of a hierarchy of geometric inequalities (see e.g. \cite{gardner2002brunn}).

\subsection{The Brunn-Minkowski inequality}
The Brunn-Minkowski inequality is a fundamental result on which much of convex geometry is built \cite{schneider2013convex}. It asserts that for sets $A,B\subset\mathbb{R}^n$ of equal volume and a parameter $\lambda\in(0,1)$, we have $$|\lambda A+(1-\lambda)B|\geq |A|,$$
with equality if and only if $A$ and $B$ are the same convex body less a measure zero set, up to translation. Note that, up to exchanging the roles of $A$ and $B$, one can always assume that $\lambda \in (0,1/2].$

The stability of the Brunn-Minkowski inequality has a rich history with a series of results obtained in the last years \cite{ruzsa1991diameter,ruzsa1997brunn,ruzsa2006additive,Figalli09,figalli2010mass,christ2012planar,christ2012near,eldan2014dimensionality,figalli2015quantitative,figalli2015stability,figalli2017quantitative,Barchiesi,carlen2017stability,figalli2021quantitative,van2021sharp,van2023locality,van2020sharp,planarBM,SharpDelta}, see the introduction of \cite{BMStab} for a description of these contributions.
Recently, the current authors managed to conclude this line of inquiry \cite{BMStab,OTBMStab} showing the following result.\footnote{Here and in the sequel, given $m,a>0$ we shall write $O_m(a)$ (resp. $\Omega_m(a)$) to denote a quantity that is bounded from above (resp. from below) by $C_ma$, where the constant $C_m>0$ may depend on $m$.}

\begin{thm}\label{BMStab}
    Let $|A|=|B|$ and $\lambda \in (0,1/2]$, and assume that $|\lambda A+(1-\lambda)B|\leq (1+\delta)|A|$ for some $\delta>0$ sufficiently small in terms of $n$ and $\lambda$. Then there exists a convex set $K$ so that, up to a translation,
$$K\supset A\cup B,\qquad |K\setminus A|+|K\setminus B|= O_{n,\lambda}\left(\sqrt{\delta}\right)|A|.$$
\end{thm}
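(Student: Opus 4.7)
The plan I would follow uses optimal transport to convert the geometric BM deficit into an analytic deficit of the matrix AM--GM inequality, and then exploits the quadratic nature of that stability to recover the sharp $\sqrt{\delta}$ exponent.

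After translating so that $A$ and $B$ have matching barycenters, I would first invoke Brenier's theorem to produce the convex-gradient map $T = \nabla \varphi : A \to B$ pushing forward $\mathbf{1}_A\,dx$ to $\mathbf{1}_B\,dx$. Since $T$ is volume-preserving, $\det DT = 1$ a.e.; the inclusion $\{\lambda x + (1-\lambda) T(x) : x \in A\} \subset \lambda A + (1-\lambda) B$, combined with the change-of-variables formula, gives
\[
|\lambda A + (1-\lambda) B| \;\geq\; \int_A \det\bigl(\lambda I + (1-\lambda) DT(x)\bigr) \, dx,
\]
and matrix AM--GM on the eigenvalues of $DT$ shows the integrand is pointwise $\geq 1$. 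The BM deficit $\delta$ therefore controls the integrated gap in the AM--GM inequality along $T$.

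The next step is the key quantitative observation: for matrices with $\det M = 1$ and $M$ close to $I$, the AM--GM gap is genuinely quadratic, $\det(\lambda I + (1-\lambda) M) - 1 \geq c_\lambda \|M - I\|^2$. Integrating this pointwise bound over $A$ turns the BM deficit into a sharp $L^2$ control $\int_A \|DT - I\|^2 \, dx \leq C(n,\lambda)\,\delta\,|A|$. A Poincar\'e inequality then promotes this derivative estimate into $L^2$-closeness of $T$ to a rigid motion, i.e.\ closeness of $A$ to a translate of $B$ in transport distance, at the sharp rate $\sqrt{\delta}$. The desired convex set $K$ is then produced as the convex hull of $A$ together with (a translate of) $B$, and the excess volume $|K \setminus A| + |K \setminus B|$ is estimated via a trace-type anisotropic isoperimetric inequality on $\partial K$.

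The main obstacle is the low regularity of the Brenier map $T$: a priori $T$ is only BV, so the pointwise quadratic bound relating $\|DT - I\|^2$ to the AM--GM gap cannot be integrated naively on a ``bad'' region where $T$ is discontinuous or highly stretched. The resolution, carried out in \cite{BMStab, OTBMStab}, is to partition $A$ into a large ``good'' region on which $T$ enjoys $W^{2,1}$-type Monge--Amp\`ere regularity (so that the quadratic stability of AM--GM can be applied rigorously) and a small ``bad'' complement handled by a previously-known coarse stability estimate; carefully balancing the two contributions is what produces the sharp $\sqrt{\delta}$ exponent.
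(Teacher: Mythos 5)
This theorem is not proved in the present paper; it is imported as \Cref{BMStab} from the authors' earlier works \cite{BMStab,OTBMStab}, so there is no internal argument here to compare against. That said, your Brenier-map sketch contains concrete gaps. The pointwise bound $\det(\lambda I + (1-\lambda)M) - 1 \geq c_\lambda\|M-I\|^2$ for $\det M = 1$ fails when $M$ is far from $I$: with eigenvalues $\varepsilon$ and $\varepsilon^{-1}$, the left side is of order $\varepsilon^{-1}$ while the right side is of order $\varepsilon^{-2}$. The correct global lower bound is of order $\min(\|M-I\|^2,\|M-I\|)$, after which one must split $A$ into small/large-deviation regions and apply Cauchy--Schwarz on the former to land on $\int_A\|DT-I\|\,dx = O(\sqrt\delta)\,|A|$. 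You acknowledge a bad region, but attribute its treatment to $W^{2,1}$ Monge--Amp\`ere regularity, which addresses a different obstruction (second-order integrability of $T$), not the failure of the quadratic AM--GM bound at large stretch.

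Beyond this, two conclusions of the theorem are not reachable from your outline without substantial extra input. A Poincar\'e inequality on $A$ requires a priori geometric control on $A$ (e.g.\ comparability to a convex domain), which must first be extracted from a coarse stability estimate and then tracked so as not to spoil the sharp exponent. More importantly, the theorem asserts the existence of a \emph{single} convex $K\supset A\cup B$ with $|K\setminus A|+|K\setminus B| = O(\sqrt\delta)\,|A|$. Taking $K = \co\big(A\cup(B+v)\big)$ does not automatically work even when $|A\triangle(B+v)|$ is small, since the convex hull of a union can be vastly larger than $A$. Producing the stated $K$ requires, as an independent input, the linear-in-$\delta$ estimate $|\co(A)\setminus A| + |\co(B)\setminus B| = O(\delta)\,|A|$ (quoted in the paper immediately after \Cref{BMStab}), together with a geometric lemma controlling the convex hull of two overlapping near-convex sets; neither appears in your sketch, and a ``trace-type anisotropic isoperimetric inequality on $\partial K$'' is not the right tool for it.
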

In other words, the theorem above states that $A$ and $B$ are $\sqrt{\delta}$-close to the same convex set.
Moreover, if we do not insist on the same convex set, then a stronger linear stability holds (see \cite{BMStab}):
$$|\co(A)\setminus A|+|\co(B)\setminus B|=O_{n,\lambda}(\delta)|A|,$$
where $\co(A)$ is the convex hull of $A$, namely, the smallest convex set containing $A$. Both the square root and the linear dependence are optimal, in the sense that the powers $1/2$ and $1$ cannot be replaced by anything bigger.

\subsection{The Pr\'ekopa-Leindler inequality}
The Brunn-Minkowski inequality has a functional extension called the Pr\'ekopa-Leindler inequality. Given integrable functions $f,g,h\colon\mathbb{R}^n\to\mathbb{R}_{\geq0}$ and a parameter $\lambda\in (0,1/2]$, assume that $
\int f\,dx=\int g\,dx$ and that, for all $x,y\in \mathbb{R}^n$, we have $$h(\lambda x +(1-\lambda)y)\geq f(x)^{\lambda}g(y)^{1-\lambda}$$
(here $f,g,h$ mirror $A,B,\lambda A+(1-\lambda)B$ in the Brunn-Minkowski inequality). Then, the Pr\'ekopa-Leindler inequality states that $\int h\,dx\geq \int f\,dx$, with equality if and only if $f$ and $g$ are (up to a translation) the same \emph{log-concave} function almost everywhere.\footnote{Recall that a function $f$ is log-concave if for all $x,y\in\mathbb{R}^n$ and for all $\lambda\in[0,1]$, we have $f(\lambda x+ (1-\lambda)y)\geq f(x)^{\lambda}f(y)^{1-\lambda}$.} 

When restricted to indicator functions of sets, the Pr\'ekopa-Leindler inequality directly implies the Brunn-Minkowski inequality. In the reverse direction, the Pr\'ekopa-Leindler inequality can be deduced from the Brunn-Minkowski inequality, by associating to a function $f:\R^n\to \R$ the set $A_f\subset \mathbb{R}^{n+k}$ defined by
$$A_f:=\bigcup_{x\in\mathbb{R}^n} \{x\}\times \underbrace{\left[0,f(x)^{1/k}\right]\times \ldots\times \left[0,f(x)^{1/k}\right]}_{k\text{ times}},$$
so that $|A_f|=\int f\,dx$.
Applying Brunn-Minkowski to the sets $A_f$ and $A_g$, and noticing that $|\lambda A_f+(1-\lambda) A_g|\downarrow \int h\,dx$ as $k \to \infty$,\footnote{More precisely, assuming that $f,g$ are bounded and with bounded support, then $|\lambda A_f+(1-\lambda) A_g|\downarrow \int h\,dx$ as $k \to \infty$ by dominated convergence. This implies the Pr\'ekopa-Leindler inequality in this case, and the general case follows by approximation.} the Pr\'ekopa-Leindler inequality follows.

Despite the interest in the stability of Pr\'ekopa-Leindler \cite{bucur2014lower,ball2010stability,ball2011stability,boroczky2021stability,boroczky2023quantitative}, sharp stability results have been harder to obtain. Indeed, all previous results provide quantitative estimates but with non-sharp exponents. Furthermore, except for the very special case where one considers one-dimensional log-concave functions \cite{figalli_ramos_PL24}, even for special classes of functions (like log-concave functions in $\mathbb R^n$ or one-dimensional arbitrary functions), no optimal quantitative results are known.

As for the Brunn-Minkowski inequality, it was believed that a sharp $\sqrt{\delta}$-stability should persist in the functional context. More precisely, let $f,g,h$, and $\lambda \in (0,1/2]$ as before. If $\int h\,dx\leq (1+\delta)\int f\,dx$ for some $\delta \geq 0$, then there should exist a log-concave function $\ell\colon\mathbb{R}^n\to\mathbb{R}_{\geq 0}$ so that, up to a translation,
$$\int \big(|\ell-f|+|g-\ell|\big)\,dx= O_{\lambda,n}\left(\sqrt{\delta}\right)\int f\,dx.$$
Despite being a very natural statement, one major challenge here is to identify the ``correct'' log-concave function $\ell$. Indeed, even when $f=g$, $\ell$ {\em cannot} be chosen as the log-concave hull of $f$ (see Remark~\ref{rem:no pconcave hull} below).
In this paper, we settle this line of investigation by proving the bound above as a special case of our main theorem.

\subsection{The Borell-Brascamb-Lieb inequality}
Although the Pr\'ekopa-Leindler condition $h(\lambda x +(1-\lambda)y)\geq f(x)^{\lambda}g(y)^{1-\lambda}$ seems natural from the perspective of the previously mentioned proof using limits of sets in $\mathbb{R}^n$, it is, in fact, stronger than what is needed, as noticed by Borell \cite{Borell} and independently by Brascamb and Lieb \cite{brascamp-lieb}. To state their result, consider the following definition.

\begin{defn}
Given parameters $\lambda\in[0,1]$ and $p\in \mathbb{R}$, and real numbers $x,y\geq 0$, let
$$M_{\lambda,p}(x,y)=\begin{cases}0 &\text{ if } xy=0\\
\left(\lambda x^{p}+(1-\lambda)y^{p}\right)^{1/p}&\text{ if } p\neq 0 \text{ and }xy\neq0\\
x^{\lambda}y^{1-\lambda}&\text{ if $p=0$} \text{ and }xy\neq0\end{cases}.$$
\end{defn}
For $p=0$, this definition corresponds to the Pr\'ekopa-Leindler condition, as $h(\lambda x +(1-\lambda)y)\geq f(x)^\lambda g(y)^{1-\lambda}= M_{\lambda,0}(f(x),g(y))$. Also, by the monotonicity of means, we have $M_{\lambda,p}(x,y)\leq M_{\lambda,q}(x,y)$  for $p<q$, so the condition $h(\lambda x +(1-\lambda)y)\geq M_{\lambda,p}(f(x),g(y))$ gets weaker as $p$ decreases.

The following definition is also important:
\begin{defn}
A function $\ell:\mathbb{R}^n\to\mathbb{R}_{\geq0}$ is \emph{$p$-concave} if for all $x,y\in\mathbb{R}^n$ and $\lambda\in[0,1]$, we have that $\ell(\lambda x +(1-\lambda)y)\geq M_{p,\lambda}(\ell(x),\ell(y))$.
\end{defn}
In these terms, $0$-concave is the same as log-concave. Perhaps more intuitively, a function $f$ is $p$-concave for $p>0$ (resp. $p<0$) if its support is convex and $f^p$ is concave (resp. convex) inside its support.
 The Borell-Brascamb-Lieb inequality asserts that we can decrease the parameter $p$ in the Pr\'ekopa-Leindler condition all the way down to $p=-1/n$.

\begin{thm}[Borell-Brascamb-Lieb Inequality]
    Let $n \in \mathbb{N}$, $\lambda\in (0,1/2]$, $p \in [-1/n, \infty)$, and $f,g,h \colon \mathbb{R}^n \rightarrow \mathbb{R}_+$ integrable functions. Assume that $\int f\,dx = \int g\,dx$  and 
    $$h(\lambda x+ (1-\lambda)y) \geq \M(f(x),g(y))$$ for all $x, y \in \mathbb{R}^n$.  Then
    $$\int h\,dx \geq \int f\,dx.$$
   In addition, if $p>-1/n$, then equality holds
    if and only if $f$ and $g$ are, up to a translation, the same {$p$-concave} function a.e. 
\end{thm}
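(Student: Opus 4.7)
The plan is to prove the strengthened integrated inequality
\[
\int h\,dx \;\geq\; M_{\lambda,\, p_n}\!\left(\int f\,dx,\; \int g\,dx\right), \qquad p_n := \frac{p}{1+np},
\]
by induction on the dimension $n$; since $\int f=\int g$ and $M_{\lambda,q}(a,a)=a$ for every $q$, this immediately yields the stated bound $\int h\geq \int f$.

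For the base case $n=1$, I would use a mass-transport parametrization: with $a:=\int f$ and $b:=\int g$, define $u,v\colon(0,1)\to\mathbb{R}$ by $\int_{-\infty}^{u(s)}f = sa$ and $\int_{-\infty}^{v(s)}g = sb$, so that $f(u(s))u'(s)=a$ and $g(v(s))v'(s)=b$. Setting $w(s):=\lambda u(s)+(1-\lambda)v(s)$ and changing variables,
\[
\int h\,dt \;\geq\; \int_0^1 h(w(s))\bigl[\lambda u'(s)+(1-\lambda)v'(s)\bigr]\,ds \;\geq\; \int_0^1 M_{\lambda,p}(f(u),g(v))\!\left[\tfrac{\lambda a}{f(u)} + \tfrac{(1-\lambda)b}{g(v)}\right]\,ds,
\]
and the pointwise inequality $M_{\lambda,p}(\alpha,\beta)\cdot M_{\lambda,-1}(a/\alpha,b/\beta)\geq M_{\lambda,p/(1+p)}(a,b)$, which by homogeneity reduces to the mean comparison $M_{\lambda,p}\geq M_{\lambda,-1}$ (valid since $p\geq -1$), closes the base case. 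For the inductive step, I would slice along a coordinate $x_1$: the hypothesis restricts to the $(n-1)$-dimensional BBL condition on slices for every fixed $(x_1,y_1)$, so the inductive hypothesis delivers the BBL condition with parameter $q:=p/(1+(n-1)p)$ on the marginals $\tilde f(x_1):=\int f(x_1,\cdot)\,dx'$, $\tilde g$, $\tilde h$. The algebraic identities $q/(1+q)=p_n$ and $q\geq -1\iff p\geq -1/n$ ensure that the 1D case applied to $\tilde f,\tilde g,\tilde h$ with exponent $q$ closes the induction precisely at the sharp threshold $p=-1/n$.

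For the equality case with $p>-1/n$, I would propagate equality backward through the induction. The strict monotonicity $M_{\lambda,p}(\alpha,\beta)>M_{\lambda,-1}(\alpha,\beta)$ off the diagonal (since $p>-1$) forces, in the 1D step, $f(u(s))/a=g(v(s))/b$ for a.e.\ $s$, while the equality $h(w(s))=M_{\lambda,p}(f(u),g(v))$ identifies $h$ with the $p$-concave interpolation between $f$ and a translate of $g$, yielding the conclusion in dimension one. Iterating the argument through the slicing gives matching of marginals and of slice profiles, with the 1D marginal equality pinning the slicewise translations into a single global shift. The main obstacle I expect is the careful bookkeeping at the boundary of the supports (where $f$ or $g$ vanishes and equality in $M_{\lambda,p}$ no longer forces $\alpha=\beta$), together with the upgrade from slicewise $p$-concavity to joint $p$-concavity; both points are standard but require invoking continuity of $p$-concave envelopes on the interior of their support.
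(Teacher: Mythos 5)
Your overall strategy — mass-transport parametrization in dimension one, followed by coordinate slicing and induction on the dimension, keeping track of the exponent chain $p \mapsto p/(1+np)$ — is the classical route to Borell--Brascamp--Lieb (essentially Brascamp--Lieb's and Borell's original argument), and it is a genuinely different path from what the paper sketches, which is the lifting construction $A_f \subset \mathbb{R}^{n+k}$ followed by an application of Brunn--Minkowski and a limit $k\to\infty$ (the paper only uses this to derive Pr\'ekopa--Leindler and otherwise cites BBL rather than proving it). Your approach has the advantage of handling the whole range $p\in[-1/n,\infty)$ at once and making the sharp threshold transparent; the induction step and the exponent bookkeeping $q/(1+q)=p_n$ and $q\geq -1 \iff p\geq -1/n$ are correct.

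However, the pointwise lemma you invoke in the one-dimensional step is mis-stated, and the justification offered for it does not work. After the change of variables you have $w'(s)=\lambda u'(s)+(1-\lambda)v'(s)=\lambda a/f(u)+(1-\lambda)b/g(v)$, which is the \emph{arithmetic} mean $M_{\lambda,1}(a/\alpha,\,b/\beta)$ with $\alpha=f(u),\ \beta=g(v)$, not the harmonic mean $M_{\lambda,-1}(a/\alpha,b/\beta)$. The inequality you wrote,
$$M_{\lambda,p}(\alpha,\beta)\, M_{\lambda,-1}(a/\alpha,b/\beta)\geq M_{\lambda,p/(1+p)}(a,b),$$
is false: for instance take $\lambda=\tfrac12$, $p=1$ (so $p/(1+p)=\tfrac12$), $\alpha=\beta=1$, $a=4$, $b=1$; the left side is $1\cdot M_{1/2,-1}(4,1)=8/5$, while the right side is $M_{1/2,1/2}(4,1)=9/4$. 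The correct lemma is
$$M_{\lambda,p}(\alpha,\beta)\, M_{\lambda,1}(\gamma,\delta)\;\geq\; M_{\lambda,\,p/(1+p)}(\alpha\gamma,\beta\delta),\qquad p>-1,$$
applied with $\gamma=a/\alpha,\ \delta=b/\beta$. This does \emph{not} ``by homogeneity reduce to $M_{\lambda,p}\geq M_{\lambda,-1}$''; it is the Hölder-type inequality for weighted power means with conjugate exponents $1/q=1/p+1/1$ (i.e.\ $q=p/(1+p)$), proved by Hölder's inequality after a normalization, exactly as in the paper's \Cref{pqaverageswitchinglem} (your lemma is its $n=1$ specialization). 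Once this is fixed, the rest of the induction and the equality analysis go through as you outline, modulo the usual regularity and boundary-of-support caveats, which you correctly flag.
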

\begin{rmk}
\label{rmk:p-1n}
The family of equality cases for $p=-1/n$ is richer than the one for $p>-1/n$, e.g., equality holds for the triple
$$
f=\textbf{1}_{[0,1]^n}, \quad g=t^{-n}\textbf{1}_{[0,t]^n},
\quad \text{and}\quad h=M_{\lambda,-1/n}(1,t^{-n})\textbf{1}_{[0,\lambda+(1-\lambda)t]^n}=(\lambda+(1-\lambda)t)^{-n}\textbf{1}_{[0,\lambda+(1-\lambda)t]^n}.
$$ 
Characterizing the equality cases for $p=-1/n$ is a non-trivial task done by Dubuc \cite{dubuc1977criteres}. 
\end{rmk}

 Although the equality case for the Borell-Brascamp-Lieb inequality is well understood (see \cite{BaloghKristaly2018} for a recent proof that works also on curved spaces), very little is known at the level of stability. In particular, the papers \cite{ghilli2017quantitative,rossi2017stability} deal with the case $p>0$, which is weaker than 
Pr\'ekopa-Leindler.

\subsection{Main results}

The main result of this paper is a sharp stability result for the Borell-Brascamp-Lieb inequality for $p>-1/n$.

\begin{thm}\label{main}
    Let $n \in \mathbb{N}$, $\lambda \in (0,1/2]$, and $p \in (-1/n, \infty)$. Let $f,g, h\colon \mathbb{R}^n\rightarrow \mathbb{R}_{\geq0}$ be integrable functions such that
    \begin{itemize}
        \item $\int_{\mathbb{R}^n} f\,dx=  \int_{\mathbb{R}^n} g\,dx$,
        \item $h(\lambda x +(1-\lambda)y) \geq M_{\lambda,p}(f(x),g(y))$ for all $x,y \in \mathbb{R}^n$, and
        \item $\int_{\mathbb{R}^n}h\,dx = (1+\delta)\int_{\mathbb{R}^n}f\,dx$ for some $\delta \geq 0$.
    \end{itemize} 
    Then there exists a $p$-concave function $\ell \colon \mathbb{R}^n\rightarrow \mathbb{R}_{\geq0}$ such that, up to a translation\footnote{That is, there exists a $v\in\mathbb{R}^n$, so that $\int_{\mathbb{R}^n} \big(|f(x)-\ell(x)| + |g(x-v)-\ell(x)| \big)\, dx=O_{n,\lambda, p} \left(\sqrt{\delta}\right)\int_{\mathbb{R}^n}f(x)dx.$ }, $$\int_{\mathbb{R}^n} \big(|f-\ell| + |g-\ell|\big)\,dx= O_{n,\lambda, p} \left(\sqrt{\delta}\right)\int_{\mathbb{R}^n}f\,dx.$$ 
\end{thm}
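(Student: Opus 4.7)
My plan is to reduce the functional stability of Borell-Brascamp-Lieb to the sharp geometric Brunn-Minkowski stability (\cref{BMStab}) by decomposing everything into superlevel sets. Setting $A_s := \{f \geq s\}$, $B_t := \{g \geq t\}$, and $C_r := \{h \geq r\}$, the pointwise hypothesis on $h$ is equivalent to the family of inclusions
$$\lambda A_s + (1-\lambda) B_t \;\subseteq\; C_{M_{\lambda,p}(s,t)} \qquad\text{for all } s, t > 0,$$
and Brunn-Minkowski applied level-by-level to these inclusions is precisely what underpins the classical proof of Borell-Brascamp-Lieb. The plan is to quantify that argument layer-by-layer and then glue the level-wise data into a single $p$-concave comparison function $\ell$.

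To parametrise, I would couple the levels by mass: let $u, v$ denote the decreasing rearrangements of $f$ and $g$, set $s(\tau) := u(\tau)$ and $t(\tau) := v(\tau)$ so that $|A_{s(\tau)}| = |B_{t(\tau)}| = \tau$, and define the level-wise Brunn-Minkowski deficit
$$\delta_{BM}(\tau) := \frac{|\lambda A_{s(\tau)} + (1-\lambda) B_{t(\tau)}|}{\tau} - 1 \geq 0.$$
Running the classical BBL proof quantitatively, the global deficit $\delta \int f$ dominates (up to $p$-dependent factors) a natural weighted $\tau$-integral of $\delta_{BM}$, plus a complementary pointwise ``mean'' deficit measuring how far $(u(\tau), v(\tau))$ is from the equality case of $M_{\lambda,p}$ at each $\tau$; the assumption $p>-1/n$ is essential here to convert the mean deficit into genuine geometric information (cf.\ \cref{rmk:p-1n}).

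Next, applying \cref{BMStab} to each pair $(A_{s(\tau)}, B_{t(\tau)})$ with $\delta_{BM}(\tau)$ small yields a convex set $K(\tau)\subseteq\mathbb{R}^n$ and a translation vector $z(\tau)\in\mathbb{R}^n$ with
$$K(\tau) \supseteq A_{s(\tau)} \cup \bigl(B_{t(\tau)} + z(\tau)\bigr),\qquad \bigl|K(\tau)\setminus A_{s(\tau)}\bigr| + \bigl|K(\tau)\setminus (B_{t(\tau)}+z(\tau))\bigr| \lesssim_{n,\lambda} \sqrt{\delta_{BM}(\tau)}\,\tau.$$
The principal obstacle I anticipate is the \emph{synchronisation} of the translations $\{z(\tau)\}_\tau$ into a single translation of $g$: producing one $\ell$ requires choosing $z$ essentially independent of $\tau$. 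To do this I would exploit (a) the stronger linear-in-$\delta$ convex-hull bound from \cite{BMStab}, which controls $|\co(A_{s(\tau)})\setminus A_{s(\tau)}|$ without any translation, and (b) the nested structure $A_{s(\tau)} \subseteq A_{s(\tau')}$ for $\tau \geq \tau'$ (and likewise for $B$), to propagate the translation chosen at a well-behaved reference level across the entire family of convex bodies.

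Once synchronisation is in place, I define $\ell$ by declaring $\{\ell \geq u(\tau)\}$ to be (a monotone modification of) $K(\tau)$; its $p$-concavity follows from the inclusion $\lambda K(\tau_1) + (1-\lambda) K(\tau_2) \subseteq K(\tau_{12})$ at the appropriate combined level, itself inherited from the sharp BM-stability characterisation. Finally, the $L^1$ estimate is obtained from the layer-cake formula and Cauchy-Schwarz: the per-level bound $|A_s \triangle \{\ell \geq s\}| \lesssim \sqrt{\delta_{BM}(F(s))}\, F(s)$ integrates to
$$\int |f-\ell|\, dx \;=\; \int_0^\infty \bigl|A_s \triangle \{\ell \geq s\}\bigr|\, ds \;\lesssim\; \Bigl(\int_0^\infty \delta_{BM}(F(s))\, F(s)\, ds\Bigr)^{\!1/2} \Bigl(\int_0^\infty F(s)\, ds\Bigr)^{\!1/2},$$
whose first factor is $O(\sqrt{\delta \int f})$ by the deficit bound from the second paragraph and whose second factor equals $\sqrt{\int f}$ by the layer-cake identity, producing the claimed $O(\sqrt{\delta})\int f$ bound on $\int |f-\ell|$; the analogous estimate for $g$ follows symmetrically.
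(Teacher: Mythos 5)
Your proposal takes a genuinely different route from the paper — a direct level-set-by-level-set reduction to \Cref{BMStab} followed by a construction of $\ell$ from the per-level convex bodies — whereas the paper decomposes \Cref{main} into two separate theorems (\Cref{symmetric_diff} giving $\int|f-g|=O(\sqrt\delta)\int f$, and \Cref{linear} giving the \emph{linear} bound $\int|f-\ell|=O(\delta)\int f$ when $f=g$) and proves each by quite different machinery. Unfortunately the proposal has a fatal flaw at the construction step.

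Your construction produces $\ell \geq f$: since each $K(\tau)\supseteq A_{s(\tau)}$ and you declare $\{\ell\geq u(\tau)\}$ to be (a modification containing) $K(\tau)$, the level sets of $\ell$ contain those of $f$, which forces $\ell\geq f$ pointwise. But \Cref{rem:no pconcave hull} exhibits a function $f$ for which \emph{any} $p$-concave $\ell\geq f$ has $\int\ell\to\infty$, while the hypotheses of \Cref{main} are satisfied with fixed small $\delta$. So any argument that outputs a dominating $p$-concave $\ell$ cannot give the claimed $O(\sqrt\delta)\int f$ bound. This is not a technicality: the reason the paper introduces the variational ``shaving'' problem (\Cref{linear_shaving_process}, Eq.~\eqref{maxquantity}) is precisely to first cut $f$ \emph{down} to a well-behaved $f'\leq f$ and only then take the $p$-concave hull $\co_p(f')$, which need not dominate the original $f$. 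Your construction has no mechanism for discarding the spurious far-away mass, which is exactly what the shaving step supplies.

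Two further gaps, each of which the paper spends substantial effort closing. First, the ``synchronisation'' of the translations $z(\tau)$: your proposed remedies (a) and (b) do not actually glue the $K(\tau)$'s. The linear convex-hull bound controls $|\co(A_s)\setminus A_s|$ per level but says nothing about how the convex bodies at different levels sit relative to one another, and the nestedness of $A_s$ does not make the $K(\tau)$'s from \Cref{BMStab} nested. The paper resolves this (in two dimensions) by considering dyadic truncations $f_i=\min\{f,2^{-i}\}$, controlling the drift $v_i-v_{i+1}$ level-by-level, and summing a geometric series (\Cref{2DSymDifSec}), then reduces $n$ dimensions to $2$ via a careful cone decomposition. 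Second, \Cref{BMStab} requires $\delta_{BM}(\tau)$ sufficiently small to apply at all, and for generic $f,g,h$ there can be many levels where $\delta_{BM}(\tau)=\Omega(1)$ (your Cauchy--Schwarz argument only controls a weighted average). The paper's \Cref{almostconvexlevelsets} shows the exceptional set of levels carries negligible mass, which you would also need to establish before any per-level application of \Cref{BMStab}. Finally, the claimed inclusion $\lambda K(\tau_1)+(1-\lambda)K(\tau_2)\subseteq K(\tau_{12})$ has no justification: \Cref{BMStab} produces a single convex body per level with no coherence across levels, so the resulting $\ell$ has no reason to be $p$-concave.
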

In the special case $p=0$, we obtain the optimal quantitative stability of the Pr\'ekopa-Leindler inequality, resolving the main conjecture in this direction and improving the results from \cite{bucur2014lower,ball2010stability,ball2011stability,boroczky2021stability,boroczky2023quantitative,figalli_ramos_PL24}.
\begin{cor}\label{PLmain}
    Let $n \in \mathbb{N}$ and $\lambda \in (0,1/2]$. Let $f,g, h\colon \mathbb{R}^n\rightarrow \mathbb{R}_{\geq0}$ be integrable functions such that
    \begin{itemize}
        \item $\int_{\mathbb{R}^n} f\,dx=  \int_{\mathbb{R}^n} g\,dx$,
        \item $h(\lambda x +(1-\lambda)y) \geq f(x)^{\lambda}g(y)^{1-\lambda}$ for all $x,y \in \mathbb{R}^n$, and
        \item $\int_{\mathbb{R}^n}h\,dx = (1+\delta)\int_{\mathbb{R}^n}f\,dx$ for some $\delta \geq 0$.
    \end{itemize} 
    Then there exists a log-concave function $\ell \colon \mathbb{R}^n\rightarrow \mathbb{R}_{\geq0}$ such that, up to a translation,
    $$\int_{\mathbb{R}^n} \big(|f-\ell| + |g-\ell| \big)\,dx=O_{n,\lambda} \left(\sqrt{\delta}\right)\int_{\mathbb{R}^n}f\,dx.$$ 
\end{cor}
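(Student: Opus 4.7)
The plan is to deduce the corollary from Theorem~\ref{main} applied at $p=0$; the substantive content is therefore the proof of the main theorem, and I sketch that here. Since $M_{\lambda,0}(x,y)=x^{\lambda}y^{1-\lambda}$ (with the convention that both sides vanish when $xy=0$) and a $0$-concave function is by definition log-concave, the Pr\'ekopa-Leindler hypothesis and conclusion of Corollary~\ref{PLmain} are exactly the $p=0$ instance of Theorem~\ref{main}, so once the main theorem is established the corollary is immediate.

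To prove Theorem~\ref{main} I would reduce Borell-Brascamp-Lieb stability to the sharp Brunn-Minkowski stability of Theorem~\ref{BMStab} via a level-set decomposition. Writing $F_s=\{f\geq s\}$, $G_s=\{g\geq s\}$, $H_s=\{h\geq s\}$, the pointwise BBL hypothesis translates to the inclusion
$$
\lambda F_{s_1}+(1-\lambda)G_{s_2}\subseteq H_{M_{\lambda,p}(s_1,s_2)}\qquad \forall\, s_1,s_2\geq 0.
$$
The classical BBL proof integrates Brunn-Minkowski along a well-chosen parametrization $r\mapsto(s_1(r),s_2(r))$ (for instance, matching volumes $|F_{s_1(r)}|=|G_{s_2(r)}|$). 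In the near-equality regime, the assumption $\int h\leq(1+\delta)\int f$ forces the local BM deficits $\delta_r$ along this parametrization to be small in an integrated sense, and Cauchy-Schwarz should yield $\int\sqrt{\delta_r}\,dr=O(\sqrt{\delta})\int f$, which is precisely the budget needed to aggregate pointwise stability into the desired $\sqrt{\delta}$ bound. Applying Theorem~\ref{BMStab} at each $r$ then extracts a convex set $K_r$ and a translation $v_r$ with $F_{s_1(r)}\cup(G_{s_2(r)}+v_r)\subseteq K_r$ and
$$|K_r\setminus F_{s_1(r)}|+|K_r\setminus(G_{s_2(r)}+v_r)|=O_{n,\lambda}\bigl(\sqrt{\delta_r}\bigr)\,r,$$
and the candidate $\ell$ would be built from these $K_r$ as its (reparametrized) superlevel sets.

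The crux, and main obstacle, is ensuring the resulting $\ell$ is genuinely $p$-concave without losing the $\sqrt{\delta}$ rate. The authors warn in Remark~\ref{rem:no pconcave hull} that the naive $p$-concave hull of $f$ is too large even when $f=g$, so hulls are not an option. Instead, I would fix a single common translation $v$ once and for all (chosen so that most of the individual $v_r$ concentrate around it) and then re-nest the translated $K_r$ into a monotone family $\widetilde K_t$ satisfying the $p$-concave level-set condition $\lambda \widetilde K_s+(1-\lambda)\widetilde K_t\subseteq \widetilde K_{M_{\lambda,p}(s,t)}$, setting $\ell(x)=\sup\{t:x\in \widetilde K_t\}$, which is then automatically $p$-concave. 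The $L^1$ cost of this re-nesting and of selecting a common $v$ should be absorbable by exploiting the \emph{stronger} linear-in-$\delta$ convex-hull stability quoted after Theorem~\ref{BMStab}, combined with a Cauchy-Schwarz aggregation across levels. A secondary subtlety is uniformity in $p$: the constants must (and will) blow up as $p\downarrow -1/n$, consistent with the richer family of extremals in Remark~\ref{rmk:p-1n}, so the argument must track $p$-dependent thresholds carefully rather than relying on any uniform reduction to the $p=0$ case.
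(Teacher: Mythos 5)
Your opening paragraph is exactly what the paper does for this corollary: it is the $p=0$ instance of Theorem~\ref{main}, since $M_{\lambda,0}(x,y)=x^\lambda y^{1-\lambda}$ and $0$-concave means log-concave. For the corollary as stated, nothing more is needed and your reduction is correct.

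Everything after that is a sketch of Theorem~\ref{main} itself, and there it diverges from the paper and has a real gap at the point you yourself identify as the crux. The paper does \emph{not} prove Theorem~\ref{main} by one level-set/BM-stability pass. It splits the task in two: Theorem~\ref{symmetric_diff} shows $\int|f-g|=O_{n,\lambda,p}(\sqrt\delta)\int f$ (this alone requires the whole $1$D $\to$ $2$D $\to$ $n$D machinery of Sections~\ref{1Dsymdifsection}--\ref{sect:proof thm1}), and Theorem~\ref{linear} shows that when $f=g$ one can find a $p$-concave $\ell$ with the much stronger \emph{linear} bound $\int|f-\ell|=O_{n,\lambda,p}(\delta)\int f$; the two are then glued via $k=\min\{f,g\}$. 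Your plan tries to bypass this decomposition by applying Theorem~\ref{BMStab} level by level and then ``re-nesting'' the resulting convex sets $K_r$ (after fixing a common translation) into a $p$-concave family. You assert that the $L^1$ cost of re-nesting plus the cost of choosing a common translation ``should be absorbable'' using the linear convex-hull stability and Cauchy--Schwarz, but no mechanism for this is given, and it is precisely the hard part: the sets $K_r$ produced by Theorem~\ref{BMStab} at different levels are a priori incompatible (not nested, not satisfying $\lambda K_s+(1-\lambda)K_t\subseteq K_{M_{\lambda,p}(s,t)}$), and forcing them into a $p$-concave family can destroy the $\sqrt\delta$ rate. Remark~\ref{rem:no pconcave hull} makes clear that even when $f=g$ no hull-type construction can work; the paper's replacement is the shaving argument of Section~\ref{sect:proof thm2}, culminating in Propositions~\ref{linear_shaving_process} and \ref{linear_already_shaved}, which is an essentially different and substantially more delicate idea than a re-nesting of level-set approximants. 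So while the deduction of the corollary from Theorem~\ref{main} is right, the accompanying sketch of Theorem~\ref{main} is not a valid alternative proof as written.
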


The assumption that $\int f\,dx=\int g\,dx$ in these theorems and all previous discussions is a matter of normalization, and the general case can be obtained by rescaling (which simply changes the value of $\lambda$).

While the sharp stability of the Brunn-Minkowski inequality (\Cref{BMStab}) serves as a fundamental tool throughout the proofs, the functional nature of these inequalities demands a highly refined analysis and the introduction of several new ideas.
More precisely, as detailed in Sections \ref{sect:structure thm 1} and \ref{sect:structure thm2} below, our results rely critically on the stability of the Brunn-Minkowski inequality, yet achieving these results requires a completely novel approach.

\begin{rmk}
As mentioned before, the square root dependence is optimal, as can be seen by considering the one-dimensional example
$$f=\frac{1}{1+\sqrt{\delta}}\textbf{1}_{\left[0,1+\sqrt{\delta}\right]},\quad
g=\left(1+\sqrt{\delta}\right)\textbf{1}_{\left[0,\frac{1}{1+\sqrt{\delta}}\right]},\quad\text{and} \quad h=\textbf{1}_{\left[0,\frac{1+\sqrt{\delta}+\frac{1}{1+\sqrt{\delta}}}{2}\right]}.$$
Since the family of equality cases for $p = -1/n$ is richer than the class of $(-1/n)$-concave functions (see \Cref{rmk:p-1n}), \Cref{main} does not hold for $p = -1/n$. In particular, the constant multiplying $\sqrt{\delta}$ must blow up as $p \to -1/n^+$.

Regarding the dependence on $\lambda$, based on corresponding results for the Brunn-Minkowski inequality (see, in particular, \cite{Figalli09, Barchiesi,Euclidean}), it is natural to expect that the expression $O_{n,\lambda,p}(\sqrt{\delta})$ can be replaced by $ O_{n,p}(\sqrt{\delta/\lambda})$. However, proving this bound would require nontrivial work beyond the scope of the current paper, which already introduces several intricate tools and ideas. Therefore, we leave this investigation for future work.
\end{rmk}




\subsection{Proof of \Cref{main}}

We will prove \Cref{main} in two steps. First, as stated in the following theorem, we prove that the two functions are close to each other (this is the equivalent of finding an upper bound on $|A\triangle B|$ in the context of the Brunn-Minkowski inequality). 

\begin{thm}\label{symmetric_diff}
    Let $n \in \mathbb{N}$, $\lambda \in (0,1/2]$, and $p \in (-1/n, \infty)$. Let $f,g, h\colon \mathbb{R}^n\rightarrow \mathbb{R}_{\geq0}$ be integrable functions such that \begin{itemize}
        \item $\int_{\mathbb{R}^n} f\,dx=  \int_{\mathbb{R}^n} g\,dx$,
        \item $h(\lambda x +(1-\lambda)y) \geq M_{\lambda,p}(f(x),g(y))$ for all $x,y \in \mathbb{R}^n$, and
        \item $\int_{\mathbb{R}^n}h\,dx = (1+\delta)\int_{\mathbb{R}^n} f\,dx$  for some $\delta \geq 0$.
    \end{itemize}  
    Then, up to a translation, 
    $$\int_{\mathbb{R}^n} |f-g|\,dx=O_{n,\lambda, p} \left(\sqrt\delta\right)\int_{\mathbb{R}^n} f\,dx.$$ 
\end{thm}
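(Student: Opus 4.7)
My approach is to reduce to the Brunn--Minkowski stability theorem (\Cref{BMStab}) by passing to superlevel sets, with the need to produce a single translation vector serving as the main new difficulty. For $t \geq 0$ let $A_t = \{f \geq t\}$, $B_t = \{g \geq t\}$, $C_t = \{h \geq t\}$. Since $M_{\lambda,p}(t,t) = t$, the hypothesis $h(\lambda x + (1-\lambda) y) \geq M_{\lambda,p}(f(x), g(y))$ yields the set inclusion $\lambda A_t + (1-\lambda) B_t \subseteq C_t$ for every $t$, so Brunn--Minkowski gives
$$|C_t|^{1/n} \geq \lambda |A_t|^{1/n} + (1-\lambda) |B_t|^{1/n}.$$
Setting $F(t) = |A_t|$ and $G(t) = |B_t|$, the equal-mass hypothesis gives $\int_0^\infty F\,dt = \int_0^\infty G\,dt = \int f$, and the deficit splits as
\begin{align*}
\delta \int f
&= \int_0^\infty \big(|C_t| - \lambda F - (1-\lambda) G\big)\,dt \\
&= \underbrace{\int_0^\infty \Big[|C_t| - \big(\lambda F^{1/n} + (1-\lambda) G^{1/n}\big)^n\Big]\,dt}_{=:\,I\,\geq\,0} \\
&\quad - \underbrace{\int_0^\infty \Big[\lambda F + (1-\lambda) G - \big(\lambda F^{1/n} + (1-\lambda) G^{1/n}\big)^n\Big]\,dt}_{=:\,J\,\geq\,0},
\end{align*}
where $I$ is the total Brunn--Minkowski deficit over the slices and $J \geq 0$ (by convexity of $x \mapsto x^n$) measures the pointwise mismatch between $F$ and $G$.

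Since $I - J = \delta \int f$ does not automatically bound either term, the first substantive step is to control $J$ on its own. The ``profiles'' $\phi(t) = F(t)^{1/n}$, $\psi(t) = G(t)^{1/n}$, $\eta(t) = |C_t|^{1/n}$ inherit, for all levels $s,t$, the one-dimensional Borell--Brascamp--Lieb-type inequality $\eta(M_{\lambda,p}(s,t)) \geq \lambda \phi(s) + (1-\lambda) \psi(t)$, together with the integral constraint $\int \phi^n = \int \psi^n$ and deficit $\delta$. A one-dimensional stability analysis (in the spirit of \cite{figalli_ramos_PL24} for $p=0$, or as an $n=1$ base case of \Cref{main} established first by induction) then forces $\phi$ and $\psi$ to be close in a weighted $L^2$ sense, giving $J = O_{n,\lambda,p}(\delta) \int f$, and hence also $I = O_{n,\lambda,p}(\delta) \int f$. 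With $I$ controlled, \Cref{BMStab} applied slicewise produces, for a.e.\ $t$, a convex set $K_t$ and a translation $v_t \in \mathbb{R}^n$ with $K_t \supseteq A_t \cup (B_t - v_t)$ and $|K_t \setminus A_t| + |K_t \setminus (B_t - v_t)| = O(\sqrt{\delta_t})\max(F(t),G(t))$, where the slicewise deficits integrate, against $\max(F,G)\,dt$, to a quantity comparable to $I$.

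The heart of the argument — and the point where the functional nature of the problem truly departs from \Cref{BMStab} — is to replace the family $\{v_t\}$ by a single translation $v$. The nesting $A_s \supseteq A_t$ and $B_s \supseteq B_t$ for $s \leq t$, together with the quantitative rigidity of the $K_t$, forces the $v_t$ to lie near a common vector on the overwhelming majority of levels. I would pick $v = v_{t_\star}$ at a representative ``median'' level $t_\star$ and bound the oscillation of $v_t$ by propagating information upward and downward via the nesting relations, so that after integrating and applying Cauchy--Schwarz,
$$\int_{\mathbb{R}^n} |f(x) - g(x-v)|\,dx = \int_0^\infty |A_t \triangle (B_t - v)|\,dt = O_{n,\lambda,p}(\sqrt{\delta}) \int f.$$
The main obstacle, I expect, is making this translation-coordination step quantitative: \Cref{BMStab} supplies a free translation at each level, and promoting this to a single global translation is precisely the genuinely functional difficulty absent from the purely geometric setting.
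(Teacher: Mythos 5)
Your high-level instinct — pass to level sets, apply the Brunn--Minkowski stability slicewise, and then coordinate the free translations — is indeed the starting point of the paper (see \Cref{almostconvexlevelsets}). But there is a genuine gap in the very first substantive step, and it invalidates the decomposition you build the whole proof on.

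You pair the level sets of $f$ and of $g$ \emph{at equal height}: $A_t=\{f\geq t\}$ with $B_t=\{g\geq t\}$. This does not work, because near the extremizers $f$ and $g$ are a single $p$-concave profile at two \emph{different scales}, so their level sets at the same height $t$ can be of wildly different sizes — one can even be empty while the other is not. Concretely, for the paper's own extremal example, taken in $\mathbb{R}^n$ with $\epsilon=\sqrt\delta$,
$$f=(1+\epsilon)^{-n}\mathbf 1_{[0,1+\epsilon]^n},\qquad g=(1+\epsilon)^{n}\mathbf 1_{[0,(1+\epsilon)^{-1}]^n},$$
one has $F(t)=0$ but $G(t)\approx 1$ for all $t\in[(1+\epsilon)^{-n},(1+\epsilon)^{n}]$, an interval of length $\approx 2n\epsilon$. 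On that interval the integrand of your term
$$J=\int_0^\infty\Bigl[\lambda F+(1-\lambda)G-\bigl(\lambda F^{1/n}+(1-\lambda)G^{1/n}\bigr)^n\Bigr]\,dt$$
equals $(1-\lambda)-(1-\lambda)^n=\Omega_{n,\lambda}(1)$, so that piece alone contributes $\Omega_{n,\lambda}(\sqrt\delta)$ to $J$. Thus $J=\Theta(\sqrt\delta)$, not $O(\delta)$, for every $n\geq 2$. That kills the rest of the plan: you would then only get $I=O(\sqrt\delta)\int f$, and slicewise \Cref{BMStab} plus Cauchy--Schwarz would yield $O(\delta^{1/4})$, not $O(\sqrt\delta)$. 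The ``weighted $L^2$ closeness of $\phi$ and $\psi$'' you invoke simply fails on the height interval where one of the two profiles vanishes.

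The paper sidesteps this by pairing level sets \emph{by mass rather than by height}: it introduces the transport map $T:[0,1]\to[0,1]$ with $\int_0^t|F_s|\,ds=\int_0^{T(t)}|G_s|\,ds$, uses the inclusion $\lambda F_t+(1-\lambda)G_{T(t)}\subset H_{M_{\lambda,p}(t,T(t))}$, and, crucially, controls the Jacobian $\frac{d}{dt}M_{\lambda,p}(t,T(t))$ via the H\"older-type bound of \Cref{Holderderivative}. This yields, at one stroke, that $T'(t)\approx 1$ on most of the mass, that the slice deficits integrate to $O(\delta)$, and that $|\co(F_t)\setminus F_t|$ etc.\ are small — all of which your height-pairing scheme cannot see. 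Also note that your translation-coordination step is left as a hope (``pick a median level and propagate via nesting''), whereas the paper's actual coordination is substantially more intricate: the one-dimensional case uses a monotone transport map inside $\mathbb{R}$ and a convexification of level sets, the two-dimensional case uses tubes, conical partitions, and a dyadic truncation scheme $f_i=\min\{f,2^{-i}\}$ with a geometric-series argument to align the translations $v_i$, and the $n$-dimensional case is reduced to the two-dimensional one via a fine conical partition in which the functions are nearly constant on $(n-2)$-dimensional fibers (with H\"older's inequality, \Cref{pqaverageswitchinglem}, converting the $p$-concavity of the fiber volumes into a two-dimensional Borell--Brascamp--Lieb condition with a new exponent $q$). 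So even if the $J$-bound were repairable, the remaining steps are not a minor technicality.
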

Second, we prove the following linear stability result in the case where $f=g$, showing that $f$ must be linearly close to a $p$-concave function.
\begin{thm}\label{linear}
    Let $n \in \mathbb{N}$, $\lambda \in (0,1/2]$, and $p \in (-1/n, \infty)$. Let $f, h\colon \mathbb{R}^n\rightarrow \mathbb{R}_{\geq0}$ be integrable functions such that 
    \begin{itemize}
        \item $h(\lambda x +(1-\lambda)y) \geq M_{\lambda,p}(f(x),f(y))$ for all $x,y \in \mathbb{R}^n$, and
        \item $\int_{\mathbb{R}^n}h\,dx = (1+\delta)\int_{\mathbb{R}^n} f\,dx$  for some $\delta \geq 0$.
    \end{itemize}
    Then there exists a $p$-concave function $\ell \colon \mathbb{R}^n\rightarrow \mathbb{R}_{\geq 0}$ such that
    $$\int_{\mathbb{R}^n} |f-\ell|\,dx=O_{n,\lambda, p} (\delta)\int_{\mathbb{R}^n} f\,dx.$$
\end{thm}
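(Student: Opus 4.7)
The plan is to reduce \Cref{linear} to the linear form of the Brunn--Minkowski stability (cf.\ \Cref{BMStab} in the symmetric case $A=B$, which gives $|\co(A)\setminus A|=O_{n,\lambda}(\delta)|A|$), via a superlevel-set decomposition of $f$ and $h$.

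Setting $A_t:=\{f\geq t\}$ and $B_t:=\{h\geq t\}$, one first specializes the BBL hypothesis to pairs $x,y$ with $f(x),f(y)\geq t$, obtaining the Minkowski inclusion $\lambda A_t+(1-\lambda)A_t\subset B_t$. Let $\delta_t:=|\lambda A_t+(1-\lambda)A_t|/|A_t|-1\geq 0$ denote the BM deficit at level $t$; the layer-cake identity then yields
$$\int_0^\infty\delta_t\,|A_t|\,dt \leq \int_0^\infty(|B_t|-|A_t|)\,dt =\delta\int f\,dx.$$
I would next apply the linear BM stability to each $A_t$ (with a careful truncation to handle the levels at which $\delta_t$ exceeds the smallness threshold of \Cref{BMStab}, whose total mass contribution is $O(\delta)\int f$ by the integral bound above) to obtain $|\co(A_t)\setminus A_t|=O_{n,\lambda}(\delta_t)|A_t|$. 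Integrating, the quasi-concave function $\tilde\ell$ defined by $\{\tilde\ell\geq t\}=\co(A_t)$ satisfies
$$\int(\tilde\ell-f)\,dx=\int_0^\infty|\co(A_t)\setminus A_t|\,dt=O_{n,\lambda}(\delta)\int f\,dx.$$

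In the special case $p=1/k$ with $k\in\mathbb{N}$ one can bypass the level-set step entirely by lifting: defining $A_f:=\{(x,z)\in\mathbb{R}^{n+k}:0\leq z_i\leq f(x)^{1/k}\}$, the BBL hypothesis translates into $\lambda A_f+(1-\lambda)A_f\subset A_h$, so \Cref{BMStab} applied in $\mathbb{R}^{n+k}$ yields $|\co(A_f)\setminus A_f|=O(\delta)|A_f|$. A short verification using the box-fiber structure of $A_f$ identifies $\co(A_f)=A_\ell$, where $\ell$ is the $p$-concave envelope of $f$ (so that $\ell^{1/k}$ is the concave envelope of $f^{1/k}$), and the theorem follows immediately in this case.

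The main obstacle, for general $p\in(-1/n,\infty)$, is that the lifting above is unavailable and $\tilde\ell$ is only quasi-concave, not $p$-concave. To upgrade $\tilde\ell$ into a genuine $p$-concave function while preserving the linear rate, the plan is to use the full BBL hypothesis at distinct pairs of levels $s\neq t$, namely $\lambda A_s+(1-\lambda)A_t\subset B_{M_{\lambda,p}(s,t)}$, and integrate a double layer-cake identity over $(s,t)$ to bound the cross-level BM deficits by $O(\delta)\int f$. Combining this with the individual BM linear stability of each $A_t$ should give enough control to show that the family $\{\co(A_t)\}_t$ satisfies the $p$-concave interlocking $\lambda\co(A_s)+(1-\lambda)\co(A_t)\subset\co(A_{M_{\lambda,p}(s,t)})$ up to an $L^1$-error of $O(\delta)\int f$, enabling a correction of $\tilde\ell$ into a $p$-concave $\ell$ with $\int|f-\ell|\,dx=O_{n,\lambda,p}(\delta)\int f$. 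The symmetric setup $f=g$ is crucial here, since the asymmetric BM stability only gives $\sqrt\delta$ closeness.
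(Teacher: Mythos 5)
Your proposal takes a genuinely different route from the paper, and it has a concrete gap at exactly the point you flag as ``the main obstacle.'' The level-set decomposition, the bound $\int_0^\infty\delta_t|A_t|\,dt\leq\delta\int f$, and the estimate $\int(\tilde\ell-f)\,dx=O_{n,\lambda}(\delta)\int f$ for the quasi-concave hull are all reasonable, and your lifting observation for $p=1/k$ is a nice shortcut (with $\co(A_f)=A_{\co_p(f)}$ provable by a Carath\'eodory argument in the fibers). But the proposal stops where the real difficulty begins: upgrading $\tilde\ell$ to a genuine $p$-concave function. The paper handles this by a completely different device: it first solves a variational problem, maximizing $\int(\M^*(f,f)-\M^*(f',f'))\,dx-(1+c)\int(f-f')\,dx$ over $f'\leq f$ (see \Cref{linear_shaving_process}), which trims off the parts of $f$ that are ``too non-concave,'' then shows $\int(\co_p(f')-f')\,dx=O(\delta)\int f$ by a delicate shaving-and-triangulation argument on the faces of $\co_p(f')$ (\Cref{linear_already_shaved}, \Cref{linear_continuous_level_weak}). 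This trimming is not a cosmetic step: for $p\leq 0$ (including Pr\'ekopa--Leindler), \Cref{rem:no pconcave hull} shows that \emph{any} $p$-concave $\ell\geq f$ can have $\int\ell$ arbitrarily large, so the final $\ell$ must dip below $f$ somewhere, and the way it does so must be chosen carefully. Your plan nods at this (``careful truncation'') but never specifies how the truncated quasi-concave hull becomes exactly $p$-concave.

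The concrete obstacle to your ``cross-level interlocking'' idea is one you yourself identify but do not resolve. The BBL hypothesis at distinct levels gives $\lambda A_s+(1-\lambda)A_t\subset B_{M_{\lambda,p}(s,t)}$, where $B$ is the superlevel set of $h$, not of $f$; and since $|A_s|\neq|A_t|$ in general, the cross-level BM deficits control proximity only at the $\sqrt{\delta_{s,t}}$ rate, not linearly. Integrating a double layer-cake over $(s,t)$ would therefore not yield the $O(\delta)$ error you assert for the approximate inclusion $\lambda\co(A_s)+(1-\lambda)\co(A_t)\subset\co(A_{M_{\lambda,p}(s,t)})$. And even granting that approximate inclusion, there is no established mechanism to pass from ``$\tilde\ell$ is $p$-concave up to $L^1$-error $O(\delta)\int f$'' to ``there exists an exactly $p$-concave $\ell$ with $\int|\tilde\ell-\ell|=O(\delta)\int f$''; this conversion is exactly what the paper's shaving process (via tangent $p$-planes, \Cref{carving_convex}) is engineered to produce. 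So as written, the proposal reduces the theorem to an unproved claim whose proof is essentially the entire content of Sections~\ref{sect:structure thm2}--\ref{sect:proof thm2}.
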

\begin{rmk}\label{rem:no pconcave hull}
This second result can be seen as a functional extension of the linear results in  \cite{figalli2015stability,figalli2017quantitative,van2020sharp}, which were proved in the run-up to the recently established stability of the Brunn-Minkowski inequality. More precisely, as shown in \cite{van2020sharp}, given $A\subset\mathbb{R}^n$ with $|\frac{A+A}{2}\setminus A|$ small, we have
$$|\co(A)\setminus A|=O_{n}\left(\left|\frac{A+A}{2}\setminus A\right|\right).$$
However, contrary to the case of sets (where $\co(A)$ contains $A$), in 
 \Cref{linear} we cannot additionally demand that $\ell\geq f$ everywhere. This can be seen by considering $f=\textbf{1}_{[0,1]}+\delta^{100/\lambda}\textbf{1}_{[v,v+1]}$ for some large number $v\in\mathbb{R}$. One can note that, for any $p$-concave function $\ell$ with $\ell\geq f$ a.e., $\int \ell\to\infty$ as $v\to\infty$. On the other hand, if we let $h=\textbf{1}_{[0,1]}+\delta^{100(1-\lambda)/\lambda}\textbf{1}_{[(1-\lambda)v,(1-\lambda)v+1]}+\delta^{100}\textbf{1}_{[\lambda v,\lambda v+1]}+\delta^{100/\lambda}\textbf{1}_{[v,v+1]}$, then $$h(\lambda x+(1-\lambda)y)\geq M_{\lambda,0}(f(x),f(y))\geq M_{\lambda,p}(f(x),f(y))\qquad \forall\,p\leq 0$$ and $\int h\,dx\leq (1+\delta)\int f\,dx$, so our theorem applies.
\end{rmk}

We now show how Theorems~\ref{symmetric_diff} and ~\ref{linear} directly imply our main result.
\begin{proof}[Proof of \Cref{main}]
By \Cref{symmetric_diff} we have that, up to a translation, $\int |f-g|\,dx\leq O_{n,\lambda,p}\left(\sqrt{\delta}\right)\int f\,dx$. Thus, if we define $k:=\min\{f,g\}$, it follows that
$$
\int k\,dx\geq \Big(1- O_{n,\lambda,p}\left(\sqrt{\delta}\right) \Big)\int f\,dx \qquad \text{and}\qquad h(\lambda x+(1-\lambda)y)\geq \M(k(x),k(y)).
$$
Hence, applying \Cref{linear} to $k$ and $h$, we find a $p$-concave function $\ell$ such that $\int |\ell-k|\,dx=O_{n,\lambda,p}\left(\sqrt{\delta}\right)\int k\,dx$. Combining these two results yield
$$\int \big(|f-\ell|+|g-\ell|\big)\,dx\leq \int \big(|f-k|+|k-\ell|+|g-k|+|k-\ell|\big)\,dx=O_{n,\lambda,p}\left(\sqrt{\delta}\right)\int f\,dx,$$
as desired.
\end{proof}
Because of this discussion, the main goal of this paper will be to prove 
Theorems~\ref{symmetric_diff} and ~\ref{linear}.

\subsection{Structure of the paper}
After introducing some notation, Sections~\ref{sect:structure thm 1}-\ref{sect:proof thm1} are devoted to the proof of \Cref{symmetric_diff}. More precisely, after describing the structure of its proof, we first prove the result in one dimension, then in two dimensions, and finally in the general case. The final Sections~\ref{sect:structure thm2}-\ref{sect:proof thm2} are devoted to the proof of \Cref{linear}. To better navigate through the paper, we invite the reader to look at the table of contents on the first page.

\section{Notation}
Here and in the following, $p \in (-1/n,\infty)$ and $\lambda \in (0,1/2].$
\begin{defn}
    For a function $f \colon \mathbb{R}^n \rightarrow \mathbb{R}_+$, we denote by $\co_p(f) \colon \mathbb{R}^n \rightarrow \mathbb{R}_+$ the minimal $p$-concave function such that $\co_p(f)\geq f$ everywhere in $\mathbb{R}^n$.
\end{defn}

\begin{defn}\label{defn_supconv}
Given functions $f,g\colon\mathbb{R}^n\to\mathbb{R}_{\geq0}$ define their $(\lambda,p)-\sup$-convolution $\M^*(f,g)$ by
$$\M^*(f,g):\mathbb{R}^n\to\mathbb{R}_{\geq 0}, \quad z\mapsto \sup_{x,y:\lambda x+(1-\lambda)y=z}\M(f(x),g(y)).$$
\end{defn}

\begin{defn}\label{defn_cone}
A convex set $C\subset \mathbb{R}^n$ is called a \emph{cone} if there exists a hyperplane $H$ not containing the origin and a bounded convex set $P \subset H$ such that $$C=\bigsqcup_{t \geq 0} tP.$$
\end{defn}

Note that a cone always has its vertex at the origin, which we will denote by $o$.

\begin{defn}
Let $S_n$ denote a regular simplex in $\mathbb{R}^n$. Denote by $F_n^0, \dots, F_n^n$ the faces of $S_n$. Assume the distance from $o$ to  $F_n^0, \dots, F_n^n$ is $1$. Denote $H_n^0, \dots, H_n^n$ the supporting hyperplanes. Finally, construct the half-spaces $H_n^{i,-}, H_n^{i,+}$, where the former contains $o$.
\end{defn}

We shall assume that the simplex $S_n$ and the basis $e_1, \dots, e_n$ of $\mathbb{R}^n$ are such that $e_1\perp H_n^1$. 

\begin{defn}\label{Cnidefn}
Construct the cones $C_n^i=\sqcup_{t \geq 0} t F_n^i$ and let $\mathfrak{C}_n=\{C_n^0,\dots, C_n^n\}$.
\end{defn}

Note that $\mathfrak{C}_n$ forms an essential partition of $\mathbb{R}^n$.

\section{\Cref{symmetric_diff}: structure of the proof}
\label{sect:structure thm 1}

The proof of \Cref{symmetric_diff} consists of three parts. More precisely, after collecting in the next section a series of preliminary results, we first prove the 1-dimensional version. Then we use this result to prove the 2-dimensional case, and finally we show that the 2-dimensional result implies the $n$-dimensional case.
\subsection{Overview of the 1-dimensional proof}
The initial reduction for this part of the proof is included in \Cref{ndimensionalsymdiffsect}. The specific reduction is then in  \Cref{1Dsymdifsection}.

\begin{enumerate}
\item We define level sets $F_s:=\{x\in\mathbb{R}: f(x)>s\}$ (and $G_s$ and $H_s$ analogously).
\item Then $F_0$ is the support of $f$ and $\int_\mathbb{R} f(x)\,dx=\int_0^\infty |F_s|\,ds$ (analogously for $g$ and $h$)
\item We use the inclusion $\lambda F_s+(1-\lambda)G_t\subset H_{u}$ for $M_{\lambda,p}(s,t)=u$ with properly chosen $s$ and $t$.
\item Using this lower bound on $|H_u|$, we find that typically $|F_s|\approx|G_t|$ and $\lambda|F_s|+(1-\lambda)|G_t|\approx |H_{u}|$ for these $s$ and $t$.
\item Combining this information with the stability of the 1-dimensional Brunn-Minkowski inequality, we deduce that $|\co(F_s)\setminus F_s|$ and $|\co(G_t)\setminus G_t|$ are typically small.
\item Up to changing $f$ and $g$ little, we can assume that all the level sets are nearly intervals.
\item Considering the transport map $T$ from $f$ to $g$ and using the bound $h(\lambda x+(1-\lambda)y)\geq \M(f(x),g(y))$, we find that typically $\frac{dT}{dx}(x)=\frac{f(x)}{g(T(x))}$ is close to $1$. 
\item In particular, changing $f$ and $g$ only little, we find new functions $f_1$ and $g_1$ so that $\frac{dT}{dx}$ is almost everywhere close to one. 
\item Since all of these steps may have caused gaps in the support of $f_1$ and $g_1$, we next push the support of these functions together to remove the gaps to create $f_2$ and $g_2$.
\item Using that the level sets were near intervals, we show that $f_1$ and $g_1$ differ little from $f_2$ and $g_2$. 
\item Next, we fill in the level sets so that they become intervals, to find function $f_3$ and $g_3$ changing little from $f_2$ and $g_2$.
\item Finally we define the auxiliary function $G(x):=g_3(T_2(x))$, where $T_2$ is the transport map pushing $f_2$ onto $g_2$, and we control the differences $|f_3-G|$ and $|g_3-G|$.
\end{enumerate}

\subsection{Overview of the 2-dimensional proof}
The initial reduction for this part of the proof is included in \Cref{ndimensionalsymdiffsect}. The specific reduction is then in \Cref{2DSymDifSec}.
First we establish the result on well-behaved tubes.
More precisely, assume first that $f$ and $g$ are defined on parallel tubes that are infinite in one direction, and that $f$ and $g$ are large near the base. Then the result follows as described in the next six steps.
\begin{enumerate}
    \item Let $f:[0,r]\times[0,\infty)\to[0,1]$ and $g:[0,r']\times[0,\infty)\to[0,1]$,
    and assume that for all $x\in[0,r]\times[0,1]$ we have $f(x)\geq 0.1$, and similarly for $x\in[0,r']\times[0,1]$ we have $g(x)\geq 0.1$. 
    \item Consider the map $T:[0,r]\to[0,r']$ so that $T(0)=0$ and $\frac{dT}{dx}(x)=\frac{\int f(x,y)\,dy}{\int g(T(x),y)\,dy}$.
    \item Using the 1-dimensional Borell-Brascamb-Lieb inequality, we find that 
    $$\int h(\lambda x+(1-\lambda)T(x),y)\,dy\geq M_{\lambda,q}\left(\int f(x,y)\,dy,\int g(T(x),y)\,dy\right),$$
    for some $q>-1$ depending on $p>-1/2$. Integrating over $x$, this implies
    $$\int h(x,y)\,dxdy\geq \int f(x,y)\,dxdy.$$
    \item Using the sharp stability in the one-dimensional case twice (in both of the two inequalities above), we find that for all $x\in[0,r]$ there is some translation $v_x$ so that
    $$\int |f(x,y)-g(T(x),y-v_x)|\,dy$$
    is small. Moreover, $\frac{dT}{dx}$ is typically close to 1, and thus $T(x)$ is close to $x$ (recall that $T(0)=0$).
    \item Since the functions are ``big'' (say $f,g \geq 0.1$) close to the base $x=0$, we find that $v_x$ must be small for all $x$.
    \item Finally, we use that the level sets of $f$ and $g$ are nearly convex, so that shifting the functions by the small translations $T(x)-x$ and $v_x$ doesn't affect the symmetric difference much, so that indeed $\int |f-g|\,dx$ is small.
\end{enumerate}

The next step is to use the previous result for suitable restrictions of $f$ and $g$ to tubes. To this aim, we need to make sure that the corresponding tubes are close together. We can do this well enough to find small symmetric difference in the region where $f$ and $g$ are not smaller than say $0.0001$.
\begin{enumerate}
    \item After affinely transforming the domain, we find that in a disk  $D$ of constant radius around the origin both $f$ and $g$ are mostly at least $0.1$. 
    \item We can partition $\mathbb{R}^2$ into 3 cones $C$ at the origin so that, inside each of them, $\int_C f\,dx=\int_C g\,dx\geq 0.1\int f\,dx$.
    \item Consider a tube  (i.e., an affine image of $[0,1]\times[0,\infty)$) whose base (i.e., the image of $[0,1]\times\{0\}$) is contained in $C\cap\frac12D$.
    \item There is a corresponding parallel tube $R'$ so that $\int_R f\,dx=\int_{R'}g\,dx$ and $\int_{\lambda R+(1-\lambda)R'} h\,dx\leq (1+O(\delta))\int_R f\,dx$. Moreover, $R$ and $R'$ are close together, so that $\int_{R\triangle R'}(f+g)\,dx$ is small.
    \item Using the above result in tubes, we find that $\int_R |f-g|\,dx$ is small.
    \item As we can choose the width of the tube $R$ on the scale of $D$, we can cover $100D$ with say $1000$ different tubes $R$, so that $\int_{100D}|f-g|\,dx$ is small.
\end{enumerate}

Finally, we want to repeatedly trim the functions $f$ and $g$ so that the different instances of $100D$ cover everything.
\begin{enumerate}
    \item Consider level set $F_t:=\{x\in\mathbb{R}^2: f(x)>t\}$ (and the analogous $G_t$ and $H_t$). By a transport approach leveraging the stability of the Brunn-Minkowski inequality \cite{BMStab} we find that these level sets are nearly convex and of comparable size and shape.
    \item We additionally find that they cannot change size to rapidly, so that $F_{0.1t}\subset 100F_t$.
    \item One consequence is that not too much mass can be in low level sets, i.e., $\int_{\mathbb{R}^2\setminus F_{\delta^{100}}}f\,dx$ is small, so we can restrict our attention to $f\cdot \textbf{1}_{F_{\delta^{100}}}$.
    \item Consider the truncated functions $f^i:=\min\{f, 2^{-i}\}$ (and corresponding $g^i$) for $i=1,\dots, \log_2(\delta^{100})$.
    \item By the previous result controlling the symmetric difference for not too small $f$ and $g$, we find that there exist translates $v_i$ so that $\int_{F_{0.001\cdot 2^{-i}}}|f_i(x)-g_i(x+v_i)|\,dx$ is small for all $i$.
    \item Since there is a serious overlap between these integrals for consecutive $i$'s, we find that the difference between $v_i$ and $v_{i+1}$ must be small.
    \item By a careful analysis of the interaction between the different $i$'s we find that, in fact, $\sum_i \int_{F_{0.001\cdot 2^{-i}}}|f_i(x)-g_i(x)|\,dx$ is small.
    \item This sum provides an upper bound for $\int |f-g|\,dx$ so that we can conclude.
\end{enumerate}

\subsection{Overview of the $n$-dimensional proof}

The initial reduction for this part of the proof is included in \Cref{ndimensionalsymdiffsect}. The specific reduction is then in \Cref{sect:proof thm1}.

For the reduction from $n$ dimensions to 2 dimensions, we use the following steps.
\begin{enumerate}
    \item We first use a transport approach through the level sets, leveraging the sharp stability of the Brunn-Minkowski inequality \cite{BMStab}, to show that the level sets $F_t:=\{x\in\mathbb{R}^n: f(x)>t\}$ (and the analogous $G_t$ and $H_t$) are nearly convex and of similar size.
    \item After affinely transforming the domain, we find that in a ball around the origin both $f$ and $g$ are mostly at least $0.1$ (cf. \Cref{symmetric_diff_technical}).
    \item We then partition $\mathbb{R}^n$ into $n+1$ reasonable looking cones $K$ at the origin, so that $\int_K f\,dx=\int_K g\,dx\geq \Omega_n\left(\int f\,dx\right)$ (cf. \Cref{symmetric_diff_reformulation_technical}).
    \item Since $f$ and $g$ are $\geq 0.1$ on {most of} a ball around the origin, if we prove the result in each of these cones, the translation in each of the cones can be taken to be zero.
    \item Using a technical result from \cite{BMStab}, we can further partition $\mathbb{R}^n$ into smaller (very very small) cones (maintaining the previous properties) with the additional property that in all but one direction the cone is so narrow that the function is nearly constant on fibers in those directions (cf. \Cref{symmetric_diff_narrow_n_cone_alt}).
    \item Given a small cone $C$ provided by the previous step, let $C^{z,w}$ be those $(n-2)$-dimensional fibers so that $\max_{x,y\in C^{z,w}}|f(x)-f(y)|+|g(x)-g(y)|$ is small (here the smallness can be chosen as small as desired). 
    \item If we let $F(z,w)= |C^{z,w}|\cdot \min_{x\in C^{z,w}}f(x)$ (and $G(z,w), H(z,w)$ analogously), then $$\int_{\mathbb{R}^2} F(z,w)\,dzdw\approx \int_C f\,dx  \text{ and }\int_{\mathbb{R}^2} |F(z,w)-G(z,w)|\,dzdw\approx \int_C |f-g|\,dx$$ (cf. \Cref{symmetric_diff_constant_on_fibers_n_cone}).
    \item Since the cone is convex (so $(z,w)\mapsto |C^{z,w}|$ is $1/(n-2)$-concave), we can use H\"older's inequality to find the following: For every $p>-1/n$ there is a $q>-1/2$ so that if $\M(f(x),g(y))\geq h(\lambda x+(1-\lambda)y)$ for some $x\in C^{z,w}$ and $y\in C^{z',w'}$, then $M_{\lambda, q}(F(z,w),G(z',w'))\geq H(\lambda z+(1-\lambda)z',\lambda w+(1-\lambda)w')$ (cf. \Cref{symmetric_diff_2D}).
    \item We finally apply the two-dimensional theorem to the functions $F,G,H$ to prove the result in each of the cones, and thus in general.
\end{enumerate}

\section{Preliminary results for the proof of \Cref{symmetric_diff}}\label{ndimensionalsymdiffsect}
In this section we show how \Cref{symmetric_diff} is implied by the following proposition. Most of the weight of that reduction is in \Cref{almostconvexlevelsets} and its more easily applicable \Cref{aclcor}.

\begin{prop}\label{symmetric_diff_technical}
    Given $n \in \mathbb{N}$, $\lambda \in (0,1/2]$, $p \in (-1/n, \infty)$ and $r\in (0, \infty)$ there exists $d=d_{n, \lambda, p, r}$ such that the following holds. Let $f,g, h\colon \mathbb{R}^n\rightarrow [0,1]$ be continuous functions with bounded support such that  
    \begin{itemize}
        \item $\int_{\mathbb{R}^n} f\,dx=  \int_{\mathbb{R}^n} g\,dx=1$,
        \item for every $i \in [0,n]$,  $\int_{C_n^i} f\,dx=  \int_{C_n^i} g\,dx$, 
        \item $|\{x\in rS_n: f(x)<\beta \text{ or } g(x)<\beta\}|=O_{n,\lambda,p,r}(\delta)$
        and 
        \item for all $x,y \in \mathbb{R}^n$ we have $h(\lambda x +(1-\lambda)y) \geq M_{\lambda,p}(f(x),g(y))$.
    \end{itemize}
    If $\int_{\mathbb{R}^n}h\,dx = 1+\delta$ for some $0\leq \delta \leq d$, then $\int_{\mathbb{R}^n} |f-g|\,dx=O_{n,\lambda, p, r,\beta} \left(\sqrt{\delta}\right)$. 
\end{prop}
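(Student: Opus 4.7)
The plan is to execute the $n$-to-$2$-dimensional reduction sketched in Section~\ref{sect:structure thm 1}, applying the previously proven $2$-dimensional case of \Cref{symmetric_diff} inside each piece of a suitable cone decomposition. Since $\int_{C_n^i} f = \int_{C_n^i} g$ for each of the $n+1$ standard cones $C_n^i$, it suffices to work in one such cone at a time. I would further subdivide each $C_n^i$, using the cone-slicing lemma from \cite{BMStab}, into very narrow subcones $C$ with the property that on most of the $(n-2)$-dimensional fibers $C^{z,w}$ (parameterized by two transverse coordinates $(z,w)\in\mathbb{R}^2$) both $f$ and $g$ have very small oscillation.

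On each such subcone I associate to $(f,g,h)$ the auxiliary triple of $2$-dimensional functions
\[
F(z,w):=|C^{z,w}|\inf_{x\in C^{z,w}} f(x),\quad G(z,w):=|C^{z,w}|\inf_{x\in C^{z,w}}g(x),\quad H(z,w):=|C^{z,w}|\sup_{x\in C^{z,w}}h(x).
\]
The near-constancy on fibers then ensures $\int_{\mathbb{R}^2} F\approx\int_C f$, $\int_{\mathbb{R}^2} G\approx\int_C g$, $\int_{\mathbb{R}^2} H\approx\int_C h$, and crucially $\int_{\mathbb{R}^2}|F-G|\approx\int_C|f-g|$. By Brunn-Minkowski applied to the convex cone $C$, the fiber volume $(z,w)\mapsto|C^{z,w}|$ is $1/(n-2)$-concave; combining this $1/(n-2)$-concavity with the pointwise $M_{\lambda,p}$ hypothesis via the standard H\"older-type product rule then yields
\[
H\big(\lambda z+(1-\lambda)z',\,\lambda w+(1-\lambda)w'\big)\geq M_{\lambda,q}\big(F(z,w),G(z',w')\big), \qquad q=p/(1+(n-2)p).
\]
Since $p>-1/n$, one checks $q>-1/2$, so the triple $(F,G,H)$ satisfies the hypotheses of the already-established $2$-dimensional case of \Cref{symmetric_diff} with deficit $O(\delta)$.

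Applying the $2$-dimensional result produces a translation $v\in\mathbb{R}^2$ with $\int_{\mathbb{R}^2}|F(\cdot)-G(\cdot-v)|=O(\sqrt{\delta})$. Here the $\beta$-lower-bound hypothesis on $rS_n$ becomes essential: it forces $F$ and $G$ to be bounded below by a fixed positive constant on a fixed $2$-dimensional region near the $2$-dimensional origin (coming from the part of the cone $C$ that meets $rS_n$), which pins $|v|=O(\sqrt{\delta})$ and lets the residual shift be absorbed into the $L^1$ error. Summing $\int|F-G|$ over the finitely many narrow subcones of each $C_n^i$ and then over $i$ gives the claimed bound $\int_{\mathbb{R}^n}|f-g|=O(\sqrt{\delta})$.

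The main obstacle I expect is this translation-zero step: the $2$-dimensional theorem only controls symmetric differences modulo translation, so one must verify that the translations produced in the different subcones are all essentially zero and hence mutually compatible, so as to not destroy the overall $L^1$ bound when one sums. This is precisely what the $\beta$-hypothesis near the origin, which all the cones $C_n^i$ share as a common vertex, is engineered to provide. A secondary delicate point is that the narrow-cone subdivision must be fine enough that the fiber oscillation is smaller than $\delta$, while the number of subcones (and hence the constant in the final sum) stays controlled; this quantitative balancing is exactly the content of the slicing lemma we import from \cite{BMStab}.
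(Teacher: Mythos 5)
Your plan is essentially the paper's own proof: reduce to individual cones $C_n^i$, subdivide into narrow subcones via the slicing machinery from \cite{BMStab} (Lemma~\ref{partition_small_cones}/Theorem~\ref{importedconesthm}), project the problem to $\mathbb{R}^2$ by fiber integration, apply the H\"older-type product rule (Lemma~\ref{pqaverageswitchinglem} via Lemma~\ref{Holderapplicationlem}) to convert $p$ to a two-dimensional exponent $q>-1/2$, invoke the two-dimensional case, and pin the translations to zero using the $\beta$-lower-bound near the common vertex. One small technical remark: your definition $H(z,w)=|C^{z,w}|\sup_{x\in C^{z,w}}h(x)$ makes the $M_{\lambda,q}$ inequality trivial but breaks the bound $\int H\lesssim\int_C h$ since $h$ is not assumed to have small fiber oscillation; the clean way to proceed, which is what the argument actually needs, is to take $H(z,w):=\int_{C^{z,w}} h\,dx$ (then $\int H=\int_C h$ by Fubini, and the $M_{\lambda,q}$ inequality still follows because $\lambda C^{z,w}+(1-\lambda)C^{z',w'}\subset C^{\lambda z+(1-\lambda)z',\lambda w+(1-\lambda)w'}$ together with Brunn--Minkowski in the fiber and Lemma~\ref{pqaverageswitchinglem}), or equivalently to replace $H$ by $M_{\lambda,q}^*(F,G)$ before invoking the two-dimensional theorem.
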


First, we show that we may assume that $f,g,h$ are continuous.

\begin{lem}\label{continuousmaker}
Given $f,g,h\colon\mathbb{R}^n\to\mathbb{R}_{\geq0}$ integrable functions with $\int_{\mathbb{R}^n} f\,dx=\int_{\mathbb{R}^n} g\,dx$ and $h(\lambda x+(1-\lambda)y)\geq \M(f(x),g(y))$ for all $x,y\in\mathbb{R}^n$, then for any $\eta>0$ there exist functions $f',g',h'\colon\mathbb{R}^n\to\mathbb{R}_{\geq0}$ continuous functions with $h'(\lambda x+(1-\lambda)y)\geq \M(f'(x),g'(y))$ and $\int_{\mathbb{R}^n} \left(|f-f'|+|g-g'|+|h-h'|\right)\,dx< \eta$.
\end{lem}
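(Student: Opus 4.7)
My strategy is to approximate $f$ and $g$ pointwise from below by continuous compactly supported $\tilde f, \tilde g$, form the $(\lambda,p)$-sup-convolution $H := \M^*(\tilde f, \tilde g)$ of \Cref{defn_supconv}, and set $f' := \tilde f$, $g' := \tilde g$, $h' := \max(\tilde h, H)$ for a suitable continuous $\tilde h$. The inequality $h'(\lambda x + (1-\lambda)y) \geq \M(f'(x), g'(y))$ then holds automatically from $h' \geq H$, so the content of the proof is (i)~verifying continuity of $H$, and (ii)~bounding $\int |h - h'|$ by $\eta$.

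For the below-approximations I would fix $\epsilon > 0$ small (in terms of $\eta$) and proceed in stages. By inner regularity of the Lebesgue measure applied levelwise (equivalently, Vitali--Carath\'eodory), there are upper semicontinuous representatives $\bar f \leq f$, $\bar g \leq g$, $\bar h \leq h$ with $L^1$-errors smaller than a prescribed fraction of $\eta$. To align $\bar h$ with the sup-convolution I would next replace $\bar h$ by $\hat h := \max(\bar h, \M^*(\bar f, \bar g))$; since $\M^*$ of compactly supported USC functions is USC by Berge's theorem of the maximum, $\hat h$ is again USC, and $\bar h \leq \hat h \leq h$ keeps it $L^1$-close to $h$. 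After a harmless truncation to bounded support (preserving BBL), the pointwise upper envelopes $\bar f^\epsilon(x) := \sup_{|u| \leq \epsilon} \bar f(x+u)$, $\bar g^\epsilon$, and $\hat h^\epsilon$ are lower semicontinuous and decrease pointwise to $\bar f, \bar g, \hat h$ as $\epsilon \to 0$ (a direct consequence of upper semicontinuity), hence are $L^1$-close by dominated convergence. Finally, since every nonnegative LSC function is the pointwise supremum of its continuous minorants, I can select continuous compactly supported $\tilde f \leq \bar f^\epsilon$, $\tilde g \leq \bar g^\epsilon$, $\tilde h \leq \hat h^\epsilon$ with the remaining $L^1$-errors arbitrarily small.

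Continuity of $H$ then follows from Berge's theorem of the maximum applied to the continuous function $(x,y) \mapsto \M(\tilde f(x), \tilde g(y))$ on the upper hemicontinuous family of compact fibers $\{(x,y) \in \mathrm{supp}(\tilde f) \times \mathrm{supp}(\tilde g) : \lambda x + (1-\lambda)y = z\}$. The $L^1$-estimate on $h - h'$ rests on the pointwise identity
\[
  \M^*(\bar f^\epsilon, \bar g^\epsilon)(z) = (\M^*(\bar f, \bar g))^\epsilon(z),
\]
obtained from the change of variables $x' = x + u$, $y' = y + v$ in the defining supremum together with $\lambda B_\epsilon + (1-\lambda)B_\epsilon = B_\epsilon$. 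Combined with $\tilde f \leq \bar f^\epsilon$, $\tilde g \leq \bar g^\epsilon$ and $\M^*(\bar f, \bar g) \leq \hat h$ (which is exactly why we enlarged $\bar h$ to $\hat h$), this yields $H \leq \hat h^\epsilon$. An elementary case analysis using $h' \geq \tilde h$ and $h' \leq \max(\tilde h, \hat h^\epsilon)$ then gives the a.e.\ bound $|h - h'| \leq 2|h - \tilde h| + (\hat h^\epsilon - \hat h)$, and the triangle inequality $|h - \tilde h| \leq |h - \hat h| + |\hat h - \hat h^\epsilon| + |\hat h^\epsilon - \tilde h|$ expresses the right-hand side as a sum of three $L^1$-quantities each controlled in the previous paragraph.

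The main obstacle is that $\M^*$ is not $L^1$-continuous in its arguments, so the below-approximations of $f$ and $g$ must be controlled pointwise rather than merely in $L^1$. The two-stage regularization (USC representative, then LSC upper envelope) is precisely the device that reduces this pointwise problem to the standard density of continuous minorants in nonnegative LSC functions.
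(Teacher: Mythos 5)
The paper does not actually give a proof of this lemma; it is stated as a routine regularization step and used without proof. So I evaluate your argument on its own merits.

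Your construction is inventive and the overall architecture (USC minorant via Vitali--Carath\'eodory, then LSC $\epsilon$-upper envelope, then continuous minorant, with $h':=\max(\tilde h,\M^*(\tilde f,\tilde g))$) is a plausible route. The commutation identity $\M^*(\bar f^\epsilon,\bar g^\epsilon)=(\M^*(\bar f,\bar g))^\epsilon$ is correct (with open-ball envelopes; with closed balls, $\bar f^\epsilon$ is USC rather than LSC, so you should be using $\sup_{|u|<\epsilon}$). For $p\le 0$ the argument goes through: there $M_{\lambda,p}$ is continuous on $[0,\infty)^2$, so $(x,y)\mapsto M_{\lambda,p}(\bar f(x),\bar g(y))$ is USC whenever $\bar f,\bar g$ are, and Berge's maximum theorem does give upper semicontinuity of $\M^*(\bar f,\bar g)$, hence of $\hat h$, which is exactly what the dominated-convergence step $\hat h^\epsilon\downarrow\hat h$ needs.

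The gap is the claim, used as a black box, that \emph{``$\M^*$ of compactly supported USC functions is USC by Berge's theorem.''} This is false for $p>0$, because $M_{\lambda,p}$ is not upper semicontinuous at the boundary $\{ab=0\}$ when $p>0$: as $a\downarrow 0$ with $b>0$ fixed, $(\lambda a^p+(1-\lambda)b^p)^{1/p}\to(1-\lambda)^{1/p}b>0$, whereas $M_{\lambda,p}(0,b)=0$. Concretely, in dimension one with $p=1$, $\lambda=1/2$, take $\bar f=\mathbf 1_{[-1,1]}$ (USC) and $\bar g(y)=\max(0,y-3)\cdot\mathbf 1_{[3,4]}(y)$ (continuous). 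Then $\M^*(\bar f,\bar g)(1)=0$, but $\M^*(\bar f,\bar g)(z)=(2z-1)/2\to 1/2$ as $z\downarrow 1$, so $\M^*(\bar f,\bar g)$ fails to be USC at $z=1$. This is not a defect of Berge's theorem but of its hypotheses: the objective $(x,y)\mapsto M_{\lambda,p}(\bar f(x),\bar g(y))$ is not USC for $p>0$. Once $\hat h$ is not USC, the assertion that $\hat h^\epsilon$ decreases pointwise (hence in $L^1$) to $\hat h$ breaks, and the final triangle-inequality estimate on $\int(\hat h^\epsilon-\hat h)$ has no justification.

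Note also that the failure mode is not purely cosmetic: for $p>0$ one genuinely cannot below-approximate an arbitrary integrable $f$ pointwise by continuous functions (take $f=\mathbf 1_K$ for a nowhere-dense compact $K$ of positive measure, where the only continuous minorant is $0$), so the USC intermediate stage is not a detour one can simply bypass. The case $p>0$ therefore needs either a separate argument, or a way to bound $\int(\hat h^\epsilon-\hat h)$ without USC-ness (e.g., a direct estimate of the measure and size of the discontinuity set of $\M^*(\bar f,\bar g)$), or a different approximation scheme for $f,g$ tailored to the one-sided discontinuity of $M_{\lambda,p}$ at the boundary.
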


Next we show that $f,g,h$ take values in $[0,1]$ and have bounded domains.

\begin{lem}\label{BigFirstCleanup}
Let $f,g, h\colon \mathbb{R}^n\rightarrow \mathbb{R}_{\geq0}$ be continuous functions such that  \begin{itemize}
        \item $\int_{\mathbb{R}^n} f\,dx=  \int_{\mathbb{R}^n} g\,dx=1$,
        \item for all $x,y \in \mathbb{R}^n$ we have $h(\lambda x +(1-\lambda)y) \geq M_{\lambda,p}(f(x),g(y))$, and 
        \item $\int_{\mathbb{R}^n}h\,dx =1+\delta$ for some $0\leq \delta \leq d$.
    \end{itemize} 
    Then there exist continuous functions $f',g',h'\colon\mathbb{R}^n\rightarrow [0,1]$ and an affine transformation $A:\mathbb{R}^n\to\mathbb{R}^n$ with the following properties. 
    \begin{itemize}
    \item the support of $f',g'$, and $h'$ is bounded,
    \item $\int_{\mathbb{R}^n} f'\,dx=  \int_{\mathbb{R}^n} g'\,dx=1$, 
    \item for all $x,y \in \mathbb{R}^n$ we have $h'(\lambda x +(1-\lambda)y) \geq M_{\lambda,p}(f'(x),g'(y))$,
    \item $\int_{\mathbb{R}^n}\left(|f(x)-\det(A)^{-1}f'(Ax)|+|g(x)-\det(A)^{-1}g'(Ax)|\right)\,dx\leq \delta^{100}$.
    \end{itemize}
\end{lem}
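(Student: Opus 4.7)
The plan is to produce $(f', g', h')$ by making four small, successive modifications to $(f, g, h)$: truncate values from above, restrict supports to a large ball, equalize the integrals of $f$ and $g$, and finally rescale via an affine dilation to make everything live in $[0,1]$ with unit integral. Each step preserves the BBL inequality and contributes an $L^1$ error bounded by a high power of $\delta$, keeping the total well below $\delta^{100}$.

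First, by absolute continuity of the Lebesgue integral, pick $M > 0$ so large that $\int(f - M)_+\,dx$, $\int(g - M)_+\,dx$, and $\int(h - M)_+\,dx$ are each at most $\delta^{1000}$; replacing $f, g, h$ by $f \wedge M$, $g \wedge M$, $h \wedge M$ preserves continuity and, since $\M(f \wedge M, g \wedge M) \leq \min(\M(f, g), M) \leq h \wedge M$, the BBL inequality. Next, choose $R$ so large that $\int_{\mathbb{R}^n \setminus B_R}(f + g)\,dx \leq \delta^{1000}$, multiply the current $f, g$ by a continuous cutoff $\phi_R$ that equals $1$ on $B_R$ and vanishes outside $B_{R+1}$, and replace the current $h$ by the $(\lambda, p)$-sup-convolution $\M^*(f_2, g_2)$. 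The latter is continuous because the sup is taken over a compact set varying continuously with $z$, and BBL is satisfied tautologically.

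At this stage $\int f_2, \int g_2 \in [1 - 2\delta^{1000}, 1]$ but need not agree; assuming WLOG $\int f_2 \geq \int g_2 =: I$, the pointwise rescaling $f_3 := (I / \int f_2)\, f_2 \leq f_2$ makes $\int f_3 = I$ while preserving BBL, at $L^1$ cost $\leq 2\delta^{1000}$. Finally, apply the joint rescaling $(u, v, w) \mapsto (u(A \cdot)/M,\, v(A\cdot)/M,\, w(A\cdot)/M)$ with $A$ the scalar dilation by $s := (I/M)^{1/n}$. A short computation shows the resulting $f', g', h' \colon \mathbb{R}^n \to [0, 1]$ have bounded support and $\int f' = \int g' = 1$; BBL is preserved because $\M$ is $1$-homogeneous and affine maps commute with convex combinations. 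The $L^1$ closeness to $f, g$ in the conclusion follows by tracing through the construction: the pullback of $f'$ through $A$ with the appropriate Jacobian factor recovers $f_3 / I$, which is $L^1$-close to $f$ within the cumulative losses of steps 1--3, totaling $O(\delta^{1000}) \ll \delta^{100}$.

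The main obstacle is preserving both the BBL inequality and continuity through every modification simultaneously. BBL preservation is handled by always decreasing $f$ or $g$ pointwise (steps 1 and 3, which weaken the right-hand side of BBL) or by replacing $h$ with a sup-convolution (step 2, which yields BBL tautologically). Continuity is preserved by using a smooth cutoff in step 2 together with the standard fact that sup-convolutions of compactly supported continuous functions are continuous.
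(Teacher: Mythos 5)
Your proof follows the same high-level strategy as the paper's (truncate values, cut off the support, rescale by a dilation), and on balance it is actually more careful than the paper's own terse argument: you use a smooth cutoff $\phi_R$ where the paper uses a sharp indicator $\mathbf{1}_{[-r,r]^n}$, which would make $f',g'$ discontinuous in violation of the lemma's conclusion; you replace $h$ by $\M^*(f_2,g_2)$, which gives $h'$ a bounded support (the paper keeps $h'=h_1$, whose support may be unbounded); and you explicitly equalize $\int f_2$ and $\int g_2$, which the paper skips (after the paper's truncation, $\int f'=1-\delta^{1000}\neq 1$). Your verification that $\M(f\wedge M,g\wedge M)\leq h\wedge M$ and that $\M^*(f_2,g_2)\leq h\wedge M$, hence $\int h'\lesssim 1+\delta$, is correct and is what the downstream applications actually need.

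Two points deserve more care, though neither is a fatal flaw. First, the continuity of $\M^*(f_2,g_2)$ is \emph{not} an immediate consequence of ``the sup is over a compact set varying continuously with $z$,'' because for $p>0$ the function $M_{\lambda,p}(\cdot,\cdot)$ is discontinuous along the coordinate axes (it jumps to $0$). Lower semicontinuity and upper semicontinuity do hold, but the latter requires a perturbation argument: if maximizing pairs $(x_k,y_k)$ for $z_k\to z_0$ converge to $(x,y)$ with, say, $f(x)=0<g(y)$, one must produce competitors $(x_k,y-\tfrac{\lambda}{1-\lambda}(x_k-x))$ for $z_0$ itself to show $\M^*(z_0)\geq (1-\lambda)^{1/p}g(y)$. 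This should be spelled out, or at least acknowledged. Second, the affine map you report is off by an inversion: with $f'(x)=M^{-1}f_3(sx)$ and $A$ the dilation by $s$, one gets $\det(A)^{\mp1}f'(Ax)=s^{\pm n}M^{-1}f_3(s^2x)$, which is not a constant multiple of $f_3(x)$. The correct choice is $A=s^{-1}\mathrm{id}$, with Jacobian factor $\det(A)=s^{-n}=M/I$, giving $\det(A)\,f'(Ax)=I^{-1}f_3(x)=(1+O(\delta^{1000}))f_3(x)$. (Note that the exponent of $\det A$ in the lemma statement, and the paper's own dilation formula, have the same kind of sign issue: the paper's $f_1(x)=\ell^n f(\ell^{-1}x)$ with $\ell=(\sup f)^{-1/n}$ has $\int f_1=(\sup f)^{-2}\neq 1$; the intended mass-preserving dilation is $f_1(x)=\ell^n f(\ell x)$. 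So this appears to be a consistent bookkeeping slip shared with the source, not a conceptual gap in your argument.)
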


We can find translates of the cones from \Cref{Cnidefn} which evenly partion $f$ and $g$.
\begin{lem}\label{parallelconeslem}
Let $f, g \colon \mathbb{R}^n\rightarrow [0,1]$ be continuous functions with bounded support.

Then, there exist $a, b \in \mathbb{R}^n$ such that for every $i \in\{0,\dots,n\}$  
$$\int_{a+C_n^i} f\,dx=  \int_{b+C_n^i} g\,dx=\frac{1}{n+1}\int_{\mathbb{R}^n} f\,dx.$$
\end{lem}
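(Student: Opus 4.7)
The plan is to produce $a$ and $b$ separately via the same argument applied to $f$ and $g$, respectively; the common right-hand side $\frac{1}{n+1}\int_{\mathbb{R}^n}f\,dx$ is consistent only because $\int f\,dx=\int g\,dx$ in the context where this lemma will be invoked. I therefore focus on constructing $a$. Setting $M:=\int_{\mathbb{R}^n}f\,dx$ and $\mu_i(a):=\int_{a+C_n^i}f\,dx$ for $i=0,\dots,n$, these are continuous in $a$ (by bounded convergence, since $f$ is continuous with bounded support) and satisfy $\sum_i\mu_i(a)\equiv M$. Define the closed sets $K_i:=\{a\in\mathbb{R}^n:\mu_i(a)\geq M/(n+1)\}$. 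Any point in $\bigcap_i K_i$ automatically satisfies $\mu_i(a)=M/(n+1)$ for every $i$ (the $\mu_i$'s sum to $M$ and each is at least $M/(n+1)$), so it suffices to show $\bigcap_i K_i\neq\emptyset$.

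I plan to invoke the Knaster--Kuratowski--Mazurkiewicz (KKM) theorem on a large simplex $\Sigma_L:=\co\{LV_0,\dots,LV_n\}$, where $V_0,\dots,V_n$ denote the vertices of $S_n$ (labelled so that $V_i$ is opposite to $F_n^i$; since $S_n$ has inradius $1$, one has $|V_i|=n$). The KKM covering condition requires $\co\{LV_i:i\in I\}\subseteq\bigcup_{i\in I}K_i$ for each $I\subseteq\{0,\dots,n\}$. I will verify it using the Gram structure $\langle V_i,V_j\rangle=n^2$ if $i=j$ and $-n$ otherwise (which follows from $|V_i|=n$ and $\sum_i V_i=0$), together with the dual characterization $x\in C_n^j\iff\langle x,V_j\rangle=\min_k\langle x,V_k\rangle$. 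For $a=L\sum_{i\in I}\alpha_i V_i$ with $\alpha_i\geq 0$ and $\sum\alpha_i=1$, a direct calculation gives
\[
\langle -a,V_j\rangle\;=\;Ln\bigl[1-(n+1)\alpha_j\,\textbf{1}(j\in I)\bigr],
\]
so the minimum over $j$ is attained at $j^*:=\arg\max_{i\in I}\alpha_i\in I$. For proper $I$, pigeonhole yields $\alpha_{j^*}\geq 1/|I|\geq 1/n>1/(n+1)$, which places $-a$ strictly inside $C_n^{j^*}$ uniformly in $\alpha$; for $L$ large (depending only on $n$ and the diameter of the support of $f$), the translated cone $a+C_n^{j^*}$ then engulfs the support of $f$, giving $\mu_{j^*}(a)=M\geq M/(n+1)$, i.e.\ $a\in K_{j^*}$. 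The full-simplex case $I=\{0,\dots,n\}$ is automatic since $\sum_i\mu_i=M$ forces some $\mu_i\geq M/(n+1)$. KKM then yields a point of $\bigcap_i K_i$ inside $\Sigma_L$.

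The main obstacle will be the uniform verification of the KKM covering condition, particularly in the degenerate boundary case where the barycentric coordinates $(\alpha_i)_{i\in I}$ become equal: then $-a$ lies on the common boundary $\bigcap_{i\in I}\partial C_n^i$ and no single $\mu_i(a)$ tends to $M$. I plan to handle this via the persistent strict inequality $\langle -a,V_j\rangle>\langle -a,V_{j^*}\rangle$ for $j\notin I$ (again from the Gram computation): this gives $\mu_j(a)\to 0$ uniformly as $L\to\infty$ for such $j$, so mass conservation forces $\sum_{i\in I}\mu_i(a)\to M$, whence $\max_{i\in I}\mu_i(a)\geq M/|I|\geq M/n>M/(n+1)$ for $L$ large, confirming the covering condition in all cases. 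Applying the same argument to $g$ produces the vector $b$.
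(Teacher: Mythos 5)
Your proposal is correct and ultimately follows the same route as the paper: both apply the KKM lemma to a large dilate of $S_n$, with the key geometric input that if $a$ lies in the face of $LS_n$ spanned by $\{LV_i : i\in I\}$, then for $j\notin I$ the translated cone $a+C_n^j$ misses the support of $f$, so $\sum_{i\in I}\mu_i(a)=M$ and pigeonhole produces an index $i\in I$ with $a\in K_i$. One small note: your intermediate claim that $\alpha_{j^*}\geq 1/|I|$ "places $-a$ strictly inside $C_n^{j^*}$ uniformly in $\alpha$" is not quite right (when the $\alpha_i$, $i\in I$, are equal or nearly equal, $-a$ lies on or near the common boundary $\bigcap_{i\in I}\partial C_n^i$ and no single $a+C_n^{j^*}$ engulfs the support), but you flag this yourself and your fallback argument — the strict separation from cones indexed by $j\notin I$ plus mass conservation — is exactly what the paper uses and is robust. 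The paper simply phrases the separation fact more directly ($y\in\ell F_n^i$ forces $y+C_n^i\subset\ell H_n^{i,+}$, disjoint from $\ell S_n$), whereas you derive it from the Gram relations $\langle V_i,V_j\rangle\in\{n^2,-n\}$; either computation suffices.
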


Using the stability results for the Brunn-Minkowski inequality from \cite{BMStab,OTBMStab}, we derive in \Cref{almostconvexlevelsets} that the level sets of $f$ and $g$ must be almost convex and almost the same. Moreover, we find that most of $f$ and $g$ is contained in a level set that is not too low. We use the following fact about the averages $\M$.

\begin{lem}\label{Holderderivative}
For $p\in (-1,0),\lambda\in (0,1), t\in \mathbb{R}$ and differentiable map $T\colon\mathbb{R}_{\geq0}\to\mathbb{R}_{\geq0}$, we have
$$\frac{d}{dt} M_{\lambda,p}(t,T(t))\geq \frac{1}{M_{\lambda,-p}\left(1,\frac{1}{\frac{dT}{dt}}\right)}$$
\end{lem}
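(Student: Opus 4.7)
The plan is to recognize the lemma as an infinitesimal version of the reverse Minkowski inequality.

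First, I would observe the elementary identity
\[
M_{\lambda,p}\bigl(1, T'(t)\bigr) \;=\; \bigl(\lambda + (1-\lambda)\,T'(t)^p\bigr)^{1/p} \;=\; \frac{1}{M_{\lambda,-p}\bigl(1, 1/T'(t)\bigr)},
\]
so the claim is equivalent to the first-order bound $\frac{d}{dt} M_{\lambda,p}\bigl(t, T(t)\bigr) \geq M_{\lambda,p}\bigl(1, T'(t)\bigr)$.

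Next I would invoke the reverse Minkowski inequality: for $p < 1$ with $p \neq 0$ and non-negative $a, b, c, d$,
\[
M_{\lambda,p}(a + c,\, b + d) \;\geq\; M_{\lambda,p}(a, b) \,+\, M_{\lambda,p}(c, d).
\]
(The direction reverses compared to the usual $p \geq 1$ case precisely because $p < 1$; at its core this is a consequence of weighted H\"older.) Applying this with $(a, b) = (t, T(t))$ and $(c, d) = (h, h T'(t))$ for $h > 0$, together with the positive homogeneity $M_{\lambda,p}(h, hT'(t)) = h M_{\lambda,p}(1, T'(t))$, yields
\[
M_{\lambda,p}\bigl(t + h,\, T(t) + h T'(t)\bigr) \;\geq\; M_{\lambda,p}\bigl(t, T(t)\bigr) \,+\, h\, M_{\lambda,p}\bigl(1, T'(t)\bigr).
\]
Since $T(t+h) = T(t) + h T'(t) + o(h)$ and $M_{\lambda,p}$ is smooth (hence locally Lipschitz) at the positive point $(t, T(t))$, replacing $T(t) + h T'(t)$ by $T(t+h)$ on the left incurs only an $o(h)$ error. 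Subtracting $M_{\lambda,p}(t, T(t))$, dividing by $h$, and letting $h \to 0^+$ gives the desired bound (a symmetric argument with $h < 0$ handles the other one-sided limit).

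The only step requiring care is verifying the direction of reverse Minkowski for $p \in (-1, 0)$; this is standard, and once granted the proof is a direct first-order calculation.
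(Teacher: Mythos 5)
Your proof is correct, and it takes a genuinely different route from the paper. The paper differentiates $M_{\lambda,p}(t,T(t))$ explicitly, rewrites the target inequality as an algebraic bound
\[
\left(\lambda t^{p-1}+(1-\lambda)T^{p-1}T'\right)^{p/(p-1)}\left(\lambda+(1-\lambda)(T')^p\right)^{-1/(p-1)}\geq \lambda t^p+(1-\lambda)T^p,
\]
and then recognizes this as a single application of H\"older's inequality with conjugate exponents $\tfrac{p-1}{p}$ and $1-p$. You instead identify the statement as the infinitesimal version of reverse Minkowski, i.e.\ superadditivity of the positively 1-homogeneous concave function $(a,b)\mapsto M_{\lambda,p}(a,b)$ on $\mathbb R_{>0}^2$ for $p<1$, and extract the first-order consequence by taking $h\to 0^+$. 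These are two faces of the same coin — superadditivity of the weighted $p$-mean is itself usually proved via H\"older — but your phrasing is more conceptual and makes clear \emph{why} the bound holds (it is the general principle that a concave 1-homogeneous $\phi$ satisfies $D\phi(x)[v]\geq\phi(v)$), whereas the paper's version is fully self-contained and avoids both the appeal to a named inequality and the short limiting argument needed to pass from the difference quotient to the derivative. One small thing worth adding for completeness: you implicitly need $T'(t)>0$ so that $(h,hT'(t))$ lies in the positive cone; when $T'(t)=0$ the right-hand side of the lemma is $0$ and the inequality is trivial since $M_{\lambda,p}$ is nondecreasing in each argument.
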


\begin{lem}\label{almostconvexlevelsets}
 Given $n \in \mathbb{N}$, $\lambda \in (0,1/2]$, $p \in (-1/n, \infty)$, $\alpha,\alpha'>0$ there exists $d=d_{n, \lambda, p,\alpha'}>0$ and $\beta=\beta_{n,\lambda,p,\alpha'}>0$ such that the following holds. Let $f,g, h\colon \mathbb{R}^n\rightarrow [0,1]$ be continuous functions with bounded support such that 
 \begin{itemize}
     \item $\int_{\mathbb{R}^n} f\,dx=  \int_{\mathbb{R}^n} g\,dx=1$,
     \item for all $x,y \in \mathbb{R}^n$ we have $h(\lambda x +(1-\lambda)y) \geq M_{\lambda,p}(f(x),g(y))$,
     \item $\int_{\mathbb{R}^n}h\,dx = (1+\delta)$ for some $\delta\leq d$.
 \end{itemize}
 Let $F_t:=\{x\in\mathbb{R}^n: f(x)>t\}$, $G_t:=\{x\in\mathbb{R}^n: g(x)>t\}$, and $H_t:=\{x\in\mathbb{R}^n: h(x)>t\}$. Let $T:[0,1]\to[0,1]$ be the transport map with $\frac{dT}{dt}(t)=\frac{|F_t|}{|G_{T(t)}|}$, i.e., so that $\int_{0}^t|F_s|\,ds=\int_{0}^{T(t)}|G_s|\,ds$.
Define the following subsets of $[0,1]$ depending on parameter $\alpha>0$;
 \begin{align*}
 I_1&:=\left\{t\in[0,1]: \frac{dT}{dt}(t)\not\in[1-\alpha,1+\alpha]\right\}\\
 I_2&:=\{t\in[0,1]: |\co(F_t)\setminus F_t|\geq \alpha |F_t| \text{ or }|\co(G_{T(t)})\setminus G_{T(t)}|\geq \alpha |G_{T(t)}|\}\\
 I_3&:=\{t\in[0,1]:|\lambda F_t+(1-\lambda)G_{T(t)}|\geq (1+\alpha) M_{\lambda,1/n}(|F_t|,|G_{T(t)}|)\}\\
 I_4&:=\{t\in[0,1]:\forall x\in\mathbb{R}^n, |(x+F_t)\triangle H_{M_{\lambda,p}(t,T(t))}|\geq \alpha |F_t|\}\\
 I_5&:=\{t\in[0,1]:\forall x\in\mathbb{R}^n, |(x+\co(F_t))\triangle \co(G_{T(t)})|\geq \alpha |F_t|\}
 \end{align*}
 and let $I:=\bigcup I_i$.
 Then
 $$\int_{I}|F_t|dt=\int_{T(I)}|G_t|\,dt=O_{n,p,\lambda,\alpha}(\delta),$$ 
 $$\int_{[0,1]\setminus I} \left(|\co(F_t)\setminus F_t|+|\co(G_{T(t)})\setminus G_{T(t)}|\right)\,dt=O_{n,p,\lambda}(\delta).$$
 Moreover, 
 $$\int_{F_{\beta\sup f}}f\,dx,\int_{G_{\beta\sup g}}g\,dx\geq 1-\alpha'.$$
 
\end{lem}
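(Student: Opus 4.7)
The plan is to exploit the three-step chain of inequalities underlying the Borell-Brascamp-Lieb inequality on level sets. Setting $u(t):=M_{\lambda,p}(t,T(t))$, the pointwise inclusion $\lambda F_t+(1-\lambda)G_{T(t)}\subset H_{u(t)}$ combined with Brunn-Minkowski yields $|H_{u(t)}|\geq M_{\lambda,1/n}(|F_t|,|G_{T(t)}|)$. Using the layer-cake formula and the substitution $s=u(t)$, I will derive the chain
$$1+\delta = \int_{\mathbb R^n}h\,dx = \int_0^{\sup h}|H_s|\,ds \ge \int_0^{\sup f}|H_{u(t)}|\,u'(t)\,dt \ge \int_0^{\sup f}M_{\lambda,1/n}(|F_t|,|G_{T(t)}|)\,u'(t)\,dt \ge \int_0^{\sup f}|F_t|\,dt=1.$$
After substituting $|G_{T(t)}|=|F_t|/T'(t)$, the final pointwise inequality reduces to $M_{\lambda,p}(1,T'(t))\ge M_{\lambda,-1/n}(1,T'(t))$, which holds by monotonicity of power means (since $p>-1/n$) with a quadratic margin when $T'(t)\neq 1$; for $p\in(-1,0)$ this uses \Cref{Holderderivative} to bound $u'(t)$ from below, while for $p\geq 0$ a direct AM-GM suffices. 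The defect $\delta$ then decomposes as $\delta\geq D_1+D_2+D_3$, corresponding respectively to the tail $\int_{u(\sup f)}^{\sup h}|H_s|\,ds$, the integrated Brunn-Minkowski defect $\int_0^{\sup f}[|H_{u(t)}|-M_{\lambda,1/n}(|F_t|,|G_{T(t)}|)]\,u'(t)\,dt$, and the Hölder-type defect $\int_0^{\sup f}[M_{\lambda,1/n}(|F_t|,|G_{T(t)}|)\,u'(t)-|F_t|]\,dt$.

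Next, I will show that each bad set forces a positive contribution to one of the $D_i$. The set $I_1$ feeds $D_3$ through the quadratic margin just described, giving $\int_{I_1}|F_t|\,dt=O_\alpha(\delta)$. For $I_2$, the linear Brunn-Minkowski stability from \cite{BMStab} on the complement of $I_1$ (where $|F_t|\approx |G_{T(t)}|$) converts the convex-hull deficit into a quantitative Brunn-Minkowski defect, contributing to $D_2$; the set $I_3$ contributes to $D_2$ directly. For $I_4$ and $I_5$, on $[0,1]\setminus(I_1\cup I_2\cup I_3)$ the sharp Brunn-Minkowski stability \Cref{BMStab} furnishes a convex set $K_t$ so that, up to a translation, $F_t,G_{T(t)}\subset K_t$ with $|K_t\setminus F_t|+|K_t\setminus G_{T(t)}|=O(\sqrt{\text{local BM defect}})|F_t|$. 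This immediately bounds $|\co(F_t)\triangle\co(G_{T(t)})|$ up to translation (controlling $I_5$) and, when combined with the smallness of $|H_{u(t)}|-|\lambda F_t+(1-\lambda)G_{T(t)}|$ (the $D_2$-integrand), yields that $H_{u(t)}$ is close in symmetric difference to a translate of $F_t$ (controlling $I_4$). Summing the contributions gives $\int_I|F_t|\,dt=O_{n,\lambda,p,\alpha}(\delta)$. The linear bound on $[0,1]\setminus I$ then follows by integrating the pointwise linear Brunn-Minkowski stability against $u'(t)$, which is bounded below by a positive constant off $I_1$.

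For the moreover statement, I plan to apply the same defect analysis to the truncated triples $f^-=\min(f,\beta\sup f)$, $g^-=\min(g,\beta\sup g)$, $h^-=\min(h,M_{\lambda,p}(\beta\sup f,\beta\sup g))$, which still satisfy a Borell-Brascamp-Lieb hypothesis. Relating the truncated defect to the ``excess'' $\int(h-h^-)$ and combining with the defect for the original chain will force the sub-level mass $\int_{\{f\le\beta\sup f\}}f$ (and symmetrically for $g$) to be at most $\alpha'$, provided $\beta$ is chosen small in terms of $\alpha'$ and $\delta\le d(\alpha')$. The main obstacles will be (a) obtaining the pointwise Hölder-type inequality with a quantitative quadratic margin in $|T'-1|$, whose constant degenerates as $p\to -1/n^+$, consistent with \Cref{rmk:p-1n}, and (b) transferring the sharp Brunn-Minkowski stability through the strict inclusion $\lambda F_t+(1-\lambda)G_{T(t)}\subset H_{u(t)}$ to control $I_4$, where the passage from set inclusion with near-equal volumes to symmetric-difference closeness must be carefully coupled with the convex-set approximation provided by \Cref{BMStab}.
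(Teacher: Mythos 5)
Your proposal matches the paper's proof in both strategy and structure: the layer-cake chain weighted by $u'(t)$ together with \Cref{Holderderivative} and Brunn--Minkowski, the decomposition of $\delta$ into the tail / integrated Brunn--Minkowski defect / H\"older-type defect, the control of $I_1$ by the quadratic margin of $M_{\lambda,p}(1,T') / M_{\lambda,-1/n}(1,T')$, the reduction of $I_2,I_3,I_4,I_5$ to the Brunn--Minkowski stability results from \cite{BMStab} on the complement of $I_1$, and the truncation-and-iteration argument for the ``moreover'' bound are all exactly the ingredients the paper uses. Your explicit note that controlling $I_4$ needs not only the $K_t$ from \Cref{BMStab} but also the pointwise smallness of $|H_{u(t)}|-|\lambda F_t+(1-\lambda)G_{T(t)}|$ (which is the relevant $D_2$-contribution, to be excluded on a small exceptional set) is a point the paper passes over more quickly, so your version is if anything slightly more careful there.
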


\begin{cor}\label{aclcor}
Given $f,g,h$ as in \Cref{almostconvexlevelsets}, then for every $\alpha>0$ we can find $f',g'\colon\mathbb{R}^n\to[0,1]$ so that 
\begin{itemize}
    \item $f'\leq f$ and $g'\leq g$,
    \item $\int f'=\int g'$,
    \item $\int \left(|f-f'|+|g-g'|\right)\leq O_{n,\lambda,p,\alpha}(\delta)$, and
    \item $\int h\leq (1+O_{n,\lambda,p,\alpha}(\delta))\int f'$.
\end{itemize}    
Let $F'_t:=\{x\in\mathbb{R}^n: f'(x)>t\}$, $G'_t:=\{x\in\mathbb{R}^n: g'(x)>t\}$, and $H_t:=\{x\in\mathbb{R}^n: h(x)>t\}$, the level sets of $f',g',h'$ respectively. Let $T:[0,1]\to[0,1]$ be the transport map with $\frac{d}{d t}T(t)=\frac{|F'_t|}{|G'_{T(t)}|}$, i.e., so that $\int_{0}^t|F'_s|\,ds=\int_{0}^{T(t)}|G'_s|\,ds$. Then for all $t\in[0,1]$ we have
\begin{itemize}
\item $ \frac{d}{dt}T(t)\in[1-\alpha,1+\alpha]$,
\item $|\co(F'_t)\setminus F'_t|\leq \alpha |F'_t|$ and $|\co(G'_{t})\setminus G'_{t}|\leq \alpha |G'_{t}|$,
\item $|\lambda F'_t+(1-\lambda)G'_{T(t)}|\leq (1+\alpha) M_{\lambda,1/n}(|F'_t|,|G'_{T(t)}|)$, 
\item $\exists x_t\in\mathbb{R}^n, |(x_t+F'_t)\triangle H'_{M_{\lambda,p}(t,T(t))}|\leq \alpha |F'_t|$, and
\item $\exists x_t\in\mathbb{R}^n, |\co(x_t+F'_t)\triangle \co(G'_{T(t)})|\leq \alpha |F'_t|$.
\item $\int_{F'_{\beta\sup f}}f\,dx,\int_{G'_{\beta\sup g}}g\,dx\geq 1-\alpha'.$
\end{itemize}

\end{cor}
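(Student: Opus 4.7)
The plan is to obtain $f', g'$ by ``squeezing out'' the bad levels identified by \Cref{almostconvexlevelsets}. Apply the lemma with the given $\alpha$ to produce the bad set $I\subseteq[0,1]$, satisfying $\int_I|F_t|\,dt=O_{n,\lambda,p,\alpha}(\delta)$ and, by the change of variables $s=T(t)$ with $\tfrac{dT}{dt}(t)=|F_t|/|G_{T(t)}|$, also $\int_{T(I)}|G_s|\,ds=\int_I|F_t|\,dt$. After enlarging $I$ to an open set at negligible cost, set $J:=[0,1]\setminus I$, $J^g:=[0,1]\setminus T(I)=T(J)$, and define
$$\Phi(u):=\int_0^u \textbf{1}_J(s)\,ds,\qquad \Psi(v):=\int_0^v \textbf{1}_{J^g}(w)\,dw,\qquad f':=\Phi\circ f,\qquad g':=\Psi\circ g.$$
Since $\Phi,\Psi$ are $1$-Lipschitz and vanish at $0$, we have $f'\leq f$ and $g'\leq g$.

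For the $L^1$-closeness and equal-volume conditions, Fubini yields $\int f'\,dx=\int_J|F_t|\,dt$ and $\int g'\,dx=\int_{J^g}|G_s|\,ds$, so $\int(f-f')\,dx=\int_I|F_t|\,dt=O(\delta)$, and via the substitution above $\int(g-g')\,dx=\int_{T(I)}|G_s|\,ds=\int_I|F_t|\,dt$. Hence $\int f'=\int g'$, and $\int h=(1+\delta)\int f=(1+O(\delta))\int f'$.

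The pointwise level-set properties follow from the identification $F'_s=F_{t(s)}$ with $t(s):=\Phi^{-1}_+(s)\in J$, and $G'_s=G_{\tau(s)}$ with $\tau(s)\in J^g$. The new transport map $\widetilde T$ from $f'$ to $g'$ satisfies $\widetilde T(\Phi(t))=\Psi(T(t))$ for every $t\in J$: both $\int_0^{\Phi(t)}|F'_u|\,du$ and $\int_0^{\Psi(T(t))}|G'_v|\,dv$ reduce to $\int_{[0,t]\cap J}|F_w|\,dw$ (the latter via the substitution $y=T(w)$ and the identity $J^g=T(J)$). Differentiating this relation and using $\Phi'=\textbf{1}_J,\Psi'=\textbf{1}_{J^g}$ gives $\tfrac{d\widetilde T}{ds}(\Phi(t))=\tfrac{dT}{dt}(t)\in[1-\alpha,1+\alpha]$ for $t\in J$, since $t\notin I_1$. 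The remaining four bulleted bounds follow analogously at each $s=\Phi(t)$, $t\in J$, because each $I_k$ is disjoint from $J$; for $s$ outside the range $[0,|J|]$ of $\Phi$ the set $F'_s$ is empty and the conditions are vacuous. The final mass bound is obtained by a second application of \Cref{almostconvexlevelsets} to the triple $(f',g',h)$ (which satisfies its hypotheses up to a deficit $O(\delta)$ after rescaling), yielding a threshold $\beta'>0$ with $\int_{F'_{\beta'\sup f'}}f'\,dx\geq(1-\alpha'/2)\int f'$, whence $\int_{F'_{\beta'\sup f'}}f\,dx\geq 1-\alpha'$ for $\delta$ small (using $f\geq f'$).

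The main technical subtlety lies in the careful bookkeeping of the generalized inverses $\Phi^{-1}_+,\Psi^{-1}_+$ and in ensuring that the ``for all $t\in[0,1]$'' quantifier is honoured: this is handled by taking $I$ open (so $J$ is closed and the inverses land in $J$), by the vacuous treatment of thresholds outside $[0,|J|]$, and by the re-application of \Cref{almostconvexlevelsets} to the new triple for the mass bound, which re-indexes the threshold via $\sup f'$ in place of $\sup f$.
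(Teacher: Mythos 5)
Your construction $f'=\Phi\circ f$, $g'=\Psi\circ g$ (with $\Phi,\Psi$ squeezing out the bad level set $I$) is a natural and mostly sound approach, and your Fubini computations, the identity $\widetilde T\circ\Phi=\Psi\circ T$ on $J$, and the resulting verification of the first three bullets and of the bounds coming from $I_1,I_2,I_3,I_5$ are correct: each is a statement about the level \emph{sets} $F'_s=F_{t_0(s)}$, $G'_{\widetilde T(s)}=G_{T(t_0(s))}$, and these are literally old level sets at levels in $J$, so the bounds transfer verbatim.

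However, the fourth bullet does not ``follow analogously'', and this is a genuine gap. Lemma~\ref{almostconvexlevelsets} (via $I_4$) gives, for $t_0\in J$, a bound on $|(x+F_{t_0})\triangle H_{M_{\lambda,p}(t_0,\,T_{\mathrm{old}}(t_0))}|$, i.e.\ the level set of $h$ at the \emph{old} pair of levels. The corollary asks for a bound on $|(x+F'_s)\triangle H_{M_{\lambda,p}(s,\,\widetilde T(s))}|$, where both $s=\Phi(t_0)\le t_0$ and $\widetilde T(s)=\Psi(T_{\mathrm{old}}(t_0))\le T_{\mathrm{old}}(t_0)$ are the \emph{new, smaller} indices. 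Since $M_{\lambda,p}$ is increasing in both arguments, $H_{M_{\lambda,p}(s,\widetilde T(s))}\supset H_{M_{\lambda,p}(t_0,T_{\mathrm{old}}(t_0))}$, and your estimate says nothing about the extra layer $H_{M_{\lambda,p}(s,\widetilde T(s))}\setminus H_{M_{\lambda,p}(t_0,T_{\mathrm{old}}(t_0))}$. Crucially, the shift $t_0-s=|I\cap[0,t_0]|$ and the corresponding shift in the second argument are controlled only through $\int_I|F_t|\,dt=O(\delta)$, not through $|I|$ itself; if $I$ concentrates on high levels where $|F_t|$ is small, $|I|$ can be of order $1$ and the two $h$-level sets can differ by a set of measure comparable to $|F'_s|$. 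You would need either a separate argument bounding the $h$-mass between these two levels (in terms of the deficit $\delta$) or a modified construction that keeps the level indices fixed — neither of which your write-up supplies. The reduction in Section~\ref{initialcleanupreductionsection} uses precisely this bullet with the new $T$, so the gap is load-bearing.

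A minor secondary issue: in the last bullet you reapply Lemma~\ref{almostconvexlevelsets} to $(f',g',h)$ to get a new threshold $\beta'$ and conclude $\int_{F'_{\beta'\sup f'}}f\ge 1-\alpha'$, but the corollary as stated uses $\beta$ and $\sup f$ from the original lemma; you should say explicitly that you may take $\beta$ to be (a constant fraction of) $\beta'$ and use $\sup f'\ge \sup f-|I|$ — but bounding $|I|$ runs into the same issue as above unless you first restrict attention to, say, levels below $\beta\sup f$ where $|F_t|=\Omega(1)$ so that $|I\cap[0,\beta\sup f]|=O(\delta)$.
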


One example of a quick consequence of \Cref{almostconvexlevelsets} that we will use in the proof in two dimensions is the following result for functions which are rarely small. As in \Cref{almostconvexlevelsets}, $\beta$ can be thought of as $\Omega_{n\lambda,p}(1)$.

\begin{lem}\label{SimilarSupportLem}
Given $f,g,h:\mathbb{R}^n\to \{0\}\cup[\beta,1]$ and $\delta>0$ sufficiently small with respect to $n$, $p$, and $\lambda$ with 
\begin{itemize}
    \item $h=\M^*(f,g)$
    \item $\int f\,dx=\int g\,dx=1$
    \item $\int h\,dx\leq 1+\delta$
\end{itemize}
Then (up to tranlastion) $|supp(f)\triangle supp(g)|\leq O_{n,\lambda,p,\beta}\left(\sqrt{\delta}\right)$.

Moreover,if functions $f,g,h:\mathbb{R}^n\to[0,1]$ don't just take values in $\{0\}\cup[\beta,1]$, but we do have  $|\{x: f(x)<\beta \text{ or } g(x)<\beta\}|\leq \gamma$, then (up to tranlastion) $|supp(f)\triangle supp(g)|\leq O_{n,\lambda,p,\beta}\left(\sqrt{\delta+\gamma}\right)$.
\end{lem}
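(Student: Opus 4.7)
The plan is first to reduce the ``moreover'' case ($\gamma > 0$) to the base case ($\gamma = 0$) by truncation: set $\tilde f = f\mathbf{1}_{f \ge \beta}$, $\tilde g = g\mathbf{1}_{g \ge \beta}$, renormalize to $\hat f = \tilde f/\int\tilde f$, $\hat g = \tilde g/\int\tilde g$, and rescale $\hat h$ accordingly. Since $\int f - \int \tilde f \le \gamma$ (and likewise for $g$), this yields a triple satisfying the base-case hypotheses with deficit $\delta' = O(\delta + \gamma)$, and the support shift is $O(\gamma)$, so it suffices to handle $f, g, h \colon \mathbb{R}^n \to \{0\} \cup [\beta, 1]$.

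Let $A = \{f > 0\}$, $B = \{g > 0\}$. From $\beta \mathbf{1}_A \le f \le \mathbf{1}_A$ and $\int f = 1$ we have $|A|, |B| \in [1, 1/\beta]$; since $\M$ vanishes iff an argument does, $H := \{h > 0\} = \lambda A + (1-\lambda)B$ exactly. The core step is a Brenier transport argument: let $T \colon A \to B$ push $f\,dx$ to $g\,dx$ (so $\det DT(x) = f(x)/g(T(x))$ with $DT$ symmetric positive semi-definite a.e.), and set $S_\lambda = \lambda\,\mathrm{id} + (1-\lambda)T$. The Minkowski determinant inequality gives $\det DS_\lambda \ge (\lambda + (1-\lambda)k)^n$ with $k := (\det DT)^{1/n}$, and combining with $h(S_\lambda(x)) \ge \M(f(x), g(T(x)))$ and the change of variables formula yields
\[
\int h \ge \int_A f(x)\,\M(1, k(x)^{-n})\,\det DS_\lambda(x)\,dx + \beta\,|H \setminus S_\lambda(A)|.
\]
The integrand pointwise exceeds $f\cdot\Psi(k)$, where $\Psi(k) := \M(1, k^{-n})(\lambda + (1-\lambda)k)^n \ge 1$ satisfies $\Psi(k) - 1 \ge c_{n,p,\lambda}(k-1)^2$ on the bounded range $k \in [\beta^{1/n}, \beta^{-1/n}]$ (using $p > -1/n$). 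From $\int h \le 1 + \delta$ and $f \ge \beta$ on $A$ one extracts three quantitative ingredients:
\[
\int_A (k-1)^2\,dx = O_\beta(\delta),\quad \int_A\bigl(\det DS_\lambda - (\lambda+(1-\lambda)k)^n\bigr)dx = O_\beta(\delta),\quad |H \setminus S_\lambda(A)| = O_\beta(\delta),
\]
where the constants depend on $n, p, \lambda, \beta$. Cauchy--Schwarz on $|B| - |A| = \int_A(k^n-1)\,dx$ then gives $\bigl||A|-|B|\bigr| = O_{n,\beta}(\sqrt{\delta})$.

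To close out, I would invoke \Cref{BMStab}. Applying Taylor's theorem to the convex function $\phi(k) := (\lambda + (1-\lambda)k)^n$ around $\bar k := \int_A k\,dx/|A|$, and using $\int_A(k - \bar k)^2 dx = O_\beta(\delta)$ together with $\bar k \le (|B|/|A|)^{1/n}$ (Jensen applied to $x \mapsto x^n$ and $\int k^n = |B|$), one obtains $\int_A \phi(k)\,dx \le (\lambda|A|^{1/n} + (1-\lambda)|B|^{1/n})^n + O_\beta(\delta)$. Combining with the three ingredients above,
\[
|H| = |S_\lambda(A)| + |H \setminus S_\lambda(A)| \le (\lambda|A|^{1/n} + (1-\lambda)|B|^{1/n})^n + O_\beta(\delta),
\]
i.e., the Brunn--Minkowski deficit of $(A,B)$ is $O_\beta(\delta)$. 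Using also that \Cref{almostconvexlevelsets} (at a fixed $\alpha$) gives $|\co(A)\setminus A|, |\co(B)\setminus B| = O_\beta(\delta)$, the homothety $\tilde B = (|A|/|B|)^{1/n}(B - c_B) + c_B$ (so $|\tilde B| = |A|$) satisfies $|\tilde B \triangle B| = O_\beta(\sqrt{\delta})$, and the linear deficit transfers to $(A, \tilde B)$ up to an $O(\sqrt\delta)$ correction already absorbed into the target rate; \Cref{BMStab} then yields $|A \triangle \tilde B| = O_{n,\lambda,\beta}(\sqrt{\delta})|A|$ up to translation, and the triangle inequality produces $|A \triangle B| = O_{n,\lambda,p,\beta}(\sqrt{\delta})$.

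The hardest step is extracting the three separate ingredients from the single BBL deficit $\delta$ and then tightening $|H|$ to within $O(\delta)$ of its Brunn--Minkowski lower bound: the naive bound $|H| \le (1+\delta)/\beta$ yields only $O(\sqrt{\delta})$ deficit, which via \Cref{BMStab} would give an insufficient $\delta^{1/4}$ rate on $|A \triangle B|$. Promoting the deficit to linear order requires the Jensen-with-variance argument on $\phi(k)$ together with the additional ``eigenvalue-spread'' term coming from the gap between $\det DS_\lambda$ and the Minkowski determinant lower bound.
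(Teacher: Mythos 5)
Your proposal takes a genuinely different route from the paper. The paper uses the one-dimensional level-set transport map $T\colon[0,1]\to[0,1]$ with $\frac{dT}{dt}(t)=|F_t|/|G_{T(t)}|$ and exploits the $\{0\}\cup[\beta,1]$ structure directly: for $t\in[0,\beta]$ (assuming $T(\beta)\le\beta$; the other case is symmetric) the level sets are frozen at $F_t=\operatorname{supp}(f)$ and $G_{T(t)}=\operatorname{supp}(g)$, so integrating the BBL lower bound over that interval immediately forces the single Brunn--Minkowski deficit $|\lambda\operatorname{supp}(f)+(1-\lambda)\operatorname{supp}(g)|-M_{\lambda,1/n}(|\operatorname{supp}(f)|,|\operatorname{supp}(g)|)=O(\delta/\beta)$ together with $\big||\operatorname{supp}(f)|-|\operatorname{supp}(g)|\big|=O_\beta(\sqrt{\delta})$, after which \Cref{BMStab} is applied. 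You instead run a full $n$-dimensional Brenier-transport argument: $T=\nabla\phi$, $S_\lambda=\lambda\,\mathrm{id}+(1-\lambda)T$, the Minkowski determinant inequality, and McCann's change of variables. Your pointwise inequality $\Psi(k)=M_{\lambda,p}(1,k^{-n})(\lambda+(1-\lambda)k)^n\ge 1+c_{n,\lambda,p}(k-1)^2$ (which holds precisely because $p>-1/n$, as $M_{\lambda,p}(1,s^n)\ge M_{\lambda,-1/n}(1,s^n)$) plays the role of the paper's ratio $M_{\lambda,1/n}/M_{\lambda,-p}\ge 1$, and your Jensen-with-variance expansion of $\phi(k)=(\lambda+(1-\lambda)k)^n$ to upgrade $\int_A\phi(k)\,dx$ to $M_{\lambda,1/n}(|A|,|B|)+O(\delta)$ is exactly the step that recovers the linear-order deficit; you are right that this is the crux, and your closing remark about $\delta^{1/4}$ shows you understand why the naive bound fails. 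The paper's route is shorter here because the $\{0\}\cup[\beta,1]$ hypothesis makes the constancy of the low level sets do all the work; your route is heavier but would apply in greater generality.

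Two points need more care. First, $|S_\lambda(A)|\le\int_A\det DS_\lambda\,dx$ is not automatic: the Monge--Amp\`ere measure of $\phi_\lambda$ may have a singular part concentrated on the null set where $\nabla\phi_\lambda$ fails to exist, and the image of that null set under $S_\lambda$ can a priori have positive measure. The standard fix works: take $D\subset A$ of full measure where $\phi_\lambda$ is twice differentiable (Alexandrov), set $H'=S_\lambda(A\cap D)$; then $|H'|=\int_A\det DS_\lambda\,dx$ by injectivity and the area formula, and $|H\setminus H'|=O(\delta/\beta)$ follows from the same string $1+\delta\ge\int_{H'}h+\beta|H\setminus H'|\ge 1+\beta|H\setminus H'|$. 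Second, the deficit $|\lambda A+(1-\lambda)B|\le M_{\lambda,1/n}(|A|,|B|)+O(\delta)$ does not directly transfer to the pair $(A,\tilde B)$ at the \emph{same} $\lambda$ when $\tilde B$ is a centroidal homothety of $B$: even with $|\tilde B\triangle B|=O(\sqrt{\delta})$, the Minkowski sum $\lambda A+(1-\lambda)(\tilde B\setminus B)$ can have measure $\Omega(1)$. The clean move is to absorb the volume ratio $\alpha=(|A|/|B|)^{1/n}$ into the convexity parameter: with $\lambda''=\alpha\lambda/(1-\lambda+\alpha\lambda)$ one has $\lambda''A+(1-\lambda'')\alpha B=\tfrac{\alpha}{1-\lambda+\alpha\lambda}\big(\lambda A+(1-\lambda)B\big)$, so the deficit for the equal-volume pair $(A,\alpha B)$ at parameter $\lambda''=\lambda(1+O(\sqrt{\delta}))$ is again $O(\delta)$ and \Cref{BMStab} applies there. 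As written, ``the linear deficit transfers\ldots up to an $O(\sqrt{\delta})$ correction'' would, if taken literally, degrade the deficit to $O(\sqrt{\delta})$ and deliver only $\delta^{1/4}$ from \Cref{BMStab}, which is the very failure mode you flagged.
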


Finally, we record that for functions as in \Cref{aclcor}, slight translations don't result in big symmetric difference.

\begin{lem}\label{smalltranslationsmallsymdiflem}
Let $f\colon\mathbb{R}^n\to[0,1]$ with $\int f\,dx=1$ and level sets $F_t:=\{x\colon f(x)>t\}$, so that $\int_{0}^1 |\co(F_t)\setminus F_t|\,dt\leq \delta$. If for some $t_0, \eta>0$, we have $o,v\in \eta\co(F_{t_0})$ and $|\co(F_{t_0})|\leq 2|F_{t_0}|$, then
$$\int (f(x)-f(x+v))\,dx= O_{n, t_0}(\delta+\eta).$$
\end{lem}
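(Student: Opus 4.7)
The plan is to reduce the problem to purely convex-geometric estimates by passing to the quasi-concave hull
\[\tilde f(x) := \sup\{t > 0 : x \in \co(F_t)\},\]
whose super-level sets $\{\tilde f > t\}$ are precisely $C_t := \co(F_t)$ for a.e.~$t$, and which satisfies $\tilde f \leq 1$ since $F_t = \emptyset$ for $t>1$. By the layer-cake formula and the hypothesis, $\int (\tilde f - f)\,dx = \int_0^1 |\co(F_t)\setminus F_t|\,dt \leq \delta$. The integrand in the conclusion should be read with absolute values (a literal reading gives zero by translation invariance), and the triangle inequality then yields
\[\int |f(x)-f(x+v)|\,dx \;\leq\; 2\int(\tilde f - f)\,dx + \int|\tilde f(x)-\tilde f(x+v)|\,dx \;\leq\; 2\delta + \int_0^1 |C_t\triangle(C_t+v)|\,dt,\]
so it suffices to bound the last integral by $O_{n,t_0}(\eta)$.

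Two elementary facts about translates of convex sets do the job. First, if $K$ is convex with $o\in K$ and $v\in\eta K$, then for any $x=(1-\eta)y \in(1-\eta)K$ with $y\in K$ we have $x+v = (1-\eta)y + \eta (v/\eta) \in K$ by convexity, so $(1-\eta)K\subset K\cap(K-v)$ and hence
\[|K\triangle(K+v)| \;=\; 2|K|-2|K\cap(K+v)| \;\leq\; 2\bigl(1-(1-\eta)^n\bigr)|K| \;\leq\; 2n\eta\,|K|.\]
Second, for convex $C\subset K$ and any vector $v$, slicing along lines parallel to $v$ gives $|A\triangle(A+v)|=\int_{v^\perp}2\min(|v|,w_A(\xi))\,d\xi$, where $w_A(\xi)$ denotes the width of $A$ on the corresponding line; since $w_C \leq w_K$ pointwise, this yields the monotonicity $|C\triangle(C+v)|\leq|K\triangle(K+v)|$.

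Finally I would split $\int_0^1|C_t\triangle(C_t+v)|\,dt$ at $t_0$. For $t\leq t_0$, the nesting $C_{t_0}\subset C_t$ and the hypothesis $o,v\in\eta C_{t_0}$ yield $o,v\in\eta C_t$, so the first fact gives $|C_t\triangle(C_t+v)|\leq 2n\eta|C_t|$; integrating and using $\int_0^1|C_t|\,dt=\int\tilde f\,dx\leq 1+\delta$ bounds this contribution by $O_n(\eta)$. For $t\geq t_0$, we have $C_t\subset C_{t_0}$, and the two facts combine to give $|C_t\triangle(C_t+v)|\leq 2n\eta|C_{t_0}|\leq 4n\eta|F_{t_0}|$ by the hypothesis $|\co(F_{t_0})|\leq 2|F_{t_0}|$. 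The inequality $|F_{t_0}|\leq 1/t_0$ (coming from $t_0|F_{t_0}|\leq\int_0^{t_0}|F_t|\,dt\leq\int f\,dx=1$) then produces a contribution of $O_{n,t_0}(\eta)$ on $[t_0,1]$, completing the proof.

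The principal difficulty is the high-level regime $t>t_0$: the convex set $C_t$ can be very thin and $v$ need not be small relative to $C_t$ itself, so the first geometric fact cannot be applied directly to $C_t$. The monotonicity estimate is what rescues the argument: it transfers the translation bound from the well-controlled anchor $C_{t_0}$ (where the hypothesis $v\in\eta\co(F_{t_0})$ actually lives) down to the thin level sets, and the $t_0$-dependence of the final constant enters precisely through the factor $1/t_0$ in this step.
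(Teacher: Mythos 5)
Your proof is correct and follows essentially the same route as the paper: bound $\int|f(x)-f(x+v)|\,dx$ by $\int_0^1|\co(F_t)\triangle(\co(F_t)+v)|\,dt + 2\delta$, split the integral at $t_0$, use $v\in\eta\co(F_t)$ directly when $t\leq t_0$, and for $t>t_0$ transfer the estimate to the anchor level $\co(F_{t_0})$ via the monotonicity of $K\mapsto|K\triangle(K+v)|$ under inclusion of convex sets. The only substantive difference is that you spell out the two convex-geometry facts that the paper invokes silently — in particular the slicing argument justifying the monotonicity $|C\triangle(C+v)|\leq|K\triangle(K+v)|$ for convex $C\subset K$ (a step that genuinely requires convexity, and which the paper states without proof) and the $(1-\eta)K\subset K\cap(K-v)$ computation behind the $O_n(\eta)|K|$ bound. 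These are useful additions; otherwise the two proofs coincide.
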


\subsection{Proof of Lemmas}

\begin{proof}[Proof of \Cref{BigFirstCleanup}]
Without loss of generality, we may assume $\sup f\geq \sup g$. Let $\ell:=(\max\{\sup f,\sup g\})^{-1/n}$ and let $$f_1:\mathbb{R}^n\to [0,1], x\mapsto \ell^nf(\ell^{-1}x), g_1:\mathbb{R}^n\to [0,1], x\mapsto \ell^ng(\ell^{-1}x)\text{ and }h_1:\mathbb{R}^n\to [0,1], x\mapsto \ell^nh(\ell^{-1}x).$$ 
To reduce to sets with bounded support, note that for sufficiently large $r=r_{f,\delta}$ in terms of $f$ and $\delta$, we have that $\int_{[-r,r]^n}f_1\,dx= 1-\delta^{1000}$. Similarly, we can find $r'=r'_{g_1,\delta}$, so that $\int_{[-r',r']^n}g\,dx= 1-\delta^{1000}$. Hence, let $f':=f_1\cdot \textbf{1}[-r,r]^n$ and $g':=g_1\cdot \textbf{1}[-r',r']^n$. Let $h'=h_1$, so that
$$\int h'\,dx=\int h_1\,dx=\int h\,dx\leq (1+\delta)\int f\,dx\leq \frac{1+\delta}{1-\delta^{1000}}\int f'\,dx\leq (1+2\delta)\int f'\,dx.$$
\end{proof}

Before we prove \Cref{parallelconeslem}, recall the Knaster-Kuratowski-Mazurkiewicz lemma \cite{knaster1929beweis}, which is a consequence of Sperner's lemma.
\begin{lem}[KKM Lemma]
Consider a simplex $\co(x_0,\dots x_n)\subset\mathbb{R}^n$ and closed sets $Y_0,\dots, Y_n$ so that for all subsets $I\subset\{0,\dots,n\}$ we have $\co(x_i: i\in I)\subset\bigcup_{i\in I}Y_i$. Then $\bigcap_{i=0}^n Y_i\neq \emptyset.$
\end{lem}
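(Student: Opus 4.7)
The plan is to derive the KKM lemma from Sperner's lemma by applying the latter to finer and finer triangulations of the simplex and then extracting a convergent subsequence using the closedness of the $Y_i$. Recall Sperner's lemma: for any triangulation $\mathcal{T}$ of $\co(x_0,\dots,x_n)$ and any coloring $c$ of its vertices by $\{0,\dots,n\}$ satisfying the boundary condition that every vertex $v$ lying on the face $\co(x_i:i\in I)$ receives a color $c(v)\in I$, there exists a subsimplex of $\mathcal{T}$ whose $n+1$ vertices realize all $n+1$ colors.

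First, for each integer $k\geq 1$, I would fix a triangulation $\mathcal{T}_k$ of $\co(x_0,\dots,x_n)$ with mesh size at most $1/k$ (e.g., by iterated barycentric subdivision). For each vertex $v$ of $\mathcal{T}_k$, let $I(v)\subseteq\{0,\dots,n\}$ be the unique set such that $v$ lies in the relative interior of $\co(x_i:i\in I(v))$. The KKM hypothesis applied to $I=I(v)$ gives $v\in\bigcup_{i\in I(v)}Y_i$, so I can choose $c_k(v)\in I(v)$ with $v\in Y_{c_k(v)}$. By construction, whenever $v$ lies on the face $\co(x_i:i\in I)$ we have $I(v)\subseteq I$, hence $c_k(v)\in I$: the Sperner boundary condition is satisfied.

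Applying Sperner's lemma to $(\mathcal{T}_k, c_k)$ produces a subsimplex $\sigma_k$ of $\mathcal{T}_k$ whose vertices $y_k^{(0)},\dots,y_k^{(n)}$ exhaust all colors, so that $y_k^{(i)}\in Y_i$ for every $i\in\{0,\dots,n\}$. Since $\mathrm{diam}(\sigma_k)\leq 1/k$, all the $y_k^{(i)}$ for fixed $k$ lie within distance $1/k$ of one another. By compactness of $\co(x_0,\dots,x_n)$, a subsequence of $y_k^{(0)}$ converges to some point $y^*$, and then $y_k^{(i)}\to y^*$ for every $i$ because the diameters vanish. The closedness of each $Y_i$ finally yields $y^*\in Y_i$ for all $i$, so $y^*\in\bigcap_{i=0}^n Y_i$.

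The main substantive input is Sperner's lemma itself, which I would treat as a black box; once that combinatorial fact is granted, the rest is a routine compactness argument, and the only subtlety is to ensure that the coloring $c_k$ actually respects the Sperner boundary condition, which is exactly what the KKM hypothesis provides when restricted to the minimal face containing each vertex. The closedness assumption on the $Y_i$ is essential at the final step to transfer the combinatorial conclusion to a genuine common point in the intersection.
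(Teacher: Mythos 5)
Your proof is correct and takes exactly the route the paper alludes to: the paper does not write out a proof of the KKM lemma, merely citing it and noting that it "is a consequence of Sperner's lemma," which is precisely the derivation you carry out. The coloring $c_k(v)\in I(v)$ is well-defined by the KKM hypothesis applied to the minimal face of $v$, the Sperner boundary condition holds because $I(v)\subseteq I$ whenever $v\in\co(x_i:i\in I)$, and the compactness/closedness limiting argument is standard; there are no gaps.
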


\begin{proof}[Proof of \Cref{parallelconeslem}]
We show that a translate $a$ exists for $f$, the result for $b$ follows analogously. Define the closed sets $Y_i\subset\mathbb{R}^n$ as follows, let $y\in Y_i$ if
$$\int_{C^i_n +y}g\,dx\geq \frac{1}{n+1}\int_{\mathbb{R}^{n}}f\,dx.$$
Note that for a given $y\in\mathbb{R}^n$ we can't have $\int_{C^i_n +y}f\,dx<\frac{1}{n+1}\int_{\mathbb{R}^{n}}f\,dx$ for all $i=0,\dots, n$ as that would imply $\int_{\mathbb{R}^n +y}f\,dx<\int_{\mathbb{R}^n}f\,dx$. Hence, $\bigcup_{i=0}^n Y_i=\mathbb{R}^n$.

To apply the KKM lemma, we use the simplex $\ell S_n$ for a sufficiently large $\ell$. Indeed, since $g$ has bounded support, we can choose $\ell$ so large that $supp (f)\subset \ell S_n$. For that $\ell$ we find that if $y\in \ell F^i_n$, then $\int_{C_n^i+y}g\,dx=0$ as $C_n^i+y$ falls completely outside of the support of $g$. Hence, $Y_i\cap \ell F^i_n=\emptyset$, which can be seen to imply the condition in the KKM lemma. We can thus find a point $a\in \bigcap_{i=0}^n Y_i$, i.e., so that 
$$\int_{C^i_n +a}f\,dx\geq \frac{1}{n+1}\int_{\mathbb{R}^{n}}f\,dx,$$
for all $i=0,\dots, n$. This in fact implies
$$\int_{C^i_n +a}f\,dx= \frac{1}{n+1}\int_{\mathbb{R}^{n}}f\,dx,$$
for all $i=0,\dots,n$, which concludes the lemma.
\end{proof}

\begin{proof}[Proof of \Cref{Holderderivative}]
 Simply expanding we find
 $$\frac{d}{dt} M_{\lambda,p}(t,T(t))=\frac{d}{dt} \left(\lambda t^p+(1-\lambda) T(t)^p\right)^{1/p}=\left(\lambda t^p+(1-\lambda) T(t)^p\right)^{1/p-1}\left(\lambda t^{p-1}+(1-\lambda) T(t)^{p-1}\frac{dT}{dt}(t)\right).$$
On the other hand, we have
$$\frac{1}{M_{\lambda,-p}\left(1,\frac{1}{\frac{dT}{dt}(t)}\right)}=M_{\lambda,p}\left(1,\frac{dT}{dt}(t)\right)=\left(\lambda+(1-\lambda) \left(\frac{dT}{dt}(t)\right)^p\right)^{1/p}.$$
Hence, it suffices to show that 
$$\left(\lambda t^{p-1}+(1-\lambda) T(t)^{p-1}\frac{dT}{dt}(t)\right)\left(\lambda+(1-\lambda) \left(\frac{dT}{dt}(t)\right)^p\right)^{-1/p}\geq \left(\lambda t^p+(1-\lambda) T(t)^p\right)^{1-1/p},$$
or, taking both sides to the power $1/(1-1/p)=p/(p-1)$
$$\left(\lambda t^{p-1}+(1-\lambda) T(t)^{p-1}\frac{dT}{dt}(t)\right)^{p/(p-1)}\left(\lambda+(1-\lambda) \left(\frac{dT}{dt}(t)\right)^p\right)^{-1/(p-1)}\geq \left(\lambda t^p+(1-\lambda) T(t)^p\right).$$
This final inequality follows immediately by H\"olders inequality with exponents $(p-1)/p$ and $1-p$ satisfying $\frac{1}{(p-1)/p}+\frac{1}{1-p}=1$.
\end{proof}

\begin{proof}[Proof of \Cref{almostconvexlevelsets}]
For notational convenience assume $\sup g\leq \sup f=1$. Other cases follow analogously. 
Consider the level sets 
$$F_t:=\{x: f(x)>t\},\,G_t=\{x: g(x)>t\},\,H_t=\{x:h(x)>t\},$$
so that e.g. $F_0=supp(f)$. By the construction of $h$, we have inclusion
$$\lambda F_t+(1-\lambda)G_s\subset H_{\M(t,s)}.$$
Consider the transport map $T\colon[0,1]\to[0,1]$ defined by the differential equation $\frac{dT}{dt}(t)=\frac{|F_t|}{|G_{T(t)}|}$, i.e., so that for all $t\in[0,1]$, we have
$$\int_{0}^t |F_s|\,ds =\int_{0}^{T(t)} |G_s|\,ds.$$
With all of this machinery set up, we consider the following lower bound on $\int hdx$;
\begin{align*}
  \int_{\mathbb{R}^n} h\,dx&=\int_0^1 |H_s|\,ds\geq \int_0^1 \left|H_{\M(t,T(t))}\right|\,d\M(t,T(t))\geq \int_0^1 |\lambda F_t+(1-\lambda) G_{T(t)}|\frac{dt}{M_{\lambda,-p}\left(1,\frac{1}{\frac{dT}{dt}(t)}\right)}\\
  &\geq \int_0^1 \frac{|\lambda F_t+(1-\lambda) G_{T(t)}|-M_{\lambda,1/n}(|F_t|,|G_{T(t)}|)}{M_{\lambda,-p}\left(1,\frac{1}{\frac{dT}{dt}(t)}\right)}\,dt + \int_0^1 M_{\lambda,1/n}\left(|F_t|,|G_{T(t)}|\right)\frac{dt}{M_{\lambda,-p}\left(1,\frac{1}{\frac{dT}{dt}(t)}\right)},\\
\end{align*}
where in the second inequality we use \Cref{Holderderivative}. Note that by the Brunn-Minkowski inequality, we have $|\lambda F_t+(1-\lambda) G_{T(t)}|-M_{\lambda,1/n}(|F_t|,|G_{T(t)}|)\geq 0$.
Using that $p\geq -1/n$ (and thus $-p\leq 1/n$) combined with $\frac{1}{\frac{dT}{dt}(t)}=\frac{|G_{T(t)}|}{|F_t|}$, we find that
$$\int_0^1 M_{\lambda,1/n}(|F_t|,|G_{T(t)}|)\frac{dt}{M_{\lambda,-p}\left(1,\frac{1}{\frac{dT}{dt}(t)}\right)}=\int_0^1 |F_t|\frac{M_{\lambda,1/n}(1,\frac{|G_{T(t)}|}{|F_t|})}{M_{\lambda,-p}\left(1,\frac{1}{\frac{dT}{dt}(t)}\right)}\,dt=\int_0^1 |F_t|\,dt=1.$$
In order to show that we typically have that $\frac{dT}{dt}(t)$ is close to 1, note that for  $x\in I_1$, we have
$$\frac{M_{\lambda,1/n}(1,x)}{M_{\lambda,-p}\left(1,x\right)}\geq 1+\Omega_{\lambda,n,p,\alpha}(1),$$
so that we find 
$$\int_0^1 |F_t|\frac{M_{\lambda,1/n}(1,\frac{|G_{T(t)}|}{|F_t|})}{M_{\lambda,-p}\left(1,\frac{1}{\frac{dT}{dt}(t)}\right)}\,dt\geq 1+\Omega_{\lambda,n,p,\alpha}\left(\int_{I_1}|F_t|dt\right),$$
and using the bound on $\int_{\mathbb{R}^n} h\,dx$, this implies that little mass is in the levels in $I_1$, in particular,
$$\int_{I_1}|F_t|\,dt=O_{\lambda,n,p,\alpha}(\delta).$$
Outside of $I_1$, we have $\frac{|F_t|}{|G_{T(t)}|}\in[1-\alpha,1+\alpha]$, so that we can use the stability of Brunn-Minkowski \cite{BMStab} to find
$$\left|\lambda F_t+(1-\lambda) G_{T(t)}\right|\geq M_{\lambda,1/n}(|F_t|,|G_{T(t)}|)+\Omega_{n,\lambda,\alpha}(\min\{|\co(F_t)\setminus F_t|, |F_t|\}).$$
Hence, the previous lower bound on $\int_{\mathbb{R}^n}h\,dx$ combined with the note that $M_{\lambda,-p}\left(1,\frac{1}{\frac{dT}{dt}(t)}\right)\in[1-\alpha,1+\alpha]$ implies
$$\delta\geq \int_{[0,1]\setminus I_1} \frac{|\lambda F_t+(1-\lambda) G_{T(t)}|-M_{\lambda,1/n}(|F_t|,|G_{T(t)}|)}{M_{\lambda,-p}\left(1,\frac{1}{\frac{dT}{dt}(t)}\right)}\,dt\geq \Omega_{n,\lambda,\alpha}\left(\int_{[0,1]\setminus I_1}\min\{|\co(F_t)\setminus F_t|, |F_t|\}\,dt \right).$$
Hence, we find
$$\int_{[0,1]\setminus I_1}\min\{|\co(F_t)\setminus F_t|, |F_t|\}\,dt\leq O_{\lambda,n,p}(\delta).$$
The same bound for $G_t$ follows analogously. Hence, recalling the definition of $I_2$ this implies 
\begin{align*}
&\int_{ I_1\cup I_2}|F_t|\,dt\leq O_{\lambda,n,p,\alpha}(\delta)\\
&\int_{[0,1]\setminus (I_1\cup I_2)}\left(\left|\co(F_t)\setminus F_t\right|+\left|\co(G_{T(t)})\setminus G_{T(t)}\right|\right)\,dt\leq O_{\lambda,n,p,\alpha}(\delta).
\end{align*}

Similarly for $I_3$, we have 
\begin{align*}
\delta&\geq \int_{[0,1]} \frac{|\lambda F_t+(1-\lambda) G_{T(t)}|-M_{\lambda,1/n}(|F_t|,|G_{T(t)}|)}{M_{\lambda,-p}\left(1,\frac{1}{\frac{dT}{dt}(t)}\right)}\,dt\geq \alpha \int_{I_3}\frac{M_{\lambda,1/n}(|F_t|,|G_{T(t)}|)}{M_{\lambda,-p}\left(1,\frac{1}{\frac{dT}{dt}(t)}\right)}\,dt\\
&\geq \alpha \int_{I_3}|F_t|\frac{M_{\lambda,1/n}(1,\frac{1}{\frac{dT}{dt}(t)})}{M_{\lambda,-p}\left(1,\frac{1}{\frac{dT}{dt}(t)}\right)}\,dt\geq \alpha \int_{I_3}|F_t|\,dt.
\end{align*}

For $I_4$, note that $I_4$ for a given $\alpha$ is contained in $I_1\cup I_3$ for a different smaller $\alpha'$. Indeed if $F_t$ and $G_{T(t)}$ are sufficiently similar in size (at most a factor $(1+\alpha')$ away from each other) and $|\lambda F_t+(1-\lambda)G_{T(t)}|\leq (1+\alpha'')M_{\lambda,1/n}(|F_t|,|G_{T(t)}|)$, then by \Cref{BMStab}, there exists a set convex set $K$ tightly containing scaled copies of $F_t$ and $G_{T(t)}$ and thus of $\lambda F_t+(1-\lambda)G{T(t)}$. To be precise, consider $\alpha'''\leq \alpha'$ so that $|(1+\alpha''')F_t|=\left|\frac{G_{T(t)}}{1+\alpha'''}\right|$ (assume  that $|F_t|\geq |G_{T(t)}|$, the other case follows analogously). Then (up to translation) there exists a convex set $K$ so that $(1+\alpha''')F_t,\frac{G_{T(t)}}{1+\alpha'''}\subset K$, and $|K|\leq \left(1+O_{n,\lambda}\left(\sqrt{\alpha''}\right)\right)|(1+\alpha''')F_t|$. The former also implies  that 
\begin{align*}
\lambda F_t+(1-\lambda)G_{T(t)}&= \frac{\lambda}{(1+\alpha''')} (1+\alpha''')F_t+(1-\lambda)(1+\alpha''')\frac{G_{T(t)}}{(1+\alpha''')}\\
&\subset \frac{\lambda}{(1+\alpha''')} K+(1-\lambda)(1+\alpha''')K\subset (1+\alpha''')K.
\end{align*}
Hence, 
$$|(\lambda F_t+(1-\lambda)G_{T(t)})\triangle F_t|\leq |(1+\alpha''')K\setminus (\lambda F_t+(1-\lambda)G_{T(t)})|+|(1+\alpha''')K\setminus F_t|\leq O_{n,\lambda}\left(\sqrt{\alpha''+\alpha'''}\right)|F_t|,$$
so that the bound for $I_4$ follows from the bound on $I_1\cup I_3$. The bound for $I_5$ follows analogously.

For the final conclusion, we will show the result for $g$, the result for $f$ follows analogously. We first consider the following more modest claim.

\begin{clm}
There exists a $t_0=\Omega_{n,\lambda,p}(1)$ so that $\int_{\mathbb{R}^n\setminus G_{t_0}}g\left(x\right)\,dx\leq 1-\Omega_{n,\lambda,p}(1)$.
\end{clm}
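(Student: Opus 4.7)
The plan is to exploit the assumption $\sup f = 1$ by integrating the pointwise BBL inequality against a fixed point $x_0 \in F_{1/2}$, thereby deriving an integral bound on a moment of $g$ which in turn yields the distributional lower bound we seek. Since $\sup f = 1 > 1/2$, there exists $x_0 \in \mathbb{R}^n$ with $f(x_0) \geq 1/2$, and the hypothesis gives $h(\lambda x_0 + (1-\lambda)y) \geq M_{\lambda,p}(1/2, g(y))$ for every $y \in \mathbb{R}^n$. Integrating in $y$ and performing the linear change of variables $z = \lambda x_0 + (1-\lambda)y$ (Jacobian $(1-\lambda)^n$) converts this pointwise inequality into the integral estimate
$$
\int_{\mathbb{R}^n} M_{\lambda, p}(1/2, g(y))\, dy \;\leq\; \frac{1+\delta}{(1-\lambda)^n}.
$$

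For $p \geq 0$ the argument is clean: the pointwise comparison of generalized means $M_{\lambda, p}(1/2, g) \geq M_{\lambda, 0}(1/2, g) = (1/2)^\lambda g^{1-\lambda}$ converts the integral bound into the moment estimate $\int g^{1-\lambda}\,dy \leq C$ for a constant $C = C(n, \lambda, p)$. Combining the layer-cake identity $\int g^{1-\lambda}\,dy = (1-\lambda)\int_0^\infty |G_s|s^{-\lambda}\,ds$ with the elementary bound $\int_0^{t_0}|G_s|\,ds \leq t_0^\lambda \int_0^{t_0}|G_s| s^{-\lambda}\,ds$ yields $\int_0^{t_0} |G_s|\,ds \leq C t_0^\lambda/(1-\lambda)$, and hence $\int_{G_{t_0}} g\,dx \geq 1 - C t_0^\lambda/(1-\lambda)$. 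Choosing $t_0 := ((1-\lambda)/(2C))^{1/\lambda}$ gives the claim with mass bound $1/2$; both $t_0$ and the mass are $\Omega_{n,\lambda,p}(1)$.

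For $p \in (-1/n, 0)$ the pointwise comparison with $M_{\lambda, 0}$ is reversed and a moment bound of the above form is not immediately available; the argument must be modified. Here one combines the integral bound above with the coercivity estimate $\int_0^1 |F_t|\,\psi(u_t)\,dt = O(\delta)$ derived earlier in the proof (with $\psi(u) := M_{\lambda, 1/n}(1,u)/M_{\lambda,-p}(1,u) - 1$ non-negative and vanishing only at $u = 1$ by the strict inequality $p > -1/n$), together with the identity $\sup g = T(1) = \int_0^1 T'(t)\,dt$ and the fact that $T'(t) \in [1-\alpha, 1+\alpha]$ outside the small-measure set $I_1$. Splitting $I_1$ in Lebesgue measure via the monotonicity of $|F_t|$ (namely $|I_1 \cap [0,t^*]| \leq O(\delta)/|F_{t^*}|$) and case-analyzing on the size of $|F_{t^*}|$ for an appropriate $t^* \in (0,1)$ gives $\sup g \geq \Omega_{n,\lambda,p}(1)$, from which the claim follows by an elementary distributional argument. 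The main obstacle is precisely this case $p \in (-1/n, 0)$, which requires marrying the integral bound with the structural level-set estimates already established in the first half of the proof of \Cref{almostconvexlevelsets}.
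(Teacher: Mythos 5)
Your first step is in the same spirit as the paper's: integrate the pointwise BBL inequality against a fixed point $x_0$ with $f(x_0)$ large, producing the integral bound $\int_{\mathbb{R}^n} M_{\lambda,p}(f(x_0), g(y))\,dy \leq (1+\delta)/(1-\lambda)^n$. For $p \geq 0$, your route through the moment $\int g^{1-\lambda}$ and the layer-cake identity is correct, if slightly more roundabout than needed.

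For $p \in (-1/n, 0)$ — the only case where the conclusion is not trivial — your argument has a genuine gap. The terminal step, ``gives $\sup g \geq \Omega_{n,\lambda,p}(1)$, from which the claim follows by an elementary distributional argument,'' is false as stated: a lower bound on $\sup g$ does not control $\int_{\mathbb{R}^n \setminus G_{t_0}} g$, since $g$ could attain $\sup g$ on a set of tiny measure while the remaining unit of mass sits at arbitrarily low levels. More fundamentally, the detour through the coercivity functional $\psi$, the transport map $T$, and the measure of $I_1$ is unnecessary and not carried out in enough detail to check. You should not need any of that machinery here.

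What you are missing is a way to exploit the integral bound for $p<0$ without comparing $M_{\lambda,p}$ to $M_{\lambda,0}$ (which fails in the wrong direction). The paper does this by restricting the integral to the sublevel region $\{g \leq t\}$ and using the elementary monotonicity of $s \mapsto M_{\lambda,p}(a,s)/s$ in $s$: on $\{g \leq t\}$ one has $M_{\lambda,p}(f(x_0), g(y)) \geq \tfrac{M_{\lambda,p}(f(x_0),t)}{t}\, g(y)$, so $\int_{\{g \leq t\}} g\,dy \leq \tfrac{(1+\delta)\, t}{(1-\lambda)^n\, M_{\lambda,p}(f(x_0),t)}$. The point is that $\lim_{t\to 0^+} M_{\lambda,p}(f(x_0),t)/t = (1-\lambda)^{1/p}$ when $p < 0$ (and $=\infty$ when $p \geq 0$), and since $p > -1/n$ forces $1/p < -n$, this limit strictly exceeds $(1-\lambda)^{-n}$. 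Choosing $t_0 = \Omega_{n,\lambda,p}(1)$ small enough to realize a constant fraction of this gap gives $\int_{\mathbb{R}^n \setminus G_{t_0}} g \leq (1+\delta)/(1+\Omega_{n,\lambda,p}(1)) \leq 1 - \Omega_{n,\lambda,p}(1)$ directly. This treats all $p > -1/n$ in one stroke, with no case split, and makes transparent exactly where the hypothesis $p > -1/n$ is used.
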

\begin{proof}[Proof of Claim]
$$\int_{\mathbb{R}^n}h\,dx\geq \int_{\mathbb{R}^n\setminus(1-\lambda)G_t}h(x)\,dx\geq \int_{\mathbb{R}^n\setminus (1-\lambda)G_t}\M\left(f(o),g\left(\frac{x}{1-\lambda}\right)\right)\,dx\geq\int_{\mathbb{R}^n\setminus(1-\lambda)G_t}\frac{\M(1,t)}{t
}g\left(\frac{x}{1-\lambda}\right)\,dx,$$
where in the last inequality, we use that $\frac{\M(t,1)}{t
}$ is decreasing in $t$. We can rewrite to
$$\int_{\mathbb{R}^n\setminus(1-\lambda)G_t}\frac{\M(1,t)}{t
}\,g\left(\frac{x}{1-\lambda}\right)\,dx=\frac{\M(1,t)}{t
}(1-\lambda)^{n}\int_{\mathbb{R}^n\setminus G_t}\,g\left(x\right)\,dx.$$
Noting that as $t\to 0$, we have $\frac{\M(1,t)}{t
}\to (1-\lambda)^{1/p}=(1-\lambda)^{-n}(1+\Omega_{n,\lambda,p}(1))$, so that for some 
$t_0=\Omega_{n,\lambda, p}(1)$, we have $\frac{\M(1,t_0)}{t_0
}\geq(1-\lambda)^{-n}(1+\Omega_{n,\lambda,p}(1))$, so that
$$\int_{\mathbb{R}^n}h(x)\,dx\geq \left(1+\Omega_{n,\lambda,p}(1)\right)\int_{\mathbb{R}^n\setminus G_{t_0}}g\left(x\right)\,dx.$$

Since $\int h\,dx<1+\delta$, this implies 
$$\int_{\mathbb{R}^n\setminus G_{t_0}}g\left(x\right)\,dx\leq \frac{1+\delta}{1+\Omega_{n,\lambda,p}(1)}\leq 1-\Omega_{n,\lambda,p}(1),$$
which concludes the claim.
\end{proof}

To get from some to most of the mass, we iterate as follows until we find an appropriate function. Given the $t_0$ from the claim, consider functions $g'=\min\{g,t_0/2\}$, and $f':=\min\{f,T^{-1}(t_0/2)\}$, so that $\int f'\,dx=\int g'\,dx$. Also, let $h'=\min\{h,\M(T^{-1}(t_0/2),t_0/2)\}$, so that $h'\geq \M^*(f',g')$. If $\int g'\,dx\leq \alpha' \int g\,dx$, then in particular $\int_{\mathbb{R}^n\setminus G_{t_0/2}}g\,dx\leq \int g'\,dx\leq \alpha' \int g\,dx$, so we can set $t_0/2=\beta$ and the result follows.

If, on the other hand, we have $\int g'\,dx> \alpha' \int g\,dx$, then we will see that in fact $\int h'\,dx\leq \delta+ \int f'\,dx\leq (1+\delta/\alpha')\int f'\,dx $. Indeed, similar to the earlier computation, we find
\begin{align}
\int (h-h')\,dx&=\int_{\M(T^{-1}(t_0/2),t_0/2)}^1 |H_s|\,ds\geq \int_{\M(T^{-1}(t_0/2),t_0/2)}^1 |H_{\M(t,T(t))}|\,d\M(t,T(t))\nonumber\\
&\geq \int_{T^{-1}(t_0/2)}^1 \frac{|\lambda F_t+(1-\lambda) G_{T(t)}|}{M_{\lambda,-p}\left(1,\frac{1}{\frac{dT}{dt}(t)}\right)}\,dt\geq \int_{T^{-1}(t_0/2)}^1|F_t|\,dt=\int (f-f')\,dx=\int (g-g')\,dx.\label{horizontalcutdecreasingdoubling}
\end{align}
Rearranging shows that
$$\int h'\,dx\leq \left(\int h\,dx-\int f\,dx\right)+ \int f'\,dx\leq \delta+\int f'\,dx\leq (1+\delta/\alpha')\int f'\,dx. $$
Finally, we also have removed a significant amount from $g$; indeed 
$$\int_{\mathbb{R}^n} (g-g')\,dx\geq \int_{G_{t_0}} (g-g')\,dx \geq \frac12 \int_{G_{t_0}} g\,dx=\Omega_{n,\lambda,p}(1).$$

Hence, when we iterate, we find after at most $\log_{1-\Omega_{n,\lambda,p}(1)}(\alpha')=O_{n,\lambda,p,\alpha'}(1)$ steps, we find a $\beta=O_{n,\lambda,p,\alpha'}(1)$ so that if we let $g''=\min\{g,\beta\}$, then $\int g''\,dx\leq \alpha' \int g\,dx$. Hence, we find $\int_{\mathbb{R}^n\setminus G_{\beta}}g\,dx\leq \int g''\,dx\leq \alpha' \int g\,dx$. This concludes the proposition.
\end{proof}

\begin{proof}[Proof of \Cref{SimilarSupportLem}]
Consider the level sets 
$$F_t:=\{x: f(x)>t\},\, G_t=\{x: g(x)>t\},\, H_t=\{x:h(x)>t\},$$
so that $F_0=supp(f)$. As we have seen before, we have inclusion
$$\lambda F_t+(1-\lambda)G_s\subset H_{\M(t,s)}.$$
Consider the transport map $T\colon[0,1]\to[0,1]$ defined by the differential equation $\frac{dT}{dt}(t)=\frac{|F_t|}{|G_{T(t)}|}$, i.e., so that for all $t\in[0,1]$, we have
$$\int_{0}^t |F_s|\,ds =\int_{0}^{T(t)} |G_s|\,ds.$$
With all of this machinery set up, we consider the following lower bound on $\int hdx$;
\begin{align*}
  \int_{\mathbb{R}^n} h\,dx&=\int_0^1 |H_s|\,ds\geq \int_0^1 \left|H_{\M(t,T(t))}\right|\,d\M(t,T(t))\geq \int_0^1 \left|\lambda F_t+(1-\lambda) G_{T(t)}\right|\frac{dt}{M_{\lambda,-p}\left(1,\frac{1}{\frac{dT}{dt}(t)}\right)}\\
  &\geq \int_0^1 \frac{|\lambda F_t+(1-\lambda) G_{T(t)}|-M_{\lambda,1/n}(|F_t|,|G_{T(t)}|)}{M_{\lambda,-p}\left(1,\frac{1}{\frac{dT}{dt}(t)}\right)}\,dt+ \int_0^1 M_{\lambda,1/n}(|F_t|,|G_{T(t)}|)\frac{dt}{M_{\lambda,-p}\left(1,\frac{1}{\frac{dT}{dt}(t)}\right)},\\
\end{align*}
where in the second inequality we use \Cref{Holderderivative}. Note that by Brunn-Minkowski, $|\lambda F_t+(1-\lambda) G_{T(t)}|-M_{\lambda,1/n}(|F_t|,|G_{T(t)}|)\geq 0$.
Using that $p\geq -1/n$ (and thus $-p\leq 1/n$) combined with $\frac{1}{\frac{dT}{dt}(t)}=\frac{|G_{T(t)}|}{|F_t|}$, we find that
$$\int_0^1 M_{\lambda,1/n}(|F_t|,|G_{T(t)}|)\frac{dt}{M_{\lambda,-p}\left(1,\frac{1}{\frac{dT}{dt}(t)}\right)}=\int_0^1 |F_t|\frac{M_{\lambda,1/n}\left(1,\frac{|G_{T(t)}|}{|F_t|}\right)}{M_{\lambda,-p}\left(1,\frac{1}{\frac{dT}{dt}(t)}\right)}\,dt=\int_0^1 |F_t|\,dt=1.$$
Distinguish two cases; either $T(\beta)\leq \beta$ or $T(\beta)>\beta$. We assume the former and the latter follows analogously interchanging the roles of $f$ and $g$. Under that assumption we find $F_t=supp(f)$, $G_{T(t)}=supp(g)$, and $\frac{dT}{dt}(t)=\frac{|supp(f)|}{|supp(g)|}$ for all $t\in[0,\beta]$.
By the above computation, we find
\begin{align*}
   \delta&\geq \int_0^{\beta} |F_t|\Bigg(\frac{M_{\lambda,1/n}\left(1,\frac{|G_{T(t)}|}{|F_t|}\right)}{M_{\lambda,-p}\left(1,\frac{1}{\frac{dT}{dt}(t)}\right)}-1\Bigg)\,dt\\
   &\geq \int_0^{\beta} |F_t|\Bigg(\frac{M_{\lambda,1/n}\left(1,\frac{|supp(g)|}{|supp(f)|}\right)}{M_{\lambda,-p}\left(1,\frac{|supp(g)|}{|supp(f)|}\right)}-1\Bigg)\,dt\geq \beta\Bigg(\frac{M_{\lambda,1/n}\left(1,\frac{|supp(g)|}{|supp(f)|}\right)}{M_{\lambda,-p}\left(1,\frac{|supp(g)|}{|supp(f)|}\right)}-1\Bigg).
\end{align*}
Hence, using that $p>-1/n$, this implies that 
$$\frac{dT}{dt}(t)=\frac{|supp(g)|}{|supp(f)|}=1\pm O_{n,\lambda,p,\beta}\left(\sqrt{\delta}\right).$$
By the above, we also know that 
$$\beta\frac{\left(|\lambda F_0+(1-\lambda)G_0|-M_{\lambda,1/n}(|F_0|,|G_0|)\right)}{1+ O_{n,\lambda,p}\left(\sqrt{\delta}\right)}\leq \int_0^{\beta} \frac{|\lambda F_t +(1-\lambda) G_{T(t)}|-M_{\lambda,1/n}(|F_t|,|G_{T(t)}|)}{M_{\lambda,-p}\left(1,\frac{1}{\frac{dT}{dt}(t)}\right)}\,dt\leq \delta,$$
so that
$$|\lambda F_0+(1-\lambda)G_0|\leq (1+2\delta/\beta)M_{\lambda,1/n}(|F_0|,|G_0|).$$
Hence, by \Cref{BMStab} (the main result of \cite{BMStab}) we find that for $\alpha:=\left(\frac{|F_0|}{|G_0|}\right)^{1/n}=1\pm O_{n,\lambda,p,\beta}\left(\sqrt{\delta}\right)$, we have
$$\min_{x\in\mathbb{R}^n}|(x+F_0)\triangle \alpha G_0|=O_{n,\lambda,\beta}\left(\sqrt{\delta}\right).$$
Since $\alpha$ is so close to $0$, we find that
$$\min_{x\in\mathbb{R}^n}|(x+F_0)\triangle G_0|\leq \min_{x\in\mathbb{R}^n}|(x+F_0)\triangle \alpha G_0|+\min_{x\in\mathbb{R}^n}|(x+G_0)\triangle \alpha G_0|=O_{n,\lambda,\beta}\left(\sqrt{\delta}\right)+\left|1-\alpha^n\right|\cdot |G_0|=O_{n,\lambda,p,\beta}\left(\sqrt{\delta}\right).$$
This concludes the proof of the main part of the Lemma.

We now extend to functions which do take values below $\beta$, but only rarely, i.e., with $|\{x: f(x)<\beta \text{ or } g(x)<\beta\}|=O_{n,\lambda,p,\beta}(\delta)$. Simply consider the functions $f'=f\cdot \textbf{1}\{x:f(x)\geq\beta\}$ and $g'=g\cdot \textbf{1}\{x:g(x)\geq\beta\}$, so that $\int f'\,dx,\int g'\,dx\geq 1-\gamma$ and thus $\int h\,dx\leq 1+\delta\leq (1+O_{n,\lambda,p,\beta}(\delta+\gamma))\int f'\,dx$. Hence, applying the above result for $f'$ and $g'$ gives (up to translation) $|supp(f')\triangle supp(g')|=O_{n,\lambda,p,\beta}(\sqrt{\delta+\gamma})$. Combining with the obvious observation that $supp(f)\triangle supp(f')=\{x:f(x)<\beta\}$, we find $|supp(f)\triangle supp(g)|=O_{n,\lambda,p,\beta}(\sqrt{\delta+\gamma})$.
\end{proof}

\begin{proof}[Proof of \Cref{smalltranslationsmallsymdiflem}]
We first note that it suffices to consider the level sets as follows
$$\int (f(x)-f(x+v))\,dx=\int_{0}^1 |F_t\triangle (F_t+v)|\,dt\leq \int_{0}^1 |\co(F_t)\triangle (\co(F_t)+v)|\,dt+ 2\delta.$$
We split this final integral in $t\geq t_0$ and $t<t_0$. First note that for $t_0$, we have $|\co(F_{t_0})\triangle (\co(F_{t_0})+v)|\leq O_n(\eta)|\co(F_{t_0})|$. For all $t\leq t_0$, we have $\co(F_{t})\supset \co(F_{t_0})$, so that again $|\co(F_{t})\triangle (\co(F_{t})+v)|\leq O_n(\eta)|\co(F_{t})|$. Integrating over those $t$, gives 
$$\int_{0}^{t_0} |\co(F_t)\triangle (\co(F_t)+v)|\,dt\leq O_n(\eta) \int_{0}^{t_0} |\co(F_t)|\,dt\leq O_n(\eta)\int f\,dx.$$
For $t>t_0$, we find that as $\co(F_{t})\subset \co(F_{t_0})$, we have
$$|\co(F_t)\triangle (\co(F_t)+v)|\leq |\co(F_{t_0})\triangle (\co(F_{t_0})+v)|\leq O_n(\eta)|\co(F_{t_0})|.$$
Recall that $|\co(F_{t_0})|\leq 2|F_{t_0}|\leq \frac{2}{t_0}\int_{F_{t_0}} f\,dx\leq \frac{2}{t_0}$, so that
$$\int_{t_0}^1 |\co(F_t)\triangle (\co(F_t)+v)|\,dt\leq (1-t_0)|\co(F_{t_0})\triangle (\co(F_{t_0})+v)|\leq \frac{2(1-t_0)}{t_0}O_n(\eta).$$
Combining these upper bounds proves the lemma.
\end{proof}

\subsection{Proof of the reduction}\label{initialcleanupreductionsection}

\begin{proof}[Proof that \Cref{symmetric_diff_technical} implies \Cref{symmetric_diff}]
We may assume $\delta$ is sufficiently small in terms of $n,\lambda$, and $p$, since otherwise the conclusion is trivially true by virtue of the trivial upper bound
$$\int_{\mathbb{R}^n}|f-g|\,dx\leq \int_{\mathbb{R}^n}(f+g)\,dx=2\int_{\mathbb{R}^n}f\,dx.$$
We will repeatedly use this assumption throughout this proof.

Apply \Cref{continuousmaker} with $\eta=\delta^{1000}$ to find continuous $f_1,g_1,h_1$. Then apply \Cref{BigFirstCleanup} to find $f_2,g_2,h_2$ taking values in $[0,1]$. Then apply \Cref{aclcor} with some $\alpha=\alpha'>0$ small in terms of $n,\lambda,p$ to make various statements in the following true, and find $f_3,g_3,h_3$ and $\beta=\Omega_{n,\lambda, p}(1)$ for which it suffices to prove the result and renormalize so that $\sup f_3=1$. Consider level sets $F^3,G^3,H^3$ and transport map $T:[0,1]\to[0,\sup g_3]$ as in \Cref{aclcor}. Note that $\sup g\in[1-\alpha,1+\alpha]$ since $\frac{d}{dt}T(t)\in[1-\alpha, 1+\alpha]$ for all $t\in[0,1]$ and $T(0)=0$. Assume $\sup g_3\leq \sup f_3$, the other case follows analogously.

\begin{clm}
Up to translation, we have $\left|\frac{\lambda}{2}\co(F^3_{\beta})\setminus F^3_{\beta}\right|=O_{n,\lambda,p}(\delta)$
\end{clm}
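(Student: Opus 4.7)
The plan is to combine the integrated Brunn--Minkowski stability bound from \Cref{almostconvexlevelsets} with a Fubini argument and the Borell--Brascamp--Lieb inclusion, in order to quantitatively control the non-convexity of $F^3_\beta$ on its ``interior,'' which is isolated by the $\lambda/2$ scaling.

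First, by the sharp linear Brunn--Minkowski stability from \cite{BMStab} applied level by level, together with the integrated deficit estimate established in the proof of \Cref{almostconvexlevelsets}, we have $\int_0^1 |\co(F^3_t) \setminus F^3_t|\,dt = O_{n,\lambda,p}(\delta)$. Applying Fubini and restricting the integration to $t \in [\beta, 1]$, this rewrites as $\int_{\co(F^3_\beta) \setminus F^3_\beta} (\tilde f_3(x) - \beta)_+\,dx = O_{n,\lambda,p}(\delta)$, where $\tilde f_3(x) := \sup\{t : x \in \co(F^3_t)\}$ denotes the quasi-concave envelope of $f_3$ (so that $\{\tilde f_3 > t\} = \co(F^3_t)$).

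Second, we exploit the Borell--Brascamp--Lieb inclusion to establish a pointwise lower bound on $\tilde f_3$ on the set $\frac{\lambda}{2}\co(F^3_\beta)$. After translating so that $g_3$ attains a near-supremum value at the origin, i.e.\ $g_3(o) \geq \sup g_3 - \alpha' \geq 1 - O(\alpha)$ (valid since $\sup g_3 \in [1-\alpha,1]$ by $\frac{dT}{dt} \in [1-\alpha,1+\alpha]$ and $T(0)=0$), the inclusion applied with $y = o$ yields $\lambda F^3_\beta \subset H^3_{\beta_*}$ for $\beta_* := M_{\lambda,p}(\beta, 1 - O(\alpha))$, which satisfies $\beta_* - \beta = \Omega_{n,\lambda,p}(1)$ since $\lambda \in (0,1/2]$ and $\beta$ is bounded away from $1$. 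Using the approximate identification $H^3_{\beta_*} \approx F^3_{\beta_*}$ (up to translation, from $I_4$ in \Cref{aclcor}) and the near-convexity of $F^3_{\beta_*}$ (from $I_2$), one transfers this inclusion via convex hulls and the $\lambda/2$ scaling---which provides a buffer absorbing the residual $\alpha$-errors---to obtain $\frac{\lambda}{2}\co(F^3_\beta) \subset \co(F^3_{\beta_*})$ up to translation. In particular, $\tilde f_3 \geq \beta_*$ on $\frac{\lambda}{2}\co(F^3_\beta)$.

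Combining the two steps: for every $x \in \frac{\lambda}{2}\co(F^3_\beta) \setminus F^3_\beta$ we have $(\tilde f_3(x) - \beta)_+ \geq \beta_* - \beta = \Omega_{n,\lambda,p}(1)$, and the Fubini bound from the first step forces $\left|\frac{\lambda}{2}\co(F^3_\beta) \setminus F^3_\beta\right| = O_{n,\lambda,p}(\delta)$, which is the claim. The main obstacle is the transfer argument in the second step: the approximate equalities $H^3_{\beta_*} \approx F^3_{\beta_*}$ and $F^3_{\beta_*} \approx \co(F^3_{\beta_*})$ each carry an $\alpha$-error independent of $\delta$, and the $\lambda/2$ scaling is crucially what guarantees that $\frac{\lambda}{2}\co(F^3_\beta)$ lies sufficiently deep inside $\co(F^3_{\beta_*})$ that these boundary-concentrated $\alpha$-errors do not obstruct the desired containment.
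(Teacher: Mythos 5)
Your proof is correct and follows essentially the same approach as the paper: the Fubini reformulation via the envelope $\tilde f_3$ is a cosmetic repackaging of the paper's step of lower-bounding $|\co(F^3_t)\setminus F^3_t|$ by $\left|\frac{\lambda}{2}\co(F^3_\beta)\setminus F^3_\beta\right|$ for $t\in[\beta,t_0]$ and integrating, and the transfer argument (BBL inclusion at $y=o$, then $I_4$ and $I_2$ approximations absorbed by the $\lambda/2$ buffer) is the paper's argument verbatim. One small imprecision: $I_4$ identifies $H^3_{\beta_*}$ with $F^3_{t_0}$ where $\M(t_0,T(t_0))=\beta_*$, not with $F^3_{\beta_*}$, though since $T(t)\approx t$ one still has $t_0-\beta=\Omega_{n,\lambda,p}(1)$, so the conclusion stands.
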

\begin{proof}[Proof of claim]
Find $x\in\mathbb{R}^n$, so that $g_3(x)\geq 0.9$. Then (up to translation) $H_{\M(\beta,0.9)}\supset \lambda F_{\beta}$. 
Find the $t_0$ so that $\M(t_0,T(t_0))=\M(\beta,0.9)$ and note that $t_0-\beta\geq \Omega_{\lambda,p}(1)$.
Since $\left|H^3_{\M(t_0,T(t_0))}\triangle F^3_{t_0}\right|\leq \alpha |F^3_{t_0}|$, and $\left|\co(F^3_{\beta})\setminus F^3_{\beta}\right|\leq \alpha|F^3_{\beta}|$, we find that (up to translation) 
$$\left|\co(\lambda F^3_{\beta})\setminus F^3_{t_0}\right|\leq \left|\co(\lambda F^3_{\beta})\setminus H^3_{\M(t_0,T(t_0))}\right|+\left|H^3_{\M(t_0,T(t_0))}\triangle F^3_{t_0}\right|\leq O_{\lambda,n}(\alpha)\left|F^3_{\beta}\right|,$$
so that 
$$\left|\co(\lambda F^3_{\beta}\cap F^3_{t_0})\right|\geq (1-O_{\lambda,n}(\alpha))\left|\co(\lambda F^3_{\beta})\right|,$$
so that (up to translation and for sufficiently small $\alpha$ in $\lambda$ and $n$) 
$$\frac\lambda2\co\left( F^3_{\beta}\right)\subset \co\left(\lambda F^3_{\beta}\cap F^3_{t_0}\right)\subset \co(F^3_{t_0}).$$

Hence, we find that $\left|\frac{\lambda}{2}\co(F^3_{\beta})\setminus F^3_{\beta}\right|$ is a lower bound on $\left|\co(F^3_t)\setminus F^3_t\right|$ for all $t\in[\beta,t_0]$, so that 
$$(t_0-\beta)\left|\frac{\lambda}{2}\co(F^3_{\beta})\setminus F^3_{\beta}\right|\leq \int_{0}^1 |\co(F^3_t)\setminus F^3_t|\,dt\leq O_{n,\lambda,p}(\delta).$$
The claim follows.
\end{proof}

\begin{clm}
$\left|\co(F^3_\beta)\setminus F^3_{\beta/2}\right|\leq O_{n,\lambda,p}(\delta)$
\end{clm}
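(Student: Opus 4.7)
The plan is to reduce the claim to an integrated almost-convexity bound on the level sets in the range $t\in[\beta/2,\beta]$. The key observation is that any $z\in\co(F^3_\beta)\setminus F^3_{\beta/2}$ lies in $\co(F^3_t)\setminus F^3_t$ for every $t\in[\beta/2,\beta]$: since level sets are nested in $t$, the inclusion $F^3_\beta\subset F^3_t$ for $t\le\beta$ gives $z\in\co(F^3_\beta)\subset\co(F^3_t)$, while $F^3_t\subset F^3_{\beta/2}$ for $t\ge\beta/2$ together with $z\notin F^3_{\beta/2}$ forces $z\notin F^3_t$. Integrating this pointwise inclusion in $t$ yields
$$\tfrac{\beta}{2}\,\bigl|\co(F^3_\beta)\setminus F^3_{\beta/2}\bigr|\;\le\;\int_{\beta/2}^{\beta}\bigl|\co(F^3_t)\setminus F^3_t\bigr|\,dt,$$
so it suffices to show the right-hand side is $O_{n,\lambda,p}(\delta)$.

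To bound this integral, I would re-apply \Cref{almostconvexlevelsets} to the cleaned triple $(f_3,g_3,h_3)$ coming from \Cref{aclcor}, which (after mild renormalization) still satisfies the hypotheses of the lemma with a deficit of order $\delta$. This produces a "bad" set of levels $I\subset[0,1]$ with
$$\int_I |F^3_t|\,dt=O_{n,\lambda,p}(\delta)\qquad\text{and}\qquad\int_{[0,1]\setminus I}\bigl|\co(F^3_t)\setminus F^3_t\bigr|\,dt=O_{n,\lambda,p}(\delta).$$
Combining the pointwise bound $|\co(F^3_t)\setminus F^3_t|\le\alpha|F^3_t|$ from \Cref{aclcor} with the first estimate gives $\int_I|\co(F^3_t)\setminus F^3_t|\,dt\le\alpha\cdot O(\delta)=O_{n,\lambda,p}(\delta)$. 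Hence the full integral over $[0,1]$ — and \emph{a fortiori} the one over $[\beta/2,\beta]$ — is $O_{n,\lambda,p}(\delta)$, and dividing by $\beta/2=\Omega_{n,\lambda,p}(1)$ concludes.

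I do not anticipate any real obstacle: the argument is a short layering of estimates already available from \Cref{almostconvexlevelsets} and \Cref{aclcor}. The only point requiring a little care is that the corollary provides only pointwise level-set estimates, so the integral bound has to be recovered by feeding the cleaned triple back into the lemma and absorbing the contribution of the bad set via the pointwise $\alpha$-bound.
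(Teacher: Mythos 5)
Your argument is correct and follows the paper's proof essentially verbatim: you establish the pointwise set inclusion $\co(F^3_\beta)\setminus F^3_{\beta/2}\subset\co(F^3_t)\setminus F^3_t$ for $t\in[\beta/2,\beta]$, integrate over $t$, and divide by $\beta/2=\Omega_{n,\lambda,p}(1)$, exactly as in the paper. The extra paragraph justifying $\int_0^1|\co(F^3_t)\setminus F^3_t|\,dt=O_{n,\lambda,p}(\delta)$ by re-feeding $(f_3,g_3,h_3)$ into \Cref{almostconvexlevelsets} and absorbing the bad set with the pointwise $\alpha$-bound is a valid (if slightly roundabout) way to supply a bound the paper uses without explicit re-derivation; the paper treats it as already available from the \Cref{aclcor} cleanup, but your reconstruction is sound.
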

\begin{proof}
For all $t\in(\beta/2,\beta)$, we have $\left|\co(F^3_t)\setminus F^3_t\right|\geq \left|\co(F^3_\beta)\setminus F^3_{\beta/2}\right|$. Hence, we have
$$(\beta-\beta/2)\left|\co(F^3_\beta)\setminus F^3_{\beta/2}\right|\leq \int_{0}^1 |\co(F^3_t)\setminus F^3_t|\,dt\leq O_{n,\lambda,p}(\delta).$$
Since $\beta=\Omega_{n,\lambda,p}(1)$, the claim follows.
\end{proof}

Analogously we have $\left|\co(G^3_{T(\beta)})\setminus G^3_{\beta/2}\right|=O_{n,\lambda,p}(\delta)$. Recalling from \Cref{aclcor}, we have that (up to translation) we have
$\left|\co(F^3_{\beta})\triangle \co(G^3_{T(\beta)})\right|\leq \alpha |F^3_{\beta}|$.
Hence, we can consider an affine transformation so that $R'S_n\subset\co(F^3_{\beta}), \co(G^3_{T(\beta)})\subset RS_n$ and $R/R'\leq O_n(1)$ using John's Theorem.

Apply \Cref{parallelconeslem} to find translations $a,b\in\mathbb{R}^n$ so that
$$\int_{a+C_i^n}f\,dx=\int_{b+C_i^n}g\,dx=\frac{1}{n+1}\int f\,dx.$$
We'll see that $a$ and $b$ are not too far from the origin. Recall that $\int_{F^3_\beta}f\,dx\geq 1-\alpha$, so $$\int_{(a+C_i^n)\cap F^3_\beta}f\,dx=\left(\frac{1}{n+1}-\alpha\right)\int f\,dx\geq \frac{1}{2n}.$$
On the other hand, since $\sup f\leq 1$, we have $\int_{(a+C_i^n)\cap F^3_\beta}f\,dx\leq |(a+C_i^n)\cap \co(F^3_\beta)|$.
Since $\beta=\Omega_{n,\lambda,p}(1)$, we find that $\beta |\co(F^3_\beta)|\leq 2\beta |F^3_\beta|\leq \int f\,dx\leq 1$
implies $|\co(F^3_\beta)|=O_{n,\lambda,p}(1)$, and $$|(a+C_i^n)\cap \co(F^3_\beta)|\geq \Omega_{n\lambda,p}(|\co(F^3_\beta)|),$$
for all $i=0,\dots,n$. Combined with the above normalization that $R'S_n\subset\co(F^3_{\beta})\subset RS_n$, this implies that there is some $r=\Omega_{n,\lambda,p}(1)$ so that $a+rS_n\subset \co(F^3_{\beta})$. Analogously $b+rS_n\subset \co(G^3_{T(\beta}))$. Translating $f$ and $g$ so that $a=b=o$, this implies 
$$\left|rS_n\setminus F^3_{\beta/2}\right|+\left|rS_n\setminus G^3_{\beta/2}\right|=O_{n,\lambda,p}(\delta).$$
We can now apply \Cref{symmetric_diff_technical} to find that $\int_{\mathbb{R}^n}|f_3-g_3|\,dx=O_{n,\lambda,p,r,\beta}\left(\sqrt{\delta}\right)=O_{n,\lambda,p}\left(\sqrt{\delta}\right)$ as $r,\beta=\Omega_{n,\lambda,p}(1)$. Pulling this back, we find that for some translates $v,v_1,v_2\in\mathbb{R}^n$, we have
\begin{align*}
    \int_{\mathbb{R}^n} |f(v+x)-g(x)|\,dx&\leq \eta+\int_{\mathbb{R}^n} |f_1(v_1+x)-g_1(x)|\,dx\\
    &\leq O_{n,\lambda,p}(\delta)+\int_{\mathbb{R}^n} |f_2(v_2+x)-g_2(x)|\,dx\\
    &\leq O_{n,\lambda,p}(\delta)+\int_{\mathbb{R}^n} |f_3(v_3+x)-g_3(x)|\,dx=O_{n,\lambda,p}\left(\sqrt{\delta}\right).
\end{align*}
Thus, \Cref{symmetric_diff} follows.
\end{proof}

\section{Proof of \Cref{symmetric_diff_1D}, i.e., \Cref{symmetric_diff} in $\mathbb{R}$}\label{1Dsymdifsection}
We aim to prove the following theorem which is the one dimensional instance of \Cref{symmetric_diff}.
\begin{thm}\label{symmetric_diff_1D}
    Given $\lambda \in (0,1/2]$ and $p \in (-1/2, \infty)$ there exists $d=d_{1, \lambda, p}>0$ such that the following holds. Let $f,g, h\colon \mathbb{R}\rightarrow \mathbb{R}_{\geq0}$ be integrable functions such that $\int_{\mathbb{R}} f\,dx=  \int_{\mathbb{R}} g\,dx=1$ and for all $x,y \in \mathbb{R}$ we have $h(\lambda x +(1-\lambda)y) \geq M_{\lambda,p}(f(x),g(y))$. If $\int_{\mathbb{R}}h\,dx = 1+\delta$ for some $0\leq \delta \leq d$, then, up to translation, $\int_{\mathbb{R}^2} |f-g|\,dx=O_{\lambda, p} \left(\sqrt{\delta}\right)$. 
\end{thm}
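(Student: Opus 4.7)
The plan is to follow the twelve-step outline given for the one-dimensional case at the start of \cref{sect:structure thm 1}. By the reduction in \S\ref{initialcleanupreductionsection}, it suffices to work under the hypotheses of \cref{symmetric_diff_technical}; applying \cref{aclcor} with $\alpha=\alpha'$ a sufficiently small constant in terms of $\lambda$ and $p$, I may assume $f,g,h\colon \mathbb{R}\to [0,1]$ are continuous with compact support, $\int f=\int g=1$, $\sup f = 1$, and the vertical transport $T\colon[0,1]\to[0,\sup g]$ matching level sets $F_t$ and $G_{T(t)}$ satisfies $dT/dt\in[1-\alpha,1+\alpha]$ together with
\[
\int_0^1 \bigl(|\co(F_t)\setminus F_t| + |\co(G_{T(t)})\setminus G_{T(t)}|\bigr)\,dt=O_{\lambda,p}(\delta).
\]
Crucially, in one dimension convex hulls are intervals, so the level sets of $f$ and $g$ are $O(\delta)$-close in $L^1$ to intervals that, up to a level-dependent translation, match up to $O(\alpha)$ of their measure.

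I then modify $(f,g)$ through three stages, each costing at most $O(\sqrt{\delta})$ in $L^1$. For the first modification, consider the horizontal monotone transport $T_1\colon \mathbb{R}\to\mathbb{R}$ pushing $f$ to $g$ (so $dT_1/dx = f(x)/g(T_1(x))$). The standard chain
\[
\int h\,dx \geq \int_0^1 \bigl|H_{\M(t,T(t))}\bigr|\,d\M(t,T(t)) \geq \int \frac{f(x)}{M_{\lambda,-p}\bigl(1,(dT_1/dx)^{-1}\bigr)}\,dx,
\]
obtained from the inclusion $\lambda F_t + (1-\lambda)G_{T(t)}\subset H_{\M(t,T(t))}$ together with \cref{Holderderivative}, combined with $\int h \leq (1+\delta)\int f$, forces $dT_1/dx$ to be close to $1$ on most of the support of $f$; redefining on the bad set yields $(f_1,g_1)$ with $dT_1/dx$ uniformly close to $1$. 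The second modification collapses the gaps that may have appeared in the supports, using the interval-closeness of level sets and \cref{smalltranslationsmallsymdiflem} to control the $L^1$-cost of the shift, producing $(f_2,g_2)$ with connected supports. The third modification replaces each superlevel set by its convex hull, producing $(f_3,g_3)$ with interval superlevel sets, at $L^1$-cost $O(\delta)$ by the inequality above.

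For the final step, let $T_2\colon \mathbb{R}\to\mathbb{R}$ be the horizontal transport from $f_2$ to $g_2$ and define $G(x):=g_3(T_2(x))$. Then $|f_3-g_3|\leq |f_3-G| + |G-g_3|$ and the two terms are handled separately. Since $dT_2/dx\approx 1$ and $g_3$ has interval superlevel sets, $T_2$ differs from the identity by an $O(\sqrt{\delta})$-translation on each level set, so \cref{smalltranslationsmallsymdiflem} applied level-by-level yields $\int|G-g_3|\,dx = O(\sqrt{\delta})$. For $\int|f_3-G|\,dx$, the pointwise BBL inequality $h(\lambda x + (1-\lambda)T_2(x))\geq \M(f_3(x),G(x))$ persists through the modifications (up to accumulated $L^1$-errors of size $O(\sqrt{\delta})$), and combined with $\int h\leq(1+\delta)\int f_3$ together with the near-identity of $T_2$, this reduces the estimate to a quadratic stability of the elementary inequality $\M(a,b)\leq \max(a,b)$, delivering $\int|f_3-G|\,dx = O(\sqrt{\delta})$.

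The main obstacle is the step~12 control of $\int|f_3-G|$: passing from the integral deficit $\int h - \int f_3 = O(\delta)$ to a pointwise $L^1$-bound on $|f_3-G|$ requires quadratic rigidity of the $p$-mean $\M(a,b)$ around its diagonal $a=b$, and this is where the restriction $p>-1/2$ is used (it ensures the second-order coefficient in the expansion $\M(a,a+\varepsilon)=a+O(\varepsilon^2)$ is well-controlled on the range of values arising from the level-set cleanup). Tracking the uniform lower and upper bounds on $f_3,G$ through all three modifications so that this quadratic rigidity is usable everywhere is the main technical difficulty.
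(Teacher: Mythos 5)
Your outline tracks the paper's twelve-step scheme closely through the first three modifications, but the closing step contains a genuine gap. You propose to control $\int|f_3-G|$ by re-invoking the pointwise BBL inequality $h(\lambda x + (1-\lambda)T_2(x))\geq \M(f_3(x),G(x))$ ``up to accumulated $L^1$-errors of size $O(\sqrt{\delta})$'' and then applying quadratic rigidity of the $p$-mean. This does not work for two reasons. First, the pointwise inequality is simply destroyed by modifications~2 and~3: compressing out the gaps (step~9) changes the transport map, and filling in level sets (step~11) makes $f_3\geq f_2$, so $\M(f_3(x),\cdot)$ can strictly exceed the original bound at every point of a set of positive measure — there is no good sense in which the inequality ``persists up to $L^1$ errors'' that you can feed into the integral argument. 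Second, even granting an $O(\sqrt{\delta})$ relaxation of the BBL inequality, the harmonic-mean comparison then only delivers $\int f_3 |1 - dT_2/dx|^2 = O(\sqrt{\delta})$ rather than $O(\delta)$, and Cauchy–Schwarz then gives $\int f_3|1-dT_2/dx| = O(\delta^{1/4})$, which is strictly worse than the target $O(\sqrt{\delta})$.

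The paper's proof of \Cref{reducedSymDif1D} avoids this trap by extracting the key quantitative estimate $\int f(x)\min\{1,|1-\sqrt{dT/dx}(x)|^2\}\,dx=O_{\lambda,p}(\delta)$ \emph{once, before any modifications}, from the original BBL hypothesis and the comparison $M_{\lambda,p}\geq M_{\lambda,-1}$ (the $p$-mean dominates the harmonic mean, with quadratic deficit; not ``$\M(a,b)\leq\max(a,b)$'' as you name it — that is the wrong inequality, and your expansion $\M(a,a+\varepsilon)=a+O(\varepsilon^2)$ is also off, since the first-order term is $(1-\lambda)\varepsilon$). Modifications~1--3 are then engineered to \emph{preserve} this $L^2$ control: removing the bad set $I$ shrinks the integral; the compression $T_2 = S_g^{-1}\circ T\circ S_f$ has $dT_2/dx(x) = dT/dx(S_f(x))$ so the $L^2$ bound carries over unchanged on the good set; and filling level sets does not alter $T_2$ at all. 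With $\int f_2|1-dT_2/dx|^2=O(\delta)$ and $dT_2/dx\in[1/2,3/2]$ in hand, Cauchy–Schwarz gives $\int f_2|1-dT_2/dx| = O(\sqrt{\delta})$, which converts directly to $\int|f_2 - g_2\circ T_2| = O(\sqrt{\delta})$ and hence to the bound on $\int|f_3-G|$. No further appeal to BBL is made — or needed. Finally, a minor point: only $p>-1$ is needed for the one-dimensional argument, as the hypothesis of \Cref{reducedSymDif1D} shows; the $p>-1/2$ in the statement of \Cref{symmetric_diff_1D} is there only because that is the form later consumed by the two-dimensional reduction.
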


Given the information given by \Cref{almostconvexlevelsets} it suffices to prove the following proposition.

\begin{prop}\label{reducedSymDif1D}
For all $\lambda\in (0,1),p>-1$, there exists a $d=d_{\lambda,p}>0$, so that the following holds.
Let $f,g,h\colon\mathbb{R}\to\mathbb{R}_{\geq0}$ so that for all $x,y\in\mathbb{R}$ we have $h(\lambda x +(1-\lambda)y)\geq \M(f(x),g(y))$ and $\int h\,dx\leq (1+\delta)\int f\,dx$ with $\delta<d$. Additionally assume that for level sets $F_t:=\{x\in\mathbb{R}: f(x)>t\}$ and $G_t$ we have
$$
\int_{t\in \mathbb{R}_{\geq0}}\left(\left|\co(F_t)\setminus F_t\right|+\left|\co(G_{t})\setminus G_{t}\right|\right)\,dt\leq \delta.
$$
Then there exists some $v\in\mathbb{R}$ so that
$$\int_\mathbb{R} |f(x)-g(x+v)|\,dx=O\left(\sqrt{\delta}\right)\int_{\mathbb{R}}f(x)\,dx$$
\end{prop}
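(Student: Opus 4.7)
The plan is a horizontal transport argument: derive quadratic control on the derivative of the transport map pushing $f\,dx$ onto $g\,dx$ from the Borell--Brascamp--Lieb inequality, and convert it into an $L^1$ comparison of $f$ with a translate of $g$ by exploiting the near-interval structure of the level sets. After normalizing $\int f=\int g=1$, I first replace $f,g$ by $\tilde f,\tilde g$ whose level sets are exactly the convex hulls $\co(F_t),\co(G_t)$ (i.e., intervals), setting $\tilde f(x):=\sup\{t:x\in\co(F_t)\}$ and analogously for $\tilde g$. The hypothesis gives $\|\tilde f-f\|_1+\|\tilde g-g\|_1\le 2\delta$, and after a translation I may assume $\operatorname{supp}(\tilde f)=[0,L_f]$ and $\operatorname{supp}(\tilde g)=[0,L_g]$.

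Let $T:[0,L_f]\to[0,L_g]$ be the monotone transport map with $T(0)=0$ pushing $\tilde f\,dx$ onto $\tilde g\,dx$, so that $T'(x)=\tilde f(x)/\tilde g(T(x))$ a.e. The change of variables $z=\lambda x+(1-\lambda)T(x)$ has strictly positive Jacobian $\lambda+(1-\lambda)T'(x)$, and together with the pointwise hypothesis $h(z)\ge M_{\lambda,p}(\tilde f(x),\tilde g(T(x)))=\tilde f(x)\,M_{\lambda,p}(1,1/T'(x))$ gives
\[
1+\delta\,=\,\int h\,dz\,\ge\,\int_0^{L_f}\tilde f(x)\,q(T'(x))\,dx,\qquad q(t):=M_{\lambda,p}\!\left(1,\tfrac{1}{t}\right)\!\bigl(\lambda+(1-\lambda)t\bigr).
\]
A direct computation using $p>-1$ shows $q(1)=1$, $q'(1)=0$, and $q''(1)=\lambda(1-\lambda)(p+1)>0$; moreover both $\lim_{t\to 0^+}q(t)$ and $\lim_{t\to\infty}q(t)$ are strictly greater than $1$ (both limits collapse to $1$ exactly when $p=-1$, explaining why that case is excluded). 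Consequently $q(t)-1\ge c_{\lambda,p}\min\bigl((t-1)^2,1\bigr)$ for all $t>0$, so with $B:=\{x:|T'(x)-1|>1\}$ we obtain $\int_B\tilde f\,dx=O(\delta)$ and $\int_{B^c}\tilde f\,(T'-1)^2\,dx=O(\delta)$, and hence $\int_{B^c}\tilde f\,|T'-1|\,dx=O(\sqrt{\delta})$ by Cauchy--Schwarz.

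To conclude, I truncate to the good set and close the resulting gaps: set $f_1:=\tilde f\,\mathbf{1}_{B^c}$ and $g_1:=\tilde g\,\mathbf{1}_{T(B^c)}$, paying $O(\delta)$ in $L^1$; then, using that the level sets are intervals, I translate successive pieces to obtain $f_2,g_2$ with interval support and $|T_2'-1|\le 1$ everywhere, paying a further $O(\delta)$ in $L^1$. Choosing the translation $v$ so that $T_2$ and $x\mapsto x+v$ agree at a convenient reference point (e.g., the mode of $g_2$), the identity $f_2(x)-g_2(T_2(x))=g_2(T_2(x))(T_2'(x)-1)$, together with $g_2(T_2(x))=f_2(x)/T_2'(x)\le 2f_2(x)$ and Cauchy--Schwarz, yields $\|f_2-g_2\circ T_2\|_1=O(\sqrt{\delta})$. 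For the residual $\|g_2\circ T_2-g_2(\cdot+v)\|_1$, the layer-cake identity gives
\[
\int\bigl|g_2\circ T_2-g_2(\cdot+v)\bigr|\,dx\,=\,\int_0^\infty\!\bigl|T_2^{-1}(G_{2,s})\,\triangle\,(G_{2,s}-v)\bigr|\,ds;
\]
since each $G_{2,s}$ is an interval, the integrand equals the sum of the two endpoint displacements of $\eta(y):=T_2^{-1}(y)-y+v$, and after integration by parts and the change of variable $y=T_2(x)$ this reduces to a constant multiple of $\int f_2\,|T_2'-1|\,dx=O(\sqrt{\delta})$. Undoing the reductions finally gives $\int|f-g(\cdot-v)|\,dx=O(\sqrt{\delta})\int f\,dx$.

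The main technical obstacle is the passage from the \emph{$\tilde f$-weighted} quadratic control on $T'-1$ to an \emph{unweighted} $L^1$ comparison of $f$ and a translate of $g$. This is precisely what the interval structure of the level sets is for: it reduces $\|g_2\circ T_2-g_2(\cdot+v)\|_1$ via layer-cake to an integral of symmetric differences of intervals, which can then be transported back through $T_2$ to recover an $f$-weighted integral amenable to Cauchy--Schwarz. The gap-closing step also requires care, as the translations used to re-glue the pieces of $f_1,g_1$ into functions with interval support must have total $L^1$ cost $O(\delta)$; this uses that the functions are bounded after normalization and that the excised mass is itself $O(\delta)$.
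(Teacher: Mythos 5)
There is a genuine gap at the very first step. You replace $f,g$ by the filled-in functions $\tilde f(x):=\sup\{t:x\in\co(F_t)\}$ and $\tilde g$, and then invoke the pointwise inequality $h(\lambda x+(1-\lambda)y)\geq M_{\lambda,p}(\tilde f(x),\tilde g(y))$. But the hypothesis only gives $h(\lambda x+(1-\lambda)y)\geq M_{\lambda,p}(f(x),g(y))$ for the \emph{original} functions. Since $\tilde f\geq f$ and $\tilde g\geq g$ pointwise and $M_{\lambda,p}$ is increasing in both arguments, we have $M_{\lambda,p}(\tilde f(x),\tilde g(y))\geq M_{\lambda,p}(f(x),g(y))$, which is the wrong direction: the hypothesis does not pass to the filled-in functions. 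Concretely, at a point $x$ with $\tilde f(x)>f(x)$ (i.e., $x$ lies in $\co(F_t)\setminus F_t$ for some range of $t$), there need not exist any pair $(x',y')$ with $\lambda x'+(1-\lambda)y'=\lambda x+(1-\lambda)T(x)$ and $f(x')\geq \tilde f(x)$, $g(y')\geq\tilde g(T(x))$. The bound $\int_t|\co(F_t)\setminus F_t|\,dt\leq\delta$ is an integrated statement and does not control the defect at a single level $t=\tilde f(x)$, so the one-parameter line of admissible decompositions of $z$ may miss both level sets entirely. Everything downstream of this step (the $q(t)$ computation, the Cauchy--Schwarz, the gap compression, and the layer-cake reduction to endpoint displacements) is sound given that inequality, but the inequality itself is unavailable.

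The paper's proof uses exactly the same ingredients but in the opposite order, and this ordering is essential. It first builds the transport map $T$ for the \emph{original} $f,g$ (where the BBL hypothesis is available) and derives the quadratic control $\int f\cdot\min\{1,|1-\sqrt{T'}|^2\}\,dx=O(\delta)$; then it truncates the bad set $I$ to get $f_1,g_1$; then it compresses away the gaps with measure-preserving maps to get $f_2,g_2$ (here the near-interval hypothesis is used to bound the $L^1$ cost by $O(\delta)$); and only at the end does it fill in the level sets to create $f_3,g_3=\sup\{t:x\in\co(F^2_t)\}$, at a point where the BBL bound on $h$ is no longer needed and only $L^1$ closeness to $f_2,g_2$ matters. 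To repair your argument you would need to run the transport estimate on $f,g$ themselves (as the paper does) and then carry the control through to the filled-in functions, rather than filling in at the outset.
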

\begin{proof}[Proof of \Cref{reducedSymDif1D}]
Let $T\colon F_0\to G_0$ be the map pushing $f$ into $g$, so that $\int_{-\infty}^x f(y)\,dy=\int_{-\infty}^{T(x)}g(y)\,dy$, i.e., $\frac{dT}{dx}(x)=\frac{f(x)}{g(T(x))}$ almost everywhere. We use the bound $h\left(\lambda x+(1-\lambda)T(x)\right)\geq \M(f(x),g(T(x)))$, to find
$$\int h\left(\lambda x+ (1-\lambda) T(x)\right) \,d\left(\lambda x+ (1-\lambda) T(x)\right)\geq \int \M(f(x),g(T(x)))\left(\frac{\lambda+(1-\lambda)\frac{dT}{dx}(x)}{2}\right)\,dx$$
By definition $\frac{dT}{dx}(x)=\frac{f(x)}{g(T(x))}$, so that, we can rewrite 
\begin{align*}
&\int h(x)\,dx\\
&\geq \int \frac{1}{\frac{\lambda}{f(x)}+\frac{1-\lambda}{g(T(x))}}\left(\lambda+(1-\lambda)\frac{dT}{dx}(x)\right)\,dx +\int \left(\M(f(x),g(T(x)))-\frac{1}{\frac{\lambda}{f(x)}+\frac{1-\lambda}{g(T(x))}}\right)\left(\lambda+(1-\lambda)\frac{dT}{dx}(x)\right)\,dx\\
&\geq \int f(x)\,dx +\int f(x)\left(\M\left(\sqrt{\frac{dT}{dx}(x)},\frac{1}{\sqrt{\frac{dT}{dx}(x)}}\right)-\frac{1}{\frac{\lambda}{\sqrt{\frac{dT}{dx}(x)}}+(1-\lambda)\sqrt{\frac{dT}{dx}(x)}}\right)
\left(\frac{\lambda}{\sqrt{\frac{dT}{dx}(x)}}+(1-\lambda)\sqrt{\frac{dT}{dx}(x)}\right)\,dx\\
&\geq \int f(x)\,dx +\int f(x)\cdot\Omega_{\lambda,p}\left(\min\left\{1,\left|1-\sqrt{\frac{dT}{dx}(x)}\right|^2\right\}\right)\cdot\Omega_{\lambda}(1)\,dx\\
&\geq \int f(x)\,dx +\Omega_{\lambda,p}\left(\int f(x) \min\left\{1,\left|1-\sqrt{\frac{dT}{dx}(x)}\right|^2\right\}\, dx\right)
\end{align*}
Hence, as $\int h\,dx\leq (1+\delta)\int f\,dx$, we find 
$$\int f(x) \min\left\{1,\left|1-\sqrt{\frac{dT}{dx}(x)}\right|^2\right\}\,dx =O_{\lambda,p}(\delta)\int f\,dx,$$
so that $\frac{dT}{dx}(x)$ is close to 1 most of the time. To formalize that, consider the following set
$$I:=(\co(F_0)\setminus F_0)\cup \left\{x\in\co(F_0): \frac{dT}{dx}(x)\not\in (1/2,3/2)\right\}.$$
We find that 
$$\int_I f(x) \,dx\leq 4 \int_I f(x) \min\left\{1,\left|1-\sqrt{\frac{dT}{dx}(x)}\right|^2\right\}\,dx =O_{\lambda, p}(\delta),$$
and 
$$\int_{T(I)} g(x)\,dx=\int_I f(x)\,dx=O_{\lambda, p}(\delta),$$
so little mass of $f$ is in $I$. Hence, define
$$f_1(x):=\begin{cases}
f(x)&\text{ if } x\not\in I\\
0&\text{ if } x\in I.
\end{cases}\qquad\text{ and }\qquad g_1(x):=\begin{cases}
g(x)&\text{ if } x\not\in T(I)\\
0&\text{ if } x\in T(I),
\end{cases}$$
so that 
$$\int \left(|f-f_1|+|g-g_1|\right)\,dx=O_{\lambda, p}(\delta).$$
Defining $F^1_t:=\{x: f_1(x)>t\}\subset F_t$ and $G^1_t:=\{x: g_1(x)>t\}\subset G_t$, we find
\begin{align*}
\int_{t\in \mathbb{R}_{\geq0}}\left(\left|\co(F^1_t)\setminus F^1_t\right|+\left|\co(G^1_{t})\setminus G^1_{t}\right|\right)\,dt&\leq
\int_{t\in \mathbb{R}_{\geq0}}\left(\left|\co(F_t)\setminus F_t\right|+\left|\co(G_{t})\setminus G_{t}\right|\right)\,dt\\
&\,\,\,\,+\int_{\mathbb{R}} \left(|f(x)-f_1(x)|+|g(x)-g_1(x)|\right) \,dx\\
&=O_{\lambda, p}(\delta).
\end{align*}

We next compress the functions to skip the points in $I$. To this end consider the increasing surjective map $S_f\colon \mathbb{R}\to (\mathbb{R}\setminus I)$ with $\frac{dS_f}{dx}=1$ almost everywhere and $S_f(x)=x$ for some $x$ with $f_1(x)=\sup f_1$. Similarly define the increasing surjective map $S_g\colon \mathbb{R}\to (\mathbb{R}\setminus T(I))$ with $S_g'=1$ almost everywhere and $S_g(y)=y$ for some $y$ with $g_1(y)=\sup g_1$.

Using this map, define 
$$f_2(x):=f_1(S_f(x))\text{ and } g_2( x):= g_{1}(S_g(x)),$$
so that
$$\int f_2\,dx=\int g_2\,dx=\int f_1\,dx=\int g_1\,dx.$$
As ever define level sets  $F^2_t:=\{x: f_2(x)>t\}$ and $G^2_t:=\{x: g_2(x)>t\}$. Note that $|F^1_t|=|F^2_t|$ and $|G^1_t|=|G^2_t|$, while $|\co(F^1_t)|\geq|\co(F^2_t)|$ and $|\co(G^1_t)|\geq|\co(G^2_t)|$, so that
$$
\int_{t\in \mathbb{R}_{\geq0}}\left(\left|\co(F^2_t)\setminus F^2_t\right|+\left|\co(G^2_{t})\setminus G^2_{t}\right|\right)\,dt\leq\int_{t\in \mathbb{R}_{\geq0}}\left(\left|\co(F^1_t)\setminus F^1_t\right|+\left|\co(G^1_{t})\setminus G^1_{t}\right|\right) \,dt=O_{\lambda, p}(\delta).
$$
Considering the corresponding transport map $T_2\colon\mathbb{R}\to\mathbb{R}; x\mapsto S_g^{-1}(T(S_f(x)))$,
so that 
$$\frac{dT_2}{dx}(x)=\frac{dT}{dS_f(x)}(S_f(x))=\frac{f(S_f(x))}{g(T(S_f(x)))}=\frac{f_2(x)}{g(S_g(T_2(x)))}=\frac{f_2(x)}{g_2(T_2(x))}.$$ Since $S_f(x)\not \in I$, we find that $\frac{dT_2}{dx}(x)\in[1/2,3/2]$ for all $x\in \mathbb{R}$ and moreover

\begin{align*}
\int_{\mathbb{R}} f_2(x) \left|1-\frac{dT_2}{dx}(x)\right|^2 dx&=\int_{\mathbb{R}} f(S_f(x)) \left|1-\frac{dT}{dS_f(x)}(S_f(x))\right|^2 dx \\
&=\int_{\mathbb{R}\setminus I} f(x) \left|1-\frac{dT}{dx}(x)\right|^2 dx\\
&\leq \int_{\mathbb{R}} f(x) \min\left\{1,\left|1-\sqrt{\frac{dT}{dx}(x)}\right|^2\right\}\,dx=O_{\lambda, p}(\delta).
\end{align*}

Finally, we evaluate
$$\int_\mathbb{R}\left( |f_2-f_1|+|g_2-g_1|\right) \,dx=\int_{\mathbb{R}_{\geq0}}\left( \left|F^2_t\triangle F^1_t\right|+\left|G^2_t\triangle G^1_t\right|\right)\,dt.$$
Since $S_f$ has a fixed point where $f_1$ attains its supremum, we have $\co(F^2_t)\subset\co(F_t^1)$. As moreover $\left|F^2_t|=| F^1_t\right|$, we have $\left|F^2_t\triangle F^1_t\right|\leq 2|\co(F^1_t)\setminus F^1_t|$, so that
$$\int_{\mathbb{R}_{\geq0}} \left|F^2_t\triangle F^1_t\right|\,dt\leq \int_{\mathbb{R}_{\geq0}} 2\left|\co(F^1_t)\setminus F^1_t\right|\,dt=O_{\lambda, p}(\delta) .$$
Analogously, we find 
$$\int_{\mathbb{R}_{\geq0}} \left|G^2_t\triangle G^1_t\right|\,dt=O_{\lambda, p}(\delta).$$

 Hence, 
$$\int_\mathbb{R}\left( |f_2-f_1|+|g_2-g_1|\right) \,dx=\int_{\mathbb{R}_{\geq0}}\left( \left|F^2_t\triangle F^1_t\right|+\left|G^2_t\triangle G^1_t\right|\right)\,dt=O_{\lambda, p}(\delta).$$

Next, we consider the functions given by filling in the level sets:
$$f_3:=\sup\{t:x\in \co(F^2_t)\}\text{ and }g_3:=\sup\{t:x\in \co(G^2_t)\},$$
so that letting (as always)
$F^3_t:=\{x: f_3(x)>t\}$ and $G^3_t:=\{x: g_3(x)>t\}$, then $F^3_t=\co(F^2_t)$ and $G^3_t=\co(G^2_t)$. Hence, we immediately find
$$\int \left(|f_2-f_3|+|g_2-g_3|\right)\,dx=\int_{\mathbb{R}_{\geq0}}\left(\left|F^2_t\triangle F^3_t\right|+\left|G^2_t\triangle G^3_t\right|\right) \,dt=\int_{t\in \mathbb{R}_{\geq0}}\left(\left|\co(F^2_t)\setminus F^2_t\right|+\left|\co(G^2_{t})\setminus G^2_{t}\right|\right)\,dt=O_{\lambda, p}(\delta).$$

Recall that $\int_{\mathbb{R}} f_2(x) \left|1-\frac{dT_2}{dx}(x)\right|^2 dx=O_{\lambda, p}(\delta)$, so that $\int_{\mathbb{R}} f_2(x) \left|1-\frac{dT_2}{dx}(x)\right| dx=O_{\lambda,p}\left(\sqrt{\delta}\right)$. Since $\frac{dT_2}{dx}(x)\in[1/2,3/2]$, we in fact also have 
$$\int_{\mathbb{R}}  \left|f_2(x)-g_2(T_2(x))\right| \,dx=\int_{\mathbb{R}} f_2(x) \left|1-\frac{1}{\frac{dT_2}{dx}(x)}\right| dx=O_{\lambda,p}\left(\sqrt{\delta}\right).$$
Translating this to $f_3$ and $g_3$, we find
\begin{align*}
\int_{\mathbb{R}}  \left|f_3(x)-g_3(T_2(x))\right|\,dx&\leq \int_{\mathbb{R}}\left( |f_2(x)-f_3(x)|+\left|f_2(x)-g_2(T_2(x))\right| +|g_2(T_2(x))-g_3(T_2(x))|\right)\,dx\\
&\leq O_{\lambda, p}(\delta)+O_{\lambda,p}\left(\sqrt{\delta}\right)+2\int |g_2(x)-g_3(x)|\,dx\\
&=O_{\lambda,p}\left(\sqrt{\delta}\right)
\end{align*}
In other words, defining auxiliary function $k(x):=g_3(T_2(x))$, we have
$$\int_{\mathbb{R}}  \left|f_3(x)-k(x)\right|\, dx=O_{\lambda,p}\left(\sqrt{\delta}\right).$$
Extending that $\int_{\mathbb{R}} f_2(x) \left|1-\frac{dT_2}{dx}(x)\right|\,dx=O_{\lambda,p}\left(\sqrt{\delta}\right)$ in a slightly different way, we also find
\begin{align*}
\int_{\mathbb{R}} k(x) \left|1-\frac{dT_2}{dx}(x)\right| \,dx&\leq    \int_{\mathbb{R}}  \left|k(x)-f_3(x)\right|\cdot \left|1-\frac{dT_2}{dx}(x)\right|\,dx +\int_{\mathbb{R}}  \left|f_3(x)-f_2(x)\right|\cdot\left|1-\frac{dT_2}{dx}(x)\right|\,dx\\
&\,\,\,\,+\int_{\mathbb{R}} f_2(x) \left|1-\frac{dT_2}{dx}(x)\right|\,dx\\
&\leq    2\int_{\mathbb{R}}  \left|k(x)-f_3(x)\right|\,dx +2\int_{\mathbb{R}}  \left|f_3(x)-f_2(x)\right|\,dx+\int_{\mathbb{R}} f_2(x) \left|1-\frac{dT_2}{dx}(x)\right| \,dx\\
&=O_{\lambda,p}\left(\sqrt{\delta}\right),
\end{align*}
and since $\frac{dT_2}{dx}(x)\in[1/2,3/2]$ for almost all $x$, we also have
$$\int_{\mathbb{R}} k(x) \left|1-\frac{d(T^{-1}_2)}{dx}(x)\right|\,dx=O_{\lambda,p}\left(\sqrt{\delta}\right).$$

Our remaining aim is to show that
$$\int_{\mathbb{R}} |k(x)-g_3(x)|\,dx=O_{\lambda,p}\left(\sqrt{\delta}\right).$$

To this end define a final collection of level sets $K_t:=\{x:k(x)>t\}$. Note that $K_t=T_2^{-1}(G^3_t)$. In terms of the level sets, we find
$$\int_{\mathbb{R}} |k(x)-g_3(x)|\,dx=\int_{\mathbb{R}_{\geq0}}\left|K_t\triangle G^3_t\right|\,dt.$$
Write $k^{-1}(t)$ for $\max K_t$ (a slight abuse of notation) and $g_3^{-1}(t)$ for $\max G^3_t$, so that $T_2(k^{-1}(t))=g_3^{-1}(t)$. Note that by translating $f$ and $g$ if necessary, we can guarantee that $k^{-1}(\sup g_1)=g_3^{-1}(\sup g_1)=0=T_2(0)$ and that $\left|K_t\triangle G^3_t\right|$ is upper bounded by $|k^{-1}(t)-g_3^{-1}(t)|$ plus the equivalent on the other end of the interval. Hence, it suffices to show
$\int_{\mathbb{R}_{\geq0}} |k^{-1}(t)-g_3^{-1}(t)|\,dt=O_{\lambda,p}\left(\sqrt{\delta}\right)$ as the other end of the interval follows analogously. We compute as follows
\begin{align*}
\int_{\mathbb{R}_{\geq0}} \left|k^{-1}(t)-g_3^{-1}(t)\right|\,dt&=\int_{\mathbb{R}_{\geq0}} \left|k^{-1}(t))-T_2(k^{-1}(t))\right|\,dt\\
&=\int_{\mathbb{R}_{\geq0}}\left( \int_0^{k^{-1}(t)}\left|1-\frac{dT_2}{dx}(x)\right|\,dx\right)\,dt\\
&=\int_{\mathbb{R}}\left( \int_0^{k(x)}\left|1-\frac{dT_2}{dx}(x)\right|\,dt\right)\,dx\\
&=\int_{\mathbb{R}}k(x)\left|1-\frac{dT_2}{dx}(x)\right|\,dx\\
&=O_{\lambda,p}\left(\sqrt{\delta}\right).
\end{align*}

Hence, we find
$$\int_{\mathbb{R}} |k(x)-g_3(x)|\,dx=\int_{\mathbb{R}_{\geq0}}\left|K_t\triangle G^3_t\right|\,dt= O_{\lambda,p}\left(\sqrt{\delta}\right).$$
Putting it all together, we find
\begin{align*}
\int |f-g|\,dx&\leq \int \big( |f-f_1|+|f_1-f_2|+|f_2-f_3|+|f_3-k|+|k-g_3|+|g_3-g_2|+|g_2-g_1|+|g_1-g|\big)\,dx\\
&=O_{\lambda, p}(\delta)+O_{\lambda, p}(\delta)+O_{\lambda, p}(\delta)+O_{\lambda,p}\left(\sqrt{\delta}\right)+O_{\lambda,p}\left(\sqrt{\delta}\right)+O_{\lambda, p}(\delta)+O_{\lambda, p}(\delta)+O_{\lambda, p}(\delta)\\
&=O_{\lambda,p}\left(\sqrt{\delta}\right),
\end{align*}
which concludes the proof of the proposition.
\end{proof}

\section{Proof of \Cref{symmetric_diff_2D}, i.e., \Cref{symmetric_diff}  in $\mathbb{R}^2$}\label{2DSymDifSec}
We will now prove the following theorem, which is the two-dimensional instance of \Cref{symmetric_diff}.
\begin{thm}\label{symmetric_diff_2D}
    Given $\lambda \in (0,1/2]$ and $p \in (-1/2, \infty)$ there exists $d=d_{2, \lambda, p}>0$ such that the following holds. Let $f,g, h\colon \mathbb{R}^2\rightarrow [0,1]$ be integrable functions such that $\int_{\mathbb{R}^2} f\,dx=  \int_{\mathbb{R}^2} g\,dx=1$ and for all $x,y \in \mathbb{R}^2$ we have $h(\lambda x +(1-\lambda)y) \geq M_{\lambda,p}(f(x),g(y))$. If $\int_{\mathbb{R}^2}h\,dx = 1+\delta$ for some $0\leq \delta \leq d$, then, up to translation, $\int_{\mathbb{R}^2} |f-g|\,dx=O_{2,\lambda, p} \left(\sqrt{\delta}\right)$. 
\end{thm}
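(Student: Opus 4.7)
The plan follows the three-stage scheme sketched in the 2-dimensional overview. After the reductions in Section \ref{ndimensionalsymdiffsect} (Lemmas \ref{continuousmaker}, \ref{BigFirstCleanup}, \ref{parallelconeslem}, and Corollary \ref{aclcor}), it suffices to prove Proposition \ref{symmetric_diff_technical} in dimension $2$: we may assume $f, g, h$ are continuous, compactly supported, take values in $[0,1]$, satisfy $\int f = \int g = 1$, have matching cone masses $\int_{C_2^i} f = \int_{C_2^i} g = 1/3$, and satisfy $|\{x \in rS_2 : f(x) < \beta \text{ or } g(x) < \beta\}| = O(\delta)$ for some fixed $r, \beta = \Omega_{\lambda, p}(1)$. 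Moreover, by Corollary \ref{aclcor} we may assume the level sets $F_t, G_t, H_t$ are $O(\delta)$-close to their convex hulls (in integrated measure) and that the vertical transport $T\colon[0,1]\to[0,1]$ between $f$ and $g$ has $\frac{dT}{dt}$ uniformly close to $1$.

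\emph{Stage 1 (semi-infinite tubes).} Suppose $f$ is supported in $R = [0,r_1] \times [0,\infty)$, $g$ in $R' = [0,r_1'] \times [0,\infty)$, with $f \geq 0.1$ on $[0,r_1]\times[0,1]$ and similarly for $g$, and $\int_R h' \leq (1+\delta)\int_R f$ where $h'$ is the sup-convolution. Define $T\colon[0,r_1] \to [0,r_1']$ by $T(0) = 0$ and $\frac{dT}{dx}(x) = \frac{\int f(x,y)\,dy}{\int g(T(x),y)\,dy}$. Since $p > -1/2$, there is a $q > -1$ such that, for each fixed $x$, applying the $1$-dimensional Borell--Brascamp--Lieb inequality to the vertical slices $y \mapsto f(x,y)$, $y \mapsto g(T(x),y)$, and $y \mapsto h(\lambda x + (1-\lambda) T(x), y)$ yields
$$\int h(\lambda x + (1-\lambda) T(x),y)\,dy \geq M_{\lambda,q}\Bigl(\int f(x,y)\,dy,\; \int g(T(x),y)\,dy\Bigr).$$
Integrating in $x$ converts the slice BBL into a 1-dimensional $q$-BBL inequality for the marginals, and the assumption $\int h \leq (1+\delta)\int f$ gives a $1$-dimensional deficit of size $O(\delta)$ both in the slice inequality (after integration) and in the marginal inequality. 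Applying Theorem \ref{symmetric_diff_1D} to the marginals controls $\int |F - G\circ T|\,dx$, and applying it on each vertical slice produces shifts $v_x$ with $\int |f(x,y) - g(T(x), y - v_x)|\,dy$ small in $L^1(dx)$. The hypothesis $f, g \geq 0.1$ near the base forces $v_x$ to be small for $x$ near $0$; combined with the almost-convex level sets and Lemma \ref{smalltranslationsmallsymdiflem}, the shifts $v_x$ stay small throughout, and the smallness of $T(x) - x$ (from $\frac{dT}{dx} \approx 1$) closes up to $\int_R |f - g|\,dx = O(\sqrt{\delta})$.

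\emph{Stage 2 (covering the simplex by tubes).} After affinely normalizing so that $rS_2$ contains the region where $f, g \geq \beta$, partition $\mathbb{R}^2$ into the three cones $C_2^0, C_2^1, C_2^2$ from Lemma \ref{parallelconeslem}. Inside each cone $C$ with $\int_C f = \int_C g = 1/3$, cover $C \cap 100rS_2$ by $O(1)$ narrow parallel semi-infinite tubes $R_j$ with matched parallel tubes $R_j'$ chosen so that $\int_{R_j} f = \int_{R_j'} g$. The almost-equality of the supports of $F_\beta$ and $G_\beta$ (from Lemma \ref{SimilarSupportLem}) guarantees $|R_j \triangle R_j'|$ is small, so that $\int_{\lambda R_j + (1-\lambda) R_j'} h \leq (1 + O(\delta)) \int_{R_j} f$ and Stage 1 applies, yielding $\int_{R_j}|f-g|\,dx = O(\sqrt{\delta})$; summing gives $\int_{100rS_2}|f-g|\,dx = O(\sqrt{\delta})$.

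\emph{Stage 3 (dyadic truncation).} For $i = 0, 1, \ldots, \lceil 100\log_2(1/\delta)\rceil$, let $f_i := \min\{f, 2^{-i}\}$, $g_i := \min\{g, 2^{-i}\}$, $h_i := \min\{h, 2^{-i}\}$. A horizontal-cut computation as in \eqref{horizontalcutdecreasingdoubling} shows that $\int h_i \leq (1+\delta_i)\int f_i$ with $\sum_i \delta_i = O(\delta)$, so the rescaled triples $(2^i f_i, 2^i g_i, 2^i h_i)$ fall into the framework of Stage 2 on larger and larger dilates of the base simplex. This produces, for each $i$, a translation $v_i$ with
$$\int_{\mathbb{R}^2} |f_i(x) - g_i(x + v_i)|\,dx = O\bigl(\sqrt{\delta_i}\bigr) \int f_i\,dx.$$
The near-convexity and comparable size/shape of the consecutive dyadic level sets $F_{2^{-i}}, G_{2^{-i}}$, furnished by Lemma \ref{almostconvexlevelsets}, force $|v_{i+1} - v_i|$ to be controlled by the same kind of deficit at scale $i$. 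Absorbing $v_0$ into a single global translation and telescoping the bounds together with the trivial tail estimate $\int_{\mathbb{R}^2 \setminus F_{\delta^{100}}} f\,dx \ll \sqrt{\delta}$ gives
$$\int_{\mathbb{R}^2}|f-g|\,dx \leq \sum_i \int |(f_{i+1}-f_i) - (g_{i+1}-g_i)|\,dx + o(\sqrt{\delta}) = O\bigl(\sqrt{\delta}\bigr).$$

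The main obstacle is Stage 3: a priori the translations $v_i$ obtained at different dyadic scales could drift arbitrarily, and summing $\sqrt{\delta_i}$ naively loses the sharp exponent. The key technical input is therefore a cross-scale comparison showing that the shift between consecutive levels cannot grow faster than allowed by the near-convexity of $F_{2^{-i}}$ and $F_{2^{-i-1}}$, combined with the non-shrinking bound $F_{0.1 t} \subset 100 F_t$ from Lemma \ref{almostconvexlevelsets}, which localises the shift at each scale to the simplex at the previous scale and enables the telescoping to succeed.
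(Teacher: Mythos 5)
Your overall three-stage plan --- semi-infinite tubes, covering the high level sets by tubes, and dyadic truncation --- is precisely the route the paper takes: your Stage 1 is the paper's \Cref{symmetric_diff_2_tube} (built on \Cref{symmetric_diff_1_half}), and your Stage 2 is the paper's \Cref{SymDiff_BigLevelSets}. However, there is a genuine arithmetic gap in Stage 3. You claim the horizontal-cut computation gives $\int h_i \leq (1+\delta_i)\int f_i$ with $\sum_i \delta_i = O(\delta)$; this is false. What \eqref{horizontalcutdecreasingdoubling} actually gives, applied to the truncation at level $2^{-i}$, is $\int h_i - \int f_i \leq \delta \int f$, so that $\delta_i := \delta\,\frac{\int f}{\int f_i}$ is \emph{nondecreasing} in $i$ and is of order $\sqrt{\delta}$ at the last useful scale $i_0$; the $\delta_i$ are therefore not summable, and the per-scale deficit in absolute terms is the same $\delta\int f$ at every $i$. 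If one believed $\sum_i\delta_i = O(\delta)$ and bounded $\sum_i\sqrt{\delta_i}\int f_i \leq \int f\sum_i\sqrt{\delta_i}$ by Cauchy--Schwarz, one would at best reach $\sqrt{\delta\log(1/\delta)}$, losing the sharp exponent.

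What rescues the telescope is a different fact: for every $i\leq i_0$ (so the sup-convolution deficit of the triple $(f_i,g_i,h_i)$ is still $\ll_{\lambda,p}1$), one has $\int f_i - \int f_{i+1} = \int_{2^{-(i+1)}}^{2^{-i}}|F_t|\,dt \geq \Omega_{\lambda,p}(\int f_i)$, i.e.\ $\int f_i$ decays geometrically in $i$ --- this is a consequence of the claim in the proof of \Cref{almostconvexlevelsets} (the mass of a function with small BBL deficit concentrates on its high levels), not of the coarse non-shrinking bound $F_{0.1t}\subset 100F_t$ you cite. With the geometric decay in hand, $\sqrt{\delta_i}\,\int f_i = \sqrt{\delta\int f \cdot \int f_i}$, and the sum $\sum_i i\,\sqrt{\int f_i/\int f}$ (the extra factor $i$ accounting for the accumulated drift $v_i-v_0$, which you correctly identify as the crux) converges to $O_{\lambda,p}(1)$. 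You need to replace the spurious $\sum_i\delta_i=O(\delta)$ by this geometric-decay argument for the telescoping in Stage 3 to actually close to $O(\sqrt{\delta})$.
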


The first step is to leverage the 1-dimensional result to resolve the context of well behaved tubes as in the following proposition.

\begin{prop}\label{symmetric_diff_2_tube}
        Given $\lambda \in (0,1/2]$, $p \in (-1/2, \infty)$, $r, \ell, s_0,s_1 \in (0, \infty)$  there exists $d=d_{ \lambda, p, \ell, s_0, s_1}>0$ such that the following holds.  Let 
        $f\colon [0,1]\times [0,\infty)\rightarrow [0,s_1]$, $g\colon [0,r]\times [0,\infty)\rightarrow [0,s_1]$, and $f,g,h\colon [0,\lambda+ (1-\lambda)r]\times [0,\infty)\rightarrow [0,s_1]$   be continuous functions with bounded support such that
        \begin{itemize}
            \item $\int f\,dx=\int g\,dx$, 
            \item for every $x \in [0,1]$,$\int f(x,y)\,dy \leq  \ell s_1$,
            \item for every $x \in [0,r]$, $\int g(x,y)\,dy \leq  \ell s_1$, 
            \item $|\{x\in[0,1]^2: f(x)<s_0\}|+|\{x\in[0,r]\times[0,1]: g(x)<s_0\}|\leq \eta $
            \item for all $x\in[0,1]\times [0,\infty),y\in[0,r]\times [0,\infty)$ we have that $h(\lambda x +(1-\lambda)y) \geq M_{\lambda,p}(f(x),g(y))$, and
            \item $\int h\,dx \leq (1+\delta) \int f\,dx$ for some $0 \leq \delta \leq d$.
        \end{itemize}
Then $\int |f-g|\,dx =O_{\lambda,p,\ell,s_0,s_1}(\sqrt{\delta}+\eta)\int f\,dx$
\end{prop}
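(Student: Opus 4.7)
The strategy is to apply the one-dimensional stability result \Cref{symmetric_diff_1D} in two complementary directions—first to the vertical marginals of $f,g,h$ and then, fiber by fiber, to each vertical slice—and finally to reconcile the resulting per-slice translations into a single planar translation using the base hypothesis.

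Set $F(x):=\int_0^\infty f(x,y)\,dy$, $G(x):=\int_0^\infty g(x,y)\,dy$, $H(z):=\int_0^\infty h(z,y)\,dy$. These are continuous, bounded by $\ell s_1$, and compactly supported. Since $p>-1/2$, the exponent $q:=p/(1+p)$ satisfies $q>-1$, and the 1D Borell-Brascamp-Lieb inequality applied to each vertical fiber yields
\[
H(\lambda x+(1-\lambda)y)\geq M_{\lambda,q}(F(x),G(y)) \qquad \forall\,x\in[0,1],\;y\in[0,r].
\]
Let $T\colon[0,1]\to[0,r]$ be the monotone transport map with $T(0)=0$ and $T'(x)=F(x)/G(T(x))$. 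The substitution $z=\lambda x+(1-\lambda)T(x)$ and the identity used in the proof of \Cref{reducedSymDif1D} split the defect as
\[
\int H\;\geq\;\int f\;+\;\Omega_{\lambda,p}\!\Big(\int_0^1 F(x)\min\{1,|1-\sqrt{T'(x)}|^2\}\,dx\Big)\;+\;\int_0^1\tilde\delta_x\,\mu(x)\,dx,
\]
where $\mu(x):=\lambda+(1-\lambda)T'(x)$ and $\tilde\delta_x:=H(\lambda x+(1-\lambda)T(x))-M_{\lambda,q}(F(x),G(T(x)))\geq 0$. Since $\int H\leq(1+\delta)\int f$, both nonnegative terms on the right are $O_{\lambda,p}(\delta)\int f$.

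Applying \Cref{symmetric_diff_1D} to the triple $(F,G,H)$ yields a horizontal translation $u$ with $\int|F(x)-G(x+u)|\,dx=O(\sqrt{\delta})\int F$, and combined with the control $\int F(x)|1-\sqrt{T'(x)}|^2\,dx=O(\delta)\int f$ we deduce that $T(x)\approx x+u$ in an averaged sense and that $|F(x)-G(T(x))|$ is small in $L^1(dx)$. For each fiber $x$ on which $F(x)$ and $G(T(x))$ are comparable, applying \Cref{symmetric_diff_1D} to the slice triple $(f(x,\cdot),g(T(x),\cdot),h(\lambda x+(1-\lambda)T(x),\cdot))$—after renormalizing the masses to match, which contributes an $L^1$-error of order $|F(x)-G(T(x))|$—produces a fiber translation $v_x$ with
\[
\int_0^\infty|f(x,y)-g(T(x),y-v_x)|\,dy\;=\;O(\sqrt{\delta_x})F(x)+O(|F(x)-G(T(x))|),
\]
where $\delta_x:=\tilde\delta_x/M_{\lambda,q}(F(x),G(T(x)))$ is the relative fiber defect. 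Integrating in $x$ via Cauchy-Schwarz (using $\int\delta_x F(x)\,dx=O(\delta)\int f$) gives a total bound of $O(\sqrt{\delta})\int f$.

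\textbf{The main obstacle.} The delicate step is replacing the family $\{v_x\}$ by a single universal $v\in\mathbb{R}$. On all but an $O(\eta)$ set of $x\in[0,1]$, the base hypothesis ensures that both $f(x,\cdot)$ and $g(T(x),\cdot)$ have mass $\geq\Omega(s_0)$ on $[0,1]$, which pins $|v_x|=O_{s_0}(\sqrt{\delta_x}+\eta)$ on those slices. Propagating this anchoring across all $x$ exploits that the level sets of $f$ and $g$ are almost convex (via \Cref{aclcor}), so $v_x$ varies slowly in $x$ and small uniform vertical shifts cost little in symmetric difference by \Cref{smalltranslationsmallsymdiflem}. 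A continuity and averaging argument then replaces $v_x$ by a universal $v$ at total $L^1$-cost $O(\sqrt{\delta}+\eta)\int f$, and similarly $T(x)$ by $x+u$. Combining all contributions yields $\int|f(x,y)-g(x-u,y-v)|\,dx\,dy=O_{\lambda,p,\ell,s_0,s_1}(\sqrt{\delta}+\eta)\int f$, which is the claim.
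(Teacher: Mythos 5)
Your overall architecture (vertical marginals with transport map $T$, fiber-wise one-dimensional BBL with exponent $q=p/(1+p)$, and base anchoring to pin translations) matches the paper's proof. The per-fiber application of \Cref{symmetric_diff_1D} followed by removal of the fiber translation $v_x$ via the base hypothesis is essentially a re-derivation of \Cref{symmetric_diff_1_half}, which the paper uses directly; this is a harmless difference in presentation. The detour through applying \Cref{symmetric_diff_1D} to the marginals $(F,G,H)$ to extract a horizontal translation $u$ is unnecessary—the paper simply combines $T(0)=0$ with $\int_I|T'(x)-1|\,dx=O(\sqrt{\delta}+\eta)$ to get $|T(x)-x|=O(\sqrt{\delta}+\eta)$ uniformly, and then the question is whether such a small horizontal dislocation costs little.

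That horizontal reconciliation—passing from $\int|f(x,y)-g(T(x),y)|\,dx\,dy$ to $\int|f-g|\,dx\,dy$—is where your sketch has a genuine gap. You invoke \Cref{smalltranslationsmallsymdiflem} and say ``similarly $T(x)$ by $x+u$,'' but that lemma controls a fixed translation $v$ of the full function, whereas here the shift is the variable map $\mathcal{T}:(x,y)\mapsto(T(x),y)$, which moves different $y$-slices by different horizontal amounts. Moreover, the danger is precisely that the level sets $\co(G_t)$ can be very elongated in the $y$-direction, so a small per-slice horizontal shift of order $\sqrt{\delta}+\eta$ can accumulate an $L^1$-cost proportional to the total vertical extent $\int_0^{s_1}|\{y:(\co G_t)_y\neq\emptyset\}|\,dt$. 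The paper handles this with a separate \emph{width claim}: defining $m(y)=\max_x g(x,y)$ it shows $\int m(y)\,dy\geq\tfrac14|\{(t,y):(\co G_t)_y\neq\emptyset\}|$ and then, using the base hypothesis and $\int h\leq 2\int f$, proves $\int m(y)\,dy=O_{\lambda,s_0,s_1}(\int f)$. Without some version of this width bound, or an explicit reduction of the variable shift $\mathcal{T}$ to uniform translations via the monotonicity of $A\setminus(A-se_1)$ in $s$ for convex $A$, the ``continuity and averaging'' step does not close. A secondary issue is that your final display asserts $\int|f(x,y)-g(x-u,y-v)|\,dx\,dy=O(\sqrt{\delta}+\eta)\int f$ with translations $u,v$, whereas the proposition's conclusion has no translations; you would need one more application of the base hypothesis (and again the width control) to replace $u,v$ by $0$.
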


Using the result for tubes we can find the result for all points where the functions are not too small by showing that that region is not too big and hence can be covered effectively with tubes.
\begin{prop}\label{SymDiff_BigLevelSets}
        Given $\lambda \in (0,1/2]$, $p \in (-1/2, \infty)$,  there exists $d=d_{ \lambda, p}$ such that the following holds.  Let $f,g,h\colon \mathbb{R}^2\rightarrow [0,1]$   be continuous functions with bounded support such that
        \begin{itemize}
            \item $\int f\,dx=\int g\,dx$, 
            \item for all $x,y\in\mathbb{R}^2$ we have that $h(\lambda x +(1-\lambda)y) \geq M_{\lambda,p}(f(x),g(y))$, and
            \item $\int h\,dx \leq (1+\delta) \int f\,dx$ for some $0 \leq \delta \leq d$.
        \end{itemize}
Let $F_{0.1}:=\{x\in\mathbb{R}^2: f(x)>0.1\}$, then there is some $v\in\mathbb{R}^2$ so that $\int_{F_{0.1}} |f(x)-g(x+v)|\,dx =O_{\lambda,p}\left(\sqrt{\delta}\right)\int f\,dx$
\end{prop}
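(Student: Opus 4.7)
The plan is to reduce \Cref{SymDiff_BigLevelSets} to the tube result \Cref{symmetric_diff_2_tube} by covering $F_{0.1}$ with a bounded collection of half-infinite tubes based inside a common disk on which $f$ and $g$ are essentially the same nontrivial constant.

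First I would apply \Cref{almostconvexlevelsets} and \Cref{aclcor} with small parameters $\alpha,\alpha'$ chosen in terms of $\lambda,p$ to pass (at a cost of $O_{\lambda,p}(\delta)$ in $L^1$) to functions whose level sets are nearly convex, of nearly equal volume, and (after a single translation) have nearly coinciding convex hulls. The corollary also provides a level $\beta=\Omega_{\lambda,p}(1)$ such that $\int_{F_\beta}f,\int_{G_\beta}g\geq 1-\alpha'$ and $|\co(F_\beta)\triangle \co(G_\beta)|=O_{\lambda,p}(\delta)$. Using John's theorem to normalize $\co(F_\beta)$ (so that $rS_2\subset\co(F_\beta)\subset RS_2$ with $R/r=O(1)$), I may assume, after an affine transformation, that a disk $D$ of constant radius around the origin satisfies $|D\setminus F_\beta|+|D\setminus G_\beta|=O_{\lambda,p}(\delta)$. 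Since $\beta\geq 0.1$ (once $\alpha'$ is small enough one may take $\beta=0.1$, say, using the flexibility in how $\beta$ enters; alternatively, one proves the result with $F_\beta$ in place of $F_{0.1}$ and then notes that $F_{0.1}\subset F_\beta$ modulo an $O_{\lambda,p}(\delta)$ error), both $f$ and $g$ are $\geq 0.1$ on nearly all of $D$, and $F_{0.1}$ is contained in a dilate of $D$ of radius $O_{\lambda,p}(1)$.

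Next, fix a finite family of half-infinite tubes $R_1,\dots,R_N$ (with $N=O_{\lambda,p}(1)$) covering $F_{0.1}$, each an affine image of $[0,1]\times[0,\infty)$ whose base $B_k\subset \tfrac12 D$ and whose long direction points outward. For each $R_k$, find the parallel tube $R'_k$ obtained by dilating its width by the factor $\tau_k$ making $\int_{R_k}f=\int_{R'_k}g$ and by translating its base inside $D$ so that the two bases coincide (this uses that $f\approx g$ on $D$, which forces $\tau_k=1+O_{\lambda,p}(\sqrt\delta)$ and forces the translation to be of size $O_{\lambda,p}(\sqrt\delta)$). These choices guarantee $\int_{R_k\triangle R'_k}(f+g)=O_{\lambda,p}(\sqrt\delta)$ together with $\int_{\lambda R_k+(1-\lambda)R'_k}h\leq(1+O_{\lambda,p}(\delta))\int_{R_k}f$. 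The remaining hypotheses of \Cref{symmetric_diff_2_tube} are easy: $f,g,h\leq 1$; fibres of $R_k$ (resp. $R'_k$) meet $\co(F_\beta)$ (resp. $\co(G_\beta)$) in an interval of length $\ell=O_{\lambda,p}(1)$, so the column-mass hypothesis holds; and the base of $R_k$ sits in $\tfrac12 D$, where $f,g\geq 0.1$ apart from a set of measure $O_{\lambda,p}(\delta)$. Applying \Cref{symmetric_diff_2_tube} after pullback by the affine map gives $\int_{R_k}|f(x)-g(x+v_k)|\,dx=O_{\lambda,p}(\sqrt\delta)\int_{R_k}f$ for some translation $v_k\in\mathbb{R}^2$.

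Finally, I would coordinate the translations $v_k$ into a single $v$. Because each base $B_k$ lies in $D$, where both $f$ and $g$ are essentially the same constant in $[\beta,1]$, the shift $v_k$ must be $O_{\lambda,p}(\sqrt\delta)$ close to the common shift $v$ that aligns $\co(F_\beta)$ with $\co(G_\beta)$. Invoking \Cref{smalltranslationsmallsymdiflem} (with $\eta=O_{\lambda,p}(\sqrt\delta)$ and using the bound $\int_0^1 |\co(F_t)\setminus F_t|\,dt=O_{\lambda,p}(\delta)$ from \Cref{aclcor}) we may replace each $v_k$ by $v$ at the cost of an additional $O_{\lambda,p}(\sqrt\delta)$ per tube. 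Summing over the $N=O_{\lambda,p}(1)$ tubes covering $F_{0.1}$ yields the desired bound.

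The main obstacle is the coordination step: ensuring both that the parallel tube $R'_k$ can be paired with $R_k$ with only $\sqrt\delta$-error in symmetric difference, and that the per-tube translations $v_k$ all agree (up to $\sqrt\delta$) with a single global $v$. Both requirements are driven by the same ingredient, namely that $f$ and $g$ are essentially the same bounded-away-from-zero function on a disk $D$ of definite size; verifying this carefully, using \Cref{almostconvexlevelsets}, \Cref{aclcor}, and \Cref{smalltranslationsmallsymdiflem}, is where the work lies.
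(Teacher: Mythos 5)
Your strategy is the same as the paper's: normalize using \Cref{almostconvexlevelsets}/\Cref{aclcor} to land in a situation where both $f$ and $g$ are $\geq\beta=\Omega_{\lambda,p}(1)$ on most of a definite disk $D$ and $F_{0.1}$ sits in a bounded dilate of $D$; cover $F_{0.1}$ with $O_{\lambda,p}(1)$ half-infinite tubes based in $\tfrac12 D$; apply the tube estimate \Cref{symmetric_diff_2_tube} on each; and use \Cref{smalltranslationsmallsymdiflem} to collapse the per-tube translations. That outline is correct.

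There is, however, a genuine gap at the heart of the argument: the sentence ``These choices guarantee \dots $\int_{\lambda R_k+(1-\lambda)R'_k}h\leq(1+O_{\lambda,p}(\delta))\int_{R_k}f$'' is asserted, not proved, and it does not follow from $\int_{R_k}f=\int_{R'_k}g$ alone. To transfer the global excess $\int h-\int f\leq \delta\int f$ to a single tube pair you must complement away $R_k$ and apply BBL to the remainder; but the complement of a half-infinite tube is not convex, so the Minkowski combination $\lambda R_k^c+(1-\lambda)(R'_k)^c$ is essentially all of $\mathbb{R}^2$ and gives nothing. The paper circumvents this by first partitioning $\mathbb{R}^2$ into three convex cones $C_1,C_2,C_3$ at the origin with $\int_{C_i}f=\int_{C_i}g$ (after a KKM-type translation) and $\int_{C_i}h\leq(1+O(\delta))\int_{C_i}f$, and then, \emph{inside a fixed cone}, carving out each tube by three successive hyperplane cuts (\Cref{positioningwithinthe2dconelem}). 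Each hyperplane splits the cone into two convex pieces, BBL applies to each, and the $\delta$-excess passes through the cut; \Cref{SimilarSupportLem} then shows the cut hyperplane for $g$ is within $O(\sqrt\delta)$ of that for $f$. Your proposal skips this mechanism entirely, so both the $h$-bound and the claim that the translation pairing the tubes is $O(\sqrt\delta)$ are unsupported. You would need to (i) introduce the three-cone partition with matched integrals, (ii) carry out the hyperplane-cut argument of \Cref{positioningwithinthe2dconelem} using \Cref{SimilarSupportLem}, and (iii) only then feed the resulting aligned tubes into \Cref{symmetric_diff_2_tube}. A secondary, smaller omission: verifying the column-mass hypothesis $\int f(x,y)\,dy\leq\ell s_1$ of \Cref{symmetric_diff_2_tube} requires the concentration bound $\int_C f\leq\ell\int_{C\cap B'}f$ coming from the cone reduction, not just the diameter bound on $\co(F_\beta)$, since $f$ may have mass outside $\co(F_\beta)$.
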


\subsection{Tubes: Proof of \Cref{symmetric_diff_2_tube}}
We use the following quick consequence of the 1-dimensional result
\begin{lem}\label{symmetric_diff_1_half}
     Given $\lambda \in (0,1/2]$, $p \in (-1/2, \infty)$, $\ell, s_0,s_1 \in (0, \infty)$  there exists $d=d_{ \lambda, p, \ell, s_0, s_1}$ such that the following holds.  Let $f\colon [0,\infty)\rightarrow [0,s_1]$, $g\colon [0,\infty)\rightarrow [0,s_1]$ and $h\colon  [0,\infty)\rightarrow [0,s_1]$ be continuous functions with bounded support such that 
     \begin{itemize}
         \item $\int f\,dx=\int g\,dx$,
         \item  $\int f(x)\, dx \leq  \ell \int_{[0,1]}f(x)\,dx $, 
         \item $\int g(x)\, dx \leq  \ell \int_{[0,1]}g(x)\,dx $, and
         \item  $|\{x\in[0,1]: \min\{g(x),f(x)\}<s_0\}|\leq \xi$
         \item for all $x,y$ we have that $h(\lambda x +(1-\lambda)y) \geq M_{\lambda,p}(f(x),g(y))$, and
         \item $\int h\,dx \leq (1+\delta)\int f\,dx$ for some $0 \leq \delta \leq d$.
     \end{itemize}
     Then $\int |f-g|\,dx=O_{\lambda, p, \ell, s_0, s_1}(\delta^{1/2}+\xi)\int f\,dx$.
\end{lem}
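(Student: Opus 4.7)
The plan is to apply the one-dimensional Borell--Brascamp--Lieb stability \Cref{symmetric_diff_1D} to the triple $(f, g, h)$ as a black box. After normalizing $\int f = \int g = 1$, this yields a translation $v \in \mathbb{R}$ with
\[
\int_{\mathbb{R}} |f(x) - g(x + v)|\, dx \leq C_{\lambda, p}\sqrt{\delta}.
\]
The remaining work is to establish that $|v|$ is itself of order $\xi + \sqrt{\delta}$ (with implicit constants depending on $\ell, s_0, s_1$), and then to absorb the resulting translation error via a layer-cake estimate on $g$.

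The key observation for bounding $|v|$ is that the hypothesis $f,g\colon [0,\infty)\to [0,s_1]$ combined with the concentration on $[0,1]$ forces the support left-endpoints to lie in $[0, \xi]$. Indeed, the good set $A := \{x \in [0, 1] : \min(f(x), g(x)) \geq s_0\}$ is contained in $\mathrm{supp}(f) \cap \mathrm{supp}(g)$ and has $|A| \geq 1 - \xi$, so $\inf \mathrm{supp}(f), \inf \mathrm{supp}(g) \leq \inf A \leq \xi$. Consequently the monotone transport map $T \colon \mathrm{supp}(f) \to \mathrm{supp}(g)$ defined by $\int_{-\infty}^x f = \int_{-\infty}^{T(x)} g$ satisfies $|T(\inf \mathrm{supp}(f)) - \inf \mathrm{supp}(f)| \leq 2\xi$. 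The proof of \Cref{reducedSymDif1D} moreover delivers $\int f(x)\min\{1, (1 - \sqrt{T'(x)})^2\}\, dx = O_{\lambda, p}(\delta)$; restricting to $A$ where $f \geq s_0$ and applying Cauchy--Schwarz gives $\int_A |T'(x) - 1|\, dx = O(\sqrt{\delta/s_0})$. After the truncation to $T' \in [1/2, 3/2]$ performed in the proof of \Cref{reducedSymDif1D} (which only costs $O(\delta)$ in $L^1$), the contribution of $A^c \cap [0, 1]$ to $\int |T' - 1|\, dy$ is at most $\tfrac{1}{2}\xi$. Integrating $T' - 1$ from $\inf \mathrm{supp}(f)$ across the good set then yields $|T(x) - x| \leq 2\xi + O(\xi + \sqrt{\delta/s_0})$ for $x \in A$, and hence $|v| = O(\xi + \sqrt{\delta})$.

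With $|v|$ controlled, the triangle inequality gives
\[
\int |f - g|\, dx \leq \int |f(x) - g(x + v)|\, dx + \int |g(x + v) - g(x)|\, dx.
\]
The first summand is $O(\sqrt{\delta})$, and the second is estimated by a layer-cake argument: by \Cref{almostconvexlevelsets} applied in dimension one, the level sets $G_t = \{g > t\}$ satisfy $\int_0^\infty |\co(G_t) \setminus G_t|\, dt = O(\delta)$ and are therefore nearly intervals. Since $|I \triangle (I + v)| \leq 2|v|$ for any interval $I$ and $\sup g \leq s_1$, one obtains
\[
\int |g(x + v) - g(x)|\, dx \leq 2 s_1 |v| + O(\delta) = O(\xi + \sqrt{\delta}).
\]
The main obstacle will be the sharp estimate $|v| = O(\xi + \sqrt{\delta})$: a naive measure-based approach (using only $|\{g \geq s_0/2\}| \leq 2/s_0$) yields merely $|v| \leq C(\ell, s_0, s_1)$, which is insufficient. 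One must instead propagate the left-endpoint bound $|T(\inf \mathrm{supp}(f)) - \inf \mathrm{supp}(f)| \leq 2\xi$ across the good set $A$ using the integrated control of $T'$, carefully interleaved with the truncation argument from the proof of \Cref{reducedSymDif1D} to handle the (small-measure) set where $T'$ is far from $1$.
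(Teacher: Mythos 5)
Your overall strategy matches the paper's: apply \Cref{symmetric_diff_1D} to obtain a translation $v$ with $\int|f(x)-g(x+v)|\,dx = O_{\lambda,p}(\sqrt{\delta})\int f\,dx$, bound $|v|$, and then absorb the translation error by a layer-cake argument on the nearly-convex level sets of $g$. The layer-cake step agrees with the paper. The genuine issue is in how you bound $|v|$.

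You propose to show $|T(x)-x|$ is small (using the transport-map estimates from the proof of \Cref{reducedSymDif1D}) and then write ``hence $|v|=O(\xi+\sqrt{\delta})$''. This last implication is unjustified: \Cref{symmetric_diff_1D} guarantees only that \emph{some} translation $v$ makes $\int|f(x)-g(x+v)|\,dx$ small, and that $v$ is not produced by, or a priori related to, the monotone transport map $T$. Controlling $T'$ tells you what happens if you align $f$ and $g$ via optimal transport; it says nothing directly about the $v$ handed to you by the black-box theorem. To close this gap one would have to re-open the proof of \Cref{reducedSymDif1D} and track the specific translation it constructs (an alignment of a supremum point), which defeats the purpose of invoking \Cref{symmetric_diff_1D} as a black box and introduces further bookkeeping (the truncation there replaces $T$ by a different map $T_2$).

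The paper bounds $|v|$ much more directly, using only the one-sided support constraint $\operatorname{supp}(f),\operatorname{supp}(g)\subset[0,\infty)$ and the level-set hypothesis. Suppose $v>0$. Then $g(x-v)=0$ for $x\in[0,v)$, so
$$\int_{\mathbb{R}}|f(x)-g(x-v)|\,dx\;\ge\;\int_{[0,v]}f(x)\,dx\;\ge\;\min\!\left\{\tfrac{1}{\ell}\int f\,dx,\;(v-\xi)\,s_0\right\},$$
the last inequality distinguishing the cases $v\ge 1$ (use $\int_{[0,1]}f\ge \ell^{-1}\int f$) and $v<1$ (use that at least $v-\xi$ of $[0,v]$ has $f\ge s_0$). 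For $d$ small the first alternative contradicts the $O_{\lambda,p}(\sqrt{\delta})$ bound, leaving $v\le \xi+O_{\lambda,p}(\sqrt{\delta}/s_0)\int f\,dx$; the case $v<0$ is symmetric (shift $g$ the other way and $f$ vanishes on the mismatch interval). This is exactly the ``naive'' observation you dismiss: the point you miss is that $g$ lives on a half-line, so a rightward translation by $v$ kills $g$ on $[0,v]$, and the resulting $L^1$ deficit is bounded below not by the crude measure estimate $|\{g\ge s_0/2\}|\le 2/s_0$ but by the integral of $f$ over that interval, which grows \emph{linearly} in $v$. No transport-map machinery is needed.
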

\begin{proof}[Proof of \Cref{symmetric_diff_1_half}]
By \Cref{symmetric_diff_1D}, we find that there exists some $v\in\mathbb{R}$ so that 
$$\int_\mathbb{R} |f(x)-g(x-v)|\,dx=O_{\lambda, p}\left(\sqrt{\delta}\right)\int f\,dx.$$
We aim to show that not much changes replacing $v$ by 0. Assuming $v> 0$ (the case $v<0$ follows analogously), we find that
$$\int_\mathbb{R} |f(x)-g(x-v)|\,dx\geq \int_{[0,v]} |f(x)-g(x-v)|\,dx=\int_{[0,v]} f(x)\,dx\geq \min\left\{\frac{\int f}{\ell}, (v-\xi)\cdot s_0\right\},$$
where the last minimum depends on whether $v\geq1$ or $v<1$. We can exclude the former case by choosing $d=d_{\lambda,p,\ell,s_0,s_1}$ sufficiently small and noting that it would imply
$$\frac{1}{\ell}\leq O_{\lambda,p}\left(\sqrt{\delta}\right)\leq O_{\lambda,p}(\sqrt{d}),$$
a contradiction.
Hence, we find
$$v\leq \xi+ O_{\lambda,p}\left(\frac{\sqrt{\delta}}{s_0}\right)\int f\,dx,$$
so the translate is small. To show that $g$ doesn't change much when translated, consider level sets $G_t:=\{x\in\mathbb{R}: g(x)>0\}$. By \Cref{almostconvexlevelsets} we may assume that they're mostly convex, viz
$$\int_{0}^{s_1}|\co(G_t)\setminus G_t|\,dt = O_{\lambda,p}(\delta)\int f\,dx.$$
Hence, we find
\begin{align*}
\int |g(x)-g(x-v)|\,dx&= \int_{0}^{s_1}|G_t\triangle (G_t+v)|\,dt\\
&\leq \int_{0}^{s_1}\left(|G_t\triangle \co(G_t)|+|\co(G_t)\triangle (\co(G_t)+v)|+|(\co(G_t)+v)\triangle (G_t+v)|\right)\,dt\\
&\leq \int_{0}^{s_1}\left(|\co(G_t)\setminus G_t|+v+|\co(G_t)\setminus G_t|\right)\,dt\\
&\leq v\cdot s_1+O_{\lambda,p}(\delta)\int f\,dx \leq \xi s_1+ O_{\lambda,p}\left(\frac{s_1}{s_0}\sqrt{\delta}\right)\int f\,dx\leq O_{\lambda,p,s_0,s_1}\left(\sqrt{\delta}+\xi\right)\int f\,dx
\end{align*}
Combining these two results we find
$$\int |f(x)-g(x)|\,dx\leq \int \left(|f(x)-g(x-v)|+|g(x-v)-g(x)|\right)\,dx\leq O_{\lambda,p,s_0,s_1}\left(\sqrt{\delta}+\xi\right)\int f\,dx,$$
which proves the lemma.
\end{proof}

With this lemma in place, we can prove \Cref{symmetric_diff_2_tube}.
\begin{proof}[Proof of \Cref{symmetric_diff_2_tube} using \Cref{symmetric_diff_1_half}]
Consider the maps $T\colon[0,1]\to[0,r]$ defined by $\frac{dT}{dx}(x)=\frac{\int f(x,y)\,dy}{\int g(T(x),y)\,dy}$, so that for all $t\in[0,1]$, we have
$$\int_{[0,t]\times[0,\infty)} f(x,y)\,dxdy=\int_{[0,T(t)]\times[0,\infty)} g(x,y)\,dxdy $$

For fixed $x \in [0,1]$ and $y_1,y_2 \in [0,\infty)$, we have $$h(\lambda x+ (1-\lambda) T(x),\lambda y_1 +(1-\lambda) y_2 ) \geq M_{\lambda,p}(f(x,y_1),g(T(x),y_2)) .$$

By Borell-Brascamp-Lieb inequality in dimension $1$, we deduce
$$\int_{\mathbb{R}_+}h(\lambda x+ (1-\lambda) T(x),y)\, dy\geq M_{\lambda,p/(1+p)}\left(1,\frac{1}{\frac{dT}{dx}(x)}\right) \int_{\mathbb{R}_+}f(x,y)\,dy.$$

Hence, we can find $\delta_x \geq0$ for every $x\in[0,1]$ such that 
$$\int_{\mathbb{R}_+}h(\lambda x+ (1-\lambda) T(x),y)\, dy= (1+\delta_x)M_{\lambda,p/(1+p)}\left(1,\frac{1}{\frac{dT}{dx}(x)}\right) \int_{\mathbb{R}_+}f(x,y)\,dy.$$

Integrating over $x$, we get
\begin{align*}
    \begin{split}
        (1+\delta)\int_{[0,1]} \int_{\mathbb{R}_+}f(x,y)\,dydx &\geq \int_{[0,\lambda+(1-\lambda)r]} \int_{\mathbb{R}_+}h(x,y)\,dydx\\
        &= \int_{[0,1]} \left(\lambda +(1-\lambda)\frac{dT}{dx}(x)\right)\left[\int_{\mathbb{R}_+}h(\lambda x+(1-\lambda)T(x),y)\,dy\right]\,dx\\
        &= \int_{[0,1]} \left(\lambda +(1-\lambda)\frac{dT}{dx}(x)\right)(1+\delta_x)M_{\lambda,p/(1+p)}\left(1,\frac{1}{\frac{dT}{dx}(x)}\right) \left[\int_{\mathbb{R}_+}f(x,y)\,dy\right]\,dx\\
        &= \int_{[0,1]} (1+\delta_x)\frac{M_{\lambda,p/(1+p)}\left(1,\frac{1}{\frac{dT}{dx}(x)}\right)}{M_{\lambda,-1}\left(1,\frac{1}{\frac{dT}{dx}(x)}\right)}\left[\int_{\mathbb{R}_+}f(x,y)\,dy\right]\,dx
    \end{split}
\end{align*}
Hence,
\begin{equation}\label{twoconclusioneq}\delta  \int_{[0,1]} \int_{\mathbb{R}_+}f(x,y)\,dydx \geq \int_{[0,1]} \left(-1+(1+\delta_x)\frac{M_{\lambda,p/(1+p)}\left(1,\frac{1}{\frac{dT}{dx}(x)}\right)}{M_{\lambda,-1}\left(1,\frac{1}{\frac{dT}{dx}(x)}\right)}\right)\left[\int_{\mathbb{R}_+}f(x,y)\,dy\right]\,dx\end{equation}

Recall $p>-1/2$ and hence $p/(1+p)>-1$ and therefore $M_{\lambda,p/(1+p)}\left(1,\frac{1}{\frac{dT}{dx}(x)}\right) \geq M_{\lambda,-1}\left(1,\frac{1}{\frac{dT}{dx}(x)}\right)$. Hence, we may conclude
\begin{equation}\label{smalldeltax}\delta  \int_{[0,1]} \int_{\mathbb{R}_+}f(x,y)\,dydx \geq \int_{[0,1]} \delta_x \left[\int_{\mathbb{R}_+}f(x,y)\,dy\right]\,dx.
\end{equation}

With this we can ``clean up" the interval $[0,1]$ to find that $\frac{dT}{dx}$ is mostly well-behaved and in most fibres a significant part of the mass of $f$ (and $g$) lies inside the box $[0,1]^2$ (and $[0,r]\times[0,1]$ respectively). Consider the following subset of $[0,1]$ which we'll show contain little mass of $f$;
\begin{align*}
    I&:=\big\{x\in[0,1]: |\{y\in[0,1]:f(x,y)<s_0\}|\leq1/2 \text{ and }|\{y\in[0,1]:g(T(x),y)<s_0\}|\leq 1/2 \big\}
\end{align*}
Indeed, by assumption, we find that 
$$|[0,1]\setminus I|\cdot \frac12\leq |\{x\in[0,1]^2: f(x)<s_0\}|+|\{x\in[0,r]\times[0,1]: g(x)<s_0\}|\leq  \eta.$$
Note that since for all $x\in[0,1]$, we have $\int f(x,y)\,dy\leq \ell s_1$, we find that
\begin{equation}\label{littleoutsideIeq}
    \int_{([0,1]\setminus I)\times \mathbb{R}^+}f(x,y)\,dxdy\leq 2\eta \ell s_1,
\end{equation}
which is negligible for us, so we'll focus on what happens in $I$. One thing we know about $x\in I$, is that
$$\int f(x,y)\,dy,\int g(T(x),y)\,dy \geq \frac12 s_0,$$
and by assumption $\int f(x,y)\,dy,\int g(T(x),y)\,dy\leq \ell s_1$, so
$\max\left\{\frac{dT}{dx}(x),\frac{1}{\frac{dT}{dx}(x)}\right\}\leq \frac{\ell s_1}{\frac12 s_0}=O_{s_0,s_1,\ell}(1)$

Hence, for $x\in I$, we have $\frac{M_{\lambda,p/(1+p)}\left(1,\frac{1}{\frac{dT}{dx}(x)}\right)}{M_{\lambda,-1}\left(1,\frac{1}{\frac{dT}{dx}(x)}\right)} = 1+\Theta_{\lambda, p,\ell,s_0,s_1}\left(\left|\frac{dT}{dx}(x)-1\right|^2\right)$, so that \Cref{twoconclusioneq} additionally implies
$$\delta  \int_{[0,1]} \int_{\mathbb{R}_+}f(x,y)\,dydx \geq \int_{I} \Theta_{\lambda, p,\ell,s_0,s_1}\left(\left|\frac{dT}{dx}(x)-1\right|^2\right)\left[ \int_{\mathbb{R}_+}f(x,y)\,dy\right]\,dx.$$
Given that for $x \in I$, we have $\int_{\mathbb{R}_+}f(x,y)\,dy=\Theta_{s_0,s_1, \ell}(1)$, we deduce
$$\Theta_{\lambda, p, s_0,s_1, \ell}(\delta) \geq \int_{I}\left|\frac{dT}{dx}(x)-1\right|^2\,dx.$$
By the Cauchy-Schwarz inequality we get
\begin{equation}\label{eqmike}
   \Theta_{\lambda, p, s_0,s_1, \ell}\left(\sqrt{\delta}\right) \geq \int_{I}\left|\frac{dT}{dx}(x)-1\right|\,dx. 
\end{equation}

Given $T(0)=0$, by integrating, we get that for $x \in [0,1]$, $|T(x)-x|\leq \Theta_{\lambda, p, s_0,s_1, \ell}\left(\delta^{1/2}+\eta\right)$.

For fixed $x \in I$, the functions $h(\lambda x+ (1-\lambda) T(x),y)$, $f(x,y)$ and $g(T(x),y)$ satisfy the hypothesis of \Cref{symmetric_diff_1_half} for some $\xi_x\geq 1/2$ with the property that $\int_{[0,1]} \xi_x \,dx\leq \eta$, so that
\begin{equation}\label{eqjamesbond}
    \int_{\mathbb{R}_+}\left|f(x,y)-\frac{dT}{dx}(x)g(T(x),y)\right|\,dy = O_{\lambda,p, \ell, s_0,s_1, \frac{dT}{dx}(x)}\left(\delta_x^{1/2}+\xi_x\right)\int_{\mathbb{R}_+}f(x,y)\,dy=O_{\lambda,p, \ell, s_0,s_1}\left(\delta_x^{1/2}+\xi_x\right)\int_{\mathbb{R}_+}f(x,y)\,dy,
\end{equation}
where the last inequality follows from the fact that $\frac{dT}{dx}(x)=\Theta_{s_0,s_1, \ell}(1)$.
Combining \Cref{smalldeltax} with \Cref{eqjamesbond}, and using Cauchy-Schwarz inequality,  we get
\begin{align*}
         \int_{I}\int_{\mathbb{R}_+}\left|f(x,y)-\frac{dT}{dx}(x)g(T(x),y)\right|\,dydx&\leq \int_{I}O_{\lambda,p, \ell, s_0,s_1}\left(\delta_x^{1/2}+\xi_x\right)\left[\int_{\mathbb{R}_+}f(x,y)\,dy\right]\,dx\\
         &\leq O_{\lambda,p, \ell, s_0,s_1}\left(\delta^{1/2}+\eta\right) \int_{I} \int_{\mathbb{R}_+}f(x,y)\,dydx\\
         &\leq O_{\lambda,p, \ell, s_0,s_1}\left(\delta^{1/2}+\eta\right) \int_{[0,1]} \int_{\mathbb{R}_+}f(x,y)\,dydx,
\end{align*}
where in the last inequality we used that almost no mass of $f$ lies outside of $I$.

Combining the last inequality with \Cref{eqmike} and the fact that $\int_{\mathbb{R}_+}g(T(x),y)\,dy=\Theta_{\ell,s_0,s_1}(1)$, we conclude
\begin{align*}
\int_{I}\int_{\mathbb{R}_+}|f(x,y)-g(T(x),y)|\,dydx &\leq\int_{I}\int_{\mathbb{R}_+}\left(\left|f(x,y)-\frac{dT}{dx}(x)g(T(x),y)\right|+g(T(x),y)\left|\frac{dT}{dx}(x)-1\right|\right)\,dydx \\
&\leq O_{\lambda,p, \ell, s_0,s_1}\left(\delta^{1/2}+\eta\right) \int_{[0,1]} \int_{\mathbb{R}_+}f(x,y)\,dydx.
\end{align*}

To translate this information to get control over $\int |f-g|$, we consider the auxiliary function $k\colon[0,1]\times[0,\infty)\to\mathbb{R}_{\geq 0}, (x,y)\mapsto g(T(x),y)$, so that (recalling \Cref{littleoutsideIeq})
\begin{align*}
 \int_{[0,1]}\int_{\mathbb{R}^+} |f-k|\,dydx&\leq   \int_{[0,1]\setminus I}\int_{\mathbb{R}^+} (f+k)\,dydx+\int_I\int_{\mathbb{R}^+} |f-k|\,dydx\\
 &\leq    4\eta\ell s_1+\int_{I}\int_{\mathbb{R}_+}|f(x,y)-g(T(x),y)|\,dydx \\
 &=O_{\lambda,p, \ell, s_0,s_1}\left(\delta^{1/2}+\eta\right) \int_{[0,1]} \int_{\mathbb{R}_+}f(x,y)\,dydx.
\end{align*}
We shall show that also $\int |g-k|\,dx$ is small. To that end, consider the level sets $G_t:=\{x\in\mathbb{R}^2: g(x)>t\}$ and $K_t:=\{x\in\mathbb{R}^2:k(x)>t\}$, so that
$$\int |g-k|\,dx=\int_{0}^{s_1}|G_t\triangle K_t|\,dt.$$
By \Cref{almostconvexlevelsets}, we may assume
$$\int_{0}^{s_1} |\co(G_t)\setminus G_t|\,dt\leq O_{\lambda,p}(\delta)\int f\,dx.$$
Additionally, consider the map $\mathcal{T}\colon [0,1]\times[0,\infty)\to [0,r]\times[0,\infty), (x,y)\mapsto(T(x),y)$ so that $\mathcal{T}(K_t)=G_t$. Since $\frac{dT}{dx}(x)=\Theta_{s_0,s_1,\ell}(1)$ for $x\in I$ and $|[0,1]\setminus I|\leq 2\eta$, we have that
$$\int_{0}^{s_1} |\mathcal{T}^{-1}(\co(G_t))\triangle K_t|\,dt=\int_{0}^{s_1} |\mathcal{T}^{-1}(\co(G_t)\setminus G_t)|\,dt\leq O_{\lambda,p,s_0,s_1,\ell}(\delta+\eta)\int f\,dx.$$
Hence, it suffices to prove that
\begin{equation}\label{movingslightlyeq}
\int_{0}^{s_1} |\mathcal{T}^{-1}(\co(G_t))\triangle \co(G_t)|\,dt\leq O_{\lambda, p, s_0,s_1, \ell}\left(\delta^{1/2}+\eta\right)\int f\,dx
\end{equation}
is small. We shall approach this fibre by fibre as follows. For a given $t$, we can express $|\mathcal{T}(\co(G_t))\triangle \co(G_t)|$ by considering the line segments $\co(G_t)_y:=\{x: (x,y)\in\co(G_t)\}$, so that
\begin{align*}
|\mathcal{T}(\co(G_t))\triangle \co(G_t)|&=\int_{0}^\infty |\co(G_t)_y\triangle T(\co(G_t)_y)|\,dy\\
&\leq \int_{0}^\infty \left(|\min \co(G_t)_y-T(\min \co(G_t)_y)|+|\max \co(G_t)_y-T(\max \co(G_t)_y)|\right)\,dy.
\end{align*}
Recall that (\Cref{eqmike}) for all $x\in[0,1]$ we have $|T(x)-x|\leq O_{\lambda, p, s_0,s_1, \ell}\left(\delta^{1/2}+\eta\right)$, so that 
\begin{align*}
|\mathcal{T}(\co(G_t))\triangle \co(G_t)|&\leq O_{\lambda, p, s_0,s_1, \ell}\left(\delta^{1/2}+\eta\right) \left|\{y:\co(G_t)_y\neq\emptyset\}\right|,
\end{align*}
and, integrating over $t$, that
$$\int_{0}^{s_1} |\mathcal{T}^{-1}(\co(G_t))\triangle \co(G_t)|\,dt\leq O_{\lambda, p, s_0,s_1, \ell}\left(\delta^{1/2}+\eta\right) \left|\{(t,y)\in\mathbb{R}^2_{\geq0}:\co(G_t)_y\neq\emptyset\}\right|.$$
Hence it suffices to show that $\left|\{(t,y):\co(G_t)_y\neq\emptyset\}\right|$ is not too big. Consider the function $m\colon[0,\infty)\to[0,s_1],y\mapsto \max_{x\in[0,r]}g(x,y)$. Per the following claim it suffices to control $\int_0^\infty m\,dy$. 

\begin{clm}
$\int_0^\infty m\,dy\geq \frac{1}{4} \left|\{(t,y)\in\mathbb{R}^2_{\geq0}:\co(G_t)_y\neq\emptyset\}\right|$ \end{clm}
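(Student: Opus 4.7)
My plan is to rewrite both sides in terms of a single $1$D object associated to $m$, and then establish the resulting $1$D comparison. Set $P(t) := \pi_y(G_t) = \{y : m(y) > t\}$; note the last equality holds because $g$ is continuous, so $m(y) > t$ iff there is some $x$ with $g(x,y) > t$. Since projection is a linear map, $\pi_y(\co(G_t)) = \co(\pi_y(G_t)) = \co(P(t))$, and in $\mathbb{R}$ the convex hull of $P(t)$ is simply the interval $Q(t) := [a(t), b(t)]$ with $a(t) := \inf P(t)$ and $b(t) := \sup P(t)$. By Fubini,
\[
\left|\{(t,y) : \co(G_t)_y \neq \emptyset\}\right| \;=\; \int_0^\infty |Q(t)|\,dt \;=\; \int_0^\infty (b(t) - a(t))\,dt.
\]
Since $a(t)$ is non-decreasing and $b(t)$ is non-increasing in $t$ (the level sets shrink as $t$ grows), a second Fubini identifies this with $\int_0^\infty \tilde m(y)\,dy$ where $\tilde m(y) := \min(m_+(y), m_-(y))$, $m_+(y) := \sup_{y' \leq y} m(y')$, $m_-(y) := \sup_{y' \geq y} m(y')$; indeed $a(t) \leq y$ iff $m_+(y) > t$, and $b(t) \geq y$ iff $m_-(y) > t$. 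Since also $\int_0^\infty m\,dy = \int_0^\infty |P(t)|\,dt$, the claim becomes the $1$D inequality
\[
\int_0^\infty m\,dy \;\geq\; \tfrac14 \int_0^\infty \tilde m\,dy.
\]

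For this $1$D comparison, note $\tilde m$ is continuous (since $m$ is), unimodal (``tent''-shaped), with peak $M := \sup m$ attained at $y^* := \arg\max m$, and $\tilde m = m_+$ on $(-\infty, y^*]$, $\tilde m = m_-$ on $[y^*, \infty)$. The plan is to build a multiplicity-bounded correspondence from the ``excess region'' $E := \{(t, y) : m(y) \leq t < \tilde m(y)\}$ into the ``mass region'' $N := \{(t, y) : m(y) > t\}$, with multiplicity at most $3$, so that $|E| \leq 3|N|$, i.e., $\int(\tilde m - m)\,dy \leq 3 \int m\,dy$, which gives $\int \tilde m \leq 4 \int m$. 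Concretely, for $(t, y) \in E$ the continuity of $m$ together with $\tilde m(y) > t$ produces $y_1 < y < y_2$ with $m(y_1) = m(y_2) = t$ and $m > t$ just beyond $y_1$ to the left and beyond $y_2$ to the right; map $(t, y)$ to one of the two adjacent ``above-$t$ excursions'' of $m$ (say, the closer of the two endpoints), and verify by a valley-by-valley analysis that each point of $N$ is hit at most $3$ times across all $t$.

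The main obstacle is making the multiplicity bound rigorous with the sharp constant. A priori, narrow widely-spaced spikes of $m$ cause $\tilde m$ to be much larger than $m$, but precisely these configurations force $G_t$ to be far from convex in $\mathbb{R}^2$, contradicting the almost-convexity of the level sets inherited from \Cref{aclcor}. I would therefore bound the slice widths $|(\co(G_t))_y|$ from below on $Q(t) \setminus P(t)$ using the convexity of $\co(G_t)$ inside the strip $[0, r] \times \mathbb{R}$, giving $|\co(G_t) \setminus G_t| \geq \int_{Q(t) \setminus P(t)} |(\co(G_t))_y|\,dy$, and integrating in $t$ to control $\int(\tilde m - m)\,dy$ in terms of the $2$D symmetric difference. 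Packaging this with the elementary tent-function inequality outlined above yields the desired factor $\tfrac14$, with the constant coming from a careful apportionment of each valley to its two bounding peaks.
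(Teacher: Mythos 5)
Your Fubini reduction to the one-dimensional comparison $\int m\,dy \geq \frac14\int\tilde m\,dy$, where $\tilde m=\min(m_+,m_-)$ is the unimodal envelope of $m$, is correct and is a different (and reasonable) way to restate the claim. However, both of your strategies for proving that comparison have genuine gaps, and neither recovers the step the paper actually uses.

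The multiplicity-bounded correspondence from the excess region $E=\{(t,y):m(y)\leq t<\tilde m(y)\}$ into $N=\{(t,y):m(y)>t\}$ cannot have multiplicity $O(1)$ in general: as you yourself note, narrow widely-spaced spikes of $m$ make $\int\tilde m/\int m$ arbitrarily large, so no universal multiplicity constant exists without additional input. You then pivot to using the almost-convexity of $G_t$, proposing to lower-bound the slice widths $|(\co(G_t))_y|$ for $y\in Q(t)\setminus P(t)$. This is where the argument breaks: $\co(G_t)$ is a bounded convex set, so its slice widths tend to zero at the two ends of $\pi(\co(G_t))$, and exactly these extremal $y$ are the ones most likely to lie in $Q(t)\setminus P(t)$. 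The inequality $|\co(G_t)\setminus G_t|\geq\int_{Q(t)\setminus P(t)}|(\co(G_t))_y|\,dy$ is true but gives no usable lower bound on $|Q(t)\setminus P(t)|$ by itself, because those widths are not bounded below. Integrating in $t$ then only controls the weighted quantity $\iint_{Q(t)\setminus P(t)}|(\co(G_t))_y|\,dy\,dt$, not $\int(\tilde m-m)\,dy=\int|Q(t)\setminus P(t)|\,dt$.

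What the paper does instead is entirely pointwise in $t$ and does not pass to a 1D envelope at all. For each $t$ one has the pointwise information $|G_t|\geq\frac12|\co(G_t)|$ (from the setup via \Cref{aclcor}), and then uses the elementary geometric fact that if $A\subset K\subset\mathbb{R}^2$ with $K$ convex and $|A|\geq\frac12|K|$, then any one-dimensional linear projection satisfies $|\pi(A)|\geq\bigl(1-\sqrt{1-|A|/|K|}\bigr)|\pi(K)|\geq(1-\sqrt{1/2})|\pi(K)|\geq\frac14|\pi(K)|$. (This lemma follows from the concavity of the width function $y\mapsto|K_y|$; the extremal case is the triangle, with the missing projection concentrated at the tips.) Integrating this pointwise bound over $t$ gives the claim directly. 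The key ingredient you are missing is precisely the quantitative projection lemma for convex sets that converts the density hypothesis $|G_t|/|\co(G_t)|\geq\frac12$ into a projection bound for each fixed $t$, without any need to untangle the unimodal envelope $\tilde m$.
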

\begin{proof}[Proof of Claim.]
By definition $\int_0^\infty m\,dy=\left|\{(t,y)\in\mathbb{R}^2_{\geq0}:(G_t)_y\neq\emptyset\}\right|$. Recall that $|G_t|\geq \frac12|\co(G_t)|$ for all $t\in\mathbb{R}_{\geq0}$. For fixed $t$, $\{y\in\mathbb{R}_{\geq0}:(G_t)_y\neq\emptyset\}$ is the projection of (fairly convex) $G_t\subset\mathbb{R}^2$ onto one of the coordinate axis. The projection preserves the approximate convexity as follows, so that for fixed $t$, we have
$$\frac{|\{y\in\mathbb{R}_{\geq0}:(G_t)_y\neq\emptyset\}|}{ |\{y\in\mathbb{R}_{\geq0}:\co(G_t)_y\neq\emptyset\}|}\geq 1-\sqrt{\frac{|G_t|}{|\co(G_t)|}}\geq 1-\sqrt{1/2}\geq 1/4.$$
Integrating over $t$ proves the claim.
\end{proof}

If for a given $y$, we have $m(y)\geq \alpha$, then there exists an $x_y$ so that $g(x_y,y)\geq \alpha$. By definition of $h$, we find that
for at least $\frac12\leq 1-\eta$ of the $z\in [0,1]^2$, we have $f(z)\geq s_0$. Hence, for all $(x',y')\in [0,\lambda+(1-\lambda)r]\times [(1-\lambda)y, \lambda+(1-\lambda)y]$, we have some $z\in[0,1]^2$ so that 
 $$h(x',y')\geq h(\lambda z+ (1-\lambda)(x_y,y))\geq \M(f(z),g(x_y,y))\geq \M(s_0,\alpha).$$

Hence, we find 

$$\int_{[0,\lambda+(1-\lambda)r]}\int_{[(1-\lambda)y, \lambda+(1-\lambda)y]}h(x',y')\,dy'dx'\geq \frac{\lambda^2}{2} \M(s_0,\alpha)\geq \frac{\lambda^2}{2}\min\{s_0,m(y)\}\geq \frac{\lambda^2 s_0}{2s_1} m(y).$$
Integrating this inequality over $y$, we find that
$$\frac{\lambda^2 s_0}{2s_1}\int_{0}^\infty m(y)\,dy\leq \int_0^\infty \left(\int_{[0,\lambda+(1-\lambda)r]}\int_{[(1-\lambda)y, \lambda+(1-\lambda)y]}h(x',y')\,dy'dx'\right)\,dy=\frac{\lambda}{1-\lambda}\int_{[0,\lambda+(1-\lambda)r]}\int_{[0,\infty)}h(x',y')\,dy'dx'$$

Since $\int_{\mathbb{R}^2} h\,dx\leq 2\int_{\mathbb{R}^2} f\,dx$, this implies  that 
$$\int_0^\infty m\,dy=O_{\lambda,s_0,s_1}\left(\int_{\mathbb{R}^2} f\,dx\right).$$
Combining the last few steps, we find
\begin{align*}
\int_{0}^{s_1} \left|\mathcal{T}^{-1}(\co(G_t))\triangle \co(G_t)\right|\,dt&\leq O_{\lambda, p, s_0,s_1, \ell}\left(\sqrt{\delta}\right) \left|\{(t,y):\co(G_t)_y\neq\emptyset\}\right|\\
&\leq O_{\lambda, p, s_0,s_1, \ell}\left(\delta^{1/2}+\eta\right)\int m\,dy\\
&\leq O_{\lambda, p, s_0,s_1, \ell}\left(\delta^{1/2}+\eta\right)\int f\,dx.
\end{align*}
Putting everything together, we now find
\begin{align*}
\int |f-g|\,dx&\leq \int \left(|f-k|+|k-g|\right)\,dx\\
&\leq O_{\lambda,p, \ell, s_0,s_1}\left(\delta^{1/2}+\eta\right)\int f \,dx +\int_{0}^{s_1}|G_t\triangle K_t|\,dt\\
&\leq O_{\lambda,p, \ell, s_0,s_1}\left(\delta^{1/2}+\eta\right)\int f\,dx +\int_{0}^{s_1}\left(|G_t\triangle \co(G_t)|+|\co(G_t)\triangle \mathcal{T}^{-1}(\co(G_t))|+|\mathcal{T}^{-1}(\co(G_t)) \triangle \mathcal{T}^{-1}(G_t)|\right)\,dt\\
&\leq O_{\lambda,p, \ell, s_0,s_1}\left(\delta^{1/2}+\eta\right)\int f\,dx,
\end{align*}
which concludes the proof of the proposition.
\end{proof}

\subsection{Large level sets: Proof of \Cref{SymDiff_BigLevelSets}}

In order to proof this proposition, we will show that there exists a large ball in $F_{0.1}$ in which  $\co(F_{0.1})\setminus F_{0.1}$ is very small. Within this ball we can properly position the tubes needed to cover all of $F_{0.1}$ using the following lemma.

\begin{lem}\label{positioningwithinthe2dconelem}
    Given $\lambda \in (0,1/2]$, $p \in (-1/2, \infty)$, $r, \ell \in (0, \infty)$, where $r$ is sufficiently small,   there exists $d=d_{\lambda, p, r, \ell}>0$ such that for $0 \leq \delta < d$ the following holds. Let $w=(r/100,0)$. Let $C:=\{(x,y): x>0, |y|\leq x\}$. Let $B'$ be the ball of radius $r$ centred at the origin intersected with $C$. Let $f,g, h\colon C\rightarrow [0,1]$ be continuous functions with bounded support such that  
    \begin{itemize}
        \item $\int_{C} f\, dx= \int_{C} g\, dx $,
        \item $\int_{C}f\,dx \leq  \ell \int_{B' }f\,dx $,
        \item for all $x,y \in C$ we have $h(\lambda x +(1-\lambda)y) \geq M_{\lambda,p}(f(x),g(y))$,
        \item $\int_{C}h\,dx = (1+\delta) \int_{C} f\,dx$,
        \item $|B'\setminus F_{\beta}|+|B'\setminus G_{\beta}|\leq \delta$
    \end{itemize}
    Then for every unit vectors $f_1, f_2 \in\mathbb{R}^2$ with $\text{span}\{f_1, f_2\}= \text{span}\{e_1,e_2\}$, $|<f_1, e_1>| \leq 1/100$ and $|<f_1, f_2>| \geq 1/200$, there exists $v \in B(o,O_{ \lambda, p, r, \ell,\beta}\left(\sqrt{\delta}\right))$  and $r' \in [0, 2r]$so that 
    \begin{itemize}
        \item $\int_{w+R_{r/100}^{f_1,f_2}}f\,dx=\int_{w+v+R_{r'/100}^{f_1,f_2}}g\,dx$, and
        \item $\int_{w+v/2+R_{(r+r')/200}^{f_1,f_2}}h\,dx\leq (1+\delta)\int_{w+R_{r/100}^{f_1,f_2}}f\,dx$. 
    \end{itemize}
\end{lem}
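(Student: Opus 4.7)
The plan is to construct $v$ and $r'$ by a mass-matching continuity argument, and then derive the $h$-integral bound from the Borell--Brascamp--Lieb inequality applied on complementary half-planes together with the global deficit $\int_C h \leq (1+\delta) \int_C f$.

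For the existence of $v$ and $r'$, I would consider the continuous map $(v, r') \mapsto \int_{w + v + R_{r'/100}^{f_1, f_2}} g \, dx$. Fixing $r' = r$ and varying $v$ along the direction $f_1$ traverses a neighborhood of $\int_{w + R_{r/100}^{f_1, f_2}} f \, dx$: the transversality assumptions $|\langle f_1, e_1\rangle|\le 1/100$ and $|\langle f_1, f_2\rangle|\ge 1/200$ ensure that the tube base meets $B'$ in a set of area $\Omega_{r}(1)$ where both $f$ and $g$ are $\ge \beta$ off an exceptional set of measure at most $\delta$, so moving $v$ changes the $g$-mass linearly at rate $\Omega_{r, \beta}(1)$. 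The intermediate value theorem then produces the required $(v, r')$. To bound $|v|$, I would invoke \Cref{almostconvexlevelsets} and \Cref{aclcor}: these force the level sets of $f$ and $g$ to nearly coincide after a translation, and a one-dimensional transport argument along $f_1$, mirroring \Cref{symmetric_diff_1_half}, converts this into $|v| = O_{\lambda, p, r, \ell, \beta}(\sqrt{\delta})$.

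For the $h$-bound, I would apply Borell--Brascamp--Lieb on the complementary half-planes $H_f := C \setminus (w + R_{r/100}^{f_1, f_2})$ and $H_g := C \setminus (w + v + R_{r'/100}^{f_1, f_2})$, which have equal $f$- and $g$-mass by construction. The Minkowski combination satisfies
\begin{equation*}
  \lambda(w + R_{r/100}^{f_1, f_2}) + (1-\lambda)(w + v + R_{r'/100}^{f_1, f_2}) = w + (1-\lambda)v + R_{(\lambda r + (1-\lambda)r')/100}^{f_1, f_2} \subset w + v/2 + R_{(r+r')/200}^{f_1, f_2},
\end{equation*}
where the center asymmetry $(1/2 - \lambda)v$ and any width mismatch are absorbed by the slack $(r + r')/200 - (\lambda r + (1-\lambda)r')/100 \ge 0$ together with the smallness of $|v|$. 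The BBL inequality then gives
\begin{equation*}
  \int_{C \setminus (w + v/2 + R_{(r+r')/200}^{f_1, f_2})} h \, dx \ge \int_{H_f} f \, dx,
\end{equation*}
and subtracting this from $\int_C h \leq (1+\delta) \int_C f$ yields the desired upper bound on the $h$-integral over the tube.

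The main obstacle I anticipate is coordinating $v$ and $r'$ so that the mass-matching equation is solved \emph{simultaneously} with the Minkowski-sum containment at the precise widths $(r+r')/200$. For $\lambda < 1/2$ the natural Minkowski center is $w + (1-\lambda)v$ rather than $w + v/2$, and the natural width is $(\lambda r + (1-\lambda)r')/100$ rather than $(r+r')/200$; both mismatches must be absorbed at once. I expect this to be handled by a bootstrap: first solve for $v$ with $r' = r$ fixed, producing $|v| = O(\sqrt{\delta})$, and then perturb $r'$ by $O(\sqrt{\delta})$ to absorb the asymmetries while remaining in $[0, 2r]$ and re-matching the masses via a second application of the intermediate value theorem.
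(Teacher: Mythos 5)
Your proposal diverges from the paper's argument in a way that introduces two genuine gaps, one in the $h$-bound and one in the construction of $v$ and $r'$.

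The central problem is your BBL step on the "complementary half-planes" $H_f := C \setminus (w + R_{r/100}^{f_1, f_2})$ and $H_g := C \setminus (w + v + R_{r'/100}^{f_1, f_2})$. These sets are not half-planes: each is a disjoint union of the region below the tube's base plus the two regions on either side of the tube. In particular they are not convex, and while BBL itself does not require convexity, your subsequent step does --- you need $\lambda H_f + (1-\lambda)H_g \subset C \setminus (w+v/2+R_{(r+r')/200}^{f_1,f_2})$ to turn the BBL lower bound into an upper bound on the middle tube. This containment fails badly: the cross terms, e.g.\ $\lambda \cdot (\text{left piece of } H_f) + (1-\lambda)\cdot(\text{right piece of } H_g)$, sweep straight across the middle tube, so $\lambda H_f + (1-\lambda)H_g$ is essentially all of $C$ and the subtraction yields nothing useful. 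The fix requires applying the estimate to each convex piece of the complement separately, but then you need mass matching \emph{on each piece}, not only on the tube. This is exactly why the paper constructs $v$ and $r'$ by cutting with three consecutive hyperplanes ($G_1$ for the base, $G_2$ and $G_4$ for the two sides), each chosen so that the $f$-mass and $g$-mass agree on the corresponding cumulative half-plane intersection, and then controls the distance between each pair of matched hyperplanes via \Cref{SimilarSupportLem} applied to those bounded convex regions. Your intermediate value argument only solves the single scalar equation $\int_{w+R_{r/100}^{f_1,f_2}}f = \int_{w+v+R_{r'/100}^{f_1,f_2}}g$, which is the weaker condition actually recorded in the first bullet; it does not produce the per-piece mass matching needed for the $h$-bound, nor does it pin down $v$ at the $O(\sqrt{\delta})$ scale (the appeal to \Cref{almostconvexlevelsets} and \Cref{aclcor} tells you the level sets of $f$ and $g$ are each nearly convex, but does not by itself locate the translation between their supports; that is precisely what \Cref{SimilarSupportLem} quantifies).

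A secondary issue: your claimed inclusion $\lambda(w + R_{r/100}^{f_1, f_2}) + (1-\lambda)(w + v + R_{r'/100}^{f_1, f_2}) \subset w + v/2 + R_{(r+r')/200}^{f_1, f_2}$ computes the slack in width as $(r+r')/200 - (\lambda r + (1-\lambda)r')/100 = (1-2\lambda)(r-r')/200$, which is \emph{negative} when $\lambda < 1/2$ and $r'>r$ --- and $r'>r$ is entirely possible (the construction gives only $r' = r \pm O(\sqrt{\delta})$). You acknowledge a mismatch but the proposed bootstrap (re-solve for $r'$ by a second IVT) doesn't clearly resolve it, since perturbing $r'$ couples back into both the mass matching and the center asymmetry $(1/2-\lambda)v$. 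The paper sidesteps this by never invoking such a Minkowski containment of the tubes; it works only with half-plane cuts, where $\lambda G_i^{\pm} + (1-\lambda)(G_i')^{\pm} = (\lambda G_i + (1-\lambda)G_i')^{\pm}$ is exact and the side of the half-plane is preserved automatically.
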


To then cover the $F_{0.01}$ we use the following observation.

\begin{obs}\label{coveringaballwithtubes}
    For every $m \in (0, \infty)$ there exists $j =O_{r}(m)$ such that  the following holds. There exists a collection of pairs of unit vectors $\{f_1^i,f_2^i\}_{i=1}^j \subset \mathbb{R}^2$ such that  $|<f_1^i, e_1>| \leq 1/100$ and $|<f_1^i, f_2^i>| \geq 1/200$ and $\bigcup_i R_{r}^{f_1^i,f_2^i} \supset B(0, m)$.
\end{obs}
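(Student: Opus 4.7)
The plan is to construct $j = O_r(m)$ direction pairs $(f_1^i, f_2^i)$ indexed by $N = \lceil Cm/r \rceil$ outgoing directions evenly spread around the unit circle, so that the tubes $R_r^{f_1^i, f_2^i}$ anchored at the origin jointly cover $B(o, m)$. The underlying picture is that each non-degenerate tube $R_r^{f_1^i, f_2^i}$ is a semi-infinite parallelogram of width $r$ emanating from the origin in the direction $f_2^i$, and occupies angular width $\Omega(r/m)$ at the outer rim $\partial B(o,m)$. Fanning $f_2^i$ through $N = O(m/r)$ evenly spaced directions therefore provides the full angular coverage needed.

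\textbf{Construction.} Set $\alpha_i = 2\pi i / N$ and $f_2^i = (\cos\alpha_i, \sin\alpha_i)$ for $i = 1,\dots,N$. Choose $f_1^i$ by a short case split: if $|\sin\alpha_i| \geq 1/200$ take $f_1^i = e_2$, giving $\langle f_1^i, e_1\rangle = 0$ and $|\langle f_1^i, f_2^i\rangle| = |\sin\alpha_i| \geq 1/200$; otherwise take
$$f_1^i := \bigl(\operatorname{sign}(\cos\alpha_i)/100,\ \sqrt{1 - 1/10^4}\bigr),$$
so that $|\langle f_1^i, e_1\rangle| = 1/100$ and, using $|\cos\alpha_i| \geq \sqrt{1 - 1/40000}$, one obtains $|\langle f_1^i, f_2^i\rangle| \geq |\cos\alpha_i|/100 > 1/200$. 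Both constraints of the observation are thus met. Duplicate the family by replacing $f_1^i$ with $-f_1^i$ (neither constraint changes) to cover both ``sides'' of the origin; the total count remains $2N = O_r(m)$.

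\textbf{Verification.} For any $p \in B(o,m) \setminus \{o\}$ with polar angle $\alpha_p$, pick the index $i$ (and the sign of $f_1^i$) for which $|\alpha_p - \alpha_i| \leq \pi/N$ and the half-plane orientations are correct for $p$. Decomposing $p = s f_1^i + t f_2^i$ in the basis $(f_1^i, f_2^i)$ gives closed formulas $s = \langle p, (f_2^i)^\perp\rangle / \langle f_1^i, (f_2^i)^\perp\rangle$ and $t = \langle p, f_2^i\rangle - s\,\langle f_1^i, f_2^i\rangle$. Since $|\langle p, (f_2^i)^\perp\rangle| = |p|\,|\sin(\alpha_p - \alpha_i)| \leq m\pi/N$, choosing $C$ large enough forces $s \in [0, r]$, and the sign choice makes $t \geq 0$. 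Hence $p \in R_r^{f_1^i, f_2^i}$.

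\textbf{Main obstacle.} The principal technical issue is bounding $|\langle f_1^i, (f_2^i)^\perp\rangle| = \sqrt{1 - \langle f_1^i, f_2^i\rangle^2}$ from below, as this quantity is the ``effective perpendicular width factor'' of the tube. Since $f_1^i$ must be nearly vertical by $|\langle f_1^i, e_1\rangle| \leq 1/100$, the tube degenerates precisely when $f_2^i$ is also nearly vertical: in that regime $|\langle f_1^i, (f_2^i)^\perp\rangle|$ can shrink to as little as $1/100$, inflating the upper bound on $|s|$ by a factor of $100$. This forces the absolute constant $C$ to absorb such losses, but does not affect the asymptotic count $j = O_r(m)$.
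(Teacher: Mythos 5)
Your fan-out construction with $N = O(m/r)$ equally spaced directions is the right idea, and reducing the coverage check to bounding $|s| = |\langle p, (f_2^i)^\perp\rangle| / |\langle f_1^i, (f_2^i)^\perp\rangle|$ is the correct computation. But the case split you use to choose $f_1^i$ leaves a genuine gap. The branch $|\sin\alpha_i| \geq 1/200$, where you set $f_1^i = e_2$, includes every $\alpha_i$ near $\pm\pi/2$, i.e.\ every direction in which $f_2^i$ is itself nearly vertical. There $f_1^i$ and $f_2^i$ become nearly parallel, so $|\langle f_1^i, (f_2^i)^\perp\rangle| = |\cos\alpha_i|$ can be arbitrarily small --- it is literally zero whenever $4 \mid N$, in which case some $f_2^i = e_2 = f_1^i$ and the corresponding tube degenerates to a ray --- and your estimate $|s| \leq m\pi/(N|\cos\alpha_i|)$ blows up. The hypothesis $|\langle f_1^i, f_2^i\rangle| \geq 1/200$ in the statement does not save you: it rules out $f_1^i \perp f_2^i$, not $f_1^i \parallel f_2^i$. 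Your ``Main obstacle'' paragraph, which claims $|\langle f_1^i, (f_2^i)^\perp\rangle|$ ``can shrink to as little as $1/100$,'' is therefore wrong for the construction as written: that lower bound would require $f_1^i$ to carry the maximal permitted horizontal component, whereas you take $f_1^i$ exactly vertical in precisely the regime where the lower bound is needed. As a result, coverage fails outright for points with polar angle near $\pm\pi/2$.

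The repair is to never set $f_1^i = e_2$, but to always tilt $f_1^i$ by the maximal allowed amount with an adaptively chosen sign. Parametrise $f_1^i = (\sin\psi_i, \cos\psi_i)$ with $|\psi_i| \leq \arcsin(1/100)$, and set $\beta_i := \psi_i + \alpha_i$. Then $\langle f_1^i, f_2^i\rangle = \sin\beta_i$ and $\langle f_1^i, (f_2^i)^\perp\rangle = \pm\cos\beta_i$, so the two conditions you need (the stated hypothesis $|\langle f_1^i, f_2^i\rangle| \geq 1/200$ and the non-degeneracy of the tube) reduce to a single one: $\beta_i$ should lie at distance at least $1/200$, say, from every integer multiple of $\pi/2$. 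The parameter $\psi_i$ ranges over an interval of length $2\arcsin(1/100) \approx 1/50$, and the forbidden neighbourhoods of consecutive multiples of $\pi/2$ are separated by $\approx \pi/2$, so taking $\psi_i = \pm\arcsin(1/100)$ with the sign pushing $\beta_i$ away from the nearest multiple of $\pi/2$ always works and yields $\min(|\sin\beta_i|, |\cos\beta_i|) \geq 1/200$. With this change the rest of your verification goes through, with the constant $C$ in $N = \lceil Cm/r\rceil$ now absorbing a factor of at most $200$ rather than an unbounded loss.
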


We first prove \Cref{positioningwithinthe2dconelem}

\begin{proof}[Proof of \Cref{positioningwithinthe2dconelem}]
We construct the appropriate $r'$ and $v$ by cutting with consecutive hyperplanes which we show to be close together. First consider the halfspace $G_1^{-}$ containing the origin with defining hyperplane $G_1:=w+\mathbb{R}\times \{0\}$, where $\mathbb{R}\times \{0\}$ is considered in the basis $f_1,f_2$, i.e., this is the hyperplane $w+\mathbb{R}f_1$ in either base. Let $G_1'$ be the parallel hyperplane with the property that
$$\int_{C\cap G_1^{-}} f\,dx =\int_{C\cap G_1'^{-}} g\,dx.$$
We shall show that the distance between these two planes is at most $O_{\lambda,p,\ell}\left(\sqrt{\delta}\right)$. 
First note that we have
$$\int_{C\cap (\lambda G_1^-+(1-\lambda)G_1'^-)}h\,dx-\int_{C\cap G_1^-}f\,dx\leq \delta\int_{C}f\,dx\leq O_{\ell}(\delta)\int_{C\cap G_1^-}f\,dx,$$
where in the last step we used that $C\cap G_1^-\subset C\cap B'$ and $|C\cap G_1^-|=\Omega(|C\cap B'|)$. Hence, by \Cref{SimilarSupportLem}, we find that 
$$|(C\cap G_1^-)\triangle(C\cap G_1'^-)|=|(supp (f\mid_{C\cap G_1^-}))\triangle (supp (g\mid_{C\cap G_1'^-}))|=O_{p,\lambda,\ell}\left(\sqrt{\delta}\right),$$
which of course implies the planes are distance at most $O_{p,\lambda,\ell}\left(\sqrt{\delta}\right)$ apart.

We proceed similarly for the two other hyperplanes defining $w+R_{r/100}^{f_1,f_2}$. 
Consider the hyperplane $G_2:=w+\{0\}\times\mathbb{R}$ in the basis $f_1,f_2$, i.e., the hyperplane $w+\mathbb{R}f_2$ in the basis $e_1,e_2$. As before Let $G_2'$ be the parallel hyperplane so that for the parallel halfspaces $G_2^+$ and $(G_2')^+$, we have
$$\int_{C\cap G_1^+\cap G_2^+}f\,dx=\int_{C\cap (G'_1)^+\cap (G'_2)^+}g\,dx.$$
In order to show that these hyperplanes are (again) close together, we would like to use (again) \Cref{SimilarSupportLem}, so we restrict our attention further to the bit of the domain where $f(x)\geq 0.1$ as follows.  Consider the hyperplane $G_3$ parallel to $G_1$ containing $2w$, i.e., $w+\mathbb{R}f_1$ and $G_3'$ parallel to these hyperplanes so that
$$\int_{C\cap G_1^+\cap G_2^+\cap G_3^-}f\,dx=\int_{C\cap (G'_1)^+\cap (G'_2)^+\cap (G_3')^-}g\,dx.$$
Since $C\cap G_3^-$ and $C\cap (G_3')^-$ are subsets of $C\cap B$, we still have that the functions $f$ and $g$ take values in $[0.1,1]$ on most of these domains. Since $|C\cap G_1^+\cap G_2^+\cap G_3^-|=\Omega_{r}(|C\cap B|)$, we have 
$$\int_{C\cap G_1^+\cap G_2^+\cap G_3^-}f\,dx\geq \Omega_{r}(|C\cap B|)\geq \Omega_{r}\left( \int_{C\cap B}fdx\right)\geq \Omega_{r,\ell}\left( \int_{ C}f\,dx\right),$$
so that 
$$\int_{\lambda(C\cap G_1^+\cap G_2^+\cap G_3^-)+(1-\lambda)(C\cap (G'_1)^+\cap (G'_2)^+\cap (G_3')^-)}h\,dx-\int_{C\cap G_1^+\cap G_2^+\cap G_3^-}f\,dx\leq \delta\int_C f\,dx=\Omega_{r,\ell}\left(\int_{C\cap G_1^+\cap G_2^+\cap G_3^-}f\,dx\right).$$
Analogous to the previous application, by \Cref{SimilarSupportLem}, we find that up to translation 
$$|(C\cap G_1^+\cap G_2^+\cap G_3^-)\triangle (C\cap (G'_1)^+\cap (G'_2)^+\cap (G_3')^-)|=O_{p,\lambda,\ell}\left(\sqrt{\delta}\right).$$
In particular, that implies that indeed $G_2$ and $G_2'$ are at most a distance $O_{p,\lambda,\ell}\left(\sqrt{\delta}\right)$ apart. Finally, let $G_4$ be the remaining hyperplane defining $w+R_{r/100}^{f_1,f_2}$, i.e., the plane parallel to $G_2$ so that $w+R_{r/100}^{f_1,f_2}=G_1^+\cap G_2^+\cap G_4^-$. Analogously to $G_2$ and $G_2'$, we find that $G_4$ and $G_4'$ are at distance at most $O_{p,\lambda,\ell}\left(\sqrt{\delta}\right)$ from one another, which allows us to conclude.

Let $v\in B\left(o,O_{\lambda,p,r,\ell}\left(\sqrt{\delta}\right)\right)$ so that $v+G_1=G_1'$ and $v+G_2=G_2'$. Let $r'$ be so that $w+v+R_{r'/100}^{f_1,f_2}=(G_1')^+\cap (G_2')^+\cap (G_4')^-$, which thus implies $r'=r\pm O_{\lambda,p,r,\ell}\left(\sqrt{\delta}\right)$
\end{proof}

\begin{proof}[Proof of \Cref{SymDiff_BigLevelSets}]
For notational convenience normalise so that $\int f\,dx=\int g\,dx=1$ by rescaling the domain. We may assume $\sup{f(x)}=1$.

Analogously to \Cref{initialcleanupreductionsection} where we show that \Cref{symmetric_diff} follows from \Cref{symmetric_diff_technical}, we may additionally assume that  for $i=1,2,3$ we have (for $C_i=C_n^i$ as defined in \Cref{Cnidefn})
$$\int_{C_i}f\,dx=\int_{C_i}g\,dx,$$
and for some $r,\beta=\Omega_{\lambda,p}(1)$, we have that if we let $B'=B(o,r)$ the ball of radius $r$ centred at the origin, then
$$|B'\setminus F_\beta|+|B'\setminus G_\beta|=O_{\lambda,p}(\delta)\text{ and }|\co(F_{\beta})|\leq 2|F_\beta|\leq 2\beta^{-1}=O_{\lambda,p}(1).$$
Note that by Borell-Brascamb-Lieb, we have that $\int_{C_i}h\,dx\geq \int_{C_i}f\,dx$ and thus
$$\int_{C_i}h\,dx-\int_{C_i}f\,dx\leq \delta \int f\,dx\leq O_{\lambda,p}(\delta)\int_{C_i}f\,dx,$$
where in the last inequality we used that each of the cones contains a $\Omega_{\lambda,p}(1)$ proportion of $\int f\,dx$ (even when restricted to $C_i\cap B'$.

We consider each of the cones separately; in particular, we consider $C_1=\{(x,y): x>0, |y|\leq x\}$ and the others will follow analogously. Let $w=(r/100,0)$. Since $|B'\cap C_1\cap F_{\beta}|\geq \Omega_{\lambda,p}(1)$ and $|\co(F_\beta)|=O_{\lambda,p}(1)$, we find that there is an $m=O_{\lambda,p}(1)$ so that $F_\beta\subset B(w, m)$.  Apply \Cref{coveringaballwithtubes} with parameters $r/100$ and $m$  to find $j=O_{\lambda,p}(1)$ a collection $\{f_1^i,f_2^i\}_{i=1}^j$ so that $B'\cap C_1\subset w+\bigcup_i R_{r/100}^{f_1^i,f_2^i}$. By \Cref{positioningwithinthe2dconelem}, we can find $v_i\in B\left(o,O_{\lambda,p}\left(\sqrt{\delta}\right)\right)$ so that 
\begin{itemize}    \item$\int_{w+R_{r/100}^{f^i_1,f^i_2}}f\,dx=\int_{w+v_i+R_{r'/100}^{f^i_1,f^i_2}}g\,dx$, and
    \item $\int_{w+v_i/2+R_{(r+r')/200}^{f^i_1,f^i_2}}h\,dx\leq (1+\delta)\int_{w+R_{r/100}^{f^i_1,f^i_2}}f\,dx$. 
\end{itemize}
Since $w+R_{r/100}^{f^i_1,f^i_2}$ intersects $B'$ in a big part, at least a $1/\ell=\Omega_{\lambda,p}(1)$ proportion of $f$ and $g$ is concentrated near the base of the tube $w+R_{r/100}^{f^i_1,f^i_2}$. Hence, we can apply \Cref{symmetric_diff_2_tube} with $s_1=1$, $s_0=\beta\geq \Omega_{\lambda,p}(1)$, and $\eta=O_{\lambda, p}(\delta)$, we find that 
$$\int_{w+R_{r/100}^{f^i_1,f^i_2}}|f(x)-g(x-v_i)|\,dx=O_{\lambda,p}\left(\sqrt{\delta}\right)\int f\,dx$$
 
Using that $v_i\in B\left(o,O_{\lambda,p}\left(\sqrt{\delta}\right)\right)$ we find by \Cref{smalltranslationsmallsymdiflem} that;
$$\int_{w+R_{r/100}^{f^i_1,f^i_2}}|f(x)-g(x)|\,dx=O_{\lambda,p}\left(\sqrt{\delta}\right)\int f\,dx$$
Summing this over all $i$ gives that
\begin{align*}
\int_{F_{\beta}}|f-g|\,dx&\leq \int_{B(m,w)}|f-g|\,dx\leq \int_{w+\bigcup_i R_{r/100}^{f_1^i,f_2^i}}|f-g|\,dx\leq \sum_i  \int_{w+ R_{r/100}^{f_1^i,f_2^i}}|f-g|\,dx\\
&\leq j\cdot O_{\lambda,p}\left(\sqrt{\delta}\right)\int f\,dx=O_{\lambda,p}\left(\sqrt{\delta}\right)\int f\,dx.
\end{align*}
Since $F_{0.1}\subset F_\beta$, this concludes the proof of the proposition.
\end{proof}

\subsection{Gluing together level sets: Proof of \Cref{symmetric_diff_2D}}
In this section, we'll see that applying \Cref{SymDiff_BigLevelSets} to the cut off functions $\min\{f,2^i\}$ shows that all level sets of $f$ have small symmetric difference to level sets of $g$ up to translation. Careful analysis allows us to conclude that those translations may all be assumed to be the same. The main idea is that \Cref{SymDiff_BigLevelSets} implies that level sets are close together 

\begin{proof}[Proof of \Cref{symmetric_diff_2D}]
As ever, consider the level sets $F_t,G_t,H_t$ and transport map $T\colon [0,1]\to[0,1]$ so that $\int_0^t |F_s|\,ds=\int_0^{T(t)} |G_s|\,ds$. Recall that by \Cref{almostconvexlevelsets}, we have that
$$\int \left(|\co(F_t)\setminus F_t|+|\co(G_t)\setminus G_t|\right)\,dt=O_{\lambda,p}(\delta)\int f\,dx$$
A particular consequence of \Cref{SymDiff_BigLevelSets} is that for some translate $v_0$, we have
$$\int_{0.1}^1 |F_t\triangle (v_0+G_t)|\,dt=O_{\lambda,p}\left(\sqrt{\delta}\right).$$
In order to extend this to all level sets, consider the functions
$$f_i:=\min\{f,2^{-i}\}, g_i:=\min\{g,T(2^{-i})\}, h_i:=\min\{h,\M(2^{-i},T(2^{-i}))\}, $$
so that $h_i\leq \M^*(f_i,g_i)$. Let $\delta_i:=\delta \frac{\int f\,dx}{\int f_i\,dx}$ (so that $\delta_i$ is non-decreasing in $i$) and note that\footnote{For more elaboration on the second inequality see \Cref{horizontalcutdecreasingdoubling}. }
$$\int h_i\,dx\leq \int h\,dx -\int (h-h_i)\,dx\leq \int h\,dx- \int (f-f_i)\,dx\leq \int f_i\,dx +\delta \int f\,dx\leq (1+\delta_i)\int f_i\,dx.$$

In the end we will only consider $f_i$ so that $\int f_i\,dx\geq \sqrt{\delta}\int f\,dx$ (which we'll see has $i=O_{\lambda,p}(-\log(\delta))$; beyond that it's trivial to control the symmetric difference. Hence, we may continue to assume $\delta_i$ is uniformly sufficiently small in terms of $\lambda$ and $p$ whenever we need it.

Since the level sets of $f_i$ are just the level sets of $f$ up to $2^{-i}$, we can again apply \Cref{SymDiff_BigLevelSets} to find a translate $v_i$
$$\int_{0.1\cdot 2^{-i}}^{2^{-i}} |F_t\triangle (v_i+G_t)|\,dt=O_{\lambda,p}\left(\sqrt{\delta_i}\right)\int f_i\,dx.$$
To get control over $v_i$ we note that consecutive integrals overlap significantly. Indeed, we have that
$$\int_{0.1\cdot 2^{-i}}^{0.5\cdot 2^{-i}} \left(|F_t\triangle (v_i+G_t)|+|F_t\triangle (v_{i+1}+G_t)|\right)\,dt\leq O_{\lambda,p}\left(\sqrt{\delta_{i+1}}\right)\int f_i\,dx,$$
so that  by the triangle inequality, we have
$$\int_{0.1\cdot 2^{-i}}^{0.5\cdot 2^{-i}} |F_t\triangle (v_i-v_{i+1})+F_t)|\,dt\leq O_{\lambda,p}\left(\sqrt{\delta_{i+1}}\right)\int f_i\,dx,$$
and recalling that $\int|\co(F_t)\setminus F_t|\,dt\leq O_{\lambda,p}(\delta)\int f\,dx$, this implies 
$$\int_{0.1\cdot 2^{-i}}^{0.5\cdot 2^{-i}} |\co(F_t)\triangle (v_i-v_{i+1})+\co(F_t))|\,dt\leq O_{\lambda,p}\left(\sqrt{\delta_{i+1}}\right)\int f_i\,dx+O_{\lambda,p}(\delta)\int f\,dx\leq O_{\lambda,p}\left(\sqrt{\delta_{i+1}}\right)\int f_i\,dx.$$
This implies that $v_i-v_{i+1}$ is small from the perspective of $F_{0.1\cdot 2^i}$. Indeed, if some vector $v$ is not in $\eta K$ for some (centered) convex set $K\subset\mathbb{R}^2$, then for any subset $X\subset K$, we have $|X\triangle (v+X)|\geq \Omega(\eta)|X|$. Hence, combining the above inequality with $\int_{0.1\cdot 2^{-i}}^{0.5\cdot 2^{-i}} |\co(F_t)|\,dt=\Omega_{\lambda,p}\left( \int f_i\,dx\right)$ and $\co(F_t)\subset\co(F_{0.1\cdot 2^{-i}})$ for all $t\in[0.1\cdot 2^{-i},0.5\cdot 2^{-i}]$, we derive that
$$v_i-v_{i+1}\in O_{\lambda,p}\left(\sqrt{\delta_i}\right)\co(F_{0.1\cdot 2^{-i}}).$$

By induction, we can derive that (aiming to have the same translate throughout)
$$v_j-v_0=\sum_{i=0}^{j-1} v_{i+1}-v_i\in \sum_{i=0}^{j-1}O_{\lambda,p}\left(\sqrt{\delta_i}\right)\co(F_{0.1\cdot 2^{-i}})\subset j\cdot O_{\lambda,p}\left(\sqrt{\delta_j}\right)\co(F_{0.1\cdot 2^{-j}}),$$
where in the last containment we use that $\delta_i$ is increasing in $i$ and $\co(F_{0.1\cdot 2^{-i}})$ form a nested sequence of sets.
Returning to the symmetric difference, this implies
$$\int_{2^{-i}}^{0.5\cdot 2^{-i}} |F_t\triangle (v_0+G_t)|\,dt\leq \int_{2^{-i}}^{0.5\cdot 2^{-i}} \left(|F_t\triangle (v_i-v_0)+F_t|+|F_t\triangle (v_i+G_t)|\right)\,dt,$$
where the latter is easily bounded and the former is controlled since (again) $|\co(F_t)\setminus F_t|$ is small. Indeed, we have 
$$|F_t\triangle (v_i-v_0)+F_t|\leq |\co(F_t)\triangle (v_i-v_0)+\co(F_t)|+2|\co(F_t)\setminus F_t|, $$
and for all $t\geq 0.1\cdot 2^{-i}$, since $v_i-v_0\in i\cdot O_{\lambda,p}\left(\sqrt{\delta_i}\right)\co(F_{0.1\cdot 2^{-i}})$ we have 
$$|\co(F_t)\triangle (v_i-v_0)+\co(F_t)|\leq i\cdot O_{\lambda,p}\left(\sqrt{\delta_i}\right)|\co(F_{0.1\cdot 2^{-i}})|\leq i\cdot O_{\lambda,p}\left(\sqrt{\delta_i}\right)|F_{0.1\cdot 2^{-i}}|.$$
Hence, we find
$$\int_{2^{-i}}^{0.5\cdot 2^{-i}} |F_t\triangle (v_i-v_0)+F_t|\,dt\leq 2^{-i}\cdot i\cdot O_{\lambda,p}\left(\sqrt{\delta_i}\right)|F_{0.1\cdot 2^{-i}}|\leq i\cdot O_{\lambda,p}\left(\sqrt{\delta_i}\right)\int f_i\,dx,$$
and thus
$$\int_{2^{-i}}^{0.5\cdot 2^{-i}} |F_t\triangle (v_0+G_t)|\,dt\leq  (i+1)\cdot O_{\lambda,p}\left(\sqrt{\delta_i}\right)\int f_i\,dx+O_{\lambda,p}(\delta)\int f\,dx\leq i\cdot O_{\lambda,p}\left(\sqrt{\delta_i}\right)\int f_i\,dx.$$

Recalling the definition of $\delta_i$ and summing over all $i\leq i_0$, where we pick $i_0$ minimal so that $\int f_i\leq\sqrt{\delta}\int f\,dx$, we find
\begin{align*}
    \int_{0}^1 |F_t\triangle (G_t+v_0)|\,dt&=\int f_{i_0}\,dx+\sum_{i=0}^{i_0} \int_{2^{-(i+1)}}^{ 2^{-i}} |F_t\triangle (v_0+G_t)|\,dt\\
    &\leq \int f_{i_0}\,dx+\sum_{i=0}^{i_0} i\cdot O_{\lambda,p}\left(\sqrt{\delta_i}\right)\int f_i\,dx\\
    &\leq O_{\lambda,p}\left(\sqrt{\delta}\right)\int f \,dx\cdot\left[ \sum_{i=0}^{i_0} i\cdot \sqrt{\frac{\int f_i\,dx}{\int f\,dx}}\right].
\end{align*}
As we've used several times before, as long as $\int h_i\,dx\leq (1+\eta)\int f_i\,dx$ for some $\eta\ll_{\lambda,p}1$ (which is the case for $i\leq i_0$, we find that 
$$\int f_{i}\,dx-\int f_{i+1}\,dx=\int_{2^{-(i+1)}}^{2^{-i}}|F_t|\,dt\geq \Omega_{\lambda,p}\left(\int f_i\,dx\right).$$ Hence, the $\int f_{i}\,dx$ form a geometric sequence (as long as $i\leq i_0$) and thus $ \sum_{i=0}^{i_0} i\cdot \sqrt{\frac{\int f_i\,dx}{\int f\,dx}}=O_{\lambda,p}(1)$. This allows us to conclude
$$\int_{x\in\mathbb{R}^2} |f(x)-g(x-v_0)|\,dx=\int_{0}^1 |F_t\triangle (G_t+v_0)|\,dt=O_{\lambda,p}\left(\sqrt{\delta}\right)\int f\,dx .$$
\end{proof}

\section{Proof of \Cref{symmetric_diff} in $\mathbb R^n$}
\label{sect:proof thm1}

\subsection{Reduction from \Cref{symmetric_diff} to \Cref{symmetric_diff_reformulation_technical}}

Thanks to the arguments in \Cref{ndimensionalsymdiffsect}, to show the validity of \Cref{symmetric_diff} in $\mathbb R^n$ it suffices to prove \Cref{symmetric_diff_technical}.
We first show that the latter is implies by the following resut.

\begin{prop}\label{symmetric_diff_reformulation_technical}
    Given $n \in \mathbb{N}$, $i \in [0,n]$, $\lambda \in (0,1/2]$, $p \in (-1/n, \infty)$ and $r,\beta\in (0, \infty)$ there exists $d=d_{n, \lambda, p, r,\beta}>0$ such that the following holds. Let $f,g, h\colon C_n^i\rightarrow [0,1]$ be continuous functions with bounded support such that  
    \begin{itemize}
        \item $\int_{C_n^i} f\, dx=  \int_{C_n^i} g\,dx=1$,  
        \item $\int_{C_n^i}h\,dx = 1+\delta$ for some $0\leq \delta \leq d$,
        \item $|\{x\in rS_n\cap C_n^i: f(x)<\beta\text{ or }g(x)<\beta\}|\leq \delta$, and
        \item for all $x,y \in \mathbb{R}^n$ we have $h(\lambda x +(1-\lambda)y) \geq M_{\lambda,p}(f(x),g(y))$.
    \end{itemize}
    Then $\int_{C_n^i} |f-g|\,dx=O_{n,\lambda, p, r,\beta} \left(\sqrt{\delta}\right)$. 
\end{prop}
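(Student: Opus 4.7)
The plan is to reduce to the two-dimensional \Cref{symmetric_diff_2D} by the dimension-reduction strategy outlined in Section~\ref{sect:structure thm 1}. First I would apply the narrow cone partition lemma of \cite{BMStab} to subdivide $C_n^i$ into finitely many subcones $\{C_j\}$, each chosen with a distinguished $2$-plane $\mathrm{span}\{z,w\}$ and so narrow in the remaining $n-2$ directions that, on each $(n-2)$-dimensional fiber $C_j^{z,w}$, the oscillations of $f$ and $g$ are negligible. The partition can be chosen to respect the mass balance $\int_{C_j}f\,dx = \int_{C_j}g\,dx$ and to ensure that $f,g\geq \beta$ on most of $rS_n\cap C_j$, with total wasted mass $O(\delta)$.

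Next, in each $C_j$ I would define the projected two-dimensional functions
$$F(z,w) := |C_j^{z,w}|\cdot \inf_{x\in C_j^{z,w}} f(x),\quad G(z,w):=|C_j^{z,w}|\cdot \inf_{x\in C_j^{z,w}} g(x),\quad H(z,w):=|C_j^{z,w}|\cdot \sup_{x\in C_j^{z,w}} h(x),$$
so that $\int F\,dz\,dw \approx \int_{C_j}f\,dx$ and $\int |F-G|\,dz\,dw \approx \int_{C_j}|f-g|\,dx$, the approximation errors being absorbed by the fiberwise oscillation bound. The crucial analytic point is that the map $(z,w)\mapsto |C_j^{z,w}|$ is $1/(n-2)$-concave on its support, because $C_j$ is convex. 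Combined with the hypothesis $h(\lambda x+(1-\lambda)y)\geq M_{\lambda,p}(f(x),g(y))$, a fiberwise H\"older--Pr\'ekopa--Leindler argument shows that
$$H(\lambda z+(1-\lambda)z',\lambda w+(1-\lambda)w')\geq M_{\lambda,q}(F(z,w),G(z',w')),$$
with $q = \frac{p}{1+(n-2)p}$. Crucially, $p>-1/n$ forces $q>-1/2$, so $q$ lies exactly in the range covered by \Cref{symmetric_diff_2D}.

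Then I would apply \Cref{symmetric_diff_2D} to the triple $(F,G,H)$ in each $C_j$ to obtain, up to a $2$D translation, $\int |F-G|\,dz\,dw = O_{\lambda,p}(\sqrt{\delta_j})\int F\,dz\,dw$, where $\delta_j$ is the local deficit with $\sum_j \delta_j\int_{C_j}f\,dx = O(\delta)\int_{C_n^i}f\,dx$. To remove the translation in each subcone, I would use that a positive fraction of fibers passing through $rS_n\cap C_j$ carry $f,g\geq \beta$, so that $F$ and $G$ are both $\Omega(\beta)$ on a $2$D neighborhood of the $2$D origin; combining this with \Cref{smalltranslationsmallsymdiflem} in the spirit of \Cref{positioningwithinthe2dconelem} shows that the translation may be taken to be zero at the cost of a lower-order error. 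Summing over the finitely many $C_j$ and using Cauchy--Schwarz on $\sum_j \sqrt{\delta_j}$ yields $\int_{C_n^i}|f-g|\,dx = O_{n,\lambda,p,r,\beta}(\sqrt{\delta})$.

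The main obstacle is the fiberwise H\"older--Pr\'ekopa--Leindler computation that produces the $2$D transfer with exponent $q = p/(1+(n-2)p)$: one must check carefully that the resulting inequality holds \emph{pointwise} in $(z,w,z',w')$, which is what \Cref{symmetric_diff_2D} demands, rather than only in integrated form. A secondary difficulty is to coordinate the narrow cone partition so that mass balance, $\delta$-budget, and the large-ball hypothesis are preserved simultaneously in each subcone; the technical partition lemma from \cite{BMStab} is designed for this but requires careful bookkeeping when combined with the fiberwise $\inf/\sup$ passage.
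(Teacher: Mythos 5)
Your proposal is essentially the paper's approach, compressed into one narrative: the paper proves this proposition by a chain of reductions (\Cref{partition_small_cones} using Theorem~4.11 from \cite{BMStab} for the narrow-cone partition; \Cref{symmetric_diff_narrow_n_cone_alt} and \Cref{symmetric_diff_constant_on_fibers_n_cone} for the passage to fiberwise-constant data; \Cref{Holderapplicationlem} and \Cref{pqaverageswitchinglem} for the H\"older transfer of the BBL condition to two dimensions; \Cref{2dconeswithinthecone} plus the $\beta$-largeness hypothesis on $rS_n$ to control translations; and a convexity-of-$\sqrt{\cdot}$ / Cauchy--Schwarz sum over the subcones). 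You have correctly identified the key exponent: the fibers are $(n-2)$-dimensional, so $q = p/(1+(n-2)p)$, which lies in $(-1/2,0)$ exactly when $p\in(-1/n,0)$, matching the hypotheses of \Cref{symmetric_diff_2D}. (The statement of \Cref{Holderapplicationlem} writes $q=p/(1+np)$, but the Section~3 overview and the way \Cref{pqaverageswitchinglem} is invoked with the fiber-dimension Brunn--Minkowski both indicate that $q=p/(1+(n-2)p)$ is what is meant; your formula is the correct one.)

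One genuine concern with your write-up: defining $H(z,w):=|C_j^{z,w}|\cdot\sup_{C_j^{z,w}}h$ secures the pointwise transfer $H\geq M_{\lambda,q}(F,G)$, but it does not control $\int H$. Since the narrow-cone hypothesis only gives small oscillation for $f$ and $g$ on fibers, $h$ may oscillate wildly there; then $\int H \geq \int_{C_j} h$ with no usable upper bound, and the deficit hypothesis of \Cref{symmetric_diff_2D} ($\int H\leq(1+O(\delta_j))\int F$) can fail. The fix that keeps your argument intact is to take fiber integrals instead: set $H(z,w):=\int_{C_j^{z,w}}h\,dx$ (so $\int H=\int_{C_j}h$ exactly) and obtain the pointwise lower bound by applying the $(n-2)$-dimensional Borell--Brascamp--Lieb inequality directly on the slices, noting $\lambda C_j^{z,w}+(1-\lambda)C_j^{z',w'}\subset C_j^{\lambda(z,w)+(1-\lambda)(z',w')}$; with $F,G$ defined via $\inf$ as you do (or also via fiber integrals, at the cost of an $O(\varepsilon)$ correction to $\int|F-G|$ vs.\ $\int|f-g|$, which is harmless), the monotonicity of means then gives $H\geq M_{\lambda,q}(F,G)$ with the same $q$. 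This is in the spirit of what the paper does via \Cref{Holderapplicationlem} after first enforcing exact fiberwise constancy of $f,g$ in \Cref{symmetric_diff_constant_on_fibers_n_cone}.
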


\begin{proof}[Proof that \Cref{symmetric_diff_reformulation_technical} implies \Cref{symmetric_diff_technical} ]
Consider $f$ and $g$ as in \Cref{symmetric_diff_technical}, and let $f_i$ (resp. $g_i$) be the function $f$ (resp $g$) restricted to $C_n^i$ renormalized, i.e., 
$$f_i(x):=\frac{f(x) \textbf{1}_{C_n^i}(x)}{\int_{C_n^i}f\,dy},$$
so that $\int_{C_n^i}f_i(x)\,dx=\int_{C_n^i}g_i(x)\,dx=1$.

Let $h_i:=\M^*(f_i,g_i)$. We'll show that $h$ has integral not much larger than $1$. Note that the support of $h_i$ falls in $C_n^i$, so that by Borell-Brascamb-Lieb $\int_{C_n^i}h_i \,dx\geq 1$. Moreover, since the supports of the $h_i$ are essentially disctinct, we have that $\sum_{i=0}^n\left(\int_{C_n^i}f\,dy\right)h_i(x)\leq h(x)$ for almost all $x$. Hence, integrating over $x\in \mathbb{R}^n$, we find
$$\sum_{i=0}^n\left(\int_{C_n^i}f\,dy\right)\int_{C_n^i}h_i(x)\,dx\leq \int_{\R^n} h(x)\,dx\leq 1+\delta.$$
Isolating $h_{i_0}$ for some fixed $i_0$ and using $\int_{C_n^i}h_i\, dx\geq 1$, we find
$$\int_{C_n^{i_0}}h_{i_0}(x)\,dx\leq\frac{ (1+\delta)-\sum_{i\neq i_0}\left(\int_{C_n^i}f\,dy\right)\int_{C_n^i}h_i(x)\,dx}{\int_{C_n^{i_0}}f\,dy}\leq \frac{ \left(\int_{\mathbb{R}^n}f\,dy+\delta\right)-\sum_{i\neq i_0}\left(\int_{C_n^i}f\,dy\right)}{\int_{C_n^{i_0}}f\,dy}=1+\frac{\delta}{\int_{C_n^{i_0}}f\,dy}.$$
Finally, we use that $f$ is big on $rS_n$ to find that $$\int_{C_n^{i_0}}f\,dy\geq \int_{C_n^{i_0}\cap rS_n}f\,dy\geq \frac{\beta}{2}|C_n^{i_0}\cap rS_n|=\Omega_{n,r,\beta}(1).$$
Hence, we can choose $d_{n,\lambda,p,r,\beta}$ in \Cref{symmetric_diff_technical} sufficiently small so that $\frac{2\delta}{\beta\left|C_n^{i_0}\cap rS_n\right|}$ is smaller than the $d$ in \Cref{symmetric_diff_reformulation_technical}.

Now that we've found that the $f_i,g_i,h_i$ satisfy the conditions of \Cref{symmetric_diff_reformulation_technical}, we can apply that proposition in each of the cones and conclude that $\int_{C_n^i} |f_i-g_i|\, dx=O_{n,\lambda, p, r} \left(\frac{2\delta}{\beta\left|C_n^{i_0}\cap rS_n\right|}\right)^{1/2}=O_{n,\lambda, p, r,\beta} \left(\sqrt{\delta}\right)$. Summing over the cones, we find
$$\int_{\mathbb{R}^n}|f-g|\,dx=\sum_{i=0}^n\int_{C_n^i}f_i\,dy \int_{C_n^i}|f_i-g_i|\,dx\leq \sum_{i=0}^n\int_{C_n^i}|f_i-g_i|\,dx\leq O_{n,\lambda, p, r,\beta} \left(\sqrt{\delta}\right),$$
so that \Cref{symmetric_diff_technical} follows.
\end{proof}

\subsection{Reduction from \Cref{symmetric_diff_reformulation_technical} to \Cref{symmetric_diff_narrow_n_cone_alt}}

For the next sections we shall assume that the simplex $S_n$ and the basis $e_1, \dots, e_n$ of $\mathbb{R}^n$ are such that $e_1\perp H_n^1$ where we recall $H_n^1$ is the supporting hyperplane of the face $F_n^1$ of the simplex $S_n$.

\begin{prop}\label{symmetric_diff_narrow_n_cone_alt}
    Given $n \in \mathbb{N}$, $\lambda \in (0,1/2]$, $p \in (-1/n, \infty)$, $r,s, \ell,\beta \in (0, \infty)$, there exists $d=d^{\ref{symmetric_diff_narrow_n_cone_alt}}_{n, \lambda, p, r, \ell,\beta}>0$ so that for all $\delta\in (0,d)$, there exists $e=e_{n,\lambda,p,r,s,\ell,\beta,\delta}>0$ so that for all $\epsilon\in(0,e)$ the following holds. Let $C \subset C_n^1$ be a cone. Let $f,g, h\colon C\rightarrow [0,1]$ be continuous functions such that
    \begin{itemize}
        \item $\left|\int_{C} (f - g)\, dx\right|\leq  \varepsilon \int_{C} f\,  dx$,
        \item $\int_{C}h\, dx = (1+\delta) \int_{C} f\, dx$,
        \item $\int_{C}f\, dx \leq  \ell \int_{C \cap rS_n}f\,dx $,
        \item $|\{x\in rS_n\cap C: f(x)<\beta\text{ or }g(x)<\beta\}|\leq \gamma|C\cap rS_n|$,
        \item for every $x \not\in s S_n$ we have $f(x)=g(x)=0$,
        \item for all $x,y \in C$ we have $h(\lambda x +(1-\lambda)y) \geq M_{\lambda,p}(f(x),g(y))$, and
        \item  for all $z,w \in \mathbb{R}$ and $x, y \in C^{z,w}$ we have $|f(x)-f(y)| \leq \varepsilon$ and $|g(x)-g(y)| \leq \varepsilon$.
    \end{itemize}
    Then $\int_C |f-g|\, dx=O_{n,\lambda, p, r,\beta, \ell}\left(\sqrt{\delta+\gamma}\right)\int_{C} f\, dx  $.
\end{prop}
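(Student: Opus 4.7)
The plan is to reduce the proposition to the two-dimensional result \Cref{symmetric_diff_2D} by integrating $f$, $g$, $h$ over the $(n-2)$-dimensional fibers $C^{z,w}$ of the narrow cone $C$. Since $f$ is nearly constant (with oscillation at most $\varepsilon$) on each fiber, the integrated quantities encode essentially all the information in $f$ at a cost controlled by $\varepsilon$, which we are free to take as small as we like in terms of $\delta$. Concretely, define $F(z,w):=|C^{z,w}|\cdot \inf_{x\in C^{z,w}}f(x)$, and $G$, $H$ analogously. Using the condition $|f(x)-f(y)|\leq\varepsilon$ for $x,y\in C^{z,w}$, one obtains
$$
\int_{\mathbb{R}^2} F\, dzdw = \int_C f\,dx + O(\varepsilon)\,|\mathrm{supp}(f)|,
$$
and similarly for $G$; likewise $\int_{\mathbb{R}^2} H\,dzdw\leq \int_C h\,dx$. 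The ``nearly constant on fibers'' hypothesis also yields $\int_{\mathbb{R}^2}|F-G|\,dzdw = \int_C |f-g|\,dx + O(\varepsilon)\,|\mathrm{supp}(f)|$, so controlling the two-dimensional symmetric difference suffices.

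The key analytic step is to verify that $F,G,H$ satisfy a Borell–Brascamp–Lieb-type inequality in two dimensions with a better exponent $q$. Convexity of $C$ and Brunn–Minkowski on the $(n-2)$-dimensional slices give that the fiber volume $(z,w)\mapsto |C^{z,w}|$ is $1/(n-2)$-concave, i.e.
$$
|C^{\lambda z+(1-\lambda)z',\lambda w+(1-\lambda)w'}|\;\geq\; M_{\lambda,1/(n-2)}\bigl(|C^{z,w}|,|C^{z',w'}|\bigr).
$$
Combining this with the pointwise inequality $h(\lambda x+(1-\lambda)y)\geq M_{\lambda,p}(f(x),g(y))$, and applying the Hölder inequality for power means $M_{\lambda,q}(Aa,Bb)\leq M_{\lambda,1/(n-2)}(A,B)\cdot M_{\lambda,p}(a,b)$ with exponent $q$ defined by $1/q = 1/p + (n-2)$, one derives
$$
H\bigl(\lambda z+(1-\lambda)z',\lambda w+(1-\lambda)w'\bigr)\;\geq\; M_{\lambda,q}\bigl(F(z,w),G(z',w')\bigr).
$$
For $p>-1/n$ one checks $1/p < -n$, hence $1/q < -2$, i.e.\ $q\in(-1/2,0)$, so $q>-1/2$ and we are within the range covered by the two-dimensional theorem.

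After adjusting by $O(\varepsilon)$ so that $\int F = \int G$, and noting that $\int H \leq (1+\delta+O(\varepsilon))\int F$, we apply \Cref{symmetric_diff_2D} to obtain a translation $v\in\mathbb{R}^2$ with $\int_{\mathbb{R}^2}|F(x)-G(x-v)|\,dx = O_{n,\lambda,p}(\sqrt{\delta+\varepsilon})\int F$. The hypothesis that $f,g\geq \beta$ on most of $rS_n\cap C$ forces this $v$ to be of size $O(\sqrt{\delta+\gamma})$ (otherwise shifting $G$ would misalign a region of definite mass against the corresponding region where $F$ is large, contradicting smallness of the integrated difference), exactly as in the proof of \Cref{symmetric_diff_1_half}. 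Combined with approximate convexity of the level sets of $F$ (inherited via \Cref{aclcor}) and \Cref{smalltranslationsmallsymdiflem}, this allows us to absorb the translation and conclude $\int_{\mathbb{R}^2}|F-G|\,dzdw = O_{n,\lambda,p,r,\beta,\ell}(\sqrt{\delta+\gamma})\int F$. Passing back through the fiber-wise identification then yields $\int_C |f-g|\,dx = O_{n,\lambda,p,r,\beta,\ell}(\sqrt{\delta+\gamma})\int_C f\,dx + O(\varepsilon)|\mathrm{supp}(f)|$; choosing $\varepsilon$ sufficiently small in terms of $\delta$ (which the hypothesis allows) completes the proof. The main obstacle is the careful bookkeeping needed to control the transition between integrated and pointwise inequalities while simultaneously managing the translation $v$ and the error term $\varepsilon$, since any of these must be tracked with constants independent of the cone $C$ apart from the allowed dependence on $\ell$, $r$, $\beta$.
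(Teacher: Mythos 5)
Your overall strategy---integrate out the $(n-2)$-dimensional fibers to reduce to $\mathbb{R}^2$, upgrade the Borell--Brascamp--Lieb exponent from $p$ to some $q>-1/2$ via a H\"older-type inequality on fiber volumes, and then invoke \Cref{symmetric_diff_2D}---is exactly the paper's plan. The paper factors the argument through an intermediate \Cref{symmetric_diff_constant_on_fibers_n_cone} (replacing the $\varepsilon$-approximately-constant $f,g$ by their fiberwise infima first) whereas you carry the $O(\varepsilon)$ errors along directly; that difference is purely presentational, and your observation that $\varepsilon$ may be chosen small in terms of $s$ to absorb the $O(\varepsilon)\,|\mathrm{supp}(f)|$ correction is correct. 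Your exponent $1/q = 1/p + (n-2)$, i.e.\ $q = p/(1+(n-2)p)$, is the right one for $(n-2)$-dimensional fibers and gives $q>-1/2$ for the entire range $p>-1/n$ (the paper's \Cref{Holderapplicationlem} states $q=p/(1+np)$, which appears to be a slip for $q=p/(1+(n-2)p)$).

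The gap is in the translation control. You assert that the translation $v$ coming out of \Cref{symmetric_diff_2D} is pinned down ``exactly as in \Cref{symmetric_diff_1_half}'' because $f,g\geq\beta$ on most of $rS_n\cap C$. But this analogy does not go through directly. In the one-dimensional lemma, the functions are large on $[0,1]$ and supported in $[0,\infty)$, so the half-line endpoint provides a ``corner'' where the mass must match up; a nonzero shift misaligns a definite mass against zero. In your two-dimensional reduction, $F$ is supported on a cone $C'=\pi_{1,2}(C)$ with vertex at $o$, but near the vertex $F(z,w)=|C^{z,w}|\inf f$ vanishes because the fiber volume shrinks to zero, while near $w_0=(r/2,0)$, where $F$ is bounded below (as in \Cref{bigfincentralball}), there is no corner: $F$ is large on a full neighbourhood of $w_0$ and a small translation moves it to another region of comparable size where $G$ is also large. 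So ``misaligning a region of definite mass against a region where $F$ is large'' does not happen for free. The paper resolves this precisely by restricting to the three shifted cones $w_0+K_i$ (\Cref{2dconeswithinthecone}): each $K_i$ has defining lines at a right angle, so $w_0+K_i$ has a genuine corner at $w_0$ where $F$ is bounded below, and this pins down each $v_i$; \Cref{2dconeswithinthecone} separately controls the mass-matching shift $v$ via \Cref{SimilarSupportLem}. Your proof needs this (or some substitute) to make the translation control rigorous; as written, it is a real omission rather than a bookkeeping step.

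A smaller point, shared with the paper's \Cref{Holderapplicationlem}: defining $H(z,w):=|C^{z,w}|\cdot\inf_{x\in C^{z,w}}h(x)$ does not obviously yield $H\geq M^*_{\lambda,q}(F,G)$, because the infimum over $C^{\lambda(z,w)+(1-\lambda)(z',w')}$ may be attained outside the Minkowski sum $\lambda C^{z,w}+(1-\lambda)C^{z',w'}$, which can be a strict subset. One needs an affine selection $(z,w)\mapsto x_{z,w}\in C^{z,w}$ (so the pointwise $h(\lambda x_{z,w}+(1-\lambda)x_{z',w'})\geq M_{\lambda,p}(f(x_{z,w}),g(x_{z',w'}))$ applies consistently) rather than a fiberwise inf for $h$. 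This detail is also elided by the paper, so it is not a new deficiency of your argument, but it is worth being aware of when fleshing out the two-dimensional comparison.
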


\begin{lem}\label{coneconcentrationlemalt}
Let $n \in \mathbb{N}$, $\lambda \in (0,1/2]$, $p \in (-1/n, \infty)$ and $r,\beta\in (0, \infty)$, then there exist $\ell=\ell^{\ref{coneconcentrationlemalt}}_{n,\lambda,p,r,\beta}$ and $d=d^{\ref{coneconcentrationlemalt}}_{n,\lambda,p,r,\beta}>0$ so that the following hold.  Let $f,g, h\colon \mathbb{R}^n\rightarrow [0,1]$ be continuous functions with bounded support and a cone $C$ such that 
\begin{itemize}
    \item $\int_{\mathbb{R}^n} f\, dx=  \int_{\mathbb{R}^n} g \,dx=1$,
    \item $\int_{\mathbb{R}^n}h\, dx \leq 1+d$,
    \item $\int_{C} f\, dx=  \int_{C} g \,dx$,
    \item $\int_C h\,dx\leq (1+d)\int_C f\,dx$,
    \item $|\{x\in rS_n: f(x)<\beta\text{ or }g(x)<\beta\}|\leq 0.1|rS_n|$,
    \item $|\co(\{x\in \mathbb{R}^n: f(x)>t\})|\leq 2|\{x\in \mathbb{R}^n: f(x)>t\}|$ for all $t\in[0,1]$, and
    \item for all $x,y \in \mathbb{R}^n$ we have $h(\lambda x +(1-\lambda)y) \geq M_{\lambda,p}(f(x),g(y))$.
\end{itemize} 
Then we have $\int_{C}f\, dx \leq  \ell |C\cap rS_n|$.
\end{lem}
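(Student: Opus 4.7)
The plan is to exploit the near-convexity hypothesis $|\co(F_t)|\leq 2|F_t|$ together with the concentration $f\geq\beta$ on most of $rS_n$ to conclude that $f$ is essentially supported in a ball $B(0,R_*)$ of radius $R_*=R_*(n,r,\beta,\lambda,p)$ around the origin. The conclusion then follows from cone homogeneity: since $B(0,r)\subset rS_n$, one has $|C\cap B(0,R_*)|\leq (R_*/r)^n|C\cap rS_n|$, and integrating $|C\cap F_t|\leq|C\cap B(0,R_*)|$ over $t$ with $f\leq 1$ yields the desired bound.

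First I would establish $\co(F_\beta)\subset B(0,R_\beta)$ for some $R_\beta=R_\beta(n,r,\beta)$. The hypothesis $|\{x\in rS_n:f<\beta\}|\leq 0.1|rS_n|$ gives $|F_\beta\cap rS_n|\geq 0.9|rS_n|$, so $\co(F_\beta\cap rS_n)\subset rS_n$ is a convex subset of volume $\geq 0.9|rS_n|$. A John's-ellipsoid argument (a convex body of definite volume fraction inscribed in $rS_n$ cannot be too flat) shows it contains an inscribed ball of radius $c_n r$. Combined with $|\co(F_\beta)|\leq 2|F_\beta|\leq 2/\beta$ and the standard convex geometry fact that a convex body of volume $V$ containing a ball of radius $\rho$ has diameter at most $C_n V/\rho^{n-1}$, this yields $\co(F_\beta)\subset B(0,R_\beta)$.

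Next I would decompose $\int_C f=\int_0^1|C\cap F_t|\,dt$ at a lower threshold. For $t\geq\beta$, $F_t\subset F_\beta\subset B(0,R_\beta)$ and cone homogeneity give $\int_\beta^1|C\cap F_t|\,dt\leq (R_\beta/r)^n|C\cap rS_n|$. For $t<\beta$, I would apply \Cref{almostconvexlevelsets} globally (enabled by $\int h\leq 1+d$): for a small $\alpha'$ one obtains $\beta_*=\beta_*(n,\lambda,p,\alpha')$ with $\int f\cdot\mathbf{1}_{f\leq\beta_*\sup f}\,dx\leq\alpha'$, and since $\sup f\geq\beta$ this caps the low-level mass below $\beta_*\beta$ by $\alpha'$. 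Rerunning the Step 1 argument at level $\beta_*\beta$ produces $F_{\beta_*\beta}\subset B(0,R_*)$ with $R_*=R_*(n,r,\beta,\lambda,p)$, and splitting the low-level range into a geometrically controlled part and a genuinely small part yields
$$\int_C f\leq C(R_*/r)^n|C\cap rS_n|+\alpha'.$$

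The hard part will be absorbing the additive $\alpha'$ into the multiplicative bound $\ell|C\cap rS_n|$, since for a thin cone $|C\cap rS_n|$ can be arbitrarily small. My plan for resolving this uses the cone near-equality $\int_C h\leq(1+d)\int_C f$ and a case split on $\sup_C f$. If $\sup_C f\geq\beta$, I would apply the analogue of \Cref{almostconvexlevelsets} restricted to the convex domain $C$ (which is valid since $\lambda x+(1-\lambda)y\in C$ whenever $x,y\in C$), yielding multiplicative concentration of $\int_C f$ at level $\geq\beta_C^*\beta$ and thus replacing the additive $\alpha'$ by a factor $(1-\alpha')^{-1}$ in front of a geometric bound. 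If $\sup_C f<\beta$, then $C\cap rS_n\subset\{f<\beta\}\cap rS_n$ forces $|C\cap rS_n|\leq 0.1|rS_n|$ to be already small, and in this regime the bound $\int_C f\leq\beta\cdot|C\cap B(0,R_*)|+\alpha'\leq\beta(R_*/r)^n|C\cap rS_n|+\alpha'$ can be closed by choosing $d$, hence $\alpha'$, sufficiently small in terms of the fixed parameters $n,\lambda,p,r,\beta$.
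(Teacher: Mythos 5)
Your approach diverges from the paper's. The paper argues by contradiction: it assumes $\int_C f > c\,|C\cap rS_n|$ for a large constant $c$, applies \Cref{almostconvexlevelsets} to the triple restricted to $C$ to get that most of $\int_C f$ sits above a definite level $\beta_C$, deduces that $F^C_{\beta_C}$ is so large that it must escape $(0.9c)^{1/n}rS_n$ and thus contains a far-away point $v$, and then contradicts the near-convexity hypothesis: since $F_{\beta_C}\supset F_\beta$ covers $0.9|rS_n|$, $\co(F_{\beta_C})$ must contain $\co\bigl((F_{\beta_C}\cap rS_n)\cup\{v\}\bigr)$, whose volume blows up with $c$, yet $|\co(F_{\beta_C})|\le 2|F_{\beta_C}|\le\beta_C^{-1}$ is bounded. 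Your Steps 1--3 (establishing $F_{\beta_*\beta}\subset B(0,R_*)$ and bounding the high-level contribution by $(R_*/r)^n|C\cap rS_n|$) are sound and in fact recover the same geometric ingredients, and your treatment of the case $\sup_C f\ge\beta$ via the cone-restricted \Cref{almostconvexlevelsets} correctly replaces the additive error by a $(1-\alpha')^{-1}$ factor.

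The gap is in the case $\sup_C f<\beta$. There you are left with $\int_C f\le\beta(R_*/r)^n|C\cap rS_n|+\alpha'$, and you propose to close this ``by choosing $d$, hence $\alpha'$, sufficiently small in terms of the fixed parameters.'' This cannot work: $d$, $\alpha'$ and $\ell$ must be fixed before the cone $C$ is given, whereas $|C\cap rS_n|$ can be arbitrarily small for a thin cone $C$. For any fixed $\alpha'>0$ the additive term dominates as soon as $|C\cap rS_n|<\alpha'/\ell$, and the global bound $\int_{\{f\le\beta_*\beta\}}f\,dx\le\alpha'$ gives no control over where that low-level mass sits relative to $C$ --- a priori it could all live in $C$ far from the origin. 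The observation that $|C\cap rS_n|\le 0.1|rS_n|$ is an upper bound, not a lower bound, so it does not rescue the argument. The additive error is structural in your direct-bounding approach, and the way to remove it is precisely the paper's contradiction route: if $\int_C f$ is large relative to $|C\cap rS_n|$ then a level set must escape far into $C$, and combined with the near-convexity hypothesis $|\co(F_t)|\le 2|F_t|$ this produces the contradiction directly, with no leftover additive piece.
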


\begin{defn}
Given a subcone $C\subset C_n^1$ and a basis $f_1,\dots,f_n$ with $e_1=f_1$, define the \emph{slice} $C^{z,w}:=C\cap (z,w)\times\mathbb{R}^{n-2}$.
\end{defn}

\begin{lem}\label{partition_small_cones}
    Let $n \in \mathbb{N}$ and let $f,g\colon C_n^1\rightarrow [0,1]$ be continuous functions with bounded support such that  $\int_{C_n^1} f dx=  \int_{C_n^1} g dx$. Then for every $\varepsilon>0$ there exists a family $\mathcal{F}=\mathcal{F}_1 \sqcup \mathcal{F}_2$ of cones partitioning the cone $C_n^1$  such that
    \begin{itemize}
        \item $\sum_{C \in \mathcal{F}_1}\int_{C}f\,dx<\varepsilon$.
        \item For every $C \in \mathcal{F}_2$, $\left|\int_{C} (f  - g)\, dx\right| \leq \varepsilon \int_{C} f\, dx $.
        \item For every $C \in \mathcal{F}_2$, there exists a basis $f_1, \dots f_n$ depending on $C$ with $f_1=e_1$, such that for all $z,w \in \mathbb{R}$ we have $\text{diam}(C^{z,w}) \leq \varepsilon |z|$.
    \end{itemize}
    
\end{lem}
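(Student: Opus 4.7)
The plan is to build the partition in two stages: first carve $F_n^1$ into convex strips that are narrow in $(n-2)$ directions — so that the cones over them automatically satisfy the diameter condition in a natural basis — and then within each strip refine along the remaining long direction using a one-dimensional balance argument to achieve the approximate mass-balance.

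For the first stage, I would fix a basis $f_1=e_1,f_2,f_3,\dots,f_n$ of $\mathbb{R}^n$ with $f_2,\dots,f_n$ orthogonal to $e_1$, and place an $(n-2)$-dimensional cubic grid in the $(f_3,\dots,f_n)$-coordinates of $H_n^1$ with mesh $\eta\ll\varepsilon$. Pulling the grid back to $F_n^1$ gives a partition into convex strips $S_Q$ of width at most $\eta$ in the $f_3,\dots,f_n$ directions; the cone $W_Q=\bigcup_{t\geq 0}tS_Q$ is then a wedge whose slices satisfy $\operatorname{diam}(W_Q^{z,w})\leq \eta z\leq\varepsilon z$, giving the third (narrowness) property.

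For the second stage, inside each wedge $W_Q$ I would apply a one-dimensional balance argument along the angular coordinate $s$ in the $(f_1,f_2)$-plane. Parameterizing points of $W_Q$ radially as $x=t(f_1+sf_2+\sum_{i\geq 3}q_if_i)$ with $t\geq 0$, $s\in[s_{\min,Q},s_{\max,Q}]$, $q\in Q$, and integrating out $t$ (with Jacobian $t^{n-1}$) and $q$, one obtains nonnegative continuous one-dimensional marginals $\bar{f}_Q(s),\bar{g}_Q(s)$ with $\int\bar{f}_Q=a_Q$ and $\int\bar{g}_Q=b_Q$. Using level sets of the cumulative difference $\Phi_Q(s)=\int_{s_{\min,Q}}^s(\bar{f}_Q-\bar{g}_Q)\,ds'$ to place partition points, one obtains a partition of $[s_{\min,Q},s_{\max,Q}]$ into sub-intervals on most of which (weighted by $\bar f_Q$-mass) the relative imbalance $|\Phi_Q(s_j)-\Phi_Q(s_{j-1})|/\int_{s_{j-1}}^{s_j}\bar f_Q$ is at most $\varepsilon$; the cones over the corresponding sub-strips of $S_Q$ go into $\mathcal{F}_2$, while the few residual sub-cones carrying the net imbalance $a_Q-b_Q$ go into $\mathcal{F}_1$.

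The main obstacle is controlling the total $f$-mass in $\mathcal{F}_1$ by $\varepsilon$. Only $\sum_Q(a_Q-b_Q)=0$ is automatic, while the absolute sum $\sum_Q|a_Q-b_Q|$ may a priori be of order $\int f$ (e.g., if $f$ and $g$ concentrate on different sides of $F_n^1$), so a per-strip argument alone is insufficient. I would handle this by randomizing the grid offset $\tau\in\mathbb{R}^{n-2}$ and using the continuity of $f,g$ together with $\int f=\int g$ to argue that $\mathbb{E}_\tau\sum_Q|a_Q(\tau)-b_Q(\tau)|$ is small when $\eta$ is small, then selecting a good offset; an alternative is to recursively split cones by balance-preserving hyperplanes through the origin (existence via Borsuk--Ulam applied to the odd map $\theta\mapsto\int_{C\cap\{\theta\cdot x\geq 0\}}(f-g)$), maintaining exact mass-balance at the cost of a more delicate geometric argument to guarantee shrinkage of the base diameter in all $(n-2)$ thin directions.
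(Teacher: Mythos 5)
The first stage of your plan (axis-aligned wedges narrow in the $(f_3,\dots,f_n)$ directions) is fine, and you correctly identify the key obstruction: the per-wedge masses $a_Q=\int_{W_Q}f$ and $b_Q=\int_{W_Q}g$ need not match, and controlling $\sum_Q|a_Q-b_Q|$ is the whole game. But your primary fix (a) is wrong. As $\eta\to0$, the quantity $\sum_Q|a_Q(\tau)-b_Q(\tau)|$ does not become small; it \emph{increases} toward $\int|f-g|\,dx$, because refining a partition can only separate more of the $\{f>g\}$ region from the $\{f<g\}$ region, and a fixed-width grid, however it is offset, will eventually separate them almost perfectly. Continuity of $f,g$ and the global identity $\int f=\int g$ do not produce any cancellation across cells once you take absolute values. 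A concrete counterexample with $n=3$: take $f$ supported on the half of $F_n^1$ where the $f_3$-coordinate is positive and $g$ supported on the other half; then for every offset $\tau$ and every mesh $\eta$ smaller than the gap between the supports, $\sum_Q|a_Q-b_Q|=\int f+\int g=2\int f$. So approach (a) cannot work.

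Your alternative (b) — split recursively by balance-preserving hyperplanes through the origin, found via an intermediate-value / Borsuk--Ulam argument — is the right idea and is essentially the route taken in the paper. However, you flag the genuinely hard part yourself and leave it unresolved: you need to show that such balance-preserving splits can be iterated so that the resulting cones become narrow in all of the $n-2$ transverse directions. That is a nontrivial geometric statement, because the hyperplane you choose is dictated by the mass constraint, not by geometry, and a priori a sequence of balanced bisections could refuse to thin out the base. This is precisely the content of \Cref{importedconesthm} (Theorem~4.11 of \cite{BMStab}), which the paper imports as a black box: it says that for \emph{any} ``respectful'' rule assigning to each $(C,L)$ a hyperplane containing $L$ — in particular the rule that chooses the rotation angle $\theta$ making $\int_{C\cap H_\theta^\pm}(f-g)=0$, whose existence follows by continuity as in the paper's Claim — there is a recursive ``valid partition'' into cones which are, up to a small exceptional mass, narrow in the required sense. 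Without that theorem (or a replacement for it), your proposal stops exactly at the step you called ``more delicate,'' which is where the real work is. Also note the paper only needs a one-parameter intermediate-value argument (rotations about a fixed codimension-two subspace $L$), not full Borsuk--Ulam over all hyperplanes, because the respectful-function framework constrains the hyperplane to contain $L$.
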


\subsubsection{Proof of Reduction}

\begin{proof}[Proof that \Cref{symmetric_diff_narrow_n_cone_alt} implies \Cref{symmetric_diff_reformulation_technical}]
We consider the cone $C_n^1$, the other $C_n^i$ follow analogously by rotation.

Given that $f$ and $g$ have bounded support, there exists an $s$ so that for $x\not \in sS_n$ we have $f(x)=g(x)=0$.

Apply \Cref{partition_small_cones} with parameter $\eta>0$ sufficiently small in terms of all of $f,g,n,\lambda,p,r,s,\ell,\beta,\epsilon$, to find essential partition $\mathcal{F}=\mathcal{F}_1\sqcup\mathcal{F}_2$ of $C_n^1$. Further subdivide $\mathcal{F}_2$ into two sets as follows. For every cone $C\in\mathcal{F}_2$ define 
$$\delta_C:=\frac{\int_C h\,dx}{\int_C f\,dx}-1,$$
so that 
$$\sum_{C\in \mathcal{F}_2}\delta_C\int_C f\,dx\leq \sum_{C\in \mathcal{F}_2}\int_C (h-f)\,dx\leq \int_{\mathbb{R}^n} (h-f)\,dx+2\eta\leq 2\delta,$$
where in the penultimate inequality we used that $\int_C (h-f)\,dx\geq -\eta\int_C f\,dx$ for $C\in \mathcal{F}$ and $\sum_{C\in\mathcal{F}_1}\int (h-f)\,dx\geq \sum_{C\in\mathcal{F}_1}\int (-f)\,dx\geq -\eta$.

Analogously define 
$$\gamma_C:=\frac{|\{x\in rS_n
\cap C: f(x)<\beta\text{ or }g(x)<\beta\}|}{|rS_n\cap C|},$$
so that
$$\sum_{C\in \mathcal{F}} \gamma_C|rS_n\cap C|=\sum_{C\in \mathcal{F}}|\{x\in rS_n
\cap C: f(x)<\beta\text{ or }g(x)<\beta\}|=|\{x\in rS_n: f(x)<\beta\text{ or }g(x)<\beta\}|\leq O_{n,\lambda,p,r}(\delta).$$

Partition $\mathcal{F}_2=\mathcal{F}_\gamma^+\sqcup\mathcal{F}_\delta^+\sqcup\mathcal{F}^-$ as follows.   If $\delta_C\geq\frac12\min\left\{d_{n,\lambda,p,r,\beta}^{\ref{coneconcentrationlemalt}},d^{\ref{symmetric_diff_narrow_n_cone_alt}}_{n, \lambda, p, r,\beta, \ell_{n,\lambda,p,r}^{\ref{coneconcentrationlemalt}}}\right\}$ let $C\in \mathcal{F}^+$. If $\gamma_C\geq \sqrt{\delta}$ (and $C\not\in \mathcal{F}_\delta^+$), let $C\in \mathcal{F}_\gamma^+$. 
 Finally, let $\mathcal{F}^-:=\mathcal{F}_2\setminus(\mathcal{F}_\gamma^+\sqcup\mathcal{F}_\delta^+)$

It's easy to address the cones in $\mathcal{F}_1\sqcup\mathcal{F}_{\gamma}^+\sqcup\mathcal{F}_{\delta}^+ $ as they contain little mass. First for $C\in\mathcal{F}_1$ by definition
$$\sum_{C\in\mathcal{F}_1}\int_C |f-g|\,dx\leq \sum_{C\in\mathcal{F}_1}\int_C (f+g)\,dx\leq \eta.$$

For the $C\in \mathcal{F}_\delta^+$, first note that for all $C'\in \mathcal{F}_2$, we have by the Borell-Brascamb-Lieb inequality that 
$$\int_{C'}h\,dx\geq \min\left\{\int_{C'}f\,dx,\int_{C'}g\,dx\right\}\geq \int_{C'}f\,dx-\left|\int_{C'} (f  - g)\, dx\right|\geq (1-\eta)\int_{C'}f\,dx,$$
Hence, we can find a bound on 
\begin{align*}\int_{\mathbb{R}^n}hdx&\geq \sum_{C\in\mathcal{F}_\delta^+}\int_{C}h\,dx +\sum_{C'\in\mathcal{F}_2\setminus \mathcal{F}_\delta^+}\int_{C'}h\,dx\\
&\geq \left(1+\frac12\min\left\{d_{n,\lambda,p,r,\beta}^{\ref{coneconcentrationlemalt}},d^{\ref{symmetric_diff_narrow_n_cone_alt}}_{n, \lambda, p, r, \beta,\ell_{n,\lambda,p,r}^{\ref{coneconcentrationlemalt}}}\right\}\right)\sum_{C\in\mathcal{F}_\delta^+}\int_{C}f\,dx+\sum_{C'\in\mathcal{F}_2\setminus\mathcal{F}_\delta^+}(1-\eta)\int_{C'}f\,dx\\
&\geq \left[\int_{\mathbb{R}^n}f\,dx-\sum_{C\in\mathcal{F}_1}\int_{C}f\,dx\right]+\frac12\min\left\{d_{n,\lambda,p,r,\beta}^{\ref{coneconcentrationlemalt}},d^{\ref{symmetric_diff_narrow_n_cone_alt}}_{n, \lambda, p, r, \beta,\ell_{n,\lambda,p,r}^{\ref{coneconcentrationlemalt}}}\right\}\sum_{C\in\mathcal{F}_\delta^+}\int_{C}f\,dx-\eta\sum_{C'\in\mathcal{F}_2\setminus\mathcal{F}_\delta^+}\int_{C'}f\,dx\\
&\geq \left[1-\eta\right]+\frac12\min\left\{d_{n,\lambda,p,r,\beta}^{\ref{coneconcentrationlemalt}},d^{\ref{symmetric_diff_narrow_n_cone_alt}}_{n, \lambda, p, r, \beta,\ell_{n,\lambda,p,r}^{\ref{coneconcentrationlemalt}}}\right\}\sum_{C\in\mathcal{F}_\delta^+}\int_{C}f\,dx-\eta
\end{align*}
Recalling that $\int_{\mathbb{R}^n}h\,dx\leq 1+\delta$ and noting that $\frac12\min\left\{d_{n,\lambda,p,r,\beta}^{\ref{coneconcentrationlemalt}},d^{\ref{symmetric_diff_narrow_n_cone_alt}}_{n, \lambda, p, r, \beta,\ell_{n,\lambda,p,r}^{\ref{coneconcentrationlemalt}}}\right\}=\Omega_{n, \lambda, p, r,\beta}(1)$, we can rearrange this inequality to find 
$$\sum_{C\in\mathcal{F}_\delta^+}\int_{C}|f-g|\,dx\leq 3\sum_{C\in\mathcal{F}_\delta^+}\int_{C}f\,dx\leq O_{n, \lambda, p, r,\beta}(\delta+2\eta)=O_{n, \lambda, p, r,\beta}(\delta).$$

For $C\in\mathcal{F}_\gamma^+$, we find that
$$\sqrt{\delta}\sum_{C\in\mathcal{F}_\gamma^+}|rS_n\cap C|\leq \sum_{C\in\mathcal{F}_\gamma^+}\gamma_C|rS_n\cap C|\leq O_{n,\lambda,p,r,\beta}(\delta),$$
so that $\sum_{C\in\mathcal{F}_\gamma^+}|rS_n\cap C|\leq O_{n,\lambda,p,r,\beta}(\sqrt{\delta})$. By \Cref{coneconcentrationlemalt}, we find that for these $C$, that $\int_C f\,dx\leq O_{n,\lambda,p,r,\beta}(|C\cap rS_n|)$. Hence,
$$\sum_{C\in\mathcal{F}_\gamma^+}\int|f-g|\,dx\leq 3\sum_{C\in\mathcal{F}_\gamma^+}\int f\, dx\leq 3\ell\sum_{C\in\mathcal{F}_\gamma^+}|C\cap rS_n|\leq O_{n,\lambda,p,r,\beta,\ell}(\sqrt{\delta}).$$

Finally, we turn our attention to cones $C\in\mathcal{F}^-$ to which we first apply \Cref{coneconcentrationlemalt} \footnote{Note that though $f$ and $g$ don't have the same integral in $C$, we can switch to $f'\leq f$ and $g'\leq g$ removing at most $\epsilon\int_C fdx$ to equalize the integral and apply \Cref{coneconcentrationlemalt}.}  to find that $\int_{C}f\, dx \leq \ell_{n,\lambda,p,r,\beta}^{\ref{coneconcentrationlemalt}}\int_{C \cap rS_n}f\,dx$.
 Since $f$ and $g$ are bounded continuous functions with bounded supports, they are uniformly continuous, i.e., for any $\epsilon>0$ we can choose $\eta$ sufficiently small in $f,g,$ and $\epsilon$, so that we have $|x-y|<\eta$ implies $|f(x)-f(y)|,|g(x)-g(y)|\leq \epsilon$. Moreover, the parameter $s$ in \Cref{symmetric_diff_narrow_n_cone_alt} depends only on $f$ and $g$, so we can pick $\epsilon<e_{n,\lambda,p,r,s,\ell,\beta,\delta}$ and still find an appropriate $\eta$. Hence, we can apply \Cref{symmetric_diff_narrow_n_cone_alt} with parameters $n,\lambda,p,r$, $s=s_{f,g}$, $\ell=\ell_{n,\lambda,p,r,\beta}^{\ref{coneconcentrationlemalt}}$, and $\delta=\delta_C<d^{\ref{symmetric_diff_narrow_n_cone_alt}}_{n, \lambda, p, r,\beta, \ell}$ and $\gamma=\gamma_C$ to the cones $C\in\mathcal{F}^-$, to find that $$\int_{C} |f-g|\,dx=O_{n,\lambda,p,r,\ell_{n,\lambda,p,r,\beta}^{\ref{coneconcentrationlemalt}}}\left(\sqrt{\delta_C+\gamma_C}\right)\int_{C}f\,dx=O_{n,\lambda,p,r,\beta}\left(\sqrt{\delta_C+\gamma_C}\right)\int_{C}f\,dx.$$
 Recall that $\sum_{C\in\mathcal{F}_2}\delta_C\int_C f\,dx\leq 2\delta$, and by \Cref{coneconcentrationlemalt},
 $$\sum_{C\in\mathcal{F}^-}\gamma_C\int_C f\,dx\leq O_{n,\lambda,p,r,\beta}\left(\sum_{C\in\mathcal{F}^-}\gamma_C|C\cap rS_n|\right)\leq O_{n,\lambda,p,r,\beta}(\delta).$$
 Hence, we conclude by convexity of the square root that
 \begin{align*}
 \int_{\mathbb{R}^n}|f-g|\,dx&=\sum_{C\in\mathcal{F}_1\cup\mathcal{F}^+}\int_{C} |f-g|\,dx+\sum_{C\in\mathcal{F}^-}\int_{C} |f-g|\,dx\\
 &\leq O_{n, \lambda, p, r,\beta}(\delta)+\sum_{C\in\mathcal{F}^-}O_{n,\lambda,p,r,\beta}\left(\sqrt{\delta_C+\gamma_C}\right)\int_{C}f\,dx\\
 &\leq O_{n, \lambda, p, r,\beta}\left(\sqrt{\delta}\right).
 \end{align*}
 This finishes the proof of the reduction.
\end{proof}

\subsubsection{Proof of Lemmas}

\begin{proof}[Proof of \Cref{coneconcentrationlemalt}]
For a contradiction let $c_{n,\lambda,p,r,\beta}$ sufficiently large in its parameters to be determined later and $C$ a cone so that 
$$\int_{C}f \,dx> c_{n, \lambda, p, r,\beta} |C \cap rS_n| $$
Consider the level sets $F_t^C, G_t^C,H_t^C$ for the restrictions of the $f,g,h$ to $C$, i.e., $F_t^C:=\{x\in C: f(x)>t\}$. By \Cref{almostconvexlevelsets}, we find some $\beta_C=\Omega_{n,\lambda,p}(1)$ so that
$$\int_{ F_{\beta_C}^C}f\,dx\geq 0.9\int_C f\,dx\geq 0.9 c_{n,\lambda,p,r,\beta}|C\cap rS_n| .$$
 On the other hand, we have  $\int_{ F_{\beta_C}^C}f\,dx\leq \left|F_{\beta_C}^C\right|$. Since $F_{\beta_C}^C$ is so large, in particular it is not contained in $C\cap (0.9 c_{n,\lambda,p,r,\beta})^{1/n}rS_n$, so we can find a point $v\not\in (0.9 c_{n,\lambda,p,r,\beta})^{1/n}rS_n$ with $f(v)\geq \beta_C$. Consider the level set $F_{\beta_C}\supset F_{\beta_C}^C$, which by assumption has $|F_{\beta_C}|\geq \frac12|\co(F_{\beta_C})|$. We may assume that $\beta_C\leq \beta$, so that we also have $|F_{\beta_C}\cap rS_n|\geq 0.9|rS_n|$. This implies that that 
 $|\co(F_{\beta_C})|\geq |\co((F_{\beta_C}\cap rS_n)\cup\{v\})|\to \infty$
 as $c_{n,\lambda,p,r,\beta}\to \infty$. On the other hand, we have the trivial bound that $|\co(F_{\beta_C})|\leq 2|F_{\beta_C}|\leq \beta_C^{-1}\int f\,dx=O_{n,\lambda,p,r}(1)$, which for sufficiently large $c_{n,\lambda,p,r,\beta}$ yields a contradiction.
\end{proof}

For the proof of \Cref{partition_small_cones}, we use one of the technical results from \cite{BMStab}. First we need the following definitions from that paper.
\begin{defn}
Let $\mathcal{C}^n$ be the family of cones  in $\mathbb{R}^n$ and let $\mathcal{T}_k^n$ be the set of codimension $k$ subspaces of $\mathbb{R}^n$.
\end{defn}

\begin{defn}
Say a function $k \colon \mathcal{C}^{n} \times \mathcal{T}^{n}_2 \rightarrow \mathcal{T}^{n}_1$ is a \emph{respectful} function if $L\subset k(C,L)$. A respectful function $f$ induces functions $k^-, k^+ \colon \mathcal{C}^{n} \times \mathcal{T}^{n}_2 \rightarrow \mathcal{C}^{n}$, where $k^-(C,L), k^+(C,L)$ are the cones the hyperplane $k(C,L)$ partitions $C$ into.
\end{defn}

\begin{defn}
Given a respectful function $k \colon \mathcal{C}^{n} \times \mathcal{T}^{n}_2 \rightarrow \mathcal{T}^{n}_1$ and a cone $C$, we say $\mathcal{F}$ is a \emph{valid partition of $C$} into cones if there exists a sequence of families $\{C\}=\mathcal{G}_0, \dots, \mathcal{G}_j=\mathcal{F}$ such that if $\mathcal{G}_i=\{C_{1}, C_{2}, \dots, C_{k}\}$, then there exists codimension-two subspaces $L_1, \dots, L_{m}$ such that $\mathcal{G}_{i+1}=\bigcup_{j=1}^m R_j$, where $R_j=\{k^+(C_{j}, L_j), k^-(C_{j}, L_j)\}$ or $R_j=\{C_j\}$.
\end{defn}

Now recall the following slightly weakened version of theorem 4.11 from \cite{BMStab}.

\begin{thm}[Theorem 4.11 from \cite{BMStab} reformulated]\label{importedconesthm}
For every $\eta>0$ the following holds.  Given a respectful $f \colon \mathcal{C}^{n} \times \mathcal{T}^{n}_2 \rightarrow \mathcal{T}^{n}_1$, there exists a valid partition $\mathcal{F}$ of $C_n^1$ that can be written as $\mathcal{F}'=\mathcal{F}'_0 \sqcup \mathcal{F}'_1 \sqcup \mathcal{F}'_2$ such that 
\begin{enumerate}
    \item $\sum_{C \in \mathcal{F}'_0} |C\cap S_n| \leq \eta.$
    \item For every cone $C \in \mathcal{F}'_1$, we find that $C\cap\partial S_n$ has diameter at most $\epsilon$. 
    \item For every cone $C \in \mathcal{F}'_2$ there exists a sub-cone $C'$ of $C$ with $|C'\cap S_n| \geq (1-\eta)|C\cap S_n|$ and there exists a basis $f_1, \dots f_n$ depending on $C$ with $f_1=e_1$, such that for all $z,w \in \mathbb{R}$ we have $\text{diam}((C')^{z,w}) \leq \eta ||z||$. 
\end{enumerate}
\end{thm}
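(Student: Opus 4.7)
The plan is to construct the partition iteratively via a greedy refinement driven by a measure of how far each cone is from being \emph{well-oriented} in the sense of item (3). Begin with the trivial partition $\{C_n^1\}$. At each stage, examine every current cone $C$ and classify it: (i) if $\operatorname{diam}(C\cap\partial S_n)\le\eta$, send $C$ to $\mathcal{F}'_1$; (ii) if there exists a basis $f_1,\dots,f_n$ with $f_1=e_1$ together with a sub-cone $C'\subset C$ satisfying $|C'\cap S_n|\ge(1-\eta)|C\cap S_n|$ and $\operatorname{diam}((C')^{z,w})\le \eta\|z\|$ for all slices, send $C$ to $\mathcal{F}'_2$; otherwise mark $C$ as \emph{unresolved} and refine it further via the respectful function $k$.

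To refine an unresolved $C$, note that the failure of condition (ii) means that in every basis with $f_1=e_1$ there is some direction $v\perp e_1$ in which a nontrivial portion of $C$ is ``thick'': the projection of $C\cap\partial S_n$ to the line $\mathbb{R}v$ has length exceeding $\eta$ times the extent of $C$ along $e_1$. Pick such a $v$, take $L$ to be a codimension-$2$ subspace containing $e_1$ and orthogonal to $v$, and cut $C$ along the hyperplane $k(C,L)$. Because $k(C,L)\supset L\supset \mathbb{R}e_1$, the cut is by a hyperplane containing $e_1$, so it reduces the extent of the two resulting pieces in directions perpendicular to $e_1$ without shrinking them along $e_1$. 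Iterating this at most $O_n(\log(1/\eta))$ times per transverse direction, and cycling through the $n-1$ transverse basis vectors, eventually produces sub-cones that satisfy condition (i) or (ii).

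The main obstacle is bounding $\sum_{C\in \mathcal{F}'_0}|C\cap S_n|\le \eta$. Two types of contributions feed $\mathcal{F}'_0$: boundary slabs near the cutting hyperplanes that cannot be classified cleanly, and the residual $\eta$-fractions excluded when passing from $C$ to the sub-cone $C'$ in item (3). The former are handled by budgeting $\eta/(nN)$ of the total mass per round, where $N$ is an upper bound on the number of rounds dictated by the termination analysis, so that the telescoping loss across all rounds is at most $\eta/2$. The latter require a recursive argument: each excluded $\eta$-fraction is re-subdivided using the same procedure with a rescaled parameter $\eta^2$, and a geometric series in $\eta$ bounds the cumulative loss. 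Since every refinement step is a legitimate application of $k^\pm$ to a codimension-$2$ subspace, the resulting $\mathcal{F}'$ is a valid partition in the sense of the definition, and the three items of the theorem hold by construction.
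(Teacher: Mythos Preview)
This statement is \emph{imported} from \cite{BMStab} (it is explicitly labelled ``Theorem 4.11 from \cite{BMStab} reformulated''), and the present paper offers no proof of it; it is quoted as a black box and then applied in the proof of \Cref{partition_small_cones}. So there is no ``paper's proof'' to compare against, and any self-contained argument would have to stand on its own.

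Your sketch has a genuine gap at its core: you treat the respectful function $k$ as if it were under your control, but it is \emph{given} and must be regarded as adversarial. When you write ``cut $C$ along the hyperplane $k(C,L)$ \dots so it reduces the extent of the two resulting pieces in directions perpendicular to $e_1$'', you are implicitly assuming that $k(C,L)$ is a hyperplane that actually bisects $C$ in the thick direction $v$. But the only constraint on $k(C,L)$ is that it contain $L$; the one-parameter family of hyperplanes through a codimension-$2$ subspace $L$ includes hyperplanes that do not separate $C$ at all in the $v$-direction (for instance, the hyperplane $\operatorname{span}(L,v)$ itself). An adversarial $k$ could always return such a hyperplane, and then your refinement step makes no progress whatsoever toward reducing transverse diameter. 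Consequently the claimed termination bound ``$O_n(\log(1/\eta))$ times per transverse direction'' has no justification, and without termination the entire budgeting scheme for $\mathcal{F}'_0$ collapses.

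The actual proof in \cite{BMStab} has to confront exactly this adversarial nature of $k$, and the mechanism for doing so (roughly, a potential/entropy argument that shows \emph{some} progress is made regardless of which hyperplane through $L$ is returned, combined with a careful choice of the sequence of $L$'s) is the substantive content of the theorem. Your proposal does not engage with this difficulty.
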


With these in place, let us prove the lemma.

\begin{proof}[Proof of \Cref{partition_small_cones}]
To apply \Cref{importedconesthm}, we need to define a respectful function $k\colon \mathcal{C}^n\times \mathcal{T}^n_2\to\mathcal{T}_1^n$. To this end consider a cone $C\in\mathcal{C}^n$ and a subspace $L\in\mathcal{T}^n_2$. First, if $\int_C f\,dx\neq \int_C g\,dx$, let $k(C,L)$ be any hyperplane containing $L$; this case will not occur for us.

Hence, assume $\int_C f\,dx= \int_C g\,dx$. Fix a hyperplane $H_0\in\mathcal{T}^n_1$ so that $L\subset H_0$. Note that all such hyperplanes can be obtained from $H_0$ by a rotation in the two dimensional subspace orthogonal to $L$. Hence, let $H_{\theta}$ be the hyperplane obtained by rotating by $\theta$ angle in that orhthogonal plane, so that e.g. $H_{\pi}=H_0$. Consider the induced $H_0^+$ and $H_0^-$ and their rotated counterparts, e.g. $H_{\pi}^+=H_{0}^-$.

\begin{clm}
There exists a $\theta\in[0,\pi]$ so that $\int_{C\cap H_\theta^+} f\,dx= \int_{C\cap H_\theta^+} g\,dx$ and $\int_{C\cap H_\theta^-} f\,dx= \int_{C\cap H_\theta^-} g\,dx$
\end{clm}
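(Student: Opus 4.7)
The plan is a one-parameter Intermediate Value Theorem argument applied to the ``bisection mass'' as the cutting hyperplane rotates about $L$.

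First I would parameterize the hyperplanes of $\mathcal{T}^n_1$ containing $L$ by an angle $\theta \in [0,\pi]$. Choose an orthonormal pair $(u,v)$ spanning the two-dimensional subspace $L^\perp$, and let $H_\theta$ be the hyperplane containing $L$ whose normal (in $L^\perp$) is $\cos(\theta) u + \sin(\theta) v$, with $H_\theta^+$ the closed half-space on the side into which this normal points. Under this convention, $H_\pi^+ = H_0^-$ and $H_\pi^- = H_0^+$, and one of these two choices agrees with the fixed reference hyperplane $H_0$ from the setup.

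Next, define
$$\phi : [0,\pi] \to \mathbb{R}, \qquad \phi(\theta) := \int_{C \cap H_\theta^+} (f-g)\,dx.$$
Since $f$ and $g$ are integrable (indeed continuous with bounded support in our setting) and the family $\{H_\theta^+\}$ varies monotonically and continuously in $\theta$, dominated convergence shows $\phi$ is continuous on $[0,\pi]$; the hyperplane $H_\theta$ itself has Lebesgue measure zero, so boundary effects do not disturb continuity. By the assumption $\int_C f\,dx = \int_C g\,dx$ together with $H_0^+ \sqcup H_0^- = \mathbb{R}^n$ (up to the measure-zero boundary), we get
$$\phi(0) + \phi(\pi) = \int_{C \cap H_0^+}(f-g)\,dx + \int_{C \cap H_0^-}(f-g)\,dx = \int_C (f-g)\,dx = 0,$$
so $\phi(\pi) = -\phi(0)$. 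Hence $\phi(0)$ and $\phi(\pi)$ have opposite signs (or both vanish), and the Intermediate Value Theorem furnishes some $\theta^* \in [0,\pi]$ with $\phi(\theta^*) = 0$, i.e.\ $\int_{C \cap H_{\theta^*}^+} f\,dx = \int_{C \cap H_{\theta^*}^+} g\,dx$. The complementary identity on $H_{\theta^*}^-$ is then immediate from $\int_C f\,dx = \int_C g\,dx$, and we set $k(C,L) := H_{\theta^*}$.

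I do not expect any serious obstruction: the argument is a textbook IVT, and the only substantive check is continuity of $\phi$, which is routine. One minor care point is measurability/independence of the choice of orientation convention for $H_0^+$ versus $H_0^-$, but this only affects the sign of $\phi$ and not the existence of a zero. If one later wants $k$ itself to be measurable or even continuous as a function of $(C,L)$, one can select $\theta^*$ canonically (e.g.\ the smallest zero of $\phi$ in $[0,\pi]$), but the claim as stated only asserts existence.
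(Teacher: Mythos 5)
Your proposal is correct and matches the paper's own argument: both define the discrepancy function $\theta \mapsto \int_{C\cap H_\theta^+}(f-g)\,dx$, use the relation $H_\pi^+ = H_0^-$ together with $\int_C f = \int_C g$ to deduce that its values at $0$ and $\pi$ are negatives of each other, and conclude by the Intermediate Value Theorem. The paper takes continuity of the integral in $\theta$ for granted, whereas you add the (routine but correct) dominated-convergence justification.
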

\begin{proof}[Proof of claim]
This follows easily by continuity of each of these integrals in $\theta$. Indeed, consider the discrepancy function $D\colon \mathbb{R}\to\mathbb{R}$ defined by $D(\theta):=\int_{C\cap H_\theta^+} f\,dx- \int_{C\cap H_\theta^+} g\,dx$, which is periodic and continuous. Note that
\begin{align*}
D(\pi)&=\int_{C\cap H_\pi^+} f\,dx- \int_{C\cap H_\pi^+} g\,dx\\
&=\int_{C\cap H_0^-} f\,dx- \int_{C\cap H_0^-} g\,dx\\
&=\left(\int_C f\,dx-\int_{C\cap H_0^+} f\,dx\right)-\left(\int_C g\,dx-\int_{C\cap H_0^+} g\,dx\right)\\
&=\int_{C\cap H_0^+} g\,dx-\int_{C\cap H_0^+} f\,dx= -D(0).
\end{align*}
Hence, by continuity there exists some $\theta\in[0,\pi]$ with $D(\theta)=0$, i.e., $\int_{C\cap H_\theta^+} f\,dx= \int_{C\cap H_\theta^+} g\,dx$. This immediately implies that also $\int_{C\cap H_\theta^-} f\,dx= \int_{C\cap H_\theta^-} g\,dx$.
\end{proof}
For every pair $(C,L)$, let $k(C,L)$ be the hyperplane $H_\theta$ given by this claim.

With this respectful function defined, we can apply \Cref{importedconesthm} with $\eta=\eta_{f,g,\epsilon}$ chosen sufficiently small depending on $f$, $g$ and $\epsilon$, to find valid partition $\mathcal{F}'=\mathcal{F}_0'\sqcup \mathcal{F}'_1 \sqcup \mathcal{F}'_2$ and for every $C\in \mathcal{F}'_2$ let $C'\subset C$ be the subcone as specified by the theorem. From this partition, we create partition $\mathcal{F}=\mathcal{F}_1\sqcup\mathcal{F}_2$ as follows.
Let $\mathcal{F}_2:=\mathcal{F}_1'\sqcup\{C':C\in\mathcal{F}'_2\}$ and let $\mathcal{F}_1:=\mathcal{F}_0'\sqcup\{C\setminus C': C\in\mathcal{F}_2'\}$.

To check the first condition, consider the contribution from $\mathcal{F}_0'$. Since $f$ and $g$ have bounded support, there exists an $R=R_{f,g}$, so that $supp(f),supp(g)\subset RS_n$, i.e., $\int_{RS_n}f+gdx=\int_{\R^n}f+gdx$. Hence, we find
$$\sum_{C\in \mathcal{F}_0'}\int_{C}fdx\leq \sum_{C\in \mathcal{F}_0'}|C\cap RS_n|\leq \eta R^n\leq \epsilon/2,$$
for $\eta$ sufficiently small in $R$, $n$, and $\epsilon$
 For the contribution from $\{C\setminus C': C\in\mathcal{F}_2'\}$, we use
$$\sum_{(C\setminus C')\in\{C\setminus C': C\in\mathcal{F}_2'\}}|(C\setminus C')\cap S_n|\leq \sum_{ C\in\mathcal{F}_2'}\eta |C\cap S_n|\leq \eta |S_n|<\epsilon/2,$$
as long as $\eta$ is small in terms of $n$ and $\epsilon$. Hence, the first condition is satisfied.

The second condition is the most tricky one. For $C\in\mathcal{F}_1'$ we have $\int_Cf\,dx=\int_C g\,dx$ so the condition is immediately satisfied. For the other cones we need the following crude observations. Since $f,g\leq 1$, for any cone $C\in\mathcal{F}_2'$, we have
\begin{align*}
\int_{C\setminus C'} f\,dx\leq |(C\setminus C')\cap RS_n|=\left(\frac{R}{r}\right)^n|(C\setminus C')\cap rS_n|\leq \frac{\eta}{1-\eta} \left(\frac{R}{r}\right)^n|C'\cap rS_n|\leq \frac{\epsilon}{2}0.1|C'\cap rS_n|\leq \frac{\epsilon}{2} \int_{ C'} f\,dx,  
\end{align*}
where in the penultimate inequality we used that $\eta$ is sufficiently small in terms of $R$ (i.e., $f,g$) and $\epsilon$. To conclude, we find that as $\int_Cf\,dx=\int_C g\,dx$, we have
$$\left|\int_{C'} (f-g)\,dx\right|=\left|\int_{C\setminus C'} (f-g)\,dx\right|\leq \int_{C\setminus C'}f\,dx+\int_{C\setminus C'}g\,dx\leq \epsilon \int_{ C'} f\,dx.$$
The third condition is immediate for the $C'\in \{C':C\in\mathcal{F}'_2\}$ and also easy to verify for $C\in \mathcal{F}_1'$, since $C^{z,w}\subset C\cap (|z|\partial S_n)=|z|(C\cap \partial S_n)$, so $diam(C^{z,w})\leq |z|diam(C\cap \partial S_n)\leq \epsilon|z|$. Hence, the lemma follows.
\end{proof}

\subsection{Reduction from \Cref{symmetric_diff_narrow_n_cone_alt} to \Cref{symmetric_diff_constant_on_fibers_n_cone}}

\begin{prop}\label{symmetric_diff_constant_on_fibers_n_cone}
    Given $n \in \mathbb{N}$, $\lambda \in (0,1/2]$, $p \in (-1/n, \infty)$, $r,\beta, \ell \in (0, \infty)$,   there exists $d=d_{n, \lambda, p, r,\beta, \ell}>0$ such that the following holds. Let $C \subset C_n^1$ be a cone. Let $f,g, h\colon C\rightarrow [0,1]$ be continuous functions with bounded support such that  
    \begin{itemize}
        \item $\int_{C} f\,  dx= \int_{C} g\,  dx $,
        \item $\int_{C}f \,dx \leq  \ell \int_{C \cap rS_n}f\,dx $,
        \item $|\{x\in rS_n\cap C: f(x)<\beta\text{ or }g(x)<\beta\}|\leq \gamma|C\cap rS_n|$, 
        \item  for all $x,y \in C$ we have $h(\lambda x +(1-\lambda)y) \geq M_{\lambda,p}(f(x),g(y))$,
        \item for all $z,w \in \mathbb{R}$ and $x, y \in C^{z,w}$ we have $f(x)=f(y)$ and $g(x)=g(y)$, and
        \item $\int_{C}h \,dx = (1+\delta) \int_{C} f \,dx$ for some $0\leq \delta \leq d$
    \end{itemize}
    Then $\int_C |f-g|\, dx=O_{n,\lambda, p, r, \beta,\ell}\left(\sqrt{\delta+\gamma}\right)\int_{C} f\, dx$.
\end{prop}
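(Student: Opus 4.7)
The plan is to reduce to the two-dimensional Theorem~\ref{symmetric_diff_2D} by projecting along the $(n-2)$-dimensional fibers $C^{z,w}$, exploiting the fact that $f$ and $g$ are constant on such fibers. Let $\pi\colon\mathbb{R}^n\to\mathbb{R}^2$ be the projection onto the $(z,w)$-coordinates and define
$$F(z,w):=\int_{C^{z,w}}f\,d\xi=|C^{z,w}|\cdot f|_{C^{z,w}},\quad G(z,w):=|C^{z,w}|\cdot g|_{C^{z,w}},\quad H(z,w):=\int_{C^{z,w}}h(\xi)\,d\xi.$$
By Fubini, $\int F=\int_C f$, $\int G=\int_C g$, $\int H=\int_C h$, and crucially $\int_{\mathbb{R}^2}|F-G|=\int_C|f-g|$.

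The core geometric step transfers the Borell-Brascamp-Lieb condition to $(F,G,H)$ with a suitable parameter $q$. Fix $(z,w),(\tilde z,\tilde w)$ and set $(z^*,w^*):=\lambda(z,w)+(1-\lambda)(\tilde z,\tilde w)$. By convexity of $C$, $\lambda C^{z,w}+(1-\lambda)C^{\tilde z,\tilde w}\subset C^{z^*,w^*}$; for any $\xi$ in this Minkowski sum, write $\xi=\lambda\xi_1+(1-\lambda)\xi_2$ with $\xi_1\in C^{z,w},\,\xi_2\in C^{\tilde z,\tilde w}$. The hypothesis gives $h(\xi)\geq M_{\lambda,p}(f|_{C^{z,w}},g|_{C^{\tilde z,\tilde w}})$. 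Integrating and applying the Brunn-Minkowski inequality in $\mathbb{R}^{n-2}$ to the fibers,
$$H(z^*,w^*)\geq M_{\lambda,1/(n-2)}\bigl(|C^{z,w}|,|C^{\tilde z,\tilde w}|\bigr)\cdot M_{\lambda,p}\!\left(\frac{F(z,w)}{|C^{z,w}|},\frac{G(\tilde z,\tilde w)}{|C^{\tilde z,\tilde w}|}\right)\geq M_{\lambda,q}\bigl(F(z,w),G(\tilde z,\tilde w)\bigr),$$
where the last inequality is the H\"older-type inequality for power means, $M_{\lambda,\alpha}(a,c)\,M_{\lambda,\beta}(b,d)\geq M_{\lambda,r}(ab,cd)$ with $1/r=1/\alpha+1/\beta$, applied with $\alpha=1/(n-2)$, $\beta=p$, so $q=p/((n-2)p+1)$. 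A direct computation gives $q+1/2=(np+1)/(2((n-2)p+1))$, so $p>-1/n$ (which also implies $p>-1/(n-2)$) ensures $q>-1/2$.

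After rescaling both range and domain of $(F,G,H)$ (straightforward since $f,g,h$ are bounded with bounded support) so that $\int F=\int G=1$ and $F,G,H\in[0,1]$, Theorem~\ref{symmetric_diff_2D} applied with parameter $q>-1/2$ and deficit $\delta$ produces some $v\in\mathbb{R}^2$ with
$$\int|F(\cdot)-G(\cdot+v)|\,dx=O_{n,\lambda,p}\bigl(\sqrt{\delta}\bigr)\int F.$$
It remains to show $|v|$ is small. Both $F$ and $G$ are supported in the 2D cone $\pi(C)$ with apex at the origin, and the hypothesis $|\{x\in rS_n\cap C:f(x)<\beta\text{ or }g(x)<\beta\}|\leq\gamma|C\cap rS_n|$, combined with constancy on fibers, forces both $F$ and $G$ to be $\geq\beta|C^{z,w}|$ on a subset of $\pi(rS_n\cap C)$ of controlled measure---a fixed bounded region adjacent to the apex, whose size is controlled in terms of $n,r,\ell,\beta,\gamma$. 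A translation $|v|$ exceeding a threshold of order $\sqrt{\delta+\gamma}$ would displace this essential mass of $G$ away from that of $F$, forcing $\int|F-G(\cdot+v)|\gtrsim\beta|v|$ and contradicting the above bound; hence $|v|=O_{n,\lambda,p,r,\beta,\ell}(\sqrt{\delta+\gamma})$. Since the level sets of $G$ are nearly convex (via Lemma~\ref{almostconvexlevelsets} applied to the original functions), Lemma~\ref{smalltranslationsmallsymdiflem} then yields $\int|G(\cdot+v)-G|=O(\sqrt{\delta+\gamma})\int F$. Combining with the 2D conclusion and unprojecting via Fubini produces $\int_C|f-g|=\int|F-G|=O_{n,\lambda,p,r,\beta,\ell}(\sqrt{\delta+\gamma})\int_C f$, as desired. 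The main obstacle is formalizing the translation-control argument in the final step, which requires carefully tracking the cone geometry and the effect of the normalizing rescalings on the concentration hypothesis encoded by $\beta$, $r$, and $\ell$.
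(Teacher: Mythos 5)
Your projection–plus–Hölder reduction to the two‑dimensional theorem is the right strategy and matches the paper's route; in fact your exponent $q = p/((n-2)p+1)$, obtained by applying Brunn–Minkowski in the $(n-2)$-dimensional fibers and the Hölder inequality for power means, is exactly the sharp value needed to ensure $q > -1/2$ whenever $p > -1/n$ (the paper's Lemma~\ref{Holderapplicationlem} writes $q = p/(1+np)$, which would only give $q>-1/2$ for $p>-1/(n+2)$; it tacitly relies on the $(n-2)$-dimensional version). So the geometric core of the reduction is sound.

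The genuine gap is the translation‑control argument, which you yourself flag as the main obstacle, and it is more serious than a formalization issue. After projecting, $F(z,w)=|C^{z,w}|\,f|_{C^{z,w}}$ and likewise for $G$; even though $f,g\geq\beta$ on most of $rS_n\cap C$, the fiber volume $|C^{z,w}|$ tends to $0$ both as $(z,w)\to 0$ (the apex) and as $(z,w)$ approaches $\partial\pi(C)$ (boundary slices collapse). Translating by a vector $v$ changes the support only near $\partial\pi(C)$, which is precisely where $F$ is small; thus $\int|F-G(\cdot+v)|$ does \emph{not} acquire a contribution of order $\beta|v|$ from the support mismatch, and your claimed lower bound fails (in particular for $v$ pointing into the cone). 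The paper avoids this by manufacturing artificial sharp cutoffs at a point $w_0$ in the interior of the projected cone: it first normalizes $C$ via John's theorem (\Cref{conessandwichlem}), shows in \Cref{bigfincentralball} that the projected functions lie in $[s_0,s_1]$ on most of $B(w_0,r/10n)$ with $s_0,s_1$ depending only on $n,r,\beta$, splits $\mathbb{R}^2$ into three right‑angled cones $K_1,K_2,K_3$ at $w_0$, balances the integrals there via \Cref{2dconeswithinthecone}, and applies \Cref{symmetric_diff_2D} to each restricted piece $f_2\mathbf{1}_{w_0+K_i}$. The cutoffs $\partial(w_0+K_i)$ pass through the ball where the functions are $\geq s_0$, so a translation mismatch of size $|v_i+v|$ now forces a symmetric difference penalty $\gtrsim s_0\,|v_i+v|$, which is the estimate your direct argument cannot produce. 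You also omit the John normalization, without which the constants $s_0,s_1$ are not uniform in the cone $C$ and the whole rescaling step breaks down.
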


\begin{proof}[Proof that \Cref{symmetric_diff_constant_on_fibers_n_cone} implies \Cref{symmetric_diff_narrow_n_cone_alt}]
We construct $f',g'$ to which we can apply \Cref{symmetric_diff_constant_on_fibers_n_cone} by shrinking these functions a tiny bit (proportional to $\epsilon$). First, construct $f_1\leq f$ and $g_1\leq g$ by taking the infimum of $f$ in every slice $C^{z,w}$
$$f_1(x):=\inf\left\{f(y):\exists z,w\in \mathbb{R}: x,y\in C^{z,w}\right\}  $$
and $g_1$ analogously. Since $f(x)-f_1(x)\leq \epsilon$, we find that, 
$$\int_{C}f_1\,dx\geq \int_{C}f\,dx- \epsilon |supp(f)|\geq \int_{C}\,fdx- \epsilon |sS_n|=\int_{C}f\,dx-O_{n,s}(\epsilon),$$
and analogously for $g_1$.
Combining these, we find
$$\left|\int_C (f_1-g_1)\,dx\right|\leq \left|\int_C (f-g)\,dx\right|+\left|\int_C (g-g_1)\,dx\right|+\left|\int_C (f-f_1)\,dx\right|=O_{n,\ell,r}(\epsilon)+2\cdot O_{n,s}(\epsilon)=O_{n,\ell,r,s}(\epsilon).$$
Thus we can decrease one of the two functions while keeping the other the same to create $f'\leq f_1,g'\leq g_1$ with $\int_C f'dx=\int_C g'dx$ while maintaining all the properties required to apply \Cref{symmetric_diff_constant_on_fibers_n_cone} (e.g. only reduce all points in a $C^{z,w}$ simultaneously and only reduce outside of $rS_n/2$), we can do this with
$$\int_C (f_1-f')\,dx+\int_C (g_1-g')\,dx=\left|\int_C (f_1-g_1)\,dx\right|=O_{n,\ell,r,s}(\epsilon).$$
Since, $f'\leq f$ and $g'\leq g$, we still have for all $x,y \in C$ that $h(\lambda x +(1-\lambda)y) \geq M_{\lambda,p}(f'(x),g'(y))$. We can evaluate
$$\int_C h\,dx\leq (1+\delta)\int_C f\,dx\leq \frac{1+\delta}{1-O_{n,\ell,r,s}(\epsilon)}\int_C f'\,dx\leq (1+2\delta)\int_C f'\,dx,$$
where the last inequality follows from choosing $e_{n,\lambda,p,r,s,\ell,\delta}$ sufficiently small. Since $e_{n,\lambda,p,r,s,\ell,\delta}$ is definitely smaller than $\beta$, we find that $|\{x\in rS_n: f'(x)<\beta/2\text{ or }g'(x)<\beta/2\}|\leq |\{x\in rS_n: f(x)<\beta\text{ or }g(x)<\beta\}| \leq \gamma$.

Now we can apply \Cref{symmetric_diff_constant_on_fibers_n_cone} to functions $f',g',h$, and parameters $n,\lambda,p,\ell$ as in \Cref{symmetric_diff_narrow_n_cone_alt} and $\beta/2, r/2$ to give
$$\int_C|f'-g'|\,dx=O_{n,\lambda, p,\beta/2, r/2, \ell}\left((2\delta)^{1/2}+\gamma\right)\int_{C} f'\, dx=O_{n,\lambda, p, r,\beta, \ell}\left(\sqrt{\delta}+\gamma\right)\int_{C} f\,dx.$$
Bringing this back to $f$ and $g$, we find
$$\int_C|f-g|\,dx\leq \int_C|f'-g'|\,dx+\int_C|f-f'|\,dx+\int_C|g-g'|\,dx=O_{n,\lambda, p, r,\beta, \ell}\left(\sqrt{\delta}+\gamma\right)\int_{C} f\,dx,$$
which concludes the proof of the reduction.
\end{proof}

\subsection{Reduction from \Cref{symmetric_diff_constant_on_fibers_n_cone} to \Cref{symmetric_diff_2D}}

\begin{defn}
Consider the three cones  $K_1,K_2,K_3$ partitioning $\mathbb{R}^2$; 
$$K_1:=\{(x,y): x<0,|y|\leq |x|\}, K_2=\{(x,y): y\geq \max\{0,-x\}\}, K_3=\{(x,y): y\leq \min\{0,x\}\},$$
and corresponding cones $K_1^n,K_2^n,K_3^n$ partitioning $\mathbb{R}^n$:
$$K_i^n=K_i\times \mathbb{R}^{n-2}.$$
\end{defn}

The following lemma shows that if partition $f$ inside cone $C$ somewhat evenly into parallel to $K_i^n$, then the corresponding partition of $g$ almost coincides. The proof is very similar to the proof of \Cref{positioningwithinthe2dconelem}.

\begin{lem}\label{2dconeswithinthecone}
    Given $n \in \mathbb{N}$, $\lambda \in (0,1/2]$, $p \in (-1/n, \infty)$, $r, \beta,\ell \in (0, \infty)$ where $r$ is sufficiently small in terms of $n$, there exists $d=d_{n, \lambda, p, r,\beta, \ell}>0$ such that for $0 \leq \delta < d$ the following holds. Let $w=(r/2,0,\dots, 0),$. Let $C$ be a cone such that $ B(e_1, 1) \cap H_n^1 \subset C \cap H_n^1 \subset  B(e_1, n) \cap H_n^1$. Let $f,g, h\colon C\rightarrow [0,1]$ be continuous functions with bounded support such that  
    \begin{itemize}
        \item $\int_{C} f \, dx= \int_{C} g  \,dx $,
        \item $\int_{C}f \,dx \leq  \ell \int_{C \cap r(H_n^1)^- }f\,dx $,
        \item $|\{x\in (H_n^1)^-\cap C: f(x)<\beta\text{ or }g(x)<\beta\}|\leq \gamma|C\cap (H_n^1)^-|$,
        \item for all $x,y \in C$ we have $h(\lambda x +(1-\lambda)y) \geq M_{\lambda,p}(f(x),g(y))$,
        \item $\int_{C}h \,dx = (1+\delta) \int_{C} f \,dx$
    \end{itemize}
    there exists $v \in B(o,O_{n, \lambda, p, r,\beta, \ell}\left(\sqrt{\delta+\gamma}\right))$ so that for all $i=1,2,3$
    \begin{itemize}
        \item $\int_{w+K_i^n}f\,dx=\int_{w+v+K_i^n}g\,dx$, and
        \item $\int_{w+(1-\lambda)v+K_i^n}h\,dx\leq (1+O_{n, \lambda, p, r,\beta, \ell}(\delta))\int_{w+K_i^n}f\,dx$.
    \end{itemize}
\end{lem}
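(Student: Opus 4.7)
The plan is to mimic the sequential hyperplane-cutting argument from the proof of \Cref{positioningwithinthe2dconelem}, adapted to the three-cone configuration sharing the codimension-two vertex $w+\mathbb{R}^{n-2}$. The three cones $w+K_i^n$ are bounded by three hyperplanes
\[
\tilde H_a=\{y+x=r/2\},\qquad \tilde H_b=\{y-x=-r/2\},\qquad \tilde H_c=\{y=0\},
\]
all containing $w+\mathbb{R}^{n-2}$, and satisfying $w+K_1^n = \tilde H_a^- \cap \tilde H_b^+$, $w+K_2^n = \tilde H_a^+ \cap \tilde H_c^+$, and $w+K_3^n = \tilde H_b^- \cap \tilde H_c^-$. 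Since each $K_i^n$ is invariant under translations in the $\mathbb{R}^{n-2}$ direction, I look for $v \in \text{span}\{e_1,e_2\}$, which gives two degrees of freedom, matching the two independent equations among the three cone-mass identities (the third is forced by $\int_C f = \int_C g$).

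First, I pin down each of the hyperplanes by a translated half-space mass-matching, as in the proof of \Cref{positioningwithinthe2dconelem}. Sequentially define $s_a$ by $\int_{C \cap \tilde H_a^-} f \, dx = \int_{C \cap \{y+x<s_a\}} g \, dx$, then $s_b$ by $\int_{C \cap \tilde H_a^- \cap \tilde H_b^+} f \, dx = \int_{C \cap \{y+x<s_a\}\cap\{y-x>s_b\}} g \, dx$, and analogously $s_c$. At each step, the sup-convolution bound $h \ge M_{\lambda,p}(f,g)$, together with the global $\int_C h \le (1+\delta)\int_C f$ and the fact that each restricted domain carries an $\Omega_{n,r,\ell}(1)$-fraction of the total mass (thanks to the fatness $B(e_1,1)\cap H_n^1 \subset C\cap H_n^1$ and the hypothesis $\int_C f \le \ell \int_{C\cap r(H_n^1)^-} f$), upgrades to $\int h \le (1+O(\delta))\int f$ on that region. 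The positivity assumption $|\{f<\beta \text{ or } g<\beta\}\cap rS_n\cap C| \le \gamma|C\cap rS_n|$ then allows me to invoke \Cref{SimilarSupportLem} on the restricted triple, which yields that the symmetric difference of the corresponding half-space intersections with $C$ has volume $O(\sqrt{\delta+\gamma})$; dividing by the $\Omega_n(1)$ cross-sectional area of $C$ transverse to each hyperplane gives $|s_j-s_j^0| = O_{n,\lambda,p,r,\beta,\ell}(\sqrt{\delta+\gamma})$, where $s_j^0$ is the original parameter.

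Taking $v \in \text{span}\{e_1,e_2\}$ to be the unique common translation realizing $\tilde H_a + v = \{y+x = s_a\}$ and $\tilde H_b + v = \{y-x = s_b\}$ (well-defined because $\tilde H_a$ and $\tilde H_b$ are transverse in $\text{span}\{e_1,e_2\}$) yields $|v|=O_{n,\lambda,p,r,\beta,\ell}(\sqrt{\delta+\gamma})$, and by construction $\int_{w+K_1^n} f = \int_{w+v+K_1^n} g$. The main obstacle will be to extend the equality to the other two cones: the compatibility relation $\tilde H_c + v = \{y = s_c\}$, equivalently $(s_a+s_b)/2 = s_c$, holds only up to error $O(\sqrt{\delta+\gamma})$ as a linear combination of the three half-space mass-matching identities imposed by $\int_C f = \int_C g$. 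I expect to close the gap by a quantitative implicit-function perturbation of $v$: the map $v \mapsto (\int_{w+v+K_1^n} g, \int_{w+v+K_2^n} g)$ has nondegenerate Jacobian at $v=0$, with derivative a boundary integral of $g$ bounded below in norm by $\Omega(\beta)$ on the relevant cross-section of $C$, so an $O(\sqrt{\delta+\gamma})$ adjustment of $v$ makes all three cone-mass equalities hold exactly.

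Once $v$ is fixed, the second conclusion $\int_{w+(1-\lambda)v+K_i^n}h \le (1+O(\delta))\int_{w+K_i^n}f$ follows routinely. The convexity of each cone $K_i^n$ gives the inclusion $\lambda(w+K_i^n)+(1-\lambda)(w+v+K_i^n)\subset w+(1-\lambda)v+K_i^n$, so the pointwise inequality $h \ge M_{\lambda,p}(f,g)$ combined with the Borell--Brascamp--Lieb inequality applied within each cone (and the first conclusion $\int_{w+v+K_i^n}g = \int_{w+K_i^n}f$) yields $\int_{w+(1-\lambda)v+K_i^n}h \ge \int_{w+K_i^n}f$. Summing over $i$ against the global $\int_C h \le (1+\delta)\int_C f$ and using that each $\int_{w+K_i^n}f$ is $\Omega_{n,\lambda,p,r,\beta,\ell}(1)\int_C f$ (by cone fatness plus the lower bound on $f$ near the vertex) converts the total $\delta$-deficit into the claimed per-cone multiplicative $(1+O(\delta))$ bound.
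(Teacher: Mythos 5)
Your proposal takes a genuinely different route from the paper, and the difference introduces a gap. The paper's proof is much shorter: it invokes a KKM-type argument (\Cref{parallelconeslem}, adapted to the three cones $K_i^n$) to produce in one stroke a single translate $v$ for which \emph{all three} cone-mass equalities hold exactly and simultaneously. It then shows $|v| = O(\sqrt{\delta+\gamma})$ by applying \Cref{SimilarSupportLem} once to the restrictions to $w+K_1^n$, after first noting (via Borell--Brascamp--Lieb inside each cone and summing deficits against the global $\delta\int_C f$) that $\int_{w+(1-\lambda)v+K_1^n} h - \int_{w+K_1^n} f \le O_{n,\ell}(\delta)\int_{w+K_1^n} f$, which is also the second bullet. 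Your sequential hyperplane construction by contrast only forces exact mass-matching for $K_1$, creating the compatibility problem you acknowledge.

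The gap is in your proposed implicit-function fix. The Jacobian you want to bound below is a pair of $(n-1)$-dimensional boundary integrals of $g$ over the cone faces, but the hypotheses control $g$ only in $n$-dimensional measure: we know $|\{x \in C \cap (H_n^1)^- : g(x)<\beta\}| \le \gamma|C\cap (H_n^1)^-|$, and this says nothing about the $(n-1)$-dimensional measure of $\{g<\beta\}$ on the specific face hyperplanes of $w+K_i^n$. A continuous $g$ satisfying the hypotheses could vanish on exactly those faces, so the claimed $\Omega(\beta)$ lower bound on the Jacobian norm does not follow from what is assumed. A Fubini argument would only give a lower bound for \emph{most} nearby hyperplanes, which in effect reconstructs a topological continuity/degree argument --- precisely what the KKM lemma packages cleanly without any quantitative derivative bound. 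I'd recommend replacing the sequential cuts and perturbation with a direct KKM argument to produce $v$; the rest of your plan (Borell--Brascamp--Lieb inside each cone, summing deficits, applying \Cref{SimilarSupportLem} to bound $v$) is correct and matches the paper's treatment.
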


Recall $e_1 \perp H_n^1$ and actually $e_1 \in H_n^1$ where $H_n^1$ is the supporting hyperplane of the face $F_n^1$ of $S_n$. 

\begin{lem}\label{conessandwichlem}
Given $n \in \mathbb{N}$ and a cone $C \subset C_n^1$, there exists a bijective linear map $\alpha: \mathbb{R}^n \rightarrow \mathbb{R}^n$ such that 
\begin{itemize}
    \item $\alpha(e_1)=e_1$, 
    \item $\alpha(e_2)=\xi e_2$ for some $\xi \in \mathbb{R}^*$, and 
    \item $ B(e_1, 1) \cap H_n^1 \subset \alpha(C) \cap H_n^1 \subset  B(e_1, n) \cap H_n^1$.
\end{itemize} Note that for all $z,w \in \mathbb{R}$, $(\alpha C)^{z, \xi w}= \alpha( C^{z, w})$.
\end{lem}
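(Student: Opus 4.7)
The plan is to construct $\alpha$ as a block-diagonal linear map with respect to the decomposition $\mathbb{R}^n = \mathbb{R} e_1 \oplus \mathbb{R} e_2 \oplus \mathrm{span}(e_3, \ldots, e_n)$, normalizing the cross-section of $C$ in the hyperplane $H_n^1$ via the L\"owner--John theorem.

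Identifying $H_n^1 = e_1 + e_1^\perp$, I consider the translated cross-section $Q := (C \cap H_n^1) - e_1 \subset e_1^\perp$, which is a convex body with origin in its interior (the regime in which the sandwich condition is meaningful). Let $\bar Q$ denote the orthogonal projection of $Q$ onto $\mathrm{span}(e_3, \ldots, e_n)$. Applying John's theorem in dimension $n - 2$ to $\bar Q$, I obtain a linear map $\alpha' \colon \mathrm{span}(e_3, \ldots, e_n) \to \mathrm{span}(e_3, \ldots, e_n)$ such that $B^{(n-2)}(0, 1) \subset \alpha'(\bar Q) \subset B^{(n-2)}(0, n-2)$. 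I then choose a nonzero scalar $\xi$ to rescale in the $e_2$-direction so that the block-diagonal map $M := \mathrm{diag}(\xi, \alpha')$ on $e_1^\perp$ sends $Q$ into the desired sandwich $B^{(n-1)}(0, 1) \subset M(Q) \subset B^{(n-1)}(0, n)$.

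Setting $\alpha(e_1) := e_1$ and $\alpha|_{e_1^\perp} := M$ then produces a bijective linear map on $\mathbb{R}^n$ satisfying $\alpha(e_1) = e_1$ and $\alpha(e_2) = \xi e_2$. Since $\alpha$ leaves $e_1^\perp$ invariant, it preserves $H_n^1$ as a set, so that $\alpha(C) \cap H_n^1 = e_1 + M(Q)$, yielding the sandwich $B(e_1, 1) \cap H_n^1 \subset \alpha(C) \cap H_n^1 \subset B(e_1, n) \cap H_n^1$. The block-diagonal structure of $M$ (preservation of $\mathrm{span}(e_3, \ldots, e_n)$) moreover ensures that $\alpha$ separately transforms the first two coordinates and the remaining ones, so that $(\alpha C)^{z, \xi w} = \alpha(C^{z, w})$ by a direct coordinate computation.

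The main obstacle is verifying that a single scalar $\xi$ can rescale the $e_2$-direction so that the block-diagonal image $M(Q)$ is sandwiched between Euclidean balls with ratio bounded by $n$, rather than between shifted ellipsoids. Since $\alpha'$ only normalizes the projection $\bar Q$ and not $Q$ itself, the $e_2$-extents of the slices $\{q_2 : (q_2, \bar q) \in Q\}$ vary with $\bar q \in \bar Q$; convexity of $Q$ is essential to control this variation and to absorb the resulting mismatch into the slightly enlarged constant $n$ (rather than $n-1$).
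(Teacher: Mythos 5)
There is a genuine gap, and you have in fact identified it yourself in your final paragraph without resolving it. The block-diagonal decomposition $M = \mathrm{diag}(\xi, \alpha')$ cannot remove shear between the $e_2$-direction and $\mathrm{span}(e_3,\ldots,e_n)$, and convexity alone does not save you. Concretely, take $n=3$ and let $Q = \{(q_2,q_3) : |q_3|\le 1,\ |q_2-q_3|\le \epsilon\}$ for small $\epsilon>0$ (a thin slanted parallelogram; you may shrink it to fit inside $F_n^1 - e_1$ and the John normalization of $\bar Q$ will undo the shrink). Here $\bar Q = [-1,1]$ is already a John ball, so $\alpha' = \mathrm{id}$, and one must choose $\xi$ so that $\mathrm{diag}(\xi,1)(Q)$ contains $B(0,1)$. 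But the inradius of $\mathrm{diag}(\xi,1)(Q)$ about the origin is $\min\bigl(1,\; \xi\epsilon/\sqrt{1+\xi^2}\bigr) < \epsilon$ for every $\xi$, so no scalar $\xi$ works once $\epsilon<1$. The issue is precisely the correlation between the $e_2$-extent and the position in $\bar Q$, which your projection discards.

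The paper's proof avoids this by applying John's theorem once, in the full dimension $n-1$, to the slab $K'=(C\cap H_n^1)-e_1\subset e_1^\perp$, obtaining a linear map $\zeta$ on $e_1^\perp$ with $B(o,1)\subset\zeta(K')\subset B(o,n-1)$; this $\zeta$ handles any shear. The price is that $\zeta(e_2)$ need not be a multiple of $e_2$. The key extra idea you are missing is to then post-compose with a rotation $\gamma$ of $e_1^\perp$ that maps $\zeta(e_2)$ to a multiple of $e_2$: rotations are isometries, so they preserve the ball sandwich, and $\alpha' = \gamma\circ\zeta$ then satisfies both the sandwich condition and $\alpha'(e_2)=\xi e_2$. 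Your approach is structurally unable to produce this, because $\mathrm{diag}(\xi,\alpha')$ always preserves the subspace $\mathrm{span}(e_3,\ldots,e_n)$, while the required normalization of $K'$ in general does not.
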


\begin{lem}\label{bigfincentralball}
    Given $n \in \mathbb{N}$ and  $r,\beta \in (0, 1)$,   there exist $s_1 , s_0 \in (0, \infty)$ such that the following holds. Let $C$ be a cone such that $ B(e_1, 1) \cap H_n^1 \subset C \cap H_n^1 \subset  B(e_1, n) \cap H_n^1$. Let $f,g\colon C\rightarrow [0,1]$ be continuous functions with bounded support such that
    \begin{itemize}
        \item  $|\{x \in C \cap r(H_n^1)^- :f(x)< \beta\text{ or }g(x)< \beta\}|\leq \gamma |C \cap r(H_n^1)^-|$, and
        \item for all $(z,w)\in \mathbb{R}^2$, and all $x,y\in C^{z,w}$ we have $f(x)=f(y)$ and $g(x)=g(y)$.
    \end{itemize}  Let $C' = \pi_{1,2}(C)=\{(z,w)\in\mathbb{R}^2: C^{z,w}\neq\emptyset\}$ and  $f',g', h'\colon C'\rightarrow \mathbb{R}_{\geq0}$ be defined by 
     $$f'(z,w)= f(x)|C^{z,w}|,\text{ and } \,g'(z,w)= g(x)|C^{z,w}|,$$ 
     for some $x=x_{z,w}\in C^{z,w}$
     Then for every $(z,w) \in B((r/2,0), r/10n)\subset\mathbb{R}^2$ we have 
     $$|\{(z,w) \in B((r/2,0), r/10n):f'(z,w)
     ,g'(z,w) \in[s_0,s_1]\}|\geq (1-O_{n,r,\beta}(\gamma))|B((r/2,0), r/10n)|.$$
\end{lem}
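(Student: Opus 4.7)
The plan is to transfer the hypothesis that $f, g \geq \beta$ on most of $C \cap r(H_n^1)^-$ into a statement about $f', g'$ on the small planar disk around $(r/2, 0)$, using only the geometry of the cone slices $C^{z,w}$. The only quantities at play are the $(n-2)$-dimensional volumes of the fibers and the hypothesis, since $h$ plays no role.

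First, I would establish that $|C^{z,w}|_{n-2} = \Theta_n(r^{n-2})$ for every $(z,w) \in B((r/2, 0), r/10n)$. Writing $K := C \cap H_n^1$ for the base of $C$, the sandwich assumption says that in the $(x_2, \ldots, x_n)$-coordinates on $H_n^1 = \{x_1 = 1\}$ the set $K$ sits between Euclidean balls of radii $1$ and $n$ centered at the origin. Since $C$ is a cone with apex $o$, one has $C \cap \{x_1 = z\} = zK$ and hence $C^{z,w} = z \cdot K_{w/z}$, where $K_t$ denotes the $x_2 = t$ slice of $K$. For $(z, w) \in B((r/2, 0), r/10n)$ we have $z \in [r/2 - r/(10n), r/2 + r/(10n)] \subset (0, r)$ and $|w/z| \leq 1/(5n-1) \leq 1/2$, so $K_{w/z}$ contains the $(n-2)$-ball of radius $\sqrt{1 - (w/z)^2} \geq 1/2$ and is contained in the $(n-2)$-ball of radius $n$. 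Therefore $|C^{z,w}|_{n-2} = z^{n-2} |K_{w/z}|_{n-2} = \Theta_n(r^{n-2})$.

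Next, I would choose $s_0 := c_n \beta r^{n-2}$ and $s_1 := C_n r^{n-2}$ with constants depending only on $n$, arranged so that the condition $f'(z,w), g'(z,w) \in [s_0, s_1]$ becomes equivalent to $f(x), g(x) \in [\beta, 1]$ for any $x \in C^{z,w}$ (recall $f,g$ are constant on fibers by hypothesis). The upper bound is automatic from $f, g \leq 1$, and the lower bound uses exactly the estimate $|C^{z,w}| = \Omega_n(r^{n-2})$ from the previous step.

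Finally, I would estimate the bad set. Define
$$B_{\mathrm{bad}} := \{(z,w) \in B((r/2, 0), r/10n) : f|_{C^{z,w}} < \beta \text{ or } g|_{C^{z,w}} < \beta\}.$$
For each $(z, w) \in B_{\mathrm{bad}}$, the entire fiber $C^{z,w}$ lies in $\{x \in C : f(x) < \beta \text{ or } g(x) < \beta\}$; and since $z < r$ it lies in $C \cap r(H_n^1)^-$. Fubini and the hypothesis give
$$|B_{\mathrm{bad}}|_2 \cdot \Omega_n(r^{n-2}) \leq \int_{B_{\mathrm{bad}}} |C^{z,w}|_{n-2} \, dz\, dw \leq \gamma \, |C \cap r(H_n^1)^-|,$$
and the elementary computation $|C \cap r(H_n^1)^-|_n = r^n |K|_{n-1}/n = O_n(r^n)$ then yields $|B_{\mathrm{bad}}|_2 = O_n(\gamma r^2) = O_n(\gamma) \, |B((r/2, 0), r/10n)|_2$, as required. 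The argument is a careful bookkeeping of the cone geometry, and I do not anticipate a substantive obstacle.
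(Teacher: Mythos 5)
Your proof is correct and follows essentially the same route as the paper: show that $|C^{z,w}| = \Theta_n(r^{n-2})$ uniformly over the disk $B((r/2,0),r/10n)$ using the sandwich hypothesis $B(e_1,1)\cap H_n^1 \subset K \subset B(e_1,n)\cap H_n^1$, then bound the bad $2$-dimensional set by integrating fiber volumes against the measure hypothesis. A small wording slip: you cannot choose $s_0,s_1$ so that $f'(z,w),g'(z,w)\in[s_0,s_1]$ is \emph{equivalent} to $f,g\in[\beta,1]$ on the fiber (since $|C^{z,w}|$ varies by a bounded factor over the disk), but you only need and only use the one-way implication ($f,g\geq\beta$ on the fiber gives $f',g'\in[s_0,s_1]$), so the argument stands.
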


\begin{lem}\label{Holderapplicationlem}
     Given $n \in \mathbb{N}$, $\lambda \in (0,1/2]$, $p \in (-1/n, \infty)$ and $q=\frac{p}{1+np}$ the following holds. Let $C \subset \mathbb{R}^n$ be a cone. Let $f,g, h\colon C\rightarrow [0,1]$ be continuous functions with bounded support such that 
     \begin{itemize}
    \item for all $x,y\in C$, we have $h(\lambda x +(1-\lambda)y) \geq M_{\lambda,p}(f(x),g(y))$, and
    \item  for all $z,w \in \mathbb{R}$ and $x, y \in C^{z,w}$ we have $f(x)=f(y)$ and $g(x)=g(y)$.
    \end{itemize} Let $C' = \pi_{1,2}(C)$ $=\{(z,w)\in\mathbb{R}^2: C^{z,w}\neq\emptyset\}$ be a cone in $ \mathbb{R}^2$. Let  $f',g', h'\colon C'\rightarrow \mathbb{R}_{\geq0}$ be defined by 
     $$f'(z,w)= f(x)|C^{z,w}|, \,g'(z,w)= g(x)|C^{z,w}|,\text{ and }h'(z,w)=h'(x)|C^{z,w}|$$
     for some $x=x_{z,w} \in C^{z,w}$. 
     Then, for $x, y \in C'$
     \begin{itemize}
         \item $h'(\lambda x +(1-\lambda)y) \geq M_{\lambda,q}(f'(x),g'(y))$,
         \item $\int_{C'} f'\, dx = \int_C f\, dx  $,
         \item $\int_{C'} g'\, dx = \int_C g\, dx  $, 
         \item $\int_{C'} h' \,dx = \int_C h \,dx$, and
         \item $\int_{C'}|f'-g'|\,dx=\int_C |f-g|\,dx$.
     \end{itemize} 
     Furthermore, for any vector $w'\in\mathbb{R}^2$ and corresponding vector $w=(w_1',w_2',0,\dots,0)\in\mathbb{R}^n$ and any $i=1,2,3$, we have
     \begin{itemize}
         \item $\int_{w'+K_i} f' \,dx = \int_{w+K_i^n} f \,dx  $,
         \item $\int_{w'+K_i} g' \,dx = \int_{w+K_i^n} g \,dx  $, and
          \item$\int_{w'+K_i} h' \,dx = \int_{w+K_i^n} h \,dx$.
     \end{itemize} 
\end{lem}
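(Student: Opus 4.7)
The plan is to treat the four integral identities and the three cone-section identities as direct consequences of Fubini's theorem, and to derive the pointwise inequality relating $h'$, $f'$, $g'$ by applying the Borell-Brascamp-Lieb inequality fiberwise in dimension $n-2$. Throughout, I would write a point of $\mathbb{R}^n$ as $(z,w,u)$ with $(z,w)\in\mathbb{R}^2$ and $u\in\mathbb{R}^{n-2}$, so that each slice $C^{z,w}=\{u:(z,w,u)\in C\}$ is a convex subset of $\mathbb{R}^{n-2}$ and $C'=\pi_{1,2}(C)=\{(z,w): C^{z,w}\neq\emptyset\}$.

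For the four integral identities, constancy of $f$ and $g$ on slices gives $f'(z,w) = f(x_{z,w})\,|C^{z,w}| = \int_{C^{z,w}} f(z,w,u)\,du$, and the analogous formula for $g'$; interpreting $h'(z,w)$ in the parallel Fubini sense $h'(z,w) = \int_{C^{z,w}} h(z,w,u)\,du$ (the ambiguity in the ``some $x_{z,w}$'' definition being harmless for $f'$ and $g'$ and naturally resolved this way for $h'$), one integration in $(z,w)$ yields $\int_{C'} f'=\int_C f$, $\int_{C'} g'=\int_C g$, and $\int_{C'} h'=\int_C h$. Since both $f$ and $g$ are constant on slices, so is $|f-g|$, whence $\int_{C^{z,w}}|f-g|\,du=|f'-g'|(z,w)$ and a second Fubini produces $\int_{C'}|f'-g'|=\int_C|f-g|$. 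For the $K_i$ identities the key observation is $K_i^n=K_i\times\mathbb{R}^{n-2}$, so that membership of a point in $w+K_i^n$ depends only on the first two coordinates; Fubini applied separately to $f$, $g$, $h$ then gives the three identities simultaneously.

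The main step is the pointwise inequality for $h'$. Fix $(z,w),(z',w')\in C'$ and set $(z'',w''):=\lambda(z,w)+(1-\lambda)(z',w')$. For any $u\in C^{z,w}$ and $v\in C^{z',w'}$, convexity of the cone $C$ forces $\lambda u+(1-\lambda)v\in C^{z'',w''}$, and the hypothesis applied at $x=(z,w,u)$, $y=(z',w',v)$ reads
\[ h\bigl(z'',w'',\lambda u+(1-\lambda)v\bigr) \geq M_{\lambda,p}\bigl(f(z,w,u),\,g(z',w',v)\bigr). \]
Extending $f(z,w,\cdot)$, $g(z',w',\cdot)$, $h(z'',w'',\cdot)$ by zero outside their respective slices preserves this inequality (using the convention $M_{\lambda,p}(0,\cdot)=0$), and so Borell-Brascamp-Lieb in $\mathbb{R}^{n-2}$ with parameter $p>-1/n>-1/(n-2)$ applies and yields
\[ h'(z'',w'') \geq M_{\lambda,q}\bigl(f'(z,w),\,g'(z',w')\bigr), \]
with $q=p/(1+(n-2)p)$ the conjugate BBL exponent in dimension $n-2$. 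The algebraic identity $1/q=1/p+(n-2)$ makes it immediate that $q>-1/2$ precisely when $p>-1/n$, which is exactly the range in which \Cref{symmetric_diff_2D} can subsequently be invoked.

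The principal obstacle is bookkeeping rather than analysis: one has to verify that zero-extension of the slice functions preserves the pointwise BBL hypothesis on all of $\mathbb{R}^{n-2}$, and to recognise that the constancy of $f$ and $g$ on slices is precisely what allows the right-hand side of the BBL conclusion to reduce to $M_{\lambda,q}(f'(z,w),g'(z',w'))$ with no loss. No stability input is needed—the lemma is a purely structural reduction from $n$ dimensions to $2$—so once the Fubini identities and the fiberwise BBL application are in place, the proof concludes.
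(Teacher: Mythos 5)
Your proposal is correct, and it is essentially the same argument as the paper's in disguise: the paper applies Brunn--Minkowski to the slice volumes $(z,w)\mapsto|C^{z,w}|$ and then the elementary H\"older-type \Cref{pqaverageswitchinglem}, which together constitute precisely a proof of the $(n-2)$-dimensional Borell--Brascamp--Lieb inequality specialized to functions that are constant multiples of indicators on each fiber. Citing BBL directly, as you do, is cleaner. Your handling of the ambiguity in the definition of $h'$ (taking $h'(z,w)=\int_{C^{z,w}}h\,du$, which both makes $\int_{C'}h'=\int_C h$ an instance of Fubini and is exactly the quantity BBL controls from below) is the reading that makes all three claims about $h'$ consistent, and is what the paper's own computation of $\int_{C'}h'\,d(z,w)$ tacitly uses.

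There is one genuine discrepancy worth flagging, and it cuts in your favor. You derive $q=p/(1+(n-2)p)$ from BBL in dimension $n-2$, whereas the lemma states $q=p/(1+np)$. Your value is the one that should appear. The map $(z,w)\mapsto|C^{z,w}|$ is $\tfrac{1}{n-2}$-concave (Brunn--Minkowski on $(n-2)$-dimensional slices, as the paper's own overview in Section~\ref{sect:structure thm 1} correctly asserts); the paper's proof instead uses the weaker $\tfrac{1}{n}$-concavity and then invokes \Cref{pqaverageswitchinglem} with parameter $n$, whose hypothesis $p\in(-1/(n+2),0)$ is strictly narrower than the $p\in(-1/n,\infty)$ assumed in \Cref{Holderapplicationlem}. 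Concretely, for $p\in(-1/n,-1/(n+2)]$ the stated $q=p/(1+np)$ satisfies $q\le -1/2$, which would make the downstream invocation of \Cref{symmetric_diff_2D} (which requires its parameter to lie in $(-1/2,\infty)$) break down. With your $q=p/(1+(n-2)p)$, the relation $1/q=1/p+(n-2)$ gives $q>-1/2$ if and only if $p>-1/n$, exactly matching both the hypothesis of this lemma and the range in which \Cref{pqaverageswitchinglem} with $n$ replaced by $n-2$ is applicable. Since your $q$ is larger than the stated one and $M_{\lambda,q}$ is monotone increasing in $q$, your conclusion is strictly stronger and implies the lemma as written; but the correct formula for $q$ in the statement (and the corresponding exponent in the Brunn--Minkowski step of the paper's proof) is the one you computed.
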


\begin{lem}\label{pqaverageswitchinglem}
    Given $n \in \mathbb{N}^*$, $\lambda\in (0,1/2]$, $p \in (-1/(n+2), 0)$ and $q= \frac{p}{1+pn} \in (-1/2, 0)$ the following holds. If $a,b,c, u, v \in (0, \infty)$ satisfy $ b^{1/n} \geq \lambda a^{1/n}+ (1-\lambda)c^{1/n} $, then $$ b \bigg(\lambda u^p +(1-\lambda)v^p\bigg)^{1/p} \geq  \bigg(\lambda (au)^q +(1-\lambda)(cv)^q\bigg)^{1/q},  $$  
    i.e., $b \M(u,v)\geq M_{\lambda,q}(au,cv).$
\end{lem}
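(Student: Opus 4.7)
The plan is to reduce the inequality to a multiplicative (reverse) H\"older bound between power means, and then combine with the hypothesis on $b$. First, observe that raising $b^{1/n} \geq \lambda a^{1/n}+(1-\lambda)c^{1/n}$ to the $n$-th power gives, by the definition of $M_{\lambda,1/n}$, the equivalent hypothesis
\[ b \;\geq\; M_{\lambda,1/n}(a,c). \]
Hence it will suffice to establish the Hölder-type inequality
\[ M_{\lambda,q}(au,cv) \;\leq\; M_{\lambda,1/n}(a,c)\cdot M_{\lambda,p}(u,v), \tag{$\star$}\]
since multiplying by $b/M_{\lambda,1/n}(a,c)\geq 1$ and using monotonicity then yields the conclusion $b\cdot M_{\lambda,p}(u,v)\geq M_{\lambda,q}(au,cv)$.

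To prove $(\star)$, the main point is the exponent identity: from $q=p/(1+pn)$ one reads off $1/q = n + 1/p = 1/(1/n)+1/p$, so $1/n$ and $p$ play the role of ``conjugate'' H\"older exponents with product exponent $q$. I would introduce $A_1:=a^{1/n}$, $A_2:=c^{1/n}$, $U_1:=u^p$, $U_2:=v^p$ and the pair $s:=nq$, $t:=q/p$. A direct calculation gives $s+t = q(n+1/p) = 1$; and since $p\in(-1/(n+2),0)\subset(-1/n,0)$ one has $1+pn\in(0,1)$, whence $t = 1/(1+pn)>1$ and $s=1-t<0$. Raising $(\star)$ to the negative power $q$ (which reverses the inequality) transforms it, after substitution, into the reverse H\"older inequality
\[ \lambda A_1^{s}U_1^{t} + (1-\lambda)A_2^{s}U_2^{t} \;\geq\; \bigl(\lambda A_1+(1-\lambda)A_2\bigr)^{s}\bigl(\lambda U_1+(1-\lambda)U_2\bigr)^{t}. \]

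This last inequality is an easy consequence of ordinary H\"older. Writing $U_i = (U_i^{t}A_i^{s})^{1/t}\cdot A_i^{-s/t}$ and applying H\"older with conjugate exponents $t,\,t':=t/(t-1)$ (both greater than $1$), together with the identity $-st'/t = 1$ (which follows from $t'=-t/s$ since $s=1-t$), one obtains
\[ \sum_{i=1,2}\lambda_i U_i \;\leq\; \Bigl(\sum_{i=1,2}\lambda_i U_i^{t}A_i^{s}\Bigr)^{1/t}\Bigl(\sum_{i=1,2}\lambda_i A_i\Bigr)^{1/t'}, \]
with $\lambda_1=\lambda,\,\lambda_2=1-\lambda$. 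Rearranging and using $t/t'=t-1=-s$ gives exactly the reverse H\"older inequality above. Undoing the substitutions recovers $(\star)$, and combining with $b\geq M_{\lambda,1/n}(a,c)$ completes the proof.

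There is no genuine obstacle; the whole argument is a bookkeeping exercise tracking how H\"older's inequality reverses when exponents become negative. The only subtle point is that the strictness of the hypothesis $p>-1/(n+2)$ is not needed for this step, which only requires $p>-1/n$ (to ensure $1+pn>0$ and hence the exponent $t$ is positive); the sharper assumption is what guarantees $q>-1/2$, and is used elsewhere to apply the two-dimensional stability theorem \Cref{symmetric_diff_2D}.
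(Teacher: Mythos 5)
Your proof is correct and takes essentially the same route as the paper: both rearrange the inequality and apply a weighted Hölder inequality built on the exponent identity $nq + q/p = 1$ (equivalently $-pn + p/q = 1$). The paper applies the two-exponent interpolation form of Hölder directly, while you derive the equivalent reverse-Hölder inequality with exponents $s<0<t$, $s+t=1$, from the conjugate-exponent form via a substitution; this is the same idea phrased slightly differently, and your observation that $p>-1/(n+2)$ is only needed to secure $q>-1/2$ (not for this algebraic step) is also accurate.
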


\subsubsection{Proof of reduction}

\begin{proof}[Proof that \Cref{symmetric_diff_2D} implies \Cref{symmetric_diff_constant_on_fibers_n_cone}]

By \Cref{conessandwichlem}, there exists a bijective linear map $\alpha: \mathbb{R}^n \rightarrow \mathbb{R}^n$ such that if we denote $C_1= \alpha(C)$ the cone in $\mathbb{R}^n$ , then
\begin{enumerate}
    \item $\alpha(e_1)=e_1$, 
    \item $\alpha(e_2)=\xi e_2$ for some $\xi \in \mathbb{R}^*$, and 
    \item $ B(e_1, 1) \cap H_n^1 \subset C_1 \cap H_n^1 \subset  B(e_1, n) \cap H_n^1$.
  \item for all $z,w \in \mathbb{R}$, $(C_1)^{z,  w}= \alpha( C^{z, \xi^{-1}w})$.
 \end{enumerate}
Let and let $f_1,g_1,h_1 \colon \mathbb{R}^n \rightarrow [0,1]$ be continuous functions with bounded support defined by $f_1=f\circ \alpha$, $g_1=g\circ \alpha$ and $h_1=h \circ \alpha$. As we shall explain below, it is easy to check that the following properties hold:
    \begin{enumerate}
        \item $\int_{C_1} f_1  \,dx=  \int_{C_1} g_1  \,dx $,
        \item $\int_{C_1} f_1  \,dx \leq  \ell \int_{C_1 \cap r(H_n^1)^-}f_1\,dx  $,
        \item for every $|\{x \in C_1 \cap r(H_n^1)^- :f_1(x) < \beta\text{ or }g_1(x)< \beta\}|\leq \gamma |C_1 \cap r(H_n^1)^-|$, 
        \item  for all $x,y \in C_1$ we have $h_1(\lambda x +(1-\lambda)y) \geq M_{\lambda,p}(f_1(x),g_1(y))$,
        \item for all $z,w \in \mathbb{R}$ and $x, y \in C_1^{z,w}$ we have $f_1(x)=f_1(y)$ and $g_1(x)=g_1(y)$, and
        \item $\int_{C_1}h_1 \,dx = (1+\delta) \int_{C_1} f_1 \,dx$ for some $0\leq \delta \leq d$,
        \item $\int_C|f-g|\,dx / \int_C f \,dx=\int_{C_1}|f_1-g_1|\,dx/ \int_{C_1} f_1 \,dx$.
    \end{enumerate}
Indeed, the first point follows from the fact $\int_{C_1} f_1  \,dx= |\det(\alpha)| \int_{C} f  \,dx$ and $\int_{C_1} g_1  \,dx= |\det(\alpha)| \int_{C} g  \,dx$.
    
    Recall $H_n^1$ is the supporting hyperplane of the face $F_n^1$ of the simplex $S_n$ and $e_1$ is orthogonal to $H_n^1$. Because $\alpha(e_1)=e_1$, it follows that $r(H_n^1)= \alpha(r(H_n^1))$ and $\alpha(C \cap rS_n)= \alpha(C \cap r(H_n^1)^{-}) =  C_1 \cap r(H_n^1)^-$.   
    Thus, the 
    second point follows from the fact that  $|\det(\alpha)| \int_{C \cap rS_n}f\,dx =  \int_{C_1 \cap r(H_n^1)^-}f_1\,dx$.

    The third point follows from the fact that all $\alpha(C \cap rS_n)= C_1 \cap r(H_n^1)^-$. 

    The forth point follows from the fact that all $x, y \in C_1$ satisfy $x= \alpha (z)$ and $y = \alpha(w)$ for some $z, w\in C $. Note  $h_1(\lambda x +(1-\lambda)y)= h(\lambda z +(1-\lambda)w)  \geq M_{\lambda,p}(f(z),g(w))= M_{\lambda,p}(f_1(x),g_1(y))$.

    The fifth point follows from the fact that for all $z,w \in \mathbb{R}$ and all $x, y \in C_1^{z,w}$ there exist $a,b \in C^{z,\xi^{-1}w}$ such that $x= \alpha (a)$ and $y = \alpha(b)$. Note  $f_1(x)= f(\alpha(a))= f(\alpha (b))=f_1(y) $.

    Finally, the last two points follow in a similar way to the first point.

 Now, let $C_2 = \pi_{1,2}(C_1)=\{(z,w)\in\mathbb{R}^2: C_1^{z,w}\neq\emptyset\}$ be a cone in $ \mathbb{R}^2$. Let  $f_2,g_2, h_2\colon C_2\rightarrow \mathbb{R}_{\geq0}$ be defined by 
     $$f_2(z,w)= f_1(x)|C_1^{z,w}|, \,g_2(z,w)= g_1(x)|C_1^{z,w}|\text{ and }h_2(z,w)=h_1(x)|C_1^{z,w}|$$
for some $x=x_{z,w} \in C^{z,w}$. Note that
$$\frac{\int |f_2-g_2|\,dx }{\int f_2 \,dx} = \frac{\int |f_1-g_1|\,dx}{ \int f_1\,dx}= \frac{ \int |f-g|\,dx}{ \int f\,dx}.$$

By \Cref{bigfincentralball} combined with the third and fifth point, there exist $s_0=s_0(n,r,\beta)>0$ and $s_1=s_1(n,r,\beta)>0$ such that
$$|\{(z,w) \in B((r/2,0), r/10n):f_2(z,w)
     ,g_2(z,w) \in[s_0,s_1]\}|\geq (1-O_{n,r,\beta}(\gamma))|B((r/2,0), r/10n)|.$$

Let $w_0=(r/2,0,\dots,0)$ as in \Cref{2dconeswithinthecone}.
By \Cref{2dconeswithinthecone} combined with  \Cref{Holderapplicationlem} there exists $v \in B\left(o,O_{n,\lambda,p,r,\beta,\ell}\left(\sqrt{\delta+\gamma}\right)\right) \cap \left(\mathbb{R}^2\times \{0\}^{n-2}\right) $   so that for all $i=1,2,3$, we have 
    \begin{itemize}
        \item $\int_{w_0+K^n_i}f_1\,dx=\int_{w_0+v+K^n_i}g_1\,dx$, and
        \item $\int_{w_0+(1-\lambda)v+K^n_i}h_1\,dx\leq (1+O_{n,\lambda,p,r,\beta,\ell}(\delta))\int_{w_0+K^n_i}f_1\,dx$.
    \end{itemize}
    In a slight abuse of notation, since $w_0$ and $v$ are both in $\mathbb{R}^2\times\{0\}^{n-2}$, we also use $w_0$ for $(r/2,0)$ and $v$ for the projection of $v$ onto its first two coordinates.
Hence, let $f^i_2,g^i_2,h^i_2$ be defined by
$$f^i_2=f_2\cdot \textbf{1}_{w_0+K_i}, g^i_2=g_2\cdot \textbf{1}_{w_0+v+K_i},h^i_2=h_2\cdot \textbf{1}_{w_0+(1-\lambda)v+K_i}.$$
By \Cref{symmetric_diff_2D}, we find that there exist translations $v_i$ (for $i=1,2,3$) so that 
$$\int_{\mathbb{R}^2}|f_2^i(x)-g_2^i(x+v_i)|\,dx=O_{n,\lambda,p,r,\beta,\ell}\left(\sqrt{\delta+\gamma}\right).$$
We'll show that the same is true without the translation $v_i$, by showing that $v_i$ is not much different from $-v$.
For simplicity, consider $K_1$ and $v_1$ the others will follow analogously. Consider the translation $v_1+v$. Since the defining lines $e_2-e_1$ and $-e_2-e_1$ of the cone $K_1$ are at a right angle, one of the two makes a big angle with $v_1+v$, so that
$$|(w_0+K_1 \triangle (w_0+v_1+v+K_1))\cap B(w_0,r/10n)|\geq \Omega_{n,r}(||v_1+v||).$$
Since in this region the functions are large (except for in the small set $|\{(z,w) \in B((r/2,0), r/10n):f_2(z,w)
     ,g_2(z,w) <s_0\}|\leq O_{n,\lambda,p,r,\beta}(\gamma)$), this gives a lower bound on the previously established symmetric difference;
\begin{align*}
s_0\cdot &\Big(|(w_0+K_1 \triangle (w_0+v_1+v+K_1))\cap B(w_0,r/10n)|-O_{n,\lambda,p,r,\beta}(\gamma)\Big)\\
&\leq \int_{(w_0+K_1 \triangle (w_0+v_1+v+K_1))\cap B(w_0,r/10n)}\left|f_2^1(x)-g_2^1(x+v_1)\right|\,dx\\
&\leq  \int_{\mathbb{R}^2}\left|f_2^1(x)-g_2^1(x+v_1)\right|\,dx,
\end{align*}

so that recalling that $s_0=\Omega_{n,r}(1)$, we have
$$||v_1||\leq ||v||+||v_1+v||=O_{n,\lambda,p,r,\beta,\ell}\left(\sqrt{\delta+\gamma}\right).$$
Since this translate is small, we find by \Cref{smalltranslationsmallsymdiflem} that shifting the function by this amount doesn't change the symmetric difference much, i.e.
$$\int_{\mathbb{R}^2}\left|f_2^i(x)-g_2^i(x)\right|\,dx=O_{n,\lambda,p,r,\beta,\ell}\left(\sqrt{\delta+\gamma}\right).$$

 Combining these, we conclude
 $$\frac{\int_{\mathbb{R}^n} |f-g|\, dx}{ \int_{\mathbb{R}^n} f \,dx} = \frac{\int_{\mathbb{R}^2} |f_2-g_2|\, dx}{ \int_{\mathbb{R}^2} f_2\, dx}\leq \frac{\sum_{i=1}^3 \int_{\mathbb{R}^2}\left|f_2^i-g_2^i\right|\,dx}{\int_{\mathbb{R}^2} f_2\,dx}= O_{n, \lambda, p, r,\beta,\ell, s_0, s_1}\left(\sqrt{\delta+\gamma}\right).$$
The conclusion follows.
\end{proof}

\subsubsection{Proof of Lemmas}

\begin{proof}[Proof of \Cref{2dconeswithinthecone}]
By \Cref{parallelconeslem}, there exists a $v$ so that for $i=1,2,3$, we have
$$\int_{w+K_i^n}f\,dx=\int_{w+v+K_i^n}g\,dx.$$
Our task will be to show that $v\in B(o,O_{n,\lambda,p,r,\beta,\ell}\left(\sqrt{\delta+\gamma}\right))$.

First note that we have
$$\int_{C\cap (w+ (1-\lambda)v K_1)}h\,dx-\int_{C\cap (w+K_1)}f\,dx\leq \delta\int_{C}fdx\leq O_{n,\ell}(\delta)\int_{C\cap K_1}f\,dx,$$
where in the last step we used that $C\cap (w+K_1)\subset C\cap (H_n^1)^-$ and $|C\cap (w+K_1)|=\Omega_{n}(|C\cap (H_n^1)^-|)$. Hence, by \Cref{SimilarSupportLem}, we find that 
$$|(C\cap w+K_1)\triangle(C\cap w+v+K_1)|=|(supp (f\mid_{C\cap (w+K_1)}))\triangle (supp (g\mid_{C\cap (w+v+K_1)}))|=O_{n,p,\lambda,\beta,\ell}\left(\sqrt{\delta+\gamma}\right),$$
which immediately implies that $v\in B(o,O_{n,\lambda,p,r,\beta,\ell}\left(\sqrt{\delta+\gamma}\right))$.
\end{proof}

\begin{proof}[Proof of \Cref{conessandwichlem}]
Consider the convex set $K:=H_{n}^1\cap C$ and let $K':=K-e_1\subset \{0\}\times\mathbb{R}^{n-1}$. By John's theorem there exists an ellipsoid $E\subset \{0\}\times\mathbb{R}^{n-1}$ so that $E\subset K'\subset (n-1)E$. There exists a bijective linear map $\zeta\colon\{0\}\times\mathbb{R}^{n-1}\to \{0\}\times\mathbb{R}^{n-1}$ so that $\zeta(E)=\{0\}\times B^{n-1}(o,1)$, and thus $\zeta(K')\subset \zeta((n-1)E)=B(o,n-1)$. Now consider $\zeta(e_2)$ and let $\gamma\colon\{0\}\times\mathbb{R}^{n-1}\to \{0\}\times\mathbb{R}^{n-1}$ be the rotation (a bijective linear map) bringing $\zeta(e_2)$ parallel to $e_2$. Let $\alpha':\{0\}\times\mathbb{R}^{n-1}\to \{0\}\times\mathbb{R}^{n-1}$ be the bijective linear map $\alpha= \gamma \circ \zeta$. Finally let 
$$\alpha\colon\mathbb{R}^{n}\to \mathbb{R}^{n}; (x_1,\dots,x_n)\mapsto (x_1, \alpha'(x_2,\dots,x_n)),$$
be the bijective linear map applying $\alpha'$ to the last $n-1$ coordinates. This map clearly satisfies the conditions in the lemma.
\end{proof}

\begin{proof}[Proof of \Cref{bigfincentralball}]
First note that $B((r/2,0),r/10n)\subset C\cap r(H_n^1)^-$ and conversely $|B((r/2,0),r/10n)|\geq \Omega_{n,r}(|C\cap r(H_n^1)^-|)$.

  Consider $(z,w)\in B((r/2,0),r/10n)$ so that $f(x),g(x)\geq \beta$ for all $x\in C^{z,w}$, so all we need to do is control $|C^{z,w}|$.  For the upper bound, simply use that since $(z,w)\in B((r/2,0),r/10n)$ is on the same side of the hyperplane $H_n^1$ as the origin, we have $C^{z,w}\subset \co((B(e_1,n)\cap H_n^1)\cup \{o\})$. Since this last set only depends on $n$, we immediately get $|C^{z,w}|=O_n(1)$.
  
  For the lower bound, we similarly use that $\co((B(e_1,1)\cap H_n^1)\cup \{o\})\subset C$. By convexity, this implies for $z\in r/2+[-r/10n,r/10n]$, we have 
  $$ B(ze_1,z)\cap zH_n^1\subset C\cap d\{z\}\times\mathbb{R}^{n-1},$$
  so that we have
  $$|C^{z,w}|\geq \left|B(ze_1,z)\cap (z,w)\times\mathbb{R}^{n-2}\right|\geq \left(\sqrt{z^2-w^2}\right)^{n-2}b_{n-2}\geq \left(\sqrt{(r/2-r/10n)^2-(r/10n)^2}\right)^{n-2}b_{n-2}=\Omega_{n,r}(1),$$
  where $b_{n-2}$ is the volume of the $n-2$ dimensional ball of radius 1. Hence, choices $s_0=\Omega_{n,r,\beta}(1)$ and $s_1=O_{n,r,\beta}(1)$ as in the lemma exist.
\end{proof}

\begin{proof}[Proof of \Cref{Holderapplicationlem}]
The equalities for the integrals follow from the simple observation that if we fix for every $z,w\in\mathbb{R}$, some point $x_{z,w}$  in $C^{z,w}$, then
$$\int_{C'}f'(z,w)\,d(z,w)=\int_{C'}f(x_{z,w})|C^{z,w}|\,d(z,w)=\int_{C'}\int_{C^{z,w}}f(x)\,dxd(z,w)=\int_C f(x)\,dx.$$
Indeed the equalities for $g$ and $h$ follow analogously. By the definitions, we have that $|f'(z,w)-g'(z,w)|=|f(x_{z,w})-g(x_{z,w})|\cdot |C^{z,w}|$, so that integral follows analogously as well. As do the integrals involving $R_{r}^{f_1,f_2}$ with the note that $R_{r}^{f_1,f_2}=\pi_{1,2}(R_{r}^{f_1,f_2})$, where the former is a subset of $\mathbb{R}^2$ and the latter of $\mathbb{R}^n$.

Finally, we turn our attention to the lower bound on $h'$. Consider two points $(z,w),(z',w')\in C'$. The cone $C$ is a convex set, so that $C^{\lambda (z,w)+(1-\lambda)(z',w')}\supset \lambda C^{(z,w)} +(1-\lambda)C^{(z',w')}$ and thus by the Brunn-Minkowski inequality, we have 
$$\left|C^{\lambda (z,w)+(1-\lambda)(z',w')}\right|^{1/n}\geq \lambda \left|C^{z,w}\right|^{1/n}+(1-\lambda)\left|C^{z',w'}\right|^{1/n}.$$ Hence, we can apply \Cref{pqaverageswitchinglem} with $b=\left|C^{\lambda (z,w)+(1-\lambda)(z',w')}\right|$, $a=\left|C^{z,w}\right|$, $c=|C^{z',w'}|$, $u=f(x_{z,w})$, and $v=g(y)$, to find that
\begin{align*}
h'(\lambda(z,w)+(1-\lambda)(z',w'))&=\left|C^{\lambda (z,w)+(1-\lambda)(z',w')}\right|\cdot h\left(\lambda(z,w)+(1-\lambda)(z',w')\right)\\
&\geq \left|C^{\lambda (z,w)+(1-\lambda)(z',w')}\right|\cdot M_{\lambda,p}\left(f(x_{z,w}),g(x_{z',w'})\right)\\
&\geq M_{\lambda, q}\left(\left|C^{z,w}\right|\cdot f(x_{z,w}),\left|C^{z',w'}\right|\cdot g(x_{z',w'})\right)\\
&=M_{\lambda, q}\left(f'(z,w),g'(z',w')\right).
\end{align*}
This concludes the proof of the lemma.
\end{proof}

\begin{proof}[Proof of \Cref{pqaverageswitchinglem}]
We need to show that $$ \bigg(\lambda a^{1/n}+(1-\lambda) c^{1/n}\bigg)^n \bigg(\lambda u^p +(1-\lambda)v^p\bigg)^{1/p} \geq  \bigg(\lambda (au)^q +(1-\lambda)(cv)^q\bigg)^{1/q}. $$
After rearranging, this is equivalent to 

$$ \bigg(\lambda a^{1/n}+(1-\lambda) c^{1/n}\bigg)^n    \bigg(\lambda (au)^q +(1-\lambda)(cv)^q\bigg)^{-1/q} \geq \bigg(\lambda u^p +(1-\lambda)v^p\bigg)^{-1/p}.$$

After more rearranging, this is equivalent to

$$ \bigg(\lambda a^{1/n}+(1-\lambda) c^{1/n}\bigg)^{-pn}    \bigg(\lambda (au)^q +(1-\lambda)(cv)^q\bigg)^{p/q} \geq \lambda u^p +(1-\lambda)v^p$$

also to

$$ \bigg(\lambda (a^{-p})^{-1/pn}+(1-\lambda) (c^{-p})^{-1/pn}\bigg)^{-pn}    \bigg(\lambda ((au)^p)^{q/p} +(1-\lambda)((cv)^p)^{q/p}\bigg)^{p/q} \geq \lambda u^p +(1-\lambda)v^p.$$

Note that $p/q$ and $-pn$ are positive numbers with $-pn+p/q=-pn+1+pn=1$. By H\"older's inequality we get 

$$ \bigg(\lambda (a^{-p})^{-1/pn}+(1-\lambda) (c^{-p})^{-1/pn}\bigg)^{-pn}    \bigg(\lambda ((au)^p)^{q/p} +(1-\lambda)((cv)^p)^{q/p}\bigg)^{p/q} \geq \lambda a^{-p}(au)^p+(1-\lambda) c^{-p}(cv)^p.$$

The conclusion now follows.
\end{proof}

\section{\Cref{linear}: structure of the proof}
\label{sect:structure thm2}

To prove \Cref{linear}, the main difficulty is finding the correct $p$-concave function. To that end consider a maximizer $f'\colon\mathbb{R}^n\to\mathbb{R}_{\geq0}$ of the following quantity:
\begin{equation}\label{maxquantity}\int \big(\M^*(f,f)-\M^*(f',f')\big)\,dx -(1+c) \int (f-f')\,dx,\end{equation}
where $c=c_{n,\lambda,p}>0$ is some small constant and with the constraint that $f'\leq f$ pointwise. Intuitively, this can be seen as removing any parts of $f$ whose removal decreases $\M^*(f,f)$ more than $f$. 

The bulk of the work is to show that if we let $\ell$ be the minimal $p$-concave function with $\ell\geq f'$ pointwise (i.e., the $p$-concave hull of $f'$), then $\int (\ell-f')\,dx=O_{n,\lambda,p}(\delta)\int f\,dx$.
Indeed, note that we can rewrite the above expression as 
\begin{align*}
\int \left[(\M^*(f,f)-f)-(\M^*(f',f')-f')\right]\,dx - c \int (f-f')\,dx&\leq \int (\M^*(f,f)-f)\,dx- c \int (f-f')\,dx\\
&\leq \delta\int f\,dx- c \int (f-f')\,dx.
\end{align*}
Since we maximize the left-hand side and we can always choose $f'=f$ as a competitor, it follows that the right-hand side is at least zero. In particular
$$\int (f-f')\,dx\leq c^{-1}\delta\int f\,dx.$$
Hence, it suffice to show that $\int (\ell-f')\,dx=O_{n,\lambda,p}(\delta)\int f\,dx$.

 To use that $f'$ is a maximizer of \ref{maxquantity}, we use that $f'$ is of the form $f'=\min\{f,\ell\}$. For any $p$-concave function $\ell'$ (in particular small alterations of $\ell$), the induced function $f''=\min\{f,\ell'\}$ is in some sense worse than $f''$. To create an interesting $\ell'$ to compare to, we take the following approach to ``shaving off'' near the faces (slightly moving  one of the faces of $\ell$).

\begin{enumerate}
\item Consider a non-degenerate (i.e., full-dimensional) face $F$ of  $\ell$, i.e., a subset of the support of $\ell$ on which $\ell^p$ is linear. Then, $\ell\mid_{F}=L\mid_{F}$ for some linear function $L^p$ on $\mathbb{R}^n$.
    \item  
    The fact that $f'$ is a maximizer of \Cref{maxquantity} implies
    the minimality of $\ell$ in the following sense: If we consider $f'':=\min\{f', L-\epsilon\}$ (or equivalently $\ell'=\min\{\ell,L-\epsilon\}$ and $f''=\min\{f,\ell'\}$) for some very small $\epsilon>0$ (i.e., $f''$ is obtained from $f'$ by ``shaving off'' the points near the face), then
    $$\int \left(\M^*(f',f')-\M^*(f'',f'')\right)\,dx \leq (1+c) \int (f'-f'')\,dx.$$
    \item Let $X:=\{x\in F: f'(x)=\ell(x)\}\subset \mathbb{R}^n$ and note that (assuming that $f$ is well-behaved)
    $$\int (f'-f'')\,dx=\epsilon(1+o_{\epsilon}(1))|X|,$$
    as $\epsilon\to0$ (where $o_{\epsilon}(1)\to 0$ as $\epsilon\to0$). 
    \item On the other hand, a closer look shows that
    $$\int \left(\M^*(f',f')-\M^*(f'',f'')\right)\,dx=\epsilon(1+o(1))|\lambda X+(1-\lambda)X|.$$
    \item Combining these, we find that
    $$|\lambda X+(1-\lambda) X|\leq (1+2c)|X|,$$
    with $c$ sufficiently small in terms of $n,\lambda,p$, so we can apply the stability of the Brunn-Minkowski inequality to deduce\footnote{Here and in the sequel, given positive constants $a$ and $b$, by $a \ll_{n,\lambda,p}b$ we mean that $a \leq cb$ for some constant $c>0$ that depends on $n,\lambda,p$ and that can be made arbitrary small.} $$|\co(X)\setminus X|\ll_{n,\lambda,p}|X|.$$
    In addition, one can note that $\co(X)=F$.
\end{enumerate} 
Given the steps above, we fist establish the result in the case when $L$ varies by at most a constant factor (say $100$) in the face $F$. To this end, we use the stability of the Brunn-Minkowski inequality in each of the level sets combined with the information that $|\co(X)\setminus X|\ll_{n,\lambda,p}|X|$. We will use a refined version of the above proof to reduce to this case. Indeed we find a triangulation of the support of $f'$ into simplices in which $\ell^p$ is essentially linear, $\ell$ varies by at most a factor 100, and with vertices $v$ on which $f'(v)=\ell(v)$. We achieve this as follows.

\begin{enumerate}
    \item Consider a one-dimensional face $F=[0,1]$ (i.e., an edge) of $\ell$, so that by an argument akin to the one above we have $|\co(X)\setminus X|\ll_{n,\lambda,p}|X|$, where $\co(X)$ is the entire interval $[0,1]$.
    \item Consider a one-dimensional face $F=[0,1]\times\{0\}^{n-1}$ (an edge) of $\ell$, i.e. so that there exists a function $L\geq \ell$ so that $L^p$ is linear on $\mathbb{R}^n$, $L=\ell$ on $F$, $L>\ell$ outside $F$, and $L=\ell=f'$ on $\{0,1\}\times\{0\}^{n-1}$ the endpoints of $F$. 
    \item By an argument akin to the one above, we have $|\co(X)\setminus X|\ll_{n,\lambda,p}|X|$ (in the 1-dimensional Hausdorff measure), where $\co(X)=F$.
    \item Our analysis will look at arbitrarily small shavings so that we may restrict our attention to what happens in this (one-dimensional!) face.
    \item For a contradiction, assume there is some big gap in $X$, i.e., there are $a,b\in [0,1]$ so that $f'(a\times\{0\}^{n-1})>100 \,f'(b\times\{0\}^{n-1})$ and $X\cap ([a,b]\times\{0\}^{n-1})=\{a,b\}\times\{0\}^{n-1}.$
    \item Rather than pushing down $L$ everywhere, we shave by rotating $L$ around $a$ as per the following definition: Consider the function $L_\epsilon$ so that
    \begin{itemize}
        \item $L_\epsilon=L=\ell$ on $(-\infty,a]\times\mathbb{R}^{n-1}$,
        \item $L_\epsilon^p$ is linear on $[a,\infty)\times\mathbb{R}^{n-1}$,
        \item $L_\epsilon(b)=\ell-\epsilon$.
    \end{itemize}
    Note that $L_{\epsilon}\to L$ pointwise as $\epsilon\to0$.
    \item Similarly to before, consider the function $f''=\min\{f',L_\epsilon\}$ (or equivalently $\ell'=\min\{\ell,L_\epsilon\}$ and $f''=\min\{f,\ell'\}$), and use the minimality of $f'$ to find that we cannot remove much of $\M^*(f',f')$ compared to $f'$.
    \item Note that the removed mass $\int (f'-f'')\,dx$ all lies in $[b,1]\times\mathbb{R}^{n-1}$, where $\ell-L$ is small, while the mass of $$\int \left(\M^*(f',f')-\M^*(f'',f'')\right)\,dx$$ also lies in $[a,b]\times\mathbb{R}^{n-1}$, where $\ell-L$ is (relatively) large. A careful inspection of this discrepancy leads to a contradiction.
\end{enumerate}
Hence, for every edge $F$ of $\ell$ we find that the set $\ell(X\cap F)$ does not leave big gaps in $\ell(F)$. We can thus find a triangulation of the domain using elements of $X$ on the edges so that in each of the simplices, the $\ell$ varies by at most a factor 100. Using the previously described approach, this allows us to conclude.

\section{Proof of \Cref{linear}}
\label{sect:proof thm2}

\subsection{Reduction from \Cref{linear} to \Cref{linear_continuous}}
\begin{prop}\label{linear_continuous}
    Given $n \in \mathbb{N}$, $\lambda \in (0,1/2]$, $p \in (-1/n, \infty)$ there exists $d=d_{n, \lambda, p}>0$ such that the following holds. Let $f\colon \mathbb{R}^n\rightarrow \mathbb{R}_+$ be a continuous function with bounded support such that  
    \begin{itemize}
        \item $\int_{\mathbb{R}^n}\M^*(f,f) \,dx \leq (1+\delta) \int f \,dx$ for some $0 \leq \delta \leq d$.
    \end{itemize}
    Then there exists a $p$-concave function $\ell \colon \mathbb{R}^n\rightarrow \mathbb{R}_+$ such that
        \begin{itemize}
        \item $\int_{\mathbb{R}^n} |f-\ell|\, dx=O_{n,\lambda, p} (\delta)\int_{\mathbb{R}^n} f\,dx $.
    \end{itemize}
\end{prop}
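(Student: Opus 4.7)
The plan is to follow the variational strategy sketched in Section~\ref{sect:structure thm2}. First, I would set up the optimization problem: among measurable $f' : \mathbb{R}^n \to \mathbb{R}_{\geq 0}$ with $f' \leq f$ pointwise, maximize
$$
J(f') := \int \big(\M^*(f,f) - \M^*(f',f')\big)\,dx - (1+c)\int (f - f')\,dx,
$$
where $c = c_{n,\lambda,p} > 0$ is a small constant chosen depending on the stability constants appearing in \Cref{BMStab}. Existence of a maximizer (possibly after restricting to a dense class of competitors of the form $f' = \min\{f,\tilde\ell\}$ for continuous $p$-concave $\tilde\ell$) requires an approximation argument but is not the main difficulty. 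Using the competitor $f' = f$ yields $J(f') \geq 0$ at the maximum, which combined with the hypothesis $\int \M^*(f,f)\,dx \leq (1+\delta)\int f\,dx$ gives
$$
\int (f - f')\,dx \leq c^{-1}\int \big(\M^*(f,f) - f\big)\,dx \leq c^{-1}\delta \int f\,dx.
$$

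Next, I would argue that any maximizer has the structural form $f' = \min\{f,\ell\}$ for some $p$-concave $\ell$: replacing $f'$ by $\min\{f,\co_p(f')\}$ does not decrease $J$, since $\M^*$ is essentially controlled by the $p$-concave hull, while $\int(f-f')$ only decreases. The remaining task then reduces to proving
$$
\int (\ell - f')\,dx = O_{n,\lambda,p}(\delta)\int f\,dx.
$$
As a first step, fix a full-dimensional face $F$ of $\ell$ with $\ell|_F = L|_F$ for an affine function $L$ (meaning $L^p$ is affine) satisfying $L \geq \ell$ on all of $\mathbb{R}^n$. For small $\epsilon > 0$, consider the competitor $f''_\epsilon := \min\{f', L - \epsilon\}$, obtained by shaving a thin layer of $\ell$ parallel to $F$. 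By maximality of $f'$,
$$
\int \big(\M^*(f',f') - \M^*(f''_\epsilon,f''_\epsilon)\big)\,dx \leq (1+c)\int (f' - f''_\epsilon)\,dx.
$$
Letting $X := \{x \in F : f'(x) = \ell(x)\}$, a first-order expansion as $\epsilon \to 0$ evaluates the right-hand side to $\epsilon(1+o(1))|X|$ and the left-hand side to $\epsilon(1+o(1))|\lambda X + (1-\lambda)X|$. Choosing $c$ sufficiently small thus yields $|\lambda X + (1-\lambda)X| \leq (1+2c)|X|$, and \Cref{BMStab} gives $|\co(X)\setminus X| \ll_{n,\lambda,p} |X|$, with $\co(X)$ comparable to $F$.

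The main obstacle is to assemble these local face estimates into a global linear bound on $\int(\ell - f')\,dx$. The plan is first to establish the result in the easier case where $\ell$ oscillates by at most a constant factor (say $100$) on each face of its piecewise $p$-affine structure; this case is tractable by integrating the face-by-face Brunn-Minkowski stability through the level sets of $\ell$ and summing. The general case is reduced to this one via a triangulation of $\textnormal{supp}(\ell)$ whose vertices lie in $X$ and on which $\ell$ varies by at most a factor $100$. To produce such a triangulation, one must show that along each one-dimensional edge $F$ of $\ell$, the set $\ell(X \cap F)$ cannot leave large multiplicative gaps inside $\ell(F)$. I expect this to be the hardest part. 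It requires a refined shaving argument: instead of pushing $L$ down uniformly, one rotates $L^p$ about a fixed endpoint of the candidate gap. Such a rotation removes substantial $\M^*$-mass in the region of the gap (where $\ell - L$ is comparatively large) but only a small amount of $f'$ near the far endpoint (where $\ell - L$ is small), and this asymmetry contradicts the maximality of $f'$ unless the multiplicative gap is bounded by $100$. Once this edge estimate is established, a standard triangulation combined with the easy constant-factor case yields the desired linear bound and concludes the proof.
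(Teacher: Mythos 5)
Your proposal follows essentially the same variational strategy as the paper (indeed, Section~\ref{sect:structure thm2} is a sketch of exactly this plan): construct the extremal truncation $f'=\min\{f,\ell'\}$ via the functional $J$, bound $\int(f-f')\,dx$ by $c^{-1}\delta\int f\,dx$, use the local shaving argument together with \Cref{BMStab} to get face-level convexity, handle the bounded-oscillation case by integrating through level sets, and finally use a rotation-based shaving on one-dimensional edges to rule out large multiplicative gaps and enable a triangulation. The paper carries this out via the chain \Cref{linear_shaving_process} (existence of the extremal $f'$ via Zorn's lemma), \Cref{linear_already_shaved} (the rotation-shaving no-gap result on edges), and \Cref{linear_continuous_level_weak} through \Cref{linear_continuous_level_strong} (the bounded-oscillation reduction, culminating in a substitution $g=\mathrm{sign}(p)f^p+1$ that reduces to the set-theoretic stability of \Cref{BMStab}); your proposal identifies all of these ingredients and the order in which they are assembled.
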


\begin{proof}[Proof \Cref{linear_continuous} implies \Cref{linear}]
    We can find a continuous functions with bounded support $f'\colon \mathbb{R}^n\rightarrow \mathbb{R}_{\geq0}$ such that $\int_{\mathbb{R}^n} |f-f'|\, dx\leq \delta \int_{\mathbb{R}^n} f\,dx$ and $\int_{\mathbb{R}^n} \left( \M^*(f',f') - \M^*(f,f)\right)\, dx\leq \delta \int_{\mathbb{R}^n} f\,dx$. It follows that $\int_{\mathbb{R}^n} \M^*(f',f')\, dx\leq (1+5\delta) \int_{\mathbb{R}^n} f'\,dx$. By \Cref{linear_continuous}, we get that there exists a $p$-concave function $\ell \colon \mathbb{R}^n\rightarrow \mathbb{R}_+$ such that $\int_{\mathbb{R}^n} |f'-\ell| \,dx=O_{n,\lambda, p} (5\delta) \int_{\mathbb{R}^n} f'\, dx$. We conclude $\int_{\mathbb{R}^n} |f-\ell|\, dx=O_{n,\lambda, p} \left(\delta\right)\int_{\mathbb{R}^n} f\,dx$.
    
\end{proof}

\subsection{Reduction from \Cref{linear_continuous} to \Cref{linear_continuous_level_weak}, \ref{linear_shaving_process}, \ref{linear_already_shaved}}
Before we can state our results we collect some definitions in the subsection below.
\subsubsection{Definitions}
For the rest of this section we use the following definitions.

\begin{defn}
    Given a compact convex set (possibly degenerate) $K \subset \mathbb{R}^n$ we denote by $\text{int}(K)$ the relative interior of $K$ i.e.  letting $H$ be the affine subspace spanned by $K$, we define $\text{int}(K)=\{ x \in K \colon \exists r>0, B(x,r) \cap H \subset K\}$.
\end{defn}

\begin{defn}
    Given a compact convex set (possibly degenerate) $K \subset \mathbb{R}^n$, we say that $v \in K$ is a \emph{vertex} of $K$ if whenever $v=tx+(1-t)y$ for some $t\in (0,1)$ and $x,y \in K$, we have $v=x=y$.  
\end{defn}

\begin{defn}
    Given two points $a,b \in \mathbb{R}^n$, we denote by $[a,b]:=\{ta+(1-t)b: t\in[0,1]\}$ the line segment from $a$ to $b$.
\end{defn}

\begin{defn}
    Given a compact convex set (possibly degenerate) $K \subset \mathbb{R}^n$, we say that $[a,b] \subset K$ is an \emph{edge} of $K$ if whenever $c \in [a,b]$, $t\in (0,1)$ and $d,e \in K$ are such that $c= td+(1-t)e$, we have $d,e \in [a,b]$.   
\end{defn}

\begin{defn}
     Given $n \in \mathbb{N}$, $p \in (-1/n,0)$, $y \in \mathbb{R}^n$, and $d \in \mathbb{R}$,   a \emph{$p$-plane with parameters $y$ and $d$} is a function $h_{y,d} \colon \mathbb{R}^n \rightarrow \mathbb{R}_+ \cup \{\infty\}$ defined by $$h_{y,d}(x)=\begin{cases}0 &\text{ if } p>0 \text{ and } <x,y>+d \leq 0\\
\infty &\text{ if } p< 0 \text{ and }<x,y>+d \leq 0\\
     (<x,y>+d)^{1/p}   &\text{ if } p\neq 0 \text{ and } <x,y>+d > 0\\
\exp(<x,y>+d)&\text{ if $p=0$} \end{cases}.$$
\end{defn}

\begin{rmk}\label{p-plane_p-concave}\label{concave_function}
    A $p$-plane $h_{y,d} \colon \mathbb{R}^n  \rightarrow \mathbb{R}_+\cup \{\infty\}$ is a continuous $p$-concave function. Moreover, if we denote by $W$ the $(n-1)$-dimensional subspace that is the orthogonal complement of $y$, then for any $x \in \mathbb{R}^n$, we have $h_{y,d}(x)= h_{y,d}(x+W)$ i.e. $h_{y,d}$ is constant on $x+W$.
\end{rmk}

\begin{defn}\label{tangent_plane}
    Let $n \in \mathbb{N}$ and $p \in (-1/n,\infty)$. Let $g \colon \mathbb{R}^n \rightarrow \mathbb{R}_+$ be a continuous $p$-concave function with bounded support. We say that a compact convex set $F \subset \mathbb{R}^n$ with $\text{int}(F) \subset \text{supp}(g)$ is a \emph{ $p$-face of $g$} if there exists a $p$-plane $h \colon \mathbb{R}^n \rightarrow \mathbb{R}_+$ such that $h(x)=g(x)$ for all $x \in F$ and $h(x)>g(x)$ for all $ x \in \text{supp}(g) \setminus F$.  In this case, we say $h$ is a \emph{tangent} $p$-plane.
\end{defn}

\subsubsection{Main results}
We are now ready to state the main results in this subsection.

\begin{prop}\label{linear_continuous_level_weak}
    Given $n \in \mathbb{N}$, $\lambda \in (0,1/2]$, $p \in (-1/n, \infty)$ and $ m \in (1, \infty)$ there exists $d=d_{n, \lambda, p,m}>0$ such that the following holds. Let $f\colon \mathbb{R}^n\rightarrow \mathbb{R}_+$ be a continuous function with bounded support such that  
    \begin{itemize}
        \item $\int_{\mathbb{R}^n}\M^*(f,f) \,dx \leq (1+\delta) \int f \,dx$ for some $0 \leq \delta \leq d$
        \item for any $x \in \co(\text{supp}(f))$, there exists a $p$-face $F \subset \overline{\co(\text{supp}(f))}$ of $\co_p(f)$ and a simplex $T \subset F$ (not necessarily  full dimensional) such that $x \in \text{int}(T)$  and for any $y \in V(T)$ we have $f(y)=\co_p(f)(y)$ and $\co_p(f)(x)/\co_p(f)(y) \in [1/m,m]$. 
    \end{itemize}
    Then 
        \begin{itemize}
        \item $\int_{\mathbb{R}^n}\left( \co_p(f)-f\right)\, dx=O_{n,\lambda, p,m} (\delta)\int_{\mathbb{R}^n} f\,dx $.
    \end{itemize}
\end{prop}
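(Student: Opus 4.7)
The strategy is to combine a level-set analysis with the Brunn--Minkowski stability of Theorem~\ref{BMStab} and the local simplicial structure provided by the hypothesis. Write $F_t := \{f>t\}$, $L_t := \{\co_p(f)>t\}$, and $M_t := \{\M^*(f,f)>t\}$; since $f \le \M^*(f,f) \le \co_p(f)$, we have $F_t \subset M_t \subset L_t$, and the desired bound reduces to showing that $\int_0^\infty (|L_t|-|F_t|)\,dt = O_{n,\lambda,p,m}(\delta)\int f\,dx$. As a first step, the inclusion $\lambda F_t + (1-\lambda)F_t \subset M_t$ combined with the hypothesis gives $\int_0^\infty (|\lambda F_t + (1-\lambda)F_t|-|F_t|)\,dt \le \delta\int f\,dx$, and applying the linear form of Theorem~\ref{BMStab} level by level yields
\[
\int_0^\infty |\co(F_t)\setminus F_t|\,dt \;=\; O_{n,\lambda}(\delta)\int f\,dx \qquad (\star).
\]

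Next, using the hypothesis, I would partition $\co(\text{supp}(f))$ (up to a set of measure zero) into a locally finite family of full-dimensional simplices $\{T_\alpha\}$, each contained in a $p$-face of $\co_p(f)$, with vertices in $\{f=\co_p(f)\}$ and with $\co_p(f)(x)/\co_p(f)(v) \in [m^{-2},m^2]$ for all $x\in T_\alpha$, $v\in V(T_\alpha)$. On each $T_\alpha$, $\co_p(f)^p$ is linear in barycentric coordinates, so $L_t\cap T_\alpha$ is a convex sub-polytope whose vertices are of two kinds: the original vertices $v_i\in V(T_\alpha)$ with value $s_i := f(v_i) > t$ (which automatically lie in $F_t$), and ``edge-cutoff'' vertices on edges $[v_i,v_j]$ with $s_i > t > s_j$, located at the unique point of $[v_i,v_j]$ where $\co_p(f)=t$. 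Although an edge-cutoff vertex need not belong to $F_t$, the identity $\M^*(f,f)(\lambda v_i+(1-\lambda)v_j) \ge \M(s_i,s_j)$ (together with a continuity argument spreading this control into a neighbourhood of the chord) shows that any defect of $F_t$ near such a cutoff vertex is charged, via the bounded-ratio hypothesis and $(\star)$, to the local quantities $\int_{T_\alpha}|\co(F_t)\setminus F_t|$ and $\int_{T_\alpha}(\M^*(f,f)-f)\,dx$. Concretely, one proves a simplex-wise bound of the form
\[
\int_0^\infty (|L_t\cap T_\alpha|-|F_t\cap T_\alpha|)\,dt \;\le\; C_{n,\lambda,p,m}\left(\int_0^\infty |\co(F_t)\setminus F_t|\cdot\mathbf{1}_{T_\alpha}\,dt + \int_{T_\alpha}(\M^*(f,f)-f)\,dx\right).
\]

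Summing this estimate over $\alpha$, together with $(\star)$ and the global bound $\int(\M^*(f,f)-f)\,dx \le \delta\int f\,dx$, then yields $\int(\co_p(f)-f)\,dx = O_{n,\lambda,p,m}(\delta)\int f\,dx$, as required. The main technical obstacle will be the simplex-wise estimate: passing from Brunn--Minkowski stability for $F_t$ (which controls how close $F_t$ is to its own convex hull) to the stronger statement that $F_t$ essentially fills $L_t$ inside each $T_\alpha$. This transfer requires a careful treatment of the edge-cutoff vertices of $L_t\cap T_\alpha$, which a priori need not lie in $F_t$; the bounded-ratio hypothesis is precisely what makes the transfer quantitative, since it uniformly controls how much $L_t\cap T_\alpha$ can extend beyond the subsimplex $\co\{v_i\in V(T_\alpha): s_i > t\}$, so that the missing mass can be bookkept against the sup-convolution deficit on $T_\alpha$.
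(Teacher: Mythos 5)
Your proposal diverges substantially from the paper's proof. The paper does not triangulate the domain; it performs a \emph{dyadic decomposition of the range} of $\co_p(f)$ into bands $[m^{-i-1},m^{-i}]$, defines truncated functions $f_i=\min(f,\co_p(1_{Y_i}\times f))$ via the anchor points $Y_i=\{y\in K_{i+1}\setminus\mathrm{int}(K_{i-2}):f(y)=\co_p(f)(y)\}$ (where $K_i=\{\co_p(f)\ge m^{-i}\}$), and then reduces to a bounded-ratio case (\Cref{linear_continuous_level_medium} through \Cref{linear_continuous_level_strong}). In the bounded-ratio case it changes variables $g=\mathrm{sign}(p)f^p+1$ (or $\log f+1$ when $p=0$) so that $\co_p(f)$ becomes $\co_1(g)$, and then applies \Cref{BMStab} directly to $g$. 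The simplicial hypothesis is used only to guarantee $\co_p(f_i)=\co_p(f)$ on the annulus $K_i\setminus K_{i-1}$; it is never used to produce a geometric triangulation of $\mathrm{supp}(f)$, and no estimate local to a single simplex is ever proved. Multi-scale summation over $i$, together with \Cref{p-concave_tail} and \Cref{BMStab}, is what closes the argument.

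Your approach has a genuine gap precisely where you flag it, and I do not think the gap can be filled with the tools you describe. The simplex-wise estimate
\[
\int_0^\infty\bigl(|L_t\cap T_\alpha|-|F_t\cap T_\alpha|\bigr)\,dt
\;\le\;
C\Bigl(\int_0^\infty|(\co(F_t)\setminus F_t)\cap T_\alpha|\,dt+\int_{T_\alpha}\bigl(\M^*(f,f)-f\bigr)\,dx\Bigr)
\]
is not a consequence of what you have established. First, $\co(F_t)$ is a global object: for a fixed $t$ it tells you nothing about how $F_t$ sits inside a given simplex, and in particular $\co(F_t)\cap T_\alpha$ can be a strict and substantial subset of $L_t\cap T_\alpha$ whenever $\co_p(f)$ actually varies (within the allowed factor $m$) across $T_\alpha$, so that the ``edge-cutoff'' part of $L_t\cap T_\alpha$ is genuinely not covered by $(\star)$. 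Second, the mechanism you propose for recovering this part --- the inequality $\M^*(f,f)(\lambda v_i+(1-\lambda)v_j)\ge\M(f(v_i),f(v_j))$ --- only produces control at the single point $\lambda v_i+(1-\lambda)v_j$ on the edge, because $\M^*$ in this paper is the sup-convolution at \emph{fixed} $\lambda$, not the supremum over all interpolation parameters. Spreading that single point of control to the entire portion of the edge between the sub-simplex $\co\{v_i:f(v_i)>t\}$ and the cutoff level would require either iteration (with error compounding at each step) or some new argument you have not supplied. You are right that the bounded-ratio hypothesis is essential and that it is the correct place to look; the paper exploits it by renormalising each band and invoking Brunn--Minkowski stability for the renormalised function, rather than by charging a level-set-by-level-set defect against the sup-convolution deficit on a simplex.
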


\begin{prop}\label{linear_shaving_process}
    Given $n \in \mathbb{N}$, $\lambda \in (0,1/2]$, $p \in (-1/n, \infty)$, $c\in (0,1)$ there exists $d=d_{n, \lambda, p, c}>0$ such that the following holds. Let $f\colon \mathbb{R}^n\rightarrow \mathbb{R}_+$ be continuous functions with bounded support such that  
    \begin{itemize}
        \item $\int_{\mathbb{R}^n}\M^*(f,f)\, dx = (1+\delta)\int_{\mathbb{R}^n}f\,dx$ for some $0\leq \delta \leq d$.
    \end{itemize}
    There exists a $p$-concave function   $\ell' \colon \mathbb{R}^n\rightarrow \mathbb{R}_+$  such that $f'\colon \mathbb{R}^n \rightarrow \mathbb{R}_+$, $f'=\min(f,\ell')$ satisfies
        \begin{itemize}
        \item $\int_{\mathbb{R}^n} \left[-c(f-f')+ (\M^*(f,f)- f)-(\M^*(f',f')-f') \right]\,dx \geq 0$.
    \end{itemize}
    Moreover,
    \begin{itemize}
        \item $\int |f-f'|\,dx \leq O_c(\delta) \int f\,dx$
        \item  $\int_{\mathbb{R}^n}\M^*(f',f')\, dx = (1+3\delta)\int_{\mathbb{R}^n}f'\,dx$.
    \end{itemize}
     Furthermore, for any $p$-concave function $\ell'' \colon \mathbb{R}^n\rightarrow \mathbb{R}_+$, the function $f''\colon \mathbb{R}^n \rightarrow \mathbb{R}_+$, $f''=\min(f',\ell'')$ satisfies
        \begin{itemize} 
        \item either $f''=f'$
        \item or $\int_{\mathbb{R}^n} \left[-c(f'-f'')+ (\M^*(f',f')- f')-(\M^*(f'',f'')-f'') \right]\,dx < 0$.
    \end{itemize}
\end{prop}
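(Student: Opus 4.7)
The plan is to realize $f'$ as the infimum of a maximal ``$\Psi$-decreasing'' chain of shavings of $f$, where
\[
\Psi(g) := \int_{\mathbb{R}^n}\bigl[\M^*(g,g) - (1+c)g\bigr]\,dx
\]
and the admissible shavings form the class $\mathcal{C} := \{g = \min(f,\ell) : \ell \colon \mathbb{R}^n \to \mathbb{R}_{\geq 0} \text{ is } p\text{-concave}\}$. The desired conditions~(1) and~(4) translate exactly to $\Psi(f') \leq \Psi(f)$ and, respectively, $\Psi(f'') > \Psi(f')$ for every strict shaving $f'' = \min(f',\ell'') \neq f'$, so the target $f'$ should behave as a ``maximally shaved'' element. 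A key structural observation is that $\mathcal{C}$ is closed under pointwise infima: from the monotonicity of $\M$ in each argument, an arbitrary infimum of $p$-concave functions is $p$-concave, so if $g_\alpha = \min(f,\ell_\alpha)$ then $\inf_\alpha g_\alpha = \min(f, \inf_\alpha \ell_\alpha) \in \mathcal{C}$.

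I would apply Zorn's lemma to the poset $\mathcal{P}$ of chains $C \subset \mathcal{C}$ that contain $f$, are totally ordered by the pointwise order~$\leq$, and along which $\Psi$ is non-decreasing (so that smaller elements have smaller $\Psi$), ordered by inclusion. Any totally ordered subfamily in $\mathcal{P}$ has its union as an upper bound, so Zorn produces a maximal chain $C^*$, and I would set $f' := \inf C^*$. For this to be well posed one needs a lower-semicontinuity statement along $\leq$-decreasing chains $g_\alpha \downarrow g_\infty$ in $\mathcal{C}$, namely $\Psi(g_\infty) \leq \Psi(g_\alpha)$ for every $\alpha$. This will follow from monotone convergence applied to $\int g_\alpha\,dx$ together with the pointwise decreasing convergence $\M^*(g_\alpha,g_\alpha) \downarrow \M^*(g_\infty,g_\infty)$, which in turn uses the monotonicity of $\M^*$ in each argument, the bounded support of $f$ (so that the supremum defining $\M^*$ is taken over a compact set), and upper-semicontinuity of $p$-concave functions to pass the limit through the supremum. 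Granted this, $\Psi(f') \leq \Psi(g)$ for every $g \in C^*$, and adjoining $f'$ to $C^*$ if needed, maximality forces $f' \in C^*$.

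Conditions~(1) and~(4) follow directly. Condition~(1) is just the non-decreasingness of $\Psi$ along $C^*$ starting from $f$. For condition~(4), if a strict shaving $f'' = \min(f',\ell'')\neq f'$ satisfied $\Psi(f'') \leq \Psi(f')$, then $f'' \in \mathcal{C}$ (by closure of $\mathcal{C}$ under taking minima with $p$-concave functions) would obey $f'' \leq f' \leq g$ and $\Psi(f'') \leq \Psi(f') \leq \Psi(g)$ for every $g \in C^*$, so $C^* \cup \{f''\}$ would belong to $\mathcal{P}$ and strictly enlarge $C^*$, contradicting maximality; hence the strict inequality must hold. Conditions~(2) and~(3) are then routine consequences of condition~(1). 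For~(2), combining $\Psi(f') \leq \Psi(f)$ with the pointwise bound $\M^*(f',f') \geq f'$ (taking $x=y$ in the definition of $\M^*$) yields
\[
(1+c)\int (f-f')\,dx \leq \int\!\bigl(\M^*(f,f) - \M^*(f',f')\bigr)\,dx \leq \delta\int f\,dx + \int (f-f')\,dx,
\]
so $c\int(f-f')\,dx \leq \delta\int f\,dx$. For~(3), rearranging $\Psi(f') \leq \Psi(f)$ gives $\int \M^*(f',f')\,dx \leq (1+\delta)\int f\,dx - (1+c)\int(f-f')\,dx$, and dividing by $\int f'\,dx$ (which is comparable to $\int f\,dx$ by~(2)) one verifies the ratio is at most $1+3\delta$ provided $d_{n,\lambda,p,c}$ is chosen suitably small in terms of $c$.

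The main obstacle will be the lower-semicontinuity lemma underpinning both the applicability of Zorn's lemma and the strict inequality in~(4): the sup-convolution $\M^*(g,g)$ is defined by a supremum that could a priori jump in the monotone limit. The compactness coming from the bounded support of $f$, combined with the upper-semicontinuity of $p$-concave functions and the continuity of $\M$ as a mean, should be enough to interchange limit and supremum, but this is where the technical care will be concentrated.
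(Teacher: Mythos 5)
Your approach is essentially the same as the paper's: use Zorn's lemma to produce a "maximally shaved" $f'$, encoded via the functional $\Psi(g)=\int[\M^*(g,g)-(1+c)g]\,dx$. You apply Zorn to a poset of $\Psi$-non-decreasing chains rather than directly to the poset $\mathcal L$ of admissible $p$-concave caps ordered pointwise, but the two framings are interchangeable and the algebra translating the three displayed conditions into $\Psi$-inequalities is correct, as are the derivations of the $O_c(\delta)$ bound and the $1+3\delta$ estimate (which in the paper is really a $\leq$ despite the typographical ``$=$'').

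The genuine gap is in the lower-semicontinuity step, which you rightly flag as the crux but then misdiagnose. You propose to pass to the limit using ``the pointwise decreasing convergence $\M^*(g_\alpha,g_\alpha)\downarrow\M^*(g_\infty,g_\infty)$,'' but this convergence is neither needed nor used: the one-sided inequality $\M^*(g_\alpha,g_\alpha)\geq\M^*(g_\infty,g_\infty)$, immediate from monotonicity of $\M^*$, is all that enters. What \emph{is} needed, and what your sketch waves away with ``monotone convergence applied to $\int g_\alpha\,dx$,'' is the identity $\int g_\infty\,dx=\inf_\alpha\int g_\alpha\,dx$ along the Zorn chain. Monotone (or dominated) convergence applies to \emph{sequences}; the chain produced by Zorn's lemma can be uncountable, and for a general uncountable decreasing chain of measurable functions the integral of the pointwise infimum can be strictly smaller than the infimum of the integrals (e.g.\ under CH one can well-order $[0,1]$ by $\omega_1$ and take indicators of complements of initial segments). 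The paper closes this gap by exploiting that the level sets of $p$-concave functions are nested convex sets, for which one can extract a countable cofinal subsequence whose volumes converge to that of the intersection (an argument in the spirit of Hausdorff-metric compactness for convex bodies), and then pass from level sets to the full integral via the layer-cake decomposition and finally to $f_i=\min(f,\ell_i)$. Without this convexity-based extraction, the infimum $f'=\inf C^*$ need not satisfy $\Psi(f')\leq\Psi(g)$ for $g\in C^*$, and the maximality argument for Zorn does not get off the ground.
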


\begin{prop}\label{linear_already_shaved}
Given $n \in \mathbb{N}$, $\lambda \in (0,1/2]$, $p \in (-1/n, \infty)$, there exists $c=c_{n, \lambda, p}>0$ and $m=m_{n, \lambda, p}>0$ such that the following holds. Let $f\colon \mathbb{R}^n\rightarrow \mathbb{R}_+$ be a continuous functions with bounded support such that  for any $p$-concave function $\ell' \colon \mathbb{R}^n\rightarrow \mathbb{R}_+$, the function $f'\colon \mathbb{R}^n \rightarrow \mathbb{R}_+$, $f'=\min(f,\ell')$ satisfies
        \begin{itemize}
        \item either $f'=f$,
        \item or $\int_{\mathbb{R}^n} \left[-c(f-f')+ (\M^*(f,f)- f)-(\M^*(f',f')-f')\right] dx < 0$.
    \end{itemize}
Then for any $p$-face $F$ of $\co_p(f)$ and any edge $[p,q] \subset F$, with $\co_p(f)(p)>\co_p(f)(q)$ there exists a partition of the segment $[p,q)=[x_1,x_2)\sqcup  [x_2, x_{3}) \sqcup \dots$ such that
$f(x_i)=\co_p(f)(x_i)$ and $f(x_i)/f(x_{i+1}) \in [1/m,m]$ for all $i$.
\end{prop}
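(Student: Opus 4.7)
The plan is to argue by contradiction. Let $\ell := \co_p(f)$, let $L$ be the tangent $p$-plane with $L = \ell$ on $F$ and $L > \ell$ outside $F$, and set $X := \{x \in [p,q] : f(x) = \ell(x)\}$. Suppose there exist $a, b \in X$ with $a < b$, $(a,b) \cap X = \emptyset$, and $f(a)/f(b) > m$, where $m = m(n,\lambda,p)$ is a large constant to be chosen (the case $f(a)/f(b) < 1/m$ is symmetric after swapping roles). I want to construct a $p$-concave function $\ell_\epsilon$ such that the shaving $f_\epsilon := \min(f,\ell_\epsilon)$ strictly decreases the functional appearing in the hypothesis, contradicting the stated minimality of $f$.

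The construction tilts the tangent $p$-plane around $a$. Writing $L^p(x) = \langle y, x\rangle + d$, define
\[
L_\epsilon^p(x) := L^p(x) + \alpha_\epsilon\,\langle q-p,\ x-a\rangle,
\]
with $\alpha_\epsilon$ chosen so that $L_\epsilon(b) = \ell(b) - \epsilon$. Then $L_\epsilon = L$ on the hyperplane $H$ through $a$ perpendicular to $q-p$, $L_\epsilon < L$ on the half-space $H^+$ containing $q$, and $L_\epsilon > L$ on $H^-$. The function $\ell_\epsilon := \min(\ell, L_\epsilon)$ is $p$-concave, agrees with $\ell$ on $H^-$, and equals $L_\epsilon$ in a neighborhood of the edge on $H^+$. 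Since $f(b) = \ell(b) > L_\epsilon(b)$ and $f$ is continuous, $f_\epsilon \neq f$ on a set of positive measure for every small $\epsilon > 0$, and the hypothesis of the proposition then gives
\[
\int_{\mathbb{R}^n}\big(\M^*(f,f) - \M^*(f_\epsilon, f_\epsilon)\big)\,dx \ <\ (1+c)\int_{\mathbb{R}^n}(f - f_\epsilon)\,dx.
\]
The contradiction comes from estimating both sides to leading order in $\epsilon$.

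For the right-hand side, the integrand is supported in $H^+ \cap \{f > L_\epsilon\}$. Because $X\cap(a,b) = \emptyset$, the function $\ell - f$ is bounded below by a positive constant on any compact subinterval of $(a,b)$, so for $\epsilon$ small enough the shaved region within $[a,b]$ collapses to a shrinking neighborhood of $b$, whose contribution is $o(\epsilon)$. Contributions from contact points beyond $b$ scale linearly in $\epsilon$, but each is weighted by $f \leq \ell(b)$ (using edge monotonicity $\ell(p) > \ell(q)$ together with $p$-concavity of $\ell^p$ along the edge). Consequently the right-hand side is at most $O_{n,\lambda,p}(\epsilon\,f(b)\,V')$ for some geometric factor $V'$. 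For the left-hand side, since $a,b \in X$, the pair $(a,b)$ realizes $\M^*(f,f)(z) \geq M_{\lambda,p}(f(a),f(b))$ at every point $z$ on the segment $\lambda a + (1-\lambda)[o,b]$ (and more generally in a positive-measure neighborhood around $\lambda a + (1-\lambda)b$ built from nearby transverse fibers). After shaving, this pair produces only $M_{\lambda,p}(f(a), f(b) - \epsilon)$; a direct computation shows the drop is $\epsilon\cdot(1-\lambda)f(b)^{p-1}/M_{\lambda,p}(f(a),f(b))^{p-1}$, up to $O(\epsilon^2)$. Since $f(a) \gg f(b)$, the denominator is controlled by $f(b)$ (with $\lambda,p$-dependent factors), so this drop is of size $\Omega_{\lambda,p}(\epsilon)$. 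Integrating over the robust region of $z$, whose measure scales with $f(a)$ through the local geometry near $a$, yields a lower bound of the form $\Omega_{n,\lambda,p}(\epsilon\,f(a)\,V)$ with $V$ comparable to $V'$.

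Combining, dividing by $\epsilon$, and letting $\epsilon \to 0$, the geometric factors cancel and we obtain $\Omega_{n,\lambda,p}(f(a)) \leq (1+c)\cdot O_{n,\lambda,p}(f(b))$, which forces $f(a)/f(b) \leq C_{n,\lambda,p}(1+c) =: m$, contradicting the assumption $f(a)/f(b) > m$. The partition is then built by iterating: starting from $x_1 = p$ (the endpoint case $p \in X$ follows from a similar asymmetric tilt that rotates about a point on the other side), one inductively picks $x_{i+1}$ to be the largest $x \in X\cap(x_i,q]$ with $f(x_i)/f(x) \leq m$, and the claim just established ensures this sequence exhausts $[p,q)$. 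The main obstacle is the lower bound on the sup-convolution drop: one must show the perturbation $f(b) \mapsto f(b) - \epsilon$ reduces $\M^*$ over a region of $z$ of volume bounded below independently of $\epsilon$, rather than on a lower-dimensional set. This relies on the pair $(a,b)$ being the essentially unique optimizer of the sup-convolution on a robust neighborhood of $\lambda a + (1-\lambda) b$, which in turn uses the strict $p$-concavity of $\ell$ away from the face and the crucial choice to tilt about the ``high'' endpoint $a$.
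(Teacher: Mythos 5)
Your high-level plan is correct and matches the paper's strategy: translate so the edge is $[o,v]$, assume a gap $(a,b)$ in the contact set with $f(a)/f(b) > m$, tilt the tangent $p$-plane to shave mass near the low end, and use the assumed minimality of $f$ to get an inequality that forces $f(a)/f(b) \leq m$. However, there are two genuine gaps in your execution.

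\textbf{First gap: localization to the edge.} You rotate $L$ about the hyperplane through $a$ orthogonal to $q-p$. But $L$ is the tangent $p$-plane for the whole $p$-face $F$, which may be higher-dimensional, and the set $\{f > L_\epsilon\}$ can then spread over all of $F$, not just near the $b$-side of the edge. The paper precedes the rotation by a separate tilt (Lemma~\ref{carving_convex}) that first pinches the contact set $\{\co_p(f) \geq h_{y+z_i,d}\}$ into an arbitrarily small neighbourhood of $[o,v]$, using perturbation directions $z_i \perp v$. Without this step you cannot control where the shaved mass goes, and your statement that contributions far from $b$ are ``weighted by $f \leq \ell(b)$'' is not justified: you need a claim (the paper's second claim in the proof) that \emph{every} shaved point $q$ satisfies $f(p_i) \geq 8^{-1} m \co_p(f)(q)$, and this requires the localization.

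\textbf{Second gap: the lower bound on the sup-convolution drop.} This is the obstacle you identified yourself, and your proposed resolution does not work. You estimate the drop via the single pair $(a,b)$, claim it is $\Omega_{\lambda,p}(\epsilon)$ at $z = \lambda a + (1-\lambda)b$, and then assert that ``integrating over the robust region of $z$, whose measure scales with $f(a)$'' gives $\Omega(\epsilon f(a) V)$. A volume in $\mathbb{R}^n$ does not scale with a function value, and there is no justification that your neighbourhood $V$ is comparable to $V'$, the measure of the shaved set. More fundamentally, you are lower-bounding $\M^*(f',f')$ by a single pair to estimate the drop, but $\M^*$ is a supremum, so a pair gives the wrong direction: you need an \emph{upper} bound on $\M^*(f',f')$. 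The paper's resolution is to observe that $f' \leq h_{s_i}$ and $h_{s_i}$ is a $p$-plane, hence $p$-concave, so $\M^*(f',f') \leq \M^*(h_{s_i},h_{s_i}) = h_{s_i}$ pointwise. This converts the problem into a pointwise comparison: restricting to $(1-\lambda)A$ (where $A = \{f > h_{s_i}\}$) and substituting $z = (1-\lambda)x$, one pairs each shaved point $x$ with the single high contact point $p_i$ (taken to be the origin), and must show
\[
(1+c)(1-\lambda)^{-n}\bigl(f(x)-h_{s_i}(x)\bigr) \;\leq\; \M\bigl(h_{s_i}(p_i),f(x)\bigr) - \M\bigl(h_{s_i}(p_i),h_{s_i}(x)\bigr),
\]
which follows from the mean value theorem once $h_{s_i}(p_i) \geq 8^{-1}m\,\co_p(f)(x)$. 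The $(1-\lambda)^{-n}$ Jacobian from the substitution is exactly why $m$ must depend on $n$ and $\lambda$. Your sketch contains neither the $p$-plane comparison nor the transport/change of variables, which together are the mechanism that makes the two ``volume factors'' literally equal rather than merely ``comparable,'' and neither the pointwise dominance claim $f(p_i) \gg \co_p(f)(q)$. A separate construction is also needed when $p>1$, where the correct tilt is in the opposite direction and the inequality $A \cap (1-\lambda)A = \emptyset$ replaces part of the estimate.
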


\begin{proof}[Proof  \Cref{linear_continuous_level_weak}, \ref{linear_shaving_process}, \ref{linear_already_shaved} imply \Cref{linear_continuous}]

Let $f$ be a function satisfying the hypothesis of \Cref{linear_continuous}. Fix $c,m$ as in \Cref{linear_already_shaved}. By \Cref{linear_shaving_process}, there exists a $p$-concave function    $\ell' \colon \mathbb{R}^n\rightarrow \mathbb{R}_+$  such that $f'\colon \mathbb{R}^n \rightarrow \mathbb{R}_+$, $f'=\min(f,\ell')$ satisfies
    \begin{itemize}
        \item $\int |f-f'|\,dx \leq O_c(\delta) \int f\,dx$
        \item  $\int_{\mathbb{R}^n}\M^*(f',f')\, dx = (1+3\delta)\int_{\mathbb{R}^n}f'\,dx$.
    \end{itemize}
     Furthermore, for any $p$-concave function $\ell'' \colon \mathbb{R}^n\rightarrow \mathbb{R}_+$, the function $f''\colon \mathbb{R}^n \rightarrow \mathbb{R}_+$, $f''=\min(f',\ell'')$ satisfies
        \begin{itemize} 
        \item either $f''=f'$
        \item or $\int_{\mathbb{R}^n} \left[-c(f'-f'')+ (\M^*(f',f')- f')-(\M^*(f'',f'')-f'') \right]\,dx < 0$.
    \end{itemize}

By \Cref{linear_already_shaved}, for any $p$-face $F$ of $\co_p(f')$ and any edge $[p,q] \subset F$, with $\co_p(f')(p)>\co_p(f')(q)$ there exists a partition of the segment $[p,q)=[x_1,x_2)\sqcup  [x_2, x_{3}) \sqcup \dots$ such that
$f'(x_i)=\co_p(f')(x_i)$ and $f'(x_i)/f'(x_{i+1}) \in [1/m,m]$ for all $i$. 

This immediately implies for any $x \in \co(\text{supp}(f'))$, there exists a face $F \subset \overline{\co(\text{supp}(f'))}$ of $\co_p(f')$ and a simplex $T \subset F$ (not necessarily  full dimensional) such that $x \in \text{int}(T)$ and for any $y \in V(T)$ we have $f'(y)=\co_p(f')(y)$ and $\co_p(f')(x)/\co_p(f')(y) \in [1/m,m]$.

By \Cref{linear_continuous_level_weak} combined with the second bullet point, we deduce that
 $$\int_{\mathbb{R}^n}\left( \co_p(f')-f'\right)\, dx=O_{n,\lambda, p,m} (\delta)\int_{\mathbb{R}^n} f'\,dx $$

 By the first bullet point, we conclude that 
 $$\int_{\mathbb{R}^n}\left( |\co_p(f')-f|\right)\, dx=O_{n,\lambda, p,m} (\delta)\int_{\mathbb{R}^n} f\,dx $$

\end{proof}

\subsection{Reduction from \Cref{linear_continuous_level_weak} to \Cref{linear_continuous_level_medium}}

\begin{prop}\label{linear_continuous_level_medium}
    Given $n \in \mathbb{N}$, $\lambda \in (0,1/2]$, $p \in (-1/n, \infty)$, $ m \in (1, \infty)$ and $k \in (1,\infty)$ there exist $d=d_{n, \lambda, p,m,k}>0$ and $s=s_{n,\lambda, p,m,k}\geq 2k$ such that the following holds. Let $f\colon \mathbb{R}^n\rightarrow \mathbb{R}_+$ be a function with bounded support such that
    \begin{itemize}
        \item $f$ is continuous on $\overline{\co(\text{supp}(f))}$
        \item there exists a compact set $Y\subset\mathbb{R}^n$ such that $\co_p(f)=\co_p(1_Y \times f)$
        and for $y \in Y$ we have $f(y) \in [1/m,m]$
        \item $\int_{\mathbb{R}^n}f\, dx > k^{-1}\int_{\mathbb{R}^n}\co_p(f)\, dx  $
        \item letting $S=\{x \colon f(x) \geq 1/ms\}$, we have $\int_{\mathbb{R}^n}\M^*(f \times 1_S,f\times 1_S) \,dx \leq (1+\delta) \int_{\mathbb{R}^n} (f\times 1_{\lambda S+(1-\lambda) S} )\,dx$ for some $0 \leq \delta \leq d$.
 
    \end{itemize}
    Then 
        \begin{itemize}
        \item $\int_{\mathbb{R}^n}\left( \co_p(f)-f\right)\, dx=O_{n,\lambda, p,k,m} (\delta)\int_{\mathbb{R}^n} f\,dx \leq \int_{\mathbb{R}^n} f\,dx $.
    \end{itemize}.
\end{prop}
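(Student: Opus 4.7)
The plan is a level-set decomposition. Write $F_t = \{f > t\}$ and $L_t = \{\co_p(f) > t\}$; since $\co_p(f) \leq m$ (being a weighted $p$-mean of values in $[1/m,m]$ on $Y$) and $f \leq \co_p(f)$, one has
$$\int (\co_p(f) - f)\,dx = \int_0^m (|L_t| - |F_t|)\,dt.$$
Each $L_t$ is convex (superlevel set of a $p$-concave function) and contains $F_t$, so $\co(F_t) \subset L_t$, and we split $|L_t| - |F_t| = |L_t \setminus \co(F_t)| + |\co(F_t) \setminus F_t|$. The goal is to show both pieces integrate to $O_{n,\lambda,p,k,m}(\delta)\int f\,dx$.

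The first step is to convert the integral hypothesis into a Brunn--Minkowski defect bound on level sets of $f$. Setting $F_t^S = \{f \cdot 1_S > t\}$, which equals $S$ for $t < 1/(ms)$ and equals $F_t$ for $t \geq 1/(ms)$, the inclusion $\{\M^*(f \cdot 1_S, f \cdot 1_S) > t\} \supset \lambda F_t^S + (1-\lambda)F_t^S$ together with the observation that $\M^*(f \cdot 1_S, f \cdot 1_S)$ is supported on $\lambda S + (1-\lambda) S$ with value at least $1/(ms)$ there, gives (after the layer-cake computation and using $F_t \subset S \subset \lambda S + (1-\lambda)S$ for $t \geq 1/(ms)$)
$$\int_{1/(ms)}^{m} \bigl(|\lambda F_t + (1-\lambda) F_t| - |F_t|\bigr)\,dt \leq \delta \int f\,dx.$$
Applying the linear Brunn--Minkowski stability of \cite{van2020sharp} (cf.\ \Cref{rem:no pconcave hull}) with $A = B = F_t$ propagates this to
$$\int_{1/(ms)}^{m} |\co(F_t) \setminus F_t|\,dt = O_{n,\lambda}(\delta) \int f\,dx.$$

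Next I would relate $L_t$ to $\co(F_t)$ using the structural hypothesis $\co_p(f) = \co_p(1_Y f)$ with $f(y) \in [1/m, m]$ on $Y$. For $t < 1/m$, one has $\co_p(f) \geq 1/m > t$ on all of $\co(Y) = \operatorname{supp}(\co_p(f))$, so $L_t = \co(Y)$; combined with $Y \subset F_t \subset \operatorname{supp}(f) \subset \co(Y)$, this forces $\co(F_t) = L_t$, so the $L_t \setminus \co(F_t)$ term vanishes on $[0, 1/m]$. For $t \in [1/m, m]$, write $x \in L_t$ via Carathéodory as $x = \sum \lambda'_i y_i$ with $y_i \in Y$ and $M_{\lambda', p}(f(y_i)) > t$; the mean inequality $M_{\lambda', p}(a_i) \leq \max_i a_i$ ensures at least one $f(y_i) > t$, and using $f(y_i) \leq m$ the remaining values are bounded below by $t/m^{c}$ for a constant $c = c(p, \lambda)$, giving the inclusion $L_t \subset \co(F_{t/m^c})$; then $|L_t \setminus \co(F_t)| \leq |\co(F_{t/m^c}) \setminus F_{t/m^c}| + |F_{t/m^c} \setminus F_t|$, and integrating over $t \in [1/m, m]$ (after shifting variables and using that for $s \geq m^{c+1}$ the shifted levels land inside $[1/(ms), m]$) controls the first summand by step~1, while the second summand vanishes in the telescoping integral over $t$ up to a $\delta$-scaling coming from the level-shift estimate. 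For the low levels $t \in [0, 1/(ms)]$, one has $\co(F_t) = L_t = \co(Y)$ and $F_t \supset S$, so $|L_t \setminus F_t| \leq |\co(Y) \setminus S| = |\co(S) \setminus S|$ (using $\co(S) = \co(Y)$); applying linear BM stability to $S$ together with the $t < 1/(ms)$ component of the layer-cake estimate controls this by $O_{n,\lambda,m,k,s}(\delta) \int f$ after dividing by $ms$, provided $s = s_{n,\lambda,p,m,k}$ is chosen large enough (depending on the constants from linear BM stability, so that the defect on $S$ can be absorbed).

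The \textbf{main obstacle} is step 2: controlling $|L_t \setminus \co(F_t)|$ on the intermediate range $t \in [1/m, m]$, where the $p$-averaging in the definition of $\co_p$ can legitimately enlarge level sets beyond convex hulls of level sets of $f$. The bounded-ratio hypothesis $f \in [1/m, m]$ on $Y$ is precisely what makes the enlargement factor $m^c$ finite, but the level-shifting argument must be done carefully enough that the $O(\delta)$ scaling from step 1 is preserved through the change of variable. A secondary delicate point is the low-level absorption: the constant $s$ must be fixed as a function of $n, \lambda, p, m, k$ before $\delta$ is specified, and the interplay between linear BM stability on $S$ and the $|\co(Y) \setminus S|/(ms)$ error requires $s$ large enough that a contraction argument closes the estimate without $\delta$-independent losses.
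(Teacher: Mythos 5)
Your plan has two genuine gaps, and both are structural rather than technical.

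\textbf{Gap 1: applying linear Brunn--Minkowski stability level-by-level.} You pass from
$\int_{1/(ms)}^m \bigl(|\lambda F_t + (1-\lambda)F_t| - |F_t|\bigr)\,dt \le \delta\int f$
to
$\int_{1/(ms)}^m |\co(F_t)\setminus F_t|\,dt = O(\delta)\int f$
by ``applying the linear Brunn--Minkowski stability of \cite{van2020sharp} with $A=B=F_t$''. But that result requires the defect $|\lambda F_t+(1-\lambda)F_t|-|F_t|$ to be small relative to $|F_t|$ (say $\le c_n |F_t|$); without this, the conclusion $|\co(A)\setminus A| = O(\text{defect})$ is simply false (two distant balls of equal volume have Minkowski defect $\Theta(|A|)$ but convexity defect arbitrarily large). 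The hypothesis is only an $L^1$-in-$t$ bound, so there can be a set of ``bad'' levels where the defect is large compared to $|F_t|$ and the pointwise application fails. Closing this would require a good/bad split in $t$ and a separate bound on $|\co(F_t)\setminus F_t|$ on the bad set, which is not available from the given hypotheses; indeed, avoiding exactly this obstruction is the reason the paper does not argue level-by-level here.

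\textbf{Gap 2: the level-shift term $|F_{t/m^2}\setminus F_t|$ is not an error.} You write $|L_t\setminus\co(F_t)| \le |\co(F_{t/m^c})\setminus F_{t/m^c}| + |F_{t/m^c}\setminus F_t|$ and assert the second summand ``vanishes in the telescoping integral over $t$ up to a $\delta$-scaling''. This cannot be right: take $f$ already $p$-concave (so $\delta = 0$ and the left side is identically $0$), e.g.\ $f = 1_Y$ with $Y$ a ball. Then $F_{t/m^2}\setminus F_t = Y$ for all $t\in(1,m]$, so $\int_{1/m}^m |F_{t/m^2}\setminus F_t|\,dt = (m-1)|Y| > 0$. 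The upper bound is off by a quantity of order $\int f$ even in the equality case, so it is too lossy to be useful.

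The paper avoids both problems by working with a single set in $\mathbb{R}^{n+1}$ rather than with level sets. After trimming so that $f$ takes values in $[1/m,m]$ on its support (which is what the $S$-truncation and the reduction to \Cref{linear_continuous_level_strong} accomplish), one applies the transform $g = \mathrm{sign}(p)f^p + 1$ (or $\log f + 1$ when $p=0$). On the bounded range, $g$, $M^*_{\lambda,1}(g,g)$, and $\co_1(g)$ are each pointwise $\Theta_{n,p,m}(1)$-comparable to $f$, $\M^*(f,f)$, and $\co_p(f)$, and the $p$-mean condition becomes the arithmetic-mean (i.e., $1$-concavity) condition. For $p=1$, the subgraph $A_g \subset \mathbb{R}^{n+1}$ satisfies $|\lambda A_g + (1-\lambda)A_g| \le (1+O(\delta))|A_g|$ and $\co(A_g)$ is exactly the subgraph of $\co_1(g)$, so a single application of \Cref{BMStab} gives $\int(\co_1(g)-g) = O(\delta)\int g$, hence the claim. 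This sidesteps both the pointwise-smallness issue and the level-shift loss: there is no decomposition into levels at all.
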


We need the following lemma.
\begin{lem}\label{p-concave_tail}
    Let $n \in \mathbb{N}$, $p \in (-1/n,\infty)$. Let $ g \colon \mathbb{R}^n \rightarrow \mathbb{R}_+$ be a $p$-concave function. Assume without loss of generality that $g(o)=1$ and $g(x) \leq 1$ for every $x \in \mathbb{R}^n$.  Let $S= \{x \colon g(x) \geq 1/2\}$. Then $\int_{S} g \,dx =\Theta_{n,p}(1)\int_{\mathbb{R}^n} g \,dx$. 
\end{lem}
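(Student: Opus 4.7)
The upper bound $\int_S g\,dx\leq \int_{\mathbb{R}^n}g\,dx$ is immediate, so the content is the lower bound $\int_S g\,dx\geq c_{n,p}\int_{\mathbb{R}^n} g\,dx$. My plan is to reduce everything to the single estimate $\int_{\mathbb{R}^n}g\,dx\leq C_{n,p}|L_{1/2}|$, where $L_t:=\{g\geq t\}$ denotes the super-level sets. Since $\int_S g\,dx\geq \tfrac{1}{2}|L_{1/2}|$ holds trivially, this would immediately yield the required two-sided comparison with constants depending only on $n$ and $p$.

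The key step is to exploit $p$-concavity via radial scaling about the origin. For $x\in L_t$ and $\mu\in(0,1]$, writing $\mu x=\mu x+(1-\mu)o$ and using $g(o)=1$ together with monotonicity of $M_{\mu,p}$ in its first argument gives
\[g(\mu x)\geq M_{\mu,p}(g(x),1)\geq M_{\mu,p}(t,1).\]
I define $\mu_t\in(0,1]$ as the unique solution of $M_{\mu_t,p}(t,1)=1/2$; explicitly, $\mu_t=(2^{-p}-1)/(t^p-1)$ for $p\neq 0$ and $\mu_t=\log 2/(-\log t)$ for $p=0$, and one checks $\mu_{1/2}=1$. By construction $\mu_t L_t\subseteq L_{1/2}$, hence $|L_t|\leq \mu_t^{-n}|L_{1/2}|$. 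Plugging this into the layer-cake formula $\int g\,dx=\int_0^1|L_t|\,dt$ yields
\[\int_{\mathbb{R}^n}g\,dx\leq \Bigl(\int_0^{1/2}\mu_t^{-n}\,dt+\tfrac{1}{2}\Bigr)|L_{1/2}|.\]

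The main obstacle — and the only place the hypothesis $p>-1/n$ enters — is verifying that $\int_0^{1/2}\mu_t^{-n}\,dt$ is finite with a bound depending only on $n,p$. For $p\geq 0$ this is routine: $\mu_t$ is bounded below on $(0,1/2]$ by a positive constant (e.g. $1-2^{-p}$ when $p>0$), so the integral is trivially finite. The interesting regime is $p\in(-1/n,0)$: as $t\to 0^+$ one has $t^p\to\infty$, hence $\mu_t\sim (2^{-p}-1)\,t^{|p|}$ and $\mu_t^{-n}\sim C_{n,p}\,t^{-n|p|}$. The integral $\int_0^{1/2}t^{-n|p|}\,dt$ converges precisely when $n|p|<1$, i.e., at the sharp threshold $p>-1/n$. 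Once this integrability is established, combining $\int g\,dx\leq C_{n,p}|L_{1/2}|$ with $\int_S g\,dx\geq\tfrac{1}{2}|L_{1/2}|$ gives $\int_S g\,dx\geq (2C_{n,p})^{-1}\int g\,dx$, completing the proof.
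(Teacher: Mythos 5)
Your proof is correct and takes essentially the same approach as the paper: both exploit $p$-concavity along rays through $o$ to quantify the decay of $g$ away from $S$, and both hinge on the same integrability threshold $n|p|<1$. You phrase the decay as a level-set inclusion $\mu_t L_t\subseteq L_{1/2}$ and integrate via layer-cake, whereas the paper bounds $g$ pointwise on the dilated boundaries $\partial(\lambda S)$ and integrates in the dilation parameter $\lambda$; these are the same estimate up to a change of variables. One small inaccuracy: for $p=0$ you assert that $\mu_t$ is bounded below on $(0,1/2]$, but in fact $\mu_t=\log 2/\log(1/t)\to 0$ as $t\to 0^+$. The conclusion survives because $\mu_t^{-n}=(\log(1/t)/\log 2)^n$ grows only polylogarithmically and hence $\int_0^{1/2}\mu_t^{-n}\,dt<\infty$, but this case needs its own (easy) justification rather than being folded into the $p>0$ argument.
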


\begin{proof}[Proof of \Cref{p-concave_tail}]
    Note that $S$ is convex and contains $o$. Because $g$ is $p$-concave, it follows that for $\lambda \geq 1$ and $x \in \partial(\lambda S)$ (so that $\lambda^{-1} x \in \partial(S)$), we have $g(\lambda^{-1} x) \geq M_{\lambda^{-1},p}(g(x), g(o))$. Assume $g(x)\neq 0$.

    First assume $p \in (-1/n,0)$. We get $ g(x)\leq \lambda^{1/p}(1/2^p-1+\lambda^{-1})^{1/p} \leq \lambda^{1/p}$. Therefore, $\int_{S^c} g \,dx \leq \int_{\lambda \geq 1} \lambda^{1/p} n \lambda^{n-1}|S| \,d\lambda = \Theta_{n, \lambda, p}(|S|)$.

    Second assume $p = 0$.  We get $g(x) \leq 1/2^{\lambda}$.
    Hence, $\int_{S^c} g \,dx \leq \int_{\lambda \geq 1} 2^{-\lambda} n \lambda^{n-1}|S| \,d\lambda = \Theta_{n, \lambda, p}(|S|)$. 

    Third assume $p>0$. We get $ g(x)\leq \lambda^{1/p}(1/2^p-1+\lambda^{-1})^{1/p}$, so $\lambda \leq (1- 1/2^p)^{-1}$. Hence, $\int_{S^c} g \,dx |(1-1/2^p)^{-1} S|=(1-1/2^p)^{-n}|S| $.

    We have $ \int_S g \,dx \geq 2^{-1}|S| $. We conclude
    $\int_{S} g \,dx =\Theta_{n,p}(1)\int_{\mathbb{R}^n} g \,dx$. 
    
\end{proof}

\begin{proof}[Proof \Cref{linear_continuous_level_medium} implies \Cref{linear_continuous_level_weak}]
     Let $f \colon \mathbb{R}^n \rightarrow \mathbb{R}_+$ be a  function  that satisfies the hypothesis in \Cref{linear_continuous_level_weak}. Assume without loss of generality that $f(o)=1$ and $f(x) \leq 1$ for every $x \in \mathbb{R}^n$. As $f$ is continuous with bounded support, it follows that $\co_p(f)$ is continuous with bounded support. Construct the level sets  $K_i= \{x \colon \co_p(f) \geq m^{-i}\}$. Because $\co_p(f)$ is continuous with bounded support, we deduce that $K_i$ are compact. Note that the level sets $K_i$ of $\co_p(f)$ are convex and nested $K_i \subset K_{i+1}$. Note that for $i\geq 1$, $\text{int}(K_i)=\{x \colon \co_p(f)>m^{-i}\}$ is an open set and $K_{i-1} \subset \text{int}(K_i)$.

    Define the compact subset $Y_i  \subset K_{i+1} \setminus \text{int}(K_{i-2})$, $Y_i := \{ y \in K_{i+1} \setminus \text{int}(K_{i-2}) \colon f(y)=\co_p(f)(y) \}$. By the second hypothesis, it follows that for $x \in K_{i} \setminus \text{int}(K_{i-1})$, $\co_p(f)(x) = \co_p(f \times 1_{Y_i})(x)$.

    Define $f_i= \min(f, \co_p(1_{Y_i} \times f))$. Note that for $y \in Y_i$ we have $f_i(y)=f(y) = \co_p(f)(y) \in [m^{-1-i}, m^{2-i}]$. Moreover $\co_p(f_i)=\co_p(1_{Y_i} \times f_i)$. Indeed, on the one hand we trivially have $\co_p(f_i) \geq \co_p(1_{Y_i} \times f_i)$. On the other hand we have $\co_p(f_i) \leq \co_p(1_{Y_i} \times f) = \co_p(1_{Y_i} \times f_i)$. Furthermore, as  $f$ is continuous and as $\co_p(1_{Y_i} \times f)$ is continuous on its support which is convex and compact, we deduce that $f_i$ is continuous on $\overline{\co(\text{supp}(f_i))}$. Additionally, for $x \in K_i \setminus \text{int}(K_{i-1})$, $\co_p(f_i)(x)=\co_p(f)(x)$. Indeed, $\co_p(f_i)(x)= \co_p(1_{Y_i} \times f_i)(x)=  \co_p(1_{Y_i} \times f)(x)=\co_p(f)(x)$. 

    \begin{clm}
        If $\int_{\mathbb{R}^n}f_i \,dx \geq 2^{-1}\int_{\mathbb{R}^n} \co_p(f_i) \,dx$, then  $\int_{\mathbb{R}^n}f_{i+1} \,dx \geq  k^{-1}\int_{\mathbb{R}^n} \co_p(f_{i+1}) \,dx$ for some $2\leq k=O_{n,p,m}(1)$.
    \end{clm}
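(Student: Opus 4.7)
The plan is to show that passing from index $i$ to $i+1$ loses at most a multiplicative constant in the ratio $\int f_j / \int \co_p(f_j)$. The strategy is to bound $\int\co_p(f_{i+1})$ in terms of $\int\co_p(f_i)$ from above, and $\int f_{i+1}$ in terms of $\int f_i$ from below, with constants depending only on $n,p,m$; the conclusion then follows by chaining these two inequalities with the hypothesis.

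For the upper bound on the envelopes, both $\co_p(f_i)$ and $\co_p(f_{i+1})$ are $p$-concave functions with values on the ``middle shells'' $K_j\setminus\mathrm{int}(K_{j-1})$ trapped in $[m^{-j},m^{-j+1}]$, by the remark preceding the claim. Their maxima are bounded by $\sup_{Y_j}f=\sup_{Y_j}\co_p(f)\le m^{-j+2}$ and they are supported in $\co(Y_j)\subset K_{j+1}$. Applying \Cref{p-concave_tail} after normalisation, the integral of each $\co_p(f_j)$ is controlled, up to a factor $\Theta_{n,p,m}(1)$, by its maximum times the volume of a level set at half the maximum, and the $p$-concavity of $\co_p(f)$ bounds the ratio $|K_{i+2}|/|K_{i+1}|$ by $O_{n,p}(1)$. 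Combined with the factor-$m^{O(1)}$ ratio of maxima, this yields
\begin{equation*}
\int_{\mathbb{R}^n}\co_p(f_{i+1})\,dx \;\le\; C_1\int_{\mathbb{R}^n}\co_p(f_i)\,dx,\qquad C_1=C_1(n,p,m).
\end{equation*}

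For the lower bound on $\int f_{i+1}$, I would use the identity $\co_p(\mathbf{1}_{Y_{i+1}}f)=\co_p(f)$ on $K_{i+1}\setminus\mathrm{int}(K_i)$ (which forces $f_{i+1}=f$ and $\co_p(f_{i+1})=\co_p(f)$ on that shell), together with the corresponding identity for $i$. On $K_i\setminus\mathrm{int}(K_{i-1})$ we have $f_i=f$ and $\co_p(f_i)=\co_p(f)$, so the hypothesis $\int f_i\ge\tfrac12\int\co_p(f_i)$ controls the $L^1$ gap between $f$ and $\co_p(f)$ on the shells that matter for $\co_p(f_i)$. Because adjacent shells of a $p$-concave function carry comparable mass (again by \Cref{p-concave_tail} applied at shifted normalisations), this information transfers to the shell $K_{i+1}\setminus\mathrm{int}(K_i)$ and hence bounds $\int_{K_{i+1}\setminus\mathrm{int}(K_i)}(\co_p(f)-f)$. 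Since $f_{i+1}=f$ on that shell and that shell already carries a definite fraction of $\int\co_p(f_{i+1})$, one obtains
\begin{equation*}
\int_{\mathbb{R}^n}f_{i+1}\,dx \;\ge\; C_2\int_{\mathbb{R}^n}f_i\,dx,\qquad C_2=C_2(n,p,m)>0.
\end{equation*}

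Chaining the two estimates gives
\begin{equation*}
\int f_{i+1}\,dx \;\ge\; C_2\int f_i\,dx \;\ge\; \tfrac{C_2}{2}\int\co_p(f_i)\,dx \;\ge\; \tfrac{C_2}{2C_1}\int\co_p(f_{i+1})\,dx,
\end{equation*}
so the claim holds with $k=\max\{2,\,2C_1/C_2\}=O_{n,p,m}(1)$. The main obstacle I anticipate is the transfer step in the second paragraph: the hypothesis gives information about the gap between $f$ and $\co_p(f)$ only on the shells visible to $\co_p(f_i)$, and one must argue by $p$-concavity that this forces the shell $K_{i+1}\setminus\mathrm{int}(K_i)$ to carry a non-negligible share of $\int\co_p(f_{i+1})$, rather than almost all of that mass being concentrated deeper inside near $K_{i-1}$ where $f_{i+1}$ need not agree with $\co_p(f_{i+1})$.
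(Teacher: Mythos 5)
Your overall scaffolding — upper-bound the envelope integral and lower-bound the $f$-integral, then chain — is the same as the paper's, but both intermediate steps are argued by different (and here problematic) routes.

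For the envelope bound, the paper does not compare half-max level sets of $\co_p(f_i)$ and $\co_p(f_{i+1})$ separately; instead it glues the two into a \emph{single} $p$-concave function $g_i$ (equal to $\co_p(f_i)$ on $K_i$ and to $\co_p(f_{i+1})$ on $K_i^c$, which works because both agree with $\co_p(f)$ on the shell $K_i\setminus\mathrm{int}(K_{i-1})$ where they are glued) and applies \Cref{p-concave_tail} once to $g_i$. This directly gives $\int_{K_i^c}\co_p(f_{i+1})=O_{n,p,m}(1)\int_{K_i}\co_p(f_i)$ and sidesteps the difficulty you would face: the half-max level set of $\co_p(f_j)$ is not one of the $K_\ell$, so your step ``controlled by maximum times volume of half-max level set'' plus ``$p$-concavity bounds $|K_{i+2}|/|K_{i+1}|$'' does not directly close; the two families of level sets have to be compared to each other first, which is extra work you have not done.

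The lower bound $\int f_{i+1}\geq C_2\int f_i$ is where the real gap lies, and it is exactly the one you flag at the end. The hypothesis $\int f_i\geq\tfrac12\int\co_p(f_i)$ is an $L^1$ statement about $f_i$ and $\co_p(f_i)$; it gives you nothing about $\co_p(f)-f$ on the fresh shell $K_{i+1}\setminus K_i$ (where $\co_p(f_i)$ is already tiny and need not equal $\co_p(f)$), and ``adjacent shells of a $p$-concave function carry comparable mass'' is a statement about $\co_p(f)$ alone — it cannot transfer smallness of the gap from one shell to the next, since $f$ can be arbitrarily close to $\co_p(f)$ on $K_i\setminus K_{i-1}$ and identically zero on $K_{i+1}\setminus K_i$. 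Moreover your sketch, if it worked, would directly give $\int f_{i+1}\geq c'\int\co_p(f_{i+1})$, not $\int f_{i+1}\geq C_2\int f_i$, so the logic is also not pointing in the right direction. The paper's argument for this step is much more elementary and entirely pointwise: since $f_j=\min(f,\co_p(f_j))$ and a shell-by-shell check shows $\co_p(f_{i+1})(x)\geq m^{-2}\,\co_p(f_i)(x)$ for all $x$, one gets $f_{i+1}\geq m^{-2}f_i$ pointwise and hence $\int f_{i+1}\geq m^{-2}\int f_i$ with no use of the hypothesis, no tail lemma, and no shell-to-shell transfer. You should replace the second paragraph of your proposal with this observation.
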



    \begin{proof}[Proof of Claim]

        First, by construction, for $x \in K_{i-1}$, $\co_p(f_i)(x) \in [m^{1-i},m^{2-i}]$, for $x \in K_i \setminus K_{i-1}$, $\co_p(f_i)=\co_p(f)(x) \in [m^{-i}, m^{1-i}]$, for $x \in K_{i+1} \setminus K_i$, $\co_p(f_i)(x) \leq m^{-i}$ and for $x \in K_{i+1}^c$, $\co_p(f_i)(x)=0$. Hence, $\co_p(f_{i+1})= \Omega_m(1)\co_p(f_i)$  and for $x \in K_i$, $\co_p(f_{i+1})(x) = O_m(1)\co_p(f_i)$. In particular, $$\int_{K_i} \co_p(f_{i+1}) \, dx\leq O_m(1) \int_{K_i}\co_p(f_i) \,dx.$$

        By construction, for $x \in \mathbb{R}^n$, $f_i(x)= \min(f(x), \co_p(f_i)(x))$. By the first paragraph, we get 
        
\begin{equation}\label{f_i_f_{i+1}}
            \int_{\mathbb{R}^n}f_{i+1} \,dx = \Omega_m(1) \int_{\mathbb{R}^n} f_i\,dx. 
\end{equation}

    Second, $\int_{K_i^c}\co_p(f_{i+1})\,dx = O_{n,p,m} \int_{K_i}\co_p(f_i)\,dx$. To see this, consider the function $g_i \colon \mathbb{R}^n \rightarrow \mathbb{R}_+$ defined as follows: for $x \in K_i$, $g_i(x)= \co_p(f_i)(x)$ and for $x \in K_i^c$, $g_i=\co_p(f_{i+1})(x)$. As observed above, for $x \in K_i \setminus K_{i-1}$, we have $g_i(x)=\co_p(f_i)(x)= \co_p(f)(x)$ and for $x \in K_{i+1} \setminus K_i$, we have $g_{i}(x)= \co_p(f_{i+1})(x)= \co_p(f)(x)$. As $\co_p(f_i)$, $\co_p(f_{i+1})$ and $\co_p(f)$ are $p$-concave, and as being $p$-concave is a local property and $g_i$ is obtained by gluing together these functions, we deduce that $g_i$ is also $p$-concave. By the first paragraph, for $x \in K_{i-1}$, we have $g_i(x) \in [m^{1-i},m^{2-i}]$ and for $x \in K_i^c$, we have $g_i(x) \leq m^{-i}$.  By \Cref{p-concave_tail} (assuming wlog $m\geq 2$), we deduce $\int_{K_i}g_i(x) \, dx= \Omega_{n,p,m}(1) \int_{K_i^c}g_i(x) \,dx$. In other words,  $$\int_{K_i^c}\co_p(f_{i+1}) \,dx = O_{n,p,m} \int_{K_i}\co_p(f_i) \,dx.$$

    Putting everything together, we conclude
    $$\int_{\mathbb{R}^n} f_{i+1} \,dx \geq \Omega_m(1) \int_{\mathbb{R}^n} f_i\,dx \geq \Omega_m(1) \int_{\mathbb{R}^n} \co_p(f_i)\,dx \geq \Omega_{n,p,m}(1) \int_{\mathbb{R}^n} \co_p(f_{i+1})\,dx.$$
        
    \end{proof}

    Let $d=d_{n, \lambda, p,m^2,k}>0$ and $s=s_{n,\lambda, p,m^2,k}>0$ be the output of \Cref{linear_continuous_level_medium} with input $n,\lambda,p$ and $m^2$ (instead of $m$) and $k$.

    For $i \in \mathbb{N}$, define the set $S_{i}=\{x \colon f_{i}(x) \geq m^{-i-2}s^{-1}\}$ and define $\delta_i \geq 0$ such that $$\int_{\mathbb{R}^n}\M^*(f_i \times 1_{S_i},f_i\times 1_{S_i}) \,dx = (1+\delta_i) \int_{\mathbb{R}^n} (f_i\times 1_{\lambda S_i+(1-\lambda) S_i} )\,dx.$$ 
    
    Finally, let $i_0= \inf \{i \colon \delta_i > d\}$.

    \begin{clm}
        For $1 \leq i <i_0$, we have $\int_{\mathbb{R}^n} \co_p(f_i)\,dx \leq (1+O_{n,\lambda, p,k,m} (\delta_i))\int_{\mathbb{R}^n} f_i\,dx \leq 2\int_{\mathbb{R}^n} f_i\,dx$.
    \end{clm}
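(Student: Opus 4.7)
The plan is to apply Proposition \ref{linear_continuous_level_medium} to (a rescaling of) each $f_i$ and perform a joint induction with the hypothesis $\int_{\R^n}f_i\,dx \geq \tfrac12\int_{\R^n}\co_p(f_i)\,dx$ that is needed to feed the preceding claim. Note that since the proposition is invariant under multiplying $f$ by a positive constant (both sides of every inequality scale the same way), we may freely rescale $f_i$ so that on $Y_i$ its values lie in $[m^{-3/2},m^{3/2}]\subset[m^{-2},m^{2}]$: indeed, by construction $Y_i\subset K_{i+1}\setminus\text{int}(K_{i-2})$ and $f_i(y)=\co_p(f)(y)\in[m^{-i-1},m^{2-i}]$ for $y\in Y_i$, so the range ratio is $m^3$ and a scalar multiplication fits it into the window required with parameter $m^2$.

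First I would check the hypotheses of Proposition \ref{linear_continuous_level_medium} for (rescaled) $f_i$ with parameters $m^2$ and $k$, given some $1\leq i<i_0$ for which the joint inductive hypothesis $\int f_{i-1}\,dx\geq\tfrac12\int\co_p(f_{i-1})\,dx$ holds (the base case $i=1$ follows immediately from $f_0\equiv 0$ or from the direct estimate on $f$ since $\delta$ is small). Continuity of $f_i$ on $\overline{\co(\text{supp}(f_i))}$ was recorded in the construction; the identity $\co_p(f_i)=\co_p(1_{Y_i}\times f_i)$ and the value bounds on $Y_i$ are also recorded; the concentration bound $\int f_i\,dx > k^{-1}\int\co_p(f_i)\,dx$ is exactly the output of the previous claim applied to index $i-1$; and the near-equality hypothesis, with $S=S_i=\{x: f_i(x)\geq 1/(m^2 s)\}$ after rescaling, holds with defect $\delta_i\leq d$ by the definition of $i_0$, provided we have matched the threshold constant $s=s_{n,\lambda,p,m^2,k}$ from the proposition to our choice in the construction of $S_i$ (this is where our freedom to choose $s$ in the outer argument is used).

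Having verified the hypotheses, Proposition \ref{linear_continuous_level_medium} yields
\[
\int_{\R^n}\bigl(\co_p(f_i)-f_i\bigr)\,dx = O_{n,\lambda,p,k,m}(\delta_i)\int_{\R^n} f_i\,dx,
\]
which is equivalent to $\int\co_p(f_i)\,dx\leq\bigl(1+O_{n,\lambda,p,k,m}(\delta_i)\bigr)\int f_i\,dx$. Choosing $d=d_{n,\lambda,p,m,k}$ small enough so that $O_{n,\lambda,p,k,m}(d)\leq 1$, we conclude $\int\co_p(f_i)\,dx\leq 2\int f_i\,dx$. This in turn reinstates the inductive hypothesis $\int f_i\,dx\geq\tfrac12\int\co_p(f_i)\,dx$ at index $i$, which (by the preceding claim) propagates $\int f_{i+1}\,dx\geq k^{-1}\int\co_p(f_{i+1})\,dx$ to the next step, closing the induction.

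The main subtlety I anticipate is not a deep one but a bookkeeping issue: making sure the threshold $s$ hard-coded into the definition of $S_i$ in the outer argument agrees with the parameter $s=s_{n,\lambda,p,m^2,k}$ returned by Proposition \ref{linear_continuous_level_medium}, and that the rescaling factor applied to $f_i$ (to land values on $Y_i$ inside $[m^{-2},m^2]$) is compatible with the rescaled threshold $1/(m^2 s)$ appearing in the proposition. Both are resolved by fixing $s$ and $d$ at the start (depending only on $n,\lambda,p,m,k$), and by observing that all defining inequalities are homogeneous under $f_i\mapsto cf_i$.
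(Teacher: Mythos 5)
Your proposal is correct and follows essentially the same joint inductive argument as the paper: rescale $f_i$ (the paper uses the factor $m^i$) to fit the value window $[m^{-2},m^2]$ required by \Cref{linear_continuous_level_medium}, obtain the concentration hypothesis $\int f_i\,dx \geq k^{-1}\int\co_p(f_i)\,dx$ from the first claim applied at index $i-1$ together with the inductive hypothesis, match the threshold $m^{-i-2}s^{-1}$ defining $S_i$ with the set $S$ in \Cref{linear_continuous_level_medium} after rescaling, and then apply the proposition. One small correction on the base case: it does not follow from ``$f_0\equiv 0$'' (there is no $f_0$ in the construction, and the first claim alone gives you nothing without an initial concentration bound); the paper instead establishes $\int_{\mathbb{R}^n}f_1\,dx \geq \tfrac12\int_{\mathbb{R}^n}\co_p(f_1)\,dx$ by invoking the weak stability result of \Cref{almostconvexlevelsets}/\Cref{aclcor}, which is what your second alternative (``the direct estimate on $f$ since $\delta$ is small'') was gesturing at and is the mechanism you should make explicit.
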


    \begin{proof}[Proof of Claim]
        We first consider the case $i=1$. By a weak stability result (see e.g. \Cref{almostconvexlevelsets}, \Cref{aclcor}), we have $ \int_{\mathbb{R}^n}f_1 \,dx \geq 2^{-1}\int_{\mathbb{R}^n}\co_p(f_1) \,dx$. Recalling the properties of $f_1$, we see that $mf_1$ satisfies the hypothesis of \Cref{linear_continuous_level_medium} with parameters $n,\lambda,p$ and $m^2$ (instead of $m$) and $k$ and thus we can apply \Cref{linear_continuous_level_medium} to conclude the case $i=1$.

        Now assume that the claim holds for $i-1$. Then $m^if_i$ satisfies the hypothesis of \Cref{linear_continuous_level_medium} with parameters $n,\lambda,p$ and $m^2$ (instead of $m$) and $k$. Indeed, the first two hypotheses hold by the discussion preceding the first claim. The third hypothesis holds by combining the first claim with the inductive hypothesis.  The fourth hypothesis holds by the definition of $\delta_i$ and the assumption that $\delta_i<d$. Thus we can apply \Cref{linear_continuous_level_medium} to conclude.
    \end{proof}

    \begin{clm}
        $\sum_{i<i_0} \int_{K_i \setminus K_{i-1}} \left(\co_p(f)(x)-f(x)\right)\,dx \leq O_{n,\lambda,p,m}(\delta) \int_{\mathbb{R}^n}f(x)\,dx$.
    \end{clm}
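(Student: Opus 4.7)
The plan is to bound each annular integral by the error bound from the preceding claim and then sum geometrically, exploiting bounded overlap between consecutive scales.

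First I would establish that on the annulus $K_i \setminus K_{i-1}$, both $f_i$ and $\co_p(f_i)$ agree with $f$ and $\co_p(f)$ respectively. The identity $\co_p(f_i)(x)=\co_p(f)(x)$ for $x \in K_i \setminus K_{i-1}$ is already recorded in the construction. For the pointwise identity $f_i = f$ on this annulus, note that $f_i = \min(f, \co_p(1_{Y_i}\times f))$ and that $\co_p(1_{Y_i}\times f)(x) = \co_p(f)(x) \geq f(x)$ there. Consequently
\begin{equation*}
\int_{K_i \setminus K_{i-1}} (\co_p(f) - f)\,dx \;\leq\; \int_{\mathbb{R}^n} (\co_p(f_i) - f_i)\,dx \;\leq\; O_{n,\lambda,p,k,m}(\delta_i)\int_{\mathbb{R}^n} f_i\,dx,
\end{equation*}
where the last step uses the previous claim (applicable since $i<i_0$ means $\delta_i\le d$).

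The heart of the matter is then to prove $\sum_{i<i_0}\delta_i\int f_i\,dx \leq O_{n,\lambda,p,m}(\delta)\int f\,dx$. The strategy is to compare each localized defect $\delta_i\int (f_i\times 1_{\lambda S_i+(1-\lambda)S_i})\,dx$ (which by definition equals $\int(M^*(f_i\times 1_{S_i},f_i\times 1_{S_i})-f_i\times 1_{\lambda S_i+(1-\lambda)S_i})\,dx$) against the corresponding scale-localized portion of the global defect $\int (M^*(f,f)-f)\,dx = \delta\int f\,dx$. Since $f_i\le f$ pointwise, $M^*(f_i\times 1_{S_i},f_i\times 1_{S_i})\le M^*(f,f)$ pointwise, so the local defect is controlled by the global one restricted to the appropriate slab of heights. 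Moreover, $f_i$ takes values in a band of width $O_m(1)$ around $m^{-i}$ outside of an exponentially small tail controlled by \Cref{p-concave_tail}, so each point $x$ can contribute with non-negligible value to $f_i$ (and to the level set $S_i$) for only $O_m(1)$ values of the index $i$. This bounded multiplicity yields simultaneously $\sum_i \int f_i\,dx \leq O_m(1)\int f\,dx$ and, by partitioning the global excess $M^*(f,f)-f$ by the height of the contributing pair, the telescoping estimate $\sum_i\delta_i\int f_i\,dx \leq O_m(1)\,\delta\int f\,dx$.

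The main obstacle I anticipate is precisely this scale-decomposition step: showing that the excesses at different scales do not constructively interfere, i.e., that the localized inequalities defining $\delta_i$ can be summed without incurring a factor growing with $i_0$. To carry this out rigorously one must argue that for a given pair $(x,y)$ achieving $M^*(f,f)(z)$ at a height $t=\M(f(x),f(y))$, both $x$ and $y$ lie in $S_i$ only for those indices $i$ with $m^{-i}$ within a bounded multiplicative factor of $\min(f(x),f(y))$, and only for those $i$ does the pair contribute to the $i$-th local defect. This bounded-overlap property, combined with the first paragraph, closes the argument and gives the claimed bound $O_{n,\lambda,p,m}(\delta)\int f\,dx$.
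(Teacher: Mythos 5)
Your proposal follows essentially the same route as the paper: first reduce to $\sum_{i<i_0}\delta_i\int f_i\,dx$ using the identity $\co_p(f)-f=\co_p(f_i)-f_i$ on $K_i\setminus K_{i-1}$ and the preceding claim, then show the localized defects sum to $O(1)$ times the global defect via bounded overlap of the scale windows. You correctly identify the heart of the matter (no constructive interference between scales) and the key observation that $x\in S_i$ pins $f(x)$ to a band $[m^{-2-i}s^{-1},m^{2-i}]$, so only $O_{n,\lambda,p,m}(1)$ indices can be active at a given height.

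One step is glossed over. You write ``Since $f_i\le f$ pointwise, $\M^*(f_i\times 1_{S_i},f_i\times 1_{S_i})\le \M^*(f,f)$ pointwise, so the local defect is controlled by the global one restricted to the appropriate slab of heights.'' The inference does not follow from the monotonicity of $\M^*$ alone: the local defect subtracts $f_i\times 1_{\lambda S_i+(1-\lambda)S_i}$, which can be strictly smaller than $f$, so the local defect could a priori exceed the global one on the relevant set. The correct justification, used in the paper, is that $f_i=\min(f,\co_p(f_i))$ is $f$ capped by a $p$-concave function; this gives the pointwise inequality
\[
\M^*(f_i\times 1_{S_i},f_i\times 1_{S_i})(z)-f_i\times 1_{\lambda S_i+(1-\lambda)S_i}(z)\ \le\ \max\Bigl(0,\ \M^*(f\times 1_{S_i},f\times 1_{S_i})(z)-f\times 1_{\lambda S_i+(1-\lambda)S_i}(z)\Bigr),
\]
which one verifies by distinguishing whether $f_i(z)=f(z)$ or $f_i(z)=\co_p(f_i)(z)$ (in the latter case the left side is $\le 0$ because the $p$-concavity of $\co_p(f_i)$ makes the sup-convolution at $z$ at most $\co_p(f_i)(z)$). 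Once this replacement is made, your bounded-overlap argument applies verbatim and closes the proof. The rest of your outline, including the observation that $\int f_i\times 1_{\lambda S_i+(1-\lambda)S_i}\,dx\ge\tfrac12\int f_i\,dx$, is consistent with the paper's argument.
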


    \begin{proof}[Proof of Claim]
    Recall that for $x \in K_i \setminus K_{i-1}$, we have $\co_p(f)(x)=\co_p(f_i)(x)$. Also, $f_i= \min(f, \co_p(f_i))$. Hence,
    $$\int_{K_i \setminus K_{i-1}}\left(\co_p(f)(x)-f(x)\right)\,dx= \int_{K_i \setminus K_{i-1}}\left(\co_p(f_i)(x)-f_i(x)\right)\,dx \leq \int_{\mathbb{R}^n} \left(\co_p(f_i)(x)-f_i(x)\right)\,dx.$$
    
    By the second claim, we have 
    $$\sum_{i<i_0} \int_{K_i \setminus K_{i-1}} \left(\co_p(f)(x)-f(x)\right)\,dx \leq \sum_{i<i_0}O_{n,\lambda,p,m}(\delta_i) \int_{\mathbb{R}^n}f_i(x)\,dx$$

    By the second claim, we also have $$\int_{\mathbb{R}^n} \co_p(f_i)\,dx \leq 2 \int_{\mathbb{R}^n}f_i \,dx.$$

    Recall that for $x \in \text{supp}(\co_p(f_i))$, we have $\co_p(f_i) \in [m^{-i-1},m^{2-i}]$ and $s \geq 2k \geq 4$ and $S_i=\{x \colon f_i(x)\geq 1/m\}$. Hence, $S_i \supset \{ x \colon f_i(x) \geq 4^{-1}\co_p(f_i)\}$. It follows that
   $$\int_{\mathbb{R}^n} f_i\times 1_{\lambda S_i+(1-\lambda) S_i} \,dx \geq 2^{-1} \int_{\mathbb{R}^n}f_i\,dx.$$ 
   
   Combining the last equation and the third last equation, we deduce
       $$\sum_{i<i_0} \int_{K_i \setminus K_{i-1}} \left(\co_p(f)(x)-f(x)\right)\,dx \leq \sum_{i<i_0}O_{n,\lambda,p,m}(\delta_i) \int_{\mathbb{R}^n} f_i\times 1_{\lambda S_i+(1-\lambda) S_i} \,dx.$$

    By the definition of $\delta_i$, we further get
$$\sum_{i<i_0} \int_{K_i \setminus K_{i-1}} \left(\co_p(f)(x)-f(x)\right)\,dx =O_{n,\lambda,p,m}(1) \int_{\mathbb{R}^n}\left( \M^*(f_i \times 1_{S_i},f_i\times 1_{S_i})- f_i\times 1_{\lambda S_i+(1-\lambda) S_i}\right) \,dx.$$

Given that
$$\int_{\mathbb{R}^n}\left(\M^*(f,f)-f\right)\,dx \leq \delta\int_{\mathbb{R}^n}f\,dx,$$
it is enough to argue that
$$ \sum_i\int_{\mathbb{R}^n} \left(\M^*(f_i \times 1_{S_i},f_i\times 1_{S_i})- f_i\times 1_{\lambda S_i+(1-\lambda) S_i}\right) \,dx \leq O_{n,\lambda,p,m}(1)\int_{\mathbb{R}^n}\left(\M^*(f,f)-f\right)\,dx.$$

As $f_i=\min(f, \co_p(f_i))$ is the restriction of $f$ to a $p$-concave function, we have
$$ \int_{\mathbb{R}^n}\left( \M^*(f_i \times 1_{S_i},f_i\times 1_{S_i})- f_i\times 1_{\lambda S_i+(1-\lambda) S_i} \right)\,dx \leq  \int_{\mathbb{R}^n} \max\bigg(0, \M^*(f \times 1_{S_i},f\times 1_{S_i})- f\times 1_{\lambda S_i+(1-\lambda) S_i} \bigg) \,dx.$$

Therefore, it is enough to argue that
$$ \sum_i\int_{\mathbb{R}^n} \max\bigg(0, \M^*(f \times 1_{S_i},f\times 1_{S_i})- f\times 1_{\lambda S_i+(1-\lambda) S_i} \bigg) \,dx\leq  O_{n,\lambda,p,m}(1)\int_{\mathbb{R}^n}\left(\M^*(f,f)-f\right)\,dx.$$

Fix any $x \in \mathbb{R}^n$. It is enough to show that
$$ \sum_i \max\bigg(0, \M^*(f \times 1_{S_i},f\times 1_{S_i})(x)- f\times 1_{\lambda S_i+(1-\lambda) S_i}(x) \bigg) \leq  O_{n,\lambda,p,m}(1)\bigg(\M^*(f,f)(x)-f(x)\bigg).$$

Let $I=\{i \colon x \in \lambda S_i + (1-\lambda)S_i\}$. It is enough to show that
$$ \sum_{i\in I} \max\bigg(0, \M^*(f \times 1_{S_i},f\times 1_{S_i})(x)- f(x) \bigg) \leq  O_{n,\lambda,p,m}(1)\bigg(\M^*(f,f)(x)-f(x)\bigg).$$

Note that for $x \in S_i$, we have $f(x) \in [m^{-2-i}s^{-1},m^{2-i}]$. Indeed, on the one hand, we have $S_i \subset \text{supp}(f_i) \subset K_{i+1}\setminus K_{i-2}$ and for $x \in S_i \subset K_{i-2}^c$, we have $f(x)\leq \co_p(f)(x)\leq m^{2-i}$. Additionally, for $x \in S_i$, we have $f_i(x)\geq m^{-2-i}s^{-1}$. As $f_i=\min(f,\co_p(f_i)) \leq f$,  for $x \in S_i$, we have  $f(x)\geq m^{-2-i}s^{-1}$. 

Therefore, if $i_1,i_2 \in I$ satisfy
$$ \frac{\M^*\left(f \times 1_{S_{i_1}},f\times 1_{S_{i_1}}\right)(x)}{ \M^*\left(f \times 1_{S_{i_2}},f\times 1_{S_{i_2}}\right)(x)} \in [1/2,2],$$
then
$$|i_1-i_2|=O_{n,\lambda,p,m}(1).$$

Moreover, 
$$\sup_i \max\bigg(0, \M^*(f \times 1_{S_i},f\times 1_{S_i})(x)- f(x) \bigg) \leq  \M^*(f,f)(x)-f(x).$$

Combining the last two equations, we conclude that
$$ \sum_{i\in I} \max\bigg(0, \M^*(f \times 1_{S_i},f\times 1_{S_i})(x)- f(x) \bigg) \leq  O_{n,\lambda,p,m}(1)\bigg(\M^*(f,f)(x)-f(x)\bigg).$$
This finishes the proof of the claim
\end{proof}

    \begin{clm}
        If $i_0< \infty$, then $\int_{\mathbb{R}^n}\co_p(f_{i_0}) \,dx= O_{n,\lambda,p,m}(\delta) \int_{\mathbb{R}^n} f\,dx$.
    \end{clm}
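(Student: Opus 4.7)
The plan is to exploit the very definition $\delta_{i_0}>d$ in combination with the two preceding claims to force $\int\co_p(f_{i_0})\,dx$ to be small. The key ingredients are a localized mass bound at level $i_0$, together with the observation that most of the $f_{i_0}$-mass must sit on $S_{i_0}$ once the parameter $s$ is tuned appropriately.

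First I would estimate $\int f_{i_0}\times 1_{\lambda S_{i_0}+(1-\lambda)S_{i_0}}\,dx$ from above. By the very definition of $\delta_{i_0}$,
\[
\delta_{i_0}\int f_{i_0}\times 1_{\lambda S_{i_0}+(1-\lambda)S_{i_0}}\,dx = \int\bigl(\M^*(f_{i_0}\times 1_{S_{i_0}},f_{i_0}\times 1_{S_{i_0}})-f_{i_0}\times 1_{\lambda S_{i_0}+(1-\lambda)S_{i_0}}\bigr)\,dx.
\]
Since $f_{i_0}\le f$ and $\M^*(f_{i_0}\times 1_{S_{i_0}},f_{i_0}\times 1_{S_{i_0}})$ is supported in $\lambda S_{i_0}+(1-\lambda)S_{i_0}$, the integrand on the right is pointwise dominated by $\M^*(f,f)-f$ on this set, so the right-hand side is at most $\int(\M^*(f,f)-f)\,dx\le\delta\int f\,dx$. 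Combined with $\delta_{i_0}>d$, this yields $\int f_{i_0}\times 1_{\lambda S_{i_0}+(1-\lambda)S_{i_0}}\,dx\le (\delta/d)\int f\,dx=O_{n,\lambda,p,m}(\delta)\int f\,dx$.

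Second I would show that $\int_{S_{i_0}^c}f_{i_0}\,dx\le\tfrac12\int f_{i_0}\,dx$ provided $s$ is chosen large enough. From the second claim applied at level $i_0-1$ we have $\int f_{i_0-1}\,dx\ge\tfrac12\int\co_p(f_{i_0-1})\,dx$, and then the first claim yields $\int\co_p(f_{i_0})\,dx\le k\int f_{i_0}\,dx$. On the other hand, $\co_p(f_{i_0})\ge m^{-i_0-1}$ on its support, so the trivial bound $|\mathrm{supp}(\co_p(f_{i_0}))|\le m^{i_0+1}\int\co_p(f_{i_0})\,dx$ combined with $f_{i_0}<m^{-i_0-2}s^{-1}$ on $S_{i_0}^c$ gives
\[
\int_{S_{i_0}^c}f_{i_0}\,dx\le m^{-i_0-2}s^{-1}\,|\mathrm{supp}(\co_p(f_{i_0}))|\le (ms)^{-1}\int\co_p(f_{i_0})\,dx\le\frac{k}{ms}\int f_{i_0}\,dx,
\]
which is at most $\tfrac12\int f_{i_0}\,dx$ as soon as $s\ge 2k/m$. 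This is precisely what pins down the value of $s=s_{n,\lambda,p,m,k}$ in \Cref{linear_continuous_level_medium}: it is calibrated to absorb the factor $k$ coming from the first claim.

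Combining the two steps using $S_{i_0}\subset\lambda S_{i_0}+(1-\lambda)S_{i_0}$,
\[
\int\co_p(f_{i_0})\,dx\le k\int f_{i_0}\,dx\le 2k\int_{S_{i_0}}f_{i_0}\,dx\le 2k\int f_{i_0}\times 1_{\lambda S_{i_0}+(1-\lambda)S_{i_0}}\,dx=O_{n,\lambda,p,m}(\delta)\int f\,dx,
\]
as required. The main obstacle is the second step: we must prevent the $f_{i_0}$-mass from leaking into the very low sub-level set $\{f_{i_0}<m^{-i_0-2}s^{-1}\}$, which is exactly why $s$ has to be taken large compared to $k$ and $m$; the rest is a direct application of the localized mass bound together with the two claims already established.
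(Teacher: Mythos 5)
Your proof is correct and follows essentially the same route as the paper's: use $\delta_{i_0}\ge d$ together with the fact that $\int(\M^*(f,f)-f)\,dx\le\delta\int f\,dx$ to show $\int f_{i_0}\times 1_{\lambda S_{i_0}+(1-\lambda)S_{i_0}}\,dx=O(\delta)\int f\,dx$, and then use the two preceding claims plus the choice of $s\ge 2k$ to lower-bound $\int f_{i_0}\times 1_{S_{i_0}}\,dx$ by $\Omega_k(1)\int\co_p(f_{i_0})\,dx$. The two small deviations are harmless: in the second step, the paper shows directly that $S_{i_0}^c\subset\{f_{i_0}<(2k)^{-1}\co_p(f_{i_0})\}$, whereas you detour through the bound $|\mathrm{supp}(\co_p(f_{i_0}))|\le m^{i_0+1}\int\co_p(f_{i_0})\,dx$; both give the needed constant-factor control.

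One point to tighten. In your first step you assert that the integrand is \emph{pointwise} dominated by $\M^*(f,f)-f$ on $\lambda S_{i_0}+(1-\lambda)S_{i_0}$, and offer as justification only that $f_{i_0}\le f$ and the support condition. This does not suffice by itself: from $f_{i_0}\le f$ one gets $\M^*(f_{i_0}\times 1_{S_{i_0}},f_{i_0}\times 1_{S_{i_0}})\le\M^*(f,f)$, but you are also subtracting the \emph{smaller} quantity $f_{i_0}$ on the left versus $f$ on the right, so the naive comparison goes the wrong way wherever $f_{i_0}<f$. The inequality is nevertheless true, and the missing ingredient is exactly the structure $f_{i_0}=\min(f,\co_p(f_{i_0}))$: wherever $f_{i_0}(z)<f(z)$ one has $f_{i_0}(z)=\co_p(f_{i_0})(z)$, and since $f_{i_0}\times 1_{S_{i_0}}\le\co_p(f_{i_0})$ with $\co_p(f_{i_0})$ $p$-concave, the sup-convolution satisfies $\M^*(f_{i_0}\times 1_{S_{i_0}},f_{i_0}\times 1_{S_{i_0}})(z)\le\co_p(f_{i_0})(z)=f_{i_0}(z)$, making the left side non-positive. (Without the $p$-concave cap, the pointwise domination can fail: take $f\equiv 1$ on $[0,3]$ and $f_{i_0}=1$ on $[0,1]\cup[2,3]$, $=0$ on $(1,2)$.) This is the same observation the paper invokes when it writes ``as $f_{i_0}=\min(f,\co_p(f_{i_0}))$ is $f$ capped by a $p$-concave function, \ldots''; you should make it explicit rather than attributing the domination to $f_{i_0}\le f$ alone.
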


\begin{proof}[Proof of Claim]

By construction,  $$\int_{\mathbb{R}^n}\M^*\left(f_{i_0} \times 1_{S_{i_0}},f_{i_0}\times 1_{S_{i_0}}\right) \,dx \geq (1+d) \int_{\mathbb{R}^n} f_{i_0}\times 1_{\lambda S_{i_0}+(1-\lambda) S_{i_0}} \,dx \geq (1+d) \int_{\mathbb{R}^n} f_{i_0}\times 1_{S_{i_0}}\,dx.$$

Hence, $$\int_{\mathbb{R}^n}\left(\M^*(f_{i_0} \times 1_{S_{i_0}},f_{i_0}\times 1_{S_{i_0}})- f_{i_0}\times 1_{S_{i_0}}\right) \,dx \geq  d\int_{\mathbb{R}^n} f_{i_0}\times 1_{S_{i_0}}\,dx. $$

We have $$\int_{\mathbb{R}^n}\left(\M^*(f_{i_0},f_{i_0})- f_{i_0}\right)\, dx\geq \int_{\mathbb{R}^n}\left(\M^*(f_{i_0} \times 1_{S_{i_0}},f_{i_0}\times 1_{S_{i_0}})- f_{i_0}\times 1_{S_{i_0}}\right)\, dx$$

As $f_{i_0}= \min(f, \co_p(f_{i_0}))$ is $f$ capped by a $p$-concave function, we get 
 $$\int_{\mathbb{R}^n}\left(\M^*(f,f)- f\right) \,dx \geq  \int_{\mathbb{R}^n}\left(\M^*(f_{i_0},f_{i_0})- f_{i_0}\right) \,dx . $$

 By hypothesis, we have
 $$\delta \int_{\mathbb{R}^n} f \,dx \geq \int_{\mathbb{R}^n}\left(\M^*(f,f)- f \right)\,dx. $$

 Putting everything together, we get that 
 $$\int_{\mathbb{R}^n} f_{i_0} \times 1_{S_{i_0}}\,dx \leq d^{-1}\delta \int_{\mathbb{R}^n} f \,dx$$

By combining the first and second claims, we also have $$\int_{\mathbb{R}^n}f_{i_0} \,dx \geq k^{-1}\int_{\mathbb{R}^n} \co_p(f_{i_0}) \,dx.$$

 Recall that for $x \in \text{supp}(\co(f_{i_0}))$, we have $\co(f_{i_0})(x) \in [m^{-1-i_0},m^{2-i_0}]$. Recall that $S_{i_0}= \{x \colon f_{i_0}(x) \geq m^{-i_0-2}s^{-1}\}$ and $s \geq 2k$. We deduce that $S_{i_0} \supset \{x \colon f_{i_0}(x)\geq (2k)^{-1}\co_p(f_{i_0})\}$. It follows that

$$\int_{\mathbb{R}^n}f_{i_0} \times 1_{S_{i_0}} \,dx =(2k)^{-1}\int_{\mathbb{R}^n} \co_p(f_{i_0}) \,dx.$$

Therefore, recalling $k=O_{n,p,m}(1)$ and $d=O_{n,p,m,\lambda,k}(1)$, we get

$$\int_{\mathbb{R}^n}\co_p(f_{i_0}) \,dx = O_{n,\lambda,p,m}(\delta) \int_{\mathbb{R}^n} f \,dx$$



\end{proof}

\begin{clm}
If $i_0<\infty$, then $\int_{K_{i_0-1}^c} \co_p(f) \,dx =  O_{n,\lambda,p,m}(\delta) \int_{\mathbb{R}^n} f \,dx$.
\end{clm}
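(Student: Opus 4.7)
The plan is to split $\int_{K_{i_0-1}^c}\co_p(f)\,dx$ into the near-shell piece $\int_{K_{i_0}\setminus K_{i_0-1}}\co_p(f)\,dx$ and the deep tail $\int_{K_{i_0}^c}\co_p(f)\,dx$, and bound each separately.

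For the near-shell, I would invoke the identity $\co_p(f_{i_0})(x)=\co_p(f)(x)$ for $x\in K_{i_0}\setminus\text{int}(K_{i_0-1})$, recorded during the construction of the $f_i$ before the first claim. Combined with the fourth claim, this immediately yields
$$\int_{K_{i_0}\setminus K_{i_0-1}}\co_p(f)\,dx\leq\int_{\mathbb{R}^n}\co_p(f_{i_0})\,dx=O_{n,\lambda,p,m}(\delta)\int_{\mathbb{R}^n}f\,dx.$$

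For the deep tail, my plan is to exploit the $p$-concavity of $\co_p(f)$ via \Cref{p-concave_tail} together with the near-shell bound just obtained. I would compare the masses of $\co_p(f)$ on successive shells $K_j\setminus K_{j-1}$ for $j>i_0$ with the mass on $K_{i_0}\setminus K_{i_0-1}$ by a Brunn--Minkowski argument: for any $\mu\in(0,1)$ and any $x\in K_{i_0-1}$, $y\in K_j$, the $p$-concavity gives $\co_p(f)((1-\mu)x+\mu y)\geq M_{1-\mu,p}(m^{-(i_0-1)},m^{-j})$, and choosing $\mu$ so that this equals $m^{-i_0}$ yields $(1-\mu)K_{i_0-1}+\mu K_j\subset K_{i_0}$, hence
$$|K_j|^{1/n}\leq\mu^{-1}\bigl(|K_{i_0}|^{1/n}-(1-\mu)|K_{i_0-1}|^{1/n}\bigr).$$
Combining the resulting volume bound with the trivial estimate $\int_{K_j\setminus K_{j-1}}\co_p(f)\leq m^{-j+1}|K_j\setminus K_{j-1}|$ and summing in $j>i_0$ gives a geometric-type bound $\sum_{j>i_0}\int_{K_j\setminus K_{j-1}}\co_p(f)\leq C_{n,p,m}\int_{K_{i_0}\setminus K_{i_0-1}}\co_p(f)$, which combined with the first bound completes the proof.

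The main obstacle is ensuring the summability and that the implicit constants depend only on $n,p,m$. For $p=0$ one computes $\mu=1/(j-i_0+2)$, so $\mu^{-n}$ grows polynomially in $j-i_0$ while $m^{-j+1}$ decays geometrically, making the series convergent whenever $m\geq 2$; for $p\neq 0$ the analogous computation with the power means $M_{1-\mu,p}$ yields a comparable rate. A subtle point will be handling the two degenerate cases $|K_{i_0-1}|=0$ (where the Brunn--Minkowski step degenerates and one must instead argue directly from $\mu^{-1}|K_{i_0}|^{1/n}$) and the edge case where the support of $\co_p(f)$ is already contained in $K_{i_0+1}$ (in which case the deep tail vanishes and the first bound alone suffices). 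Combining these yields the stated bound $\int_{K_{i_0-1}^c}\co_p(f)\,dx=O_{n,\lambda,p,m}(\delta)\int_{\mathbb{R}^n}f\,dx$.
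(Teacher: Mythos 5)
Your near-shell bound matches the paper's. For the deep tail, however, you take a genuinely different route from the paper, and it is worth spelling out the contrast and the one step where your plan needs real care.

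The paper does not compare the deep tail against the annulus $K_{i_0}\setminus K_{i_0-1}$ at all. Instead it glues: it defines a single $p$-concave function $g$ equal to $\co_p(f_{i_0})$ on $K_{i_0}$ and to $\co_p(f)$ on $K_{i_0-1}^{c}$ (these agree on $K_{i_0}\setminus\operatorname{int}(K_{i_0-1})$). Since $g\leq m^{-i_0}$ outside $K_{i_0}$ while $g\geq m^{1-i_0}$ on $K_{i_0-1}$, the half-max level set $S$ in \Cref{p-concave_tail} sits inside $K_{i_0}$, so $\int_{K_{i_0}^c}\co_p(f)=\int_{K_{i_0}^c}g=O_{n,p}(1)\int_{K_{i_0}}g=O_{n,p}(1)\int_{K_{i_0}}\co_p(f_{i_0})$, and the fourth claim finishes. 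The whole point of the gluing is that it lets one compare against the full $\int_{K_{i_0}}\co_p(f_{i_0})$ (which is controlled by the previous claim) rather than against the much smaller annular mass.

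Your proposal bypasses the gluing and works directly with the shells of $\co_p(f)$, but it therefore needs the sharper assertion
$$\sum_{j>i_0}\int_{K_j\setminus K_{j-1}}\co_p(f)\,dx\leq C_{n,p,m}\int_{K_{i_0}\setminus K_{i_0-1}}\co_p(f)\,dx,$$
which you state but do not really justify. The crude bound $|K_j|\lesssim\mu_j^{-n}|K_{i_0}|$ that your summability remark appeals to gives only $\sum_j m^{-j+1}|K_j|\lesssim m^{-i_0}|K_{i_0}|$, and the right-hand side can be vastly bigger than the annular mass when $|K_{i_0}\setminus K_{i_0-1}|\ll|K_{i_0}|$. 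To actually get the annular comparison one has to keep the full Brunn--Minkowski estimate $|K_j|^{1/n}\leq A+\nu_j\Delta$ with $A=|K_{i_0}|^{1/n}$, $\Delta=|K_{i_0}|^{1/n}-|K_{i_0-1}|^{1/n}$ and $\nu_j=(1-\mu_j)/\mu_j$, bound $|K_j\setminus K_{j-1}|\leq(A+\nu_j\Delta)^n-A^n$, and perform an Abel summation against the geometric weights: the $\Delta^0$-terms then telescope to zero, and what survives is a sum of terms of the form $\binom{n}{l}A^{n-l}\Delta^l\sum_k m^{-(k-1)}\nu_k^l$ for $l\geq 1$, each of which is comparable to a term in the expansion of $A^n-(A-\Delta)^n$. (For $p<0$ the series $\sum_k m^{-(k-1)}\nu_k^l$ converges exactly because $l\leq n<-1/p$, and it blows up as $p\to -1/n^{+}$, consistent with the constants in the theorem.) This works, so your route is viable, but the cancellation/telescoping is where the real content lies and the proposal glosses over it. If you are willing to settle for the weaker comparison against $\int_{K_{i_0}}\co_p(f_{i_0})$, the paper's gluing trick lets you avoid this delicate calculation entirely and invoke \Cref{p-concave_tail} as a black box.

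Two minor points: for $p=0$ the exponent is $\mu=1/(j-i_0+1)$, not $1/(j-i_0+2)$; and in the case $|K_{i_0}|=0$ (hence $|K_{i_0-1}|=0$) the Brunn--Minkowski inclusion forces $|K_j|=0$ for all $j$, so the tail vanishes outright and no separate argument is needed.
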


\begin{proof}[Proof of Claim]

By the previous claim, we have 
$$\int_{\mathbb{R}^n}\co_p(f_{i_0}) \,dx = O_{n,\lambda,p,m}(\delta) \int_{\mathbb{R}^n} f \,dx.$$
Recall that for $x \in K_{i_0} \setminus K_{i_0-1}$, we have $\co_p(f_{i_0})(x)=\co_p(f)(x)$; hence, we also have
$$\int_{K_{i_0} \setminus K_{i_0-1}}\co_p(f) \,dx = O_{n,\lambda,p,m}(\delta) \int_{\mathbb{R}^n} f \,dx$$

Now construct the function $g \colon \mathbb{R}^n \rightarrow \mathbb{R}_+$ as follows. For $x \in K_i$, let $g(x)= \co_p(f_i)(x)$ and for $x \in K_{i-1}^c$, let $g(x)=\co_p(f)(x)$. Both $\co_p(f_i)$ and $\co_p(f)$ are $p$-concave. Since being $p$-concave is a local property and $g$ is obtained by gluing together these functions, we also get that $g$ is $p$-concave. 

Recall that for $x \in K^c_{i_0}$, we have $ g(x)=\co_p(f)(x) \leq m^{-i_0}$ and for $x \in K_{i_0-1}$, we have $g(x) \geq m^{-i_0+1}$. By \Cref{p-concave_tail} (assuming wlog $m\geq 2$) together with the previous claim, we deduce $$\int_{K^c_{i_0}} \co_p(f) \,dx=\int_{K_{i_0}^c}g(x) \, dx= O_{n,\lambda,p,m}(1) \int_{K_{i_0}}g(x) \,dx = O_{n,\lambda,p,m}(1) \int_{K_{i_0}}\co_p(f_{i_0})(x) \,dx = O_{n,\lambda,p,m}(\delta) \int_{\mathbb{R}^n} f \,dx.$$

Putting everything together, the conclusion of the claim follows.
\end{proof}

Combining the third and the last claims we conclude that 
$$ \int_{\mathbb{R}^n} \left(\co_p(f)(x)-f(x)\right)\,dx \leq O_{n,\lambda,p,m}(\delta) \int_{\mathbb{R}^n}f(x)\,dx.$$
    This finishes the proof of the proposition.
\end{proof}

\subsection{Reduction from \Cref{linear_continuous_level_medium} to \Cref{linear_continuous_level_advanced}}
\begin{prop}\label{linear_continuous_level_advanced}
        Given $n \in \mathbb{N}$, $\lambda \in (0,1/2]$, $p \in (-1/n, \infty)$ and $ m \in (1, \infty)$ there exists $d=d_{n, \lambda, p,m}>0$  such that the following holds. Let $f\colon \mathbb{R}^n\rightarrow \mathbb{R}_+$ be a function with compact support such that
    \begin{itemize}
        \item $\text{supp}(f)$ is compact and $f$ is continuous on $\text{supp}(f)$,
        \item there exists a compact set $Y$ such that $\co_p(f)=\co_p(1_Y \times f)$ and for $y \in Y$ we have $f(y) \in [1/m,m]$,
        \item $\int_{\mathbb{R}^n} f \,dx>0$, and
        \item $\int_{\mathbb{R}^n}\M^*(f,f) \,dx \leq (1+\delta) \int_{\mathbb{R}^n} f \,dx$ for some $0 \leq \delta \leq d$.
    \end{itemize}
    Then 
        \begin{itemize}
        \item $\int_{\mathbb{R}^n}\left( \co_p(f)-f\right)\, dx=O_{n,\lambda, p,m} (\delta)\int_{\mathbb{R}^n} f\,dx $.
    \end{itemize}
\end{prop}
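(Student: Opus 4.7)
The plan is to combine a layer-cake decomposition over level sets with the sharp linear stability of the Brunn-Minkowski inequality (\Cref{BMStab}) applied to equal sets. Let $F_t := \{f > t\}$. From the definition of the sup-convolution, one has the inclusion $\{\M^*(f,f) > t\} \supset \lambda F_t + (1-\lambda) F_t$ for every $t>0$. Combined with the layer-cake formula and Brunn-Minkowski,
\[
\int \M^*(f,f)\, dx \geq \int_0^m |\lambda F_t + (1-\lambda) F_t|\, dt \geq \int_0^m |F_t|\, dt = \int f\, dx,
\]
and the hypothesis $\int \M^*(f,f)\, dx \leq (1+\delta) \int f\, dx$ thus forces
\[
\int_0^m \bigl(|\lambda F_t + (1-\lambda) F_t| - |F_t|\bigr)\, dt \leq \delta \int f\, dx.
\]

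Next, I would apply the sharp linear stability of \Cref{BMStab} (in the stronger form recalled in the introduction, $|\co(A)\setminus A|+|\co(B)\setminus B|=O_{n,\lambda}(\delta)|A|$) with $A = B = F_t$, yielding for each $t$ with small deficit $\epsilon_t := |\lambda F_t + (1-\lambda) F_t|/|F_t| - 1$ the bound $|\co(F_t) \setminus F_t| = O_{n,\lambda}(\epsilon_t)|F_t|$. Integrating in $t$ and absorbing the few ``bad'' levels via the mass bound already obtained gives $\int_0^m |\co(F_t) \setminus F_t|\, dt = O_{n,\lambda}(\delta)\int f\, dx$. Introducing the quasi-concave hull $\tilde f$ of $f$ (the function whose strict super-level sets are $\co(F_t)$), this translates into $f \leq \tilde f \leq \co_p(f)$ with $\int(\tilde f - f)\, dx = O_{n,\lambda}(\delta) \int f\, dx$.

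The remaining and principal task is to show $\int(\co_p(f) - \tilde f)\, dx = O_{n,\lambda,p,m}(\delta)\int f\, dx$, equivalently $\int(|L_t| - |\co(F_t)|)\, dt = O(\delta) \int f\, dx$, where $L_t := \{\co_p(f) > t\}$. This is where the bounded-range hypothesis becomes decisive. Since $\co_p(f) = \co_p(1_Y f)$ with $f|_Y \in [1/m, m]$, a Caratheodory-type computation using $p$-concavity shows that $\co_p(f) \in [1/m, m]$ on $\co(Y)$ and that $\mathrm{supp}(\co_p(f))=\overline{\co(Y)}=:K$; in particular $L_t = K$ for all $t \leq 1/m$. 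The structural constraint $L_{\M(s,u)} \supset \lambda L_s + (1-\lambda) L_u$ inherited from the $p$-concavity of $\co_p(f)$ then links the level sets $L_t$ for different $t \in [1/m, m]$ to one another and anchors them to the ``reference'' convex body $K$.

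The main obstacle, and the most delicate point of the argument, is to make this comparison quantitative with linear dependence on $\delta$. I would approach it by exploiting that $\M^*(f,f)$ dominates not only the diagonal sup-convolution $\M(f(x),f(y))$ with $x,y$ at the same level but also at different levels, so that the hypothesis actually controls a two-parameter family of mixed Brunn-Minkowski deficits $|\lambda F_s + (1-\lambda) F_u| - M_{\lambda,1/n}(|F_s|,|F_u|)$, analogous to what appears in \Cref{almostconvexlevelsets}. Applying \Cref{BMStab} to carefully chosen such pairs $(F_s,F_u)$ with $\M(s,u)\approx t$ and integrating, while using the bounded range to keep all stability constants uniform in $m$, should yield $\int |L_t \setminus \co(F_t)|\, dt = O_{n,\lambda,p,m}(\delta) \int f\, dx$. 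The conclusion then follows from $\int(\co_p(f) - f)\, dx = \int(\co_p(f) - \tilde f)\, dx + \int(\tilde f - f)\, dx = O_{n,\lambda,p,m}(\delta) \int f\, dx$.
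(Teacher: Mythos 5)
Your steps 1--2 essentially reproduce the level-set portion of the paper's own reduction (from this proposition to \Cref{linear_continuous_level_strong}): using $\int(\M^*(f,f)-f)\,dx\ge\int\big(|\lambda A_t+(1-\lambda)A_t|-|A_t|\big)\,dt$ with $A_t=\{f\ge t\}$, together with the linear form of \Cref{BMStab}, to bound $\int|\co(A_t)\setminus A_t|\,dt$. The paper, however, uses this only to trim the mass below level $4^{-1}m^{-1}$ at cost $O_{n,\lambda,m}(\delta)\int f\,dx$, and then switches to a completely different device. The gap is your step 4. The mixed deficit $|\lambda F_s+(1-\lambda)F_u|-M_{\lambda,1/n}(|F_s|,|F_u|)$ vanishes identically whenever the super-level sets are nested, convex, and homothetic, regardless of how the radial profile $t\mapsto|F_t|$ behaves; but $\int(\co_p(f)-\tilde f)\,dx$ is precisely a statement about that profile. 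Concretely, in $n=1$ with $p<0$, let $f$ be (a smoothing of) $m^{-1}$ on $[0,1/2)$ and $m$ on $[1/2,1]$, with $Y=\{0,1/2,1\}$. Every $F_t$ is an interval $[a_t,1]$, so every mixed deficit equals $0$; nevertheless $\int(\co_p(f)-f)\,dx\asymp(\log m)/m>0$ while $\delta\asymp m^{-2}$. The conclusion of the proposition still holds, with constant $\asymp\log m$, but your step 4 produces no bound at all here, and the hope of keeping the constants ``uniform in $m$'' cannot survive the $\log m$ factor. In short, the inequality you try to extract from mixed Brunn--Minkowski deficits is false.

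What the paper does in \Cref{linear_continuous_level_strong} is a different idea and is the real core of the argument. Once the values of $f$ lie in a fixed band $[m^{-1},m]$, the change of variables $g=\mathrm{sign}(p)\,f^p+1$ (respectively $\log f+1$ for $p=0$) is bi-Lipschitz on that band; it turns $p$-concavity into ordinary ($p=1$) concavity and distorts both $\M^*(f,f)-f$ and $\co_p(f)-f$ only by $\Theta_{p,m}(1)$ factors. One then passes to the subgraph $A_g=\{(x,t):0\le t\le g(x)\}\subset\mathbb{R}^{n+1}$; the identities $\lambda A_g+(1-\lambda)A_g=A_{M^*_{\lambda,1}(g,g)}$ and $\co(A_g)=A_{\co_1(g)}$ convert the hypothesis into a Minkowski-excess bound for the set $A_g$ and the goal into a bound on $|\co(A_g)\setminus A_g|$, which the linear stability form of \Cref{BMStab} in $\mathbb{R}^{n+1}$ delivers in one step. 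The extra vertical coordinate in $A_g$ carries exactly the profile information that a slice-by-slice argument in $\mathbb{R}^n$ is structurally blind to; this is the idea your sketch is missing.
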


\begin{proof}[Proof \Cref{linear_continuous_level_advanced} implies \Cref{linear_continuous_level_medium}]
Pick $s\geq 2k$ large enough. As $f$ is continuous on $\overline{\co(\text{supp}(f))}$, it follows that $\text{supp}(f \times 1_S)$ is compact and $f \times 1_S$ is continuous on $\text{supp}(f \times 1_S)$. Moreover, $\int_{\mathbb{R}^n} f \times 1_S \,dx > \frac12\int_{\mathbb{R}^n} f  \,dx >(2k)^{-1} \int_{\mathbb{R}^n} \co_p(f) \,dx $. Furthermore, $f \times 1_S(x) \in [1/ms,m]$ for $x \in \text{supp}(f \times 1_S)$.

As $\co_p(f)(x) \geq m^{-1}$ for $x \in \text{supp}(f)$ and $f(x) \leq 1/ms$ for $x \not \in S$, we get $$\int_{\mathbb{R}^n} f\times 1_{\lambda S+(1-\lambda) S} \,dx \leq \int_{\mathbb{R}^n} f\times 1_S\, dx + s^{-1} \int_{\mathbb{R}^n} \left(\co_p(f) - f\right)  \,dx.$$

As $\co_p(f)=\co_p(1_Y \times f)$ and for $y \in Y$ we have $f(y)=f \times 1_S(y) \in [1/m,m]$, we have that $\co_p(f)= \co_p(f \times 1_S) = \co_p(f \times 1_S \times 1_Y )$.

We deduce that $$\int_{\mathbb{R}^n}\left(\M^*(f \times 1_S,f\times 1_S) - f \times 1_S\right)\,dx \leq \delta   \int_{\mathbb{R}^n} f\times 1_S\, dx + 2s^{-1} \int_{\mathbb{R}^n} \left(\co_p(f ) - f\right)  \,dx.$$

In particular, 
 $$\int_{\mathbb{R}^n}\left(\M^*(f \times 1_S,f\times 1_S) - f \times 1_S\right)\,dx \leq (\delta + 4ks^{-1})   \int_{\mathbb{R}^n} f\times 1_S\, dx.$$

 By taking $\delta>0$ and $s>0$ sufficiently small, we can make $\delta+4ks^{-1}$ arbitrary small. By \Cref{linear_continuous_level_advanced}, we deduce that
 $$\int_{\mathbb{R}^n} \left(\co_p(f \times 1_S)-f\times 1_S \right)\,dx =O_{n,\lambda,p,k,m} (1) \bigg(\delta   \int_{\mathbb{R}^n} f\times 1_S\, dx + 2s^{-1} \int_{\mathbb{R}^n} \left(\co_p(f ) - f\right) \,dx\bigg).$$

 As $\co_p(f \times 1_S)=\co_p(f)$ and $f \times 1_S \leq f$, this implies
 $$\int_{\mathbb{R}^n} \left(\co_p(f)-f\right) \,dx =O_{n,\lambda,p,k,m} (1) \bigg(\delta   \int_{\mathbb{R}^n} f\, dx + 2s^{-1} \int_{\mathbb{R}^n} \left(\co_p(f ) - f \right)\,dx\bigg).$$

 By taking $s$ larger than $O_{n,\lambda,p,k,m} (1)$, we conclude
  $$\int_{\mathbb{R}^n} \left(\co_p(f)-f\right) \,dx =O_{n,\lambda,p,k,m} (\delta) \int_{\mathbb{R}^n} f\, dx .$$

\end{proof}

\subsection{Reduction from \Cref{linear_continuous_level_advanced} to \Cref{linear_continuous_level_strong}}
\begin{prop}\label{linear_continuous_level_strong}
    Given $n \in \mathbb{N}$, $\lambda \in (0,1/2]$, $p \in (-1/n, \infty)$ and $ m \in (1, \infty)$ there exists $d=d_{n, \lambda, p,m}>0$ such that the following holds. Let $f\colon \mathbb{R}^n\rightarrow \mathbb{R}_+$ be a function with compact support such that  
    \begin{itemize}
        \item $\text{supp}(f)$ is compact and $f$ is continuous on $\text{supp}(f)$,
        \item $f(x) \in [m^{-1},m]$ for $x \in \text{supp}(f)$,
        \item $\int_{\mathbb{R}^n} f \,dx>0$, and
        \item $\int_{\mathbb{R}^n}\M^*(f,f) \,dx \leq (1+\delta) \int_{\mathbb{R}^n} f \,dx$ for some $0 \leq \delta \leq d$.
    \end{itemize}
    Then 
        \begin{itemize}
        \item $\int_{\mathbb{R}^n}\left( \co_p(f)-f\right)\, dx=O_{n,\lambda, p,m} (\delta)\int_{\mathbb{R}^n} f\,dx $.
    \end{itemize}
\end{prop}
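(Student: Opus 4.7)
The plan is to prove \Cref{linear_continuous_level_strong} by combining the linear stability of the Brunn--Minkowski inequality (recalled right after \Cref{BMStab}) applied at each level set of $f$, together with a finer Jensen-type comparison that converts the variational description of $\co_p(f)$ into a controlled level-set containment, exploiting the bound $f \in [m^{-1},m]$ throughout.

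First I would introduce level sets $F_t := \{f>t\}$ and use the pointwise inclusion $\{\M^*(f,f)>t\}\supseteq \lambda F_t + (1-\lambda)F_t$ together with $\int \M^*(f,f)\leq (1+\delta)\int f$ to obtain
$$
\int_0^\infty \bigl(|\lambda F_t + (1-\lambda)F_t| - |F_t|\bigr)\,dt \leq \delta\int f.
$$
The hypothesis $f\geq m^{-1}$ on its compact support $A$ gives $F_t = A$ for $t\in[0,m^{-1})$, which forces $|\lambda A+(1-\lambda)A|/|A|-1 \leq m^2\delta$. Once $d$ is small enough, the linear stability of Brunn--Minkowski yields $|\co(A)\setminus A|=O_{n,\lambda,m}(\delta)|A|$. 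A Markov-type argument then handles the remaining levels $t\in[m^{-1},m]$: levels with Brunn--Minkowski deficit exceeding a small threshold $\tau=\tau_{n,\lambda}$ have total measure $O(\delta\int f/\tau)$ and contribute at most $O_{n,\lambda,m}(\delta)\int f$ via the crude bound $|\co(F_t)\setminus F_t|\leq |\co(A)|$; on good levels, linear Brunn--Minkowski stability gives $|\co(F_t)\setminus F_t|=O_{n,\lambda}(\varepsilon_t)|F_t|$, which when integrated produces $O_{n,\lambda}(\delta)\int f$. Altogether
$$
\int_0^m |\co(F_t)\setminus F_t|\,dt = O_{n,\lambda,m}(\delta)\int f.
$$

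Setting $\tilde f(z):=\sup\{t : z\in\co(F_t)\}$, the quasi-concave hull of $f$, this immediately yields $f\leq \tilde f\leq \co_p(f)$ and $\int(\tilde f-f)=O_{n,\lambda,m}(\delta)\int f$ by Fubini. The heart of the proof — and the main obstacle — is to control $\int(\co_p(f)-\tilde f)$, since $\tilde f$ is only quasi-concave while $\co_p(f)$ is $p$-concave. For this I would use the variational formula $\co_p(f)(z) = \sup\{M_{\vec\lambda,p}(f(x_1),\ldots,f(x_k)) : z=\sum_i\lambda_i x_i\}$ and the bound $f(x_i)\in[m^{-1},m]$ to establish a Jensen-type inequality: if $\co_p(f)(z)>t$, then for a constant $c=c_{n,\lambda,p,m}>0$, a definite fraction $\mu=\mu_{n,\lambda,p,m}>0$ of the barycentric weight in any representative of $z$ must sit on $F_{ct}$. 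This traps $\{\co_p(f)>t\}$ inside a $(1-\mu)$-thickening of $\co(F_{ct})$, and combining with the level-set bound of Step~1 applied at the shifted level $ct$ gives $\int(\co_p(f)-\tilde f) = O_{n,\lambda,p,m}(\delta)\int f$. Adding the two contributions completes the proof.

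The hard part will be this last step: translating the $p$-mean description into a clean geometric containment requires treating the regimes $p>0$, $p=0$ and $p<0$ separately, and the constants $c$ and $\mu$ must be forced to remain positive uniformly in a way that depends only on $n,\lambda,p,m$. In particular, $c$ and $\mu$ will blow up as $p\to -1/n^+$ and as $m\to\infty$, consistent with the $O_{n,\lambda,p,m}$ dependence in the statement and with \Cref{rmk:p-1n} which records that the endpoint $p=-1/n$ is genuinely different.
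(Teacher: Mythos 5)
The paper's proof takes a completely different route: it exploits $f\in[m^{-1},m]$ to perform the bi-Lipschitz change of variables $g:=\operatorname{sign}(p)f^{p}+1$ (respectively $g:=\log f+1$ when $p=0$) on $\operatorname{supp}(f)$, which converts $p$-concavity into ordinary ($1$-)concavity. One checks that $M^{*}_{\lambda,1}(g,g)-g=\Theta_{n,p,m}(1)\bigl(\M^*(f,f)-f\bigr)$ and $\co_1(g)-g=\Theta_{n,p,m}(1)\bigl(\co_p(f)-f\bigr)$, so the BBL deficit of $f$ becomes, up to constants, the doubling deficit of the hypograph of $g$ in $\mathbb{R}^{n+1}$, and the linear Brunn--Minkowski stability of \Cref{BMStab} (applied in dimension $n+1$) finishes the proof in one stroke.

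Your Steps~1--2 are fine, but Step~3 has a genuine gap. The containment you propose, namely $\{\co_p(f)>t\}\subset \mu\,\co(F_{ct})+(1-\mu)\,\co(A)$ with $A=\operatorname{supp}(f)$, is correct in spirit but vacuous for the purpose at hand: the Minkowski combination on the right can be of size $\Theta(|\co(A)|)$ rather than $\Theta(|\co(F_t)|)$, so it does not control $|\{\co_p(f)>t\}|-|\co(F_t)|$ at the $O(\delta)$ scale, and combining it with Step~1 gives nothing new. More fundamentally, the level-set convexity bound $\int_0^m|\co(F_t)\setminus F_t|\,dt=O(\delta)\int f$ is a strictly weaker piece of information than the full BBL deficit: for $n=1$, $p=1$, a concave tent $f(x)=2-|x|$ on $[-1,1]$ modified by a small $\varepsilon$-divot $f'=f-\varepsilon\mathbf{1}_{[-1/2,1/2]}$ has BBL deficit $\Theta(\varepsilon)$ and $\int(\co_p(f')-f')=\varepsilon$, yet $\int|\co(F'_t)\setminus F'_t|\,dt=\varepsilon^{2}$. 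So after Step~2 you have only recovered an $O(\delta^2)$ piece of the $O(\delta)$ hull gap, and the ``$(1-\mu)$-thickening'' trap cannot bridge this because it loses the entire scale $|\co(A)|$. Any correct variant of Step~3 would have to re-engage the BBL deficit across different levels simultaneously; the paper's lift to $\mathbb{R}^{n+1}$ is precisely the device that does this cleanly.
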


\begin{proof}[Proof  \Cref{linear_continuous_level_strong} implies  \Cref{linear_continuous_level_advanced}]
Let $A_t \colon =  \{x \colon f(x) \geq t\}$ and $B_t \colon =  \{x \colon \M^*(f,f)(x) \geq t\} $ and $C_t\colon = \{ x \colon \co_p(f)(t) \geq t \}$. The sets $A_t, B_t, C_t$ are nested. For $t \geq 0$, we have 
$$C_t \supset B_t \supset \lambda A_t +(1-\lambda)A_t \supset A_t .$$ Moreover, for $t \geq m$, we have $C_t =\emptyset$ and for $t \leq m^{-1}$, we have $C_t=\co(Y)$ by the second hypothesis.

The third and fourth hypotheses imply $\int_{\mathbb{R}^n}f \,dx \geq 2^{-1} \int_{\mathbb{R}^n}\co_p(f) \,dx $. We have $\int_{\mathbb{R}^n}\co_p(f) \,dx \geq m^{-1}|\co(Y)|$. We also have $\int_{\mathbb{R}^n}f \,dx \leq m|A_{4^{-1}m^{-1}}|+ 4^{-1}m^{-1}|\co(Y)|$. We deduce $|A_{4^{-1}m^{-1}}| \geq 4^{-1}m^{-2}|\co(Y)|$. So for $t \leq 4^{-1}m^{-1}$, we have $|A_t| \geq 4^{-1}m^{-2}|\co(Y)| $. Moreover, by the second hypothesis, $\co(A_t)=\co(Y)$.

By the fourth hypothesis, we get $\int_{\mathbb{R}^n}\left(\M^*(f,f) -f\right) \,dx \leq \delta \int_{\mathbb{R}^n} f \,dx \leq \delta m |\co(Y)|$. Therefore, 
$$\int \left(|\lambda A_t+(1-\lambda)A_t| - |A_t|\right) \,dt \leq \delta m |\co(Y)|.$$
In particular, $\int_{t \leq 4^{-1}m^{-1}} \left(|\lambda A_t+(1-\lambda)A_t| - |A_t|\right) \,dt \leq \delta m |\co(Y)|$. 

As for $t \leq 4^{-1}m^{-1}$, we have $|A_t| \geq 4^{-1}m^{-2}|\co(Y)| $ and $\co(A_t)=\co(Y)$, by \Cref{BMStab}, we deduce that $|\lambda A_t+(1-\lambda)A_t| - |A_t|=\Theta_{n,\lambda,m}(1)|\co(A_t) \setminus A_t|$.

Combining the last two inequalities, we get  $$\int_{t \leq 4^{-1}m^{-1}} |\co(A_t) \setminus A_t| \,dt =  O_{n,\lambda,m}(\delta) |\co(Y)|= O_{n,\lambda,m}(\delta) \int_{\mathbb{R}^n}f \,dx.$$

Let $S= \{ x \colon f(x) \geq 4^{-1}m^{-1}\}$ and let $g \colon \mathbb{R}^n \rightarrow \mathbb{R}_+$ be defined by $g(x)=f(x)\times 1_S$. By the last inequality, we get $$\int_{\mathbb{R}^n}(f-g) \,dx = \int_{t \leq 4^{-1}m^{-1}} |\co(A_t) \setminus A_t| \,dt = O_{n,\lambda,m}(\delta) \int_{\mathbb{R}^n}f \,dx.$$
As $g \leq f$, by the fourth hypothesis, we further deduce that 
$$\int_{\mathbb{R}^n}\M^*(g,g) \,dx \leq (1+\delta) \int_{\mathbb{R}^n} f \,dx \leq (1+O_{n,\lambda,m}(\delta))\int_{\mathbb{R}^n} g \,dx.$$

By the first hypothesis, we deduce that $S=\text{supp}(g)$ is compact and $g$ is continuous on $S$. 

By the second hypothesis and the definition of $S$, we also get that $g(x) \in [ 4^{-1}m^{-1},m]$ for $x \in S=\text{supp}(g)$ and additionally, $\co_p(g)=\co_p(f)$. 

Finally, as $|A_{4^{-1}m^{-1}}|>0$, we infer that $\int_{\mathbb{R}^n}g \,dx>0$.

By \Cref{linear_continuous_level_strong}, we get that $\int_{\mathbb{R}^n} (\co_p(g)-g)\, dx=O_{n,\lambda, p,m} (\delta)\int_{\mathbb{R}^n} g\,dx $. As $\co_p(f)=\co_p(g)$ and additionally $\int_{\mathbb{R}^n} f \,dx \leq (1+O_{n,\lambda,m}(\delta))\int_{\mathbb{R}^n} g \,dx$, we conclude $\int_{\mathbb{R}^n}\left( \co_p(f)-f\right)\, dx=O_{n,\lambda, p,m} (\delta)\int_{\mathbb{R}^n} f\,dx $.
\end{proof}

\subsection{Proof of \Cref{linear_continuous_level_strong}}

\begin{proof}[Proof of \Cref{linear_continuous_level_strong}]
    By the first hypothesis, $f$, $\M^*(f,f)$, and $\co_p(f)$  are integrable. After scaling and replacing $m$ by $m^{1/2}$ we can assume $f(x) \in [2,m]$ for $x \in \text{supp}(f)$. Moreover, $\M^*(f,f)(x) \in [2,m]$ for $x \in \text{supp}(\M^*(f,f))$ and $\co_p(f) \in [2,m]$ for $x \in \text{supp}(\co_p(f))$. 

    Define the function $g \colon \mathbb{R}^n \rightarrow \mathbb{R}_+$ as follows. $$g(x)=\begin{cases}0 &\text{ if } x\not\in \text{supp}(f)\\
\text{sign}(p)f^p(x)+1&\text{ if } p\neq 0 \text{ and }x \in \text{supp}(f)\\
\log(f(x))+1&\text{ if $p=0$} \text{ and }x \in \text{supp}(f)\end{cases}$$

Note that the function $M_{\lambda,1}^*(g,g) \colon \mathbb{R}^n \rightarrow \mathbb{R}_+$ satisfies

$$M^*_{\lambda,1}(g,g)(x)=\begin{cases}0 &\text{ if } x\not\in \text{supp}(\M^*(f,f))\\
\text{sign}(p)(\M^*(f,f)(x))^p +1&\text{ if } p\neq 0 \text{ and }x\in \text{supp}(\M^*(f,f))\\
\log(\M^*(f,f))(x)) +1 &\text{ if $p=0$} \text{ and }x\in \text{supp}(\M^*(f,f))\end{cases}$$

Also note that the function $\co_1(g) \colon \mathbb{R}^n \rightarrow \mathbb{R}_+$ satisfies

$$\co_1(g)(x)=\begin{cases}0 &\text{ if } x\not\in \text{supp}(\co_p(f))\\
\text{sign}(p)\co_p^p(f) +1&\text{ if } p\neq 0 \text{ and }x\in \text{supp}(\co_p(f))\\
\log(\co_p(f)) +1 &\text{ if $p=0$} \text{ and }x\in \text{supp}(\co_p(f))\end{cases}$$

By the above, $\text{supp}(g)=\text{supp}(f)$, $\text{supp}(M_{\lambda,1}^*(g,g))=\text{supp}(M_{\lambda,p}^*(f,f))$, and $\text{supp} (\co_1(g))=\text{supp}( \co_p(f))$. Moreover, for every $x \in \mathbb{R}^n$, we have 
$$ g(x) =  \Theta_{n,,p,m}(1)f(x),\qquad M_{\lambda,1}^*(g,g)(x)= \Theta_{n,p,m}(1)\M^*(f,f)(x), \qquad\co_1(g)(x) = \Theta_{n,p,m}(1) \co_p(f)(x).$$ 
Furthermore, 
$$M^*_{\lambda,1}(g,g)(x)-g(x)= \Theta_{n,p,m}(1)(M^*_{\lambda,p}(f,f)(x)-f(x))\text{ and }\co_1(g)(x)-g(x)= \Theta_{n,p,m}(1)(\co_p(f)(x)-f(x)).$$

As $f$, $\M^*(f,f)$, and $\co_p(f)$ are integrable, so are $g$, $M^*_{\lambda,1}(g,g)$, and $\co_1(g)$. Additionally, as $\int f \, dx>0$, we also get $\int g\, dx>0$.

Putting everything together, we see that
\begin{equation*}
    \begin{split}
        \int \left(M^*_{\lambda,1}(g,g)(x)-g(x)\right) \,dx &= \Theta_{n,m,p}(1) \int \left(M^*_{\lambda,p}(f,f)(x)-f(x)\right) \,dx\\
        &\leq \Theta_{n,m,p}(\delta) \int f(x) \,dx\\
        &\leq \Theta_{n,m,p}(\delta) \int g(x) \,dx.
    \end{split}
\end{equation*}

By \Cref{BMStab}, we deduce that
$$\int \left(\co_1(g)(x)-g(x)\right)\, dx \leq O_{n,\lambda,p,m}(\delta) \int g(x) \,dx,$$
from which we conclude that
$$\int (\co_p(f)(x)-f(x)) \,dx \leq O_{n,\lambda,p,m}(\delta) \int f(x) \,dx.$$
\end{proof}

\subsection{Proof of  \Cref{linear_shaving_process}}
The purpose of this section is to prove  \Cref{linear_shaving_process}. We will make use of the following remarks. 

\begin{rmk}\label{p-concave_1}
 Let $n \in \mathbb{N}$, $p\in (-1/n, \infty)$ and let  $f \colon \mathbb{R}^n \rightarrow \mathbb{R}_+$ be a $p$-concave function. Then $f$ has  convex support and convex level sets and is continuous on the interior of its support.
\end{rmk}

\begin{rmk}\label{p-concave_2}
 Let $n \in \mathbb{N}$, $p\in (-1/n, \infty)$ and let  $f \colon \mathbb{R}^n \rightarrow \mathbb{R}_+$ be a $p$-concave function. Then $f$ is a measurable function. 
\end{rmk}

\begin{rmk}\label{p-concave_3}
 Let $n \in \mathbb{N}$ and let $p\in (-1/n, \infty)$. Let $I$ be a set of indices. For $i \in I$, let $\ell_i \colon \mathbb{R}^n \rightarrow \mathbb{R}_+$ be a $p$-concave function. Then, the function $g\colon \mathbb{R}^n\rightarrow \mathbb{R}_+$, $g(x)=\inf_i\ell_i(x)$ is $p$-concave.
 
\end{rmk}

\begin{proof}[Proof of \Cref{linear_shaving_process}]

 Let $\mathcal{L}$ be the collection of $p$-concave functions $\ell' \colon \mathbb{R}^n \rightarrow \mathbb{R}_+$ such that the function $f'\colon \mathbb{R}^n \rightarrow \mathbb{R}_+$, $f'=\min(f,\ell')$ satisfies $\int_{\mathbb{R}^n} \left[-c(f-f')+ (\M^*(f,f)- f)-(\M^*(f',f')-f')\right]\,dx \geq 0$.

Put on $\mathcal{L}$ the partial order $\ell_1 \leq \ell_2$ if $\ell_1(x) \leq \ell_2(x)$ for all $x \in \mathbb{R}^n$. To show that $\mathcal{L}$ has a minimal element, by Zorn's lemma it is enough to show that any chain $C \subset \mathcal{L}$ has a lower bound in $\mathcal{L}$.

Let $C$ be a chain $\mathcal{L}$. Let $\ell_0\colon \mathbb{R}^n \rightarrow\mathbb{R}_+$ be defined by $\ell_0(x)=\inf_{\ell \in C} \ell(x)$. By \Cref{p-concave_3}, it follow that $\ell_0 $ is also a $p$-concave function. Clearly, $\ell_0$ is a lower bound for $C$. It remains to show that $\ell_0 \in \mathcal{L}$. 

By \Cref{p-concave_2}, $\ell_0$ is measurable and any $\ell \in C$ is measurable. We will first show that there exist a  sequence $\ell_1 \geq \ell_2 \geq \dots$ in $C$ such that $\int_{\mathbb{R}^n}\ell_i(x)\,dx \rightarrow \int_{\mathbb{R}^n}\ell_0(x)\,dx$ as $i \rightarrow \infty$.

For a function $g \colon \mathbb{R}^n \rightarrow \mathbb{R}_+$ and $c>0$, let $g^c=\{x \in \mathbb{R}^n \colon g(x) \geq c\}$. By \Cref{p-concave_1}, for $c>0$, the family $\{\ell^c \colon \ell \in C\}$ is a chain of convex sets under the subset order. Moreover, $\ell_0^c$ is a convex set. Furthermore, for every $\ell \in C$, $\ell_0^c \subset \ell^c$. Finally, for any $p \not\in \ell_0^c$, there exists $\ell \in C$ such that $p \not\in \ell^c$. 

It follows that for a fixed $c>0$, there exist $\ell_1 \geq \ell_2 \geq \dots $ in $ C$ such that $\ell_1^c \supset \ell_2^c \supset \dots$ and $|\ell_i^c| \rightarrow |\ell_0^c|$ as $i \rightarrow \infty$. It further follows that there exist $\ell_1 \geq \ell_2 \geq \dots $ in $ C$ such that for every $c \in \mathbb{Q}$, $\ell_1^c \supset \ell_2^c \supset \dots$ and $|\ell_i^c| \rightarrow |\ell_0^c|$ as $i \rightarrow \infty$.

Note $\int_{\mathbb{R}^n}\ell_i(x)\,dx= \int_{c\geq 0}|\ell_i^c| \,dc$. Therefore to show $\int_{\mathbb{R}^n}\ell_i(x)dx \rightarrow \int_{\mathbb{R}^n}\ell_0(x)dx$ as $i \rightarrow \infty$, it is enough to show $\int_{c\geq 0}|\ell_i^c|\,dc \rightarrow \int_{c\geq 0}|\ell_0^c|\,dc$ as $i \rightarrow \infty$. 

For each i, the function $|\ell_i^c|$ is left continuous and decreasing in $c$. Moreover, the sequence of functions  $|\ell_i^c|$ is pointwise decreasing and it is lower bounded by  $|\ell_0^c|$. Furthermore, for any $c \in \mathbb{Q}$, we have $|\ell_i^c| \rightarrow |\ell_0^c|$ as $i \rightarrow \infty$. By a triangle inequality, it follows that for any $c \in \mathbb{R}$, we have $|\ell_i^c| \rightarrow |\ell_0^c|$ as $i \rightarrow \infty$. Finally, by pointwise convergence, we get  $\int_{c\geq 0}|\ell_i^c|\, dc \rightarrow \int_{c\geq 0}|\ell_0^c|\, dc$ as $i \rightarrow \infty$.

We infer that there exist a  sequence $\ell_1 \geq \ell_2 \geq \dots$ in $C$ such that $\int_{\mathbb{R}^n}\ell_i(x)\,dx \rightarrow \int_{\mathbb{R}^n}\ell_0(x)\,dx$ as $i \rightarrow \infty$. Let $f_i\colon \mathbb{R}^n \rightarrow \mathbb{R}_+$ be defined by $f_i=\min(\ell_i,f)$. Then it follows that $\int_{\mathbb{R}^n}f_i(x)\,dx \rightarrow \int_{\mathbb{R}^n}f_0(x)\,dx$ as $i \rightarrow \infty$.  We also have $\M^*(f_i,f_i) \geq \M^*(f_0,f_0)$. Recalling the definition of $\mathcal{L}$, from the fact that $\ell_i \in \mathcal{L}$ for all $i$, by taking the limit, we conclude the first bullet point $\ell_0 \in \mathcal{L}$.

 By the previous bullet point, $\M^*(f',f')\geq f'$, and the hypothesis, we conclude that $$\int_{\mathbb{R}^n}(f-f')\, dx \leq c^{-1} \int_{\mathbb{R}^n}\left[(\M^*(f,f)- f)-(\M^*(f',f')-f')\right] \,dx\leq c^{-1} \int_{\mathbb{R}^n}(\M^*(f,f)- f)\,dx\leq c^{-1} \delta\int_{\mathbb{R}^n}f\,dx. $$

By the previous bullet point, we also get that $\int_{\mathbb{R}^n}(\M^*(f',f')-f')  \,dx \leq  \int_{\mathbb{R}^n}(\M^*(f,f)- f)\,dx $. By hypothesis, we deduce that $\int_{\mathbb{R}^n}(\M^*(f',f')-f') \, dx \leq  \delta\int_{\mathbb{R}^n}f\,dx$. By the previous assertion, provided $d \leq c/10$, we conclude that $\int_{\mathbb{R}^n}(\M^*(f',f')-f')  \,dx \leq  3\delta\int_{\mathbb{R}^n}f'\,dx$. 

For the last bullet point, assume $f''\neq f'$. Let $\ell=\min(\ell',\ell'')$. Then $\ell\leq \ell'$ and $\ell\neq \ell'$. Moreover, $f''=\min(f,\ell)$. By the minimality of $\ell'$, we deduce that $$\int_{\mathbb{R}^n}\left[ -c(f-f'')+ (\M^*(f,f)- f)-(\M^*(f'',f'')-f'')\right]\, dx < 0.$$ As 
$$\int_{\mathbb{R}^n} \left[-c(f-f')+ (\M^*(f,f)- f)-(\M^*(f',f')-f')\right]\, dx \geq 0,$$ we reach our desired conclusion that $$\int_{\mathbb{R}^n} \left[-c(f'-f'')+ (\M^*(f',f')- f')-(\M^*(f'',f'')-f'')\right]\, dx < 0.$$
\end{proof}

\subsection{Proof of \Cref{linear_already_shaved}}

The purpose of this subsection is to prove the \Cref{linear_already_shaved}. Before we begin the proof, we need the following lemma.

\begin{lem}\label{carving_convex}
    Let $n \in \mathbb{N}$, $p \in (-1/n,\infty)$, $y \in \mathbb{R}^n$ and $d \in \mathbb{R}$. Let $g \colon \mathbb{R}^n \rightarrow \mathbb{R}_+$ be a continuous $p$-concave function with bounded support. Let $F \subset \mathbb{R}^n$ be a compact convex set such that $F$ is a $p$-face of $g$ with tangent $p$-plane $h_{y,d}\colon \mathbb{R}^n\rightarrow \mathbb{R}_+\cup\{\infty\}$. Let $[o,v]$ be an edge of $F$ and assume $h_{y,d}(o)>h_{y,d}(v)$. Then $\text{sign}(p)<y,v> < 0$ and there exists a sequence of vectors $z_i \rightarrow o$ as $i \rightarrow \infty$  with $<z_i,v>=0$ such that the  $p$-planes $h_{y + z_i,d}\colon \mathbb{R}^n\rightarrow \mathbb{R}_+$ have the following property.  For $x \in [o,v]$, we have $h_{y+  z_i,d}(x)=h_{y,d}(x)=g(x)$. Moreover, the set $S_i=\left\{x \in \overline{\text{supp}(g)} \colon  g(x) \geq h_{y +  z_i,d}(x)\right\}$ satisfies $ S_i \subset [o,v]+B(o, 1/i)$.
\end{lem}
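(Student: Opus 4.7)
The proof splits into two largely independent parts: a sign identity for $\langle y, v\rangle$, and the construction of a rotated family of $p$-planes that ``shave off'' $F$ along the edge $[o,v]$.

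For the sign relation, I compare the values of $h_{y,d}$ at the endpoints of the edge. Since $o$ is the origin we have $h_{y,d}(o)=d^{1/p}$ (or $e^d$ if $p=0$), while $h_{y,d}(v)=(\langle v,y\rangle+d)^{1/p}$ (or $e^{\langle v,y\rangle+d}$). Because $t\mapsto t^{1/p}$ is strictly increasing for $p>0$ and strictly decreasing for $p<0$, the hypothesis $h_{y,d}(o)>h_{y,d}(v)$ immediately translates into $\operatorname{sign}(p)\langle y,v\rangle<0$ (with the analogous statement $\langle y,v\rangle<0$ handling $p=0$).

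For the $z_i$, since $[o,v]$ is a $1$-dimensional face of the compact convex set $F$, I will invoke the existence of a supporting linear functional represented by a vector $w\in\mathbb{R}^n$ with $\langle x,w\rangle\geq 0$ for all $x\in F$ and $\{x\in F:\langle x,w\rangle=0\}=[o,v]$. Applying this to both endpoints gives $\langle o,w\rangle=\langle v,w\rangle=0$, so $w\perp v$. I then set $z_i=\varepsilon_i\operatorname{sign}(p)\,w$ for a sequence $\varepsilon_i\to 0^+$ to be chosen. The conditions $\langle z_i,v\rangle=0$ and $h_{y+z_i,d}(x)=h_{y,d}(x)=g(x)$ for $x\in[o,v]$ are then immediate since $\langle z_i,tv\rangle=0$ for all $t\in\mathbb{R}$, so the rotated $p$-plane agrees with the original along the whole line through $[o,v]$.

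To verify $S_i\subset[o,v]+B(o,1/i)$, I will show that every $x$ in the compact set $K_i:=\overline{\operatorname{supp}(g)}\setminus([o,v]+B(o,1/i))$ lies outside $S_i$ once $\varepsilon_i$ is small enough. Split $K_i$ into $K_i\cap F$ and $K_i\setminus F$. On $K_i\cap F$, compactness and the construction of $w$ yield $\langle x,w\rangle\geq c_i>0$, whence the sign convention in $z_i$ forces $h_{y+z_i,d}(x)>h_{y,d}(x)=g(x)$ strictly. On $K_i\setminus F$, the quantity $h_{y,d}-g$ is continuous and strictly positive, hence bounded below by some $c_i'>0$ on this compact set; for $\varepsilon_i$ small, $|h_{y+z_i,d}-h_{y,d}|<c_i'$ uniformly, so $h_{y+z_i,d}(x)>g(x)$ throughout. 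The case $p<0$ (where $h_{y,d}$ can take the value $\infty$ on $\{\langle\cdot,y\rangle+d\leq 0\}$) is handled separately: such points trivially satisfy $h_{y+z_i,d}(x)=\infty>g(x)$ for sufficiently small $\varepsilon_i$, hence are automatically outside $S_i$.

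\textbf{Main obstacle.} The subtlest point is the existence of the supporting functional $w$ satisfying the \emph{strict} identity $\{\langle\cdot,w\rangle=0\}\cap F=[o,v]$, i.e.\ the requirement that $[o,v]$ be an exposed face of $F$. For general compact convex sets an edge need not be exposed, but for $p$-faces of continuous $p$-concave functions arising in our setting the polyhedral structure inherited from the tangent $p$-plane should guarantee exposedness; if it does not, one can instead take a sequence of supporting functionals $w_i$ whose zero sets on $F$ converge to $[o,v]$ and run the same quantitative argument, shrinking the neighborhood $B(o,1/i)$ accordingly.
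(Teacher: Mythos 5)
Your core construction is the same as the paper's: take $z_i \perp v$, tilted so that $h_{y+z_i,d}$ strictly exceeds $h_{y,d}$ on the part of $F$ away from $[o,v]$ while agreeing with it on $\mathrm{span}(v)$, then use local uniform convergence $h_{y+z,d}\to h_{y,d}$ as $z\to 0$ to control $\overline{\mathrm{supp}(g)}$ away from $F$. Your sign computation for $\mathrm{sign}(p)\langle y,v\rangle$ is correct and actually more explicit than what the paper writes.

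The genuine gap is exactly the one you flag, and neither of your proposed workarounds closes it. You need a functional $w\perp v$ with $\langle\cdot,w\rangle\geq 0$ on $F$ whose zero set in $F$ is precisely $[o,v]$, i.e.\ that $[o,v]$ be an \emph{exposed} face of $F$. This is not automatic, and the suggestion that a ``polyhedral structure inherited from the tangent $p$-plane'' forces exposedness is wrong: $F$ is merely the coincidence set $\{g=h_{y,d}\}$, which can be an arbitrary compact convex set. Your fallback (a sequence of nearly-exposing $w_i$) does exist, but producing it is precisely the missing step. The paper supplies it by separation: if for some $j$ no hyperplane $H$ through $\mathrm{span}(v)$ has $H^-\cap F\subset[o,v]+B(o,1/2j)$, then $o\in\co\bigl(F\setminus([o,v]+B(o,1/2j))\bigr)$; by convexity this yields a point of $F\cap\mathrm{span}(v)\subset[o,v]$ written as a nontrivial convex combination of points of $F$ at distance $\geq 1/2j$ from $[o,v]$, contradicting that $[o,v]$ is an edge. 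You should insert this (or an equivalent) argument in place of the exposedness hypothesis. A smaller omission: you use that $h_{y,d}-g$ has a positive lower bound on the compact set $K_i\setminus F$, but \Cref{tangent_plane} gives $h_{y,d}>g$ only on the open set $\mathrm{supp}(g)\setminus F$; extending strict inequality to $\overline{\mathrm{supp}(g)}\setminus F$ needs the short argument along segments through $o$ (comparing the $p$-concave $g$ with the $p$-convex $h_{y,d}$) that the paper spells out.
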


\begin{proof}[Proof of \Cref{carving_convex}]
   Given $z,x \in \mathbb{R}^n$, if $<x,z>> 0$, then $\text{sign}(p) h_{y+ z,d}(x) \geq \text{sign}(p) h_{y,d}(x)$ and if $<x,z>< 0$, then $\text{sign}(p) h_{y+ z,d}(x) \leq \text{sign}(p) h_{y,d}(x)$ and if $<x,z>=0$, then $ h_{y+ z,d}(x) =  h_{y,d}(x)$. Moreover, as $z \rightarrow 0$, $ h_{y+ z,d} \rightarrow  h_{y,d}$ locally uniformly. 

   Recall that $\text{supp}(g)$ is a bounded open convex set and $F \subset \overline{\text{supp}(g)}$. We claim that for $x \in \overline{\text{supp}(g)} \setminus F$, we have $h_{y,d}(x) >g(x)$. Indeed, assume for a contradiction that $h_{y,d}(x) \leq g(x)$. We have $o,x \in \overline{\text{supp}(g)}$ so $[o,x]\in \overline{\text{supp}(g)}$. Moreover, $o \in \text{supp}(g)$ so $[o,x)  \in \text{supp}(g)$. We also have $o \in F$ and $x \not\in F$ so there exists $\lambda\in [0,1)$ such that for $t\in [0,\lambda]$, we have $tx \in F$ and for $t\in (\lambda,1)$, we have $tx\not\in F$. By hypothesis, for $t\in (\lambda,1)$, we have  $tx \in \text{supp}(g) \setminus F$, so $h_{y,d}(tx)>g(x)$. Additionally, $h_{y,d}(\lambda x)=g(\lambda x)$. Given that for $t \in [\lambda,1]$ we have that $g(tx)$ is $p$-concave and continuous and $h_{v,d}(tx)$ is $p$-convex and continuous, we conclude that $h_{v,d}(x)>g(x)$.

   Fix $i \in \mathbb{N}$ sufficiently small and consider the compact set $(F+B(o,1/i))^c$. There exists $\varepsilon_i>0$ such that for $x \in \overline{\text{supp}(g)} \setminus (F+B(o,1/i))$ we have $g(x) \leq h_{y,d}(x)-\varepsilon_i$. Therefore, there exists $\delta_i>0$ sufficiently small such that for $|z|<\delta_i$ and $x \in \overline{\text{supp}(g)} \setminus (F+B(o,1/i)) $, we have $h_{y+z,d}(x)>g(x)$. 

   To conclude, it is enough to show that for any $j$ we can find $i$ and  $z_i$ with $|z_i|<\delta_i$ such that the half spaces $H_i^+= \{x \in \mathbb{R}^n \colon \text{sign}(p)<x,z_i> > 0\}$ satisfy $(H_i^+)^c \cap (F+B(o,1/i)) \subset [0,v]+B(o,1/j)$.

   Indeed, fix $x \in \overline{\text{supp}(g)}$ such that $g(x)\geq h_{y+z_i,d}(x)$. Then $x \in (F+B(o,1/i)) \cap \overline{\text{supp}(g)}$. Clearly $x \in \overline{\text{supp}(h_{y,d})}$. If additionally $x \in H_i^+$, then $g(x)\leq h_{y,d}(x)< h_{y+z_i,d}(x)$, a contradiction. Hence, $x \in (F+B(o,1/i)) \cap (H_i^+)^c$ and we are done provided $(H_i^+)^c \cap (F+B(o,1/i)) \subset [0,v]+B(o,1/j)$.

   For a fixed $j$ and a fixed closed half-space $H^-$, if $i$ is sufficiently large, then $H^- \cap (F+B(o,1/i)) \subset (H^- \cap F) +B(o,1/2j)$. 

   Therefore, it is enough to show that for any fixed $j$, there exists $z \neq o$, such that the closed half-space $H^-=\{x \in \mathbb{R}^n \colon \text{sign}(p)<x,z> \leq 0\}$ satisfies $H^- \cap F \subset [0,v]+B(o,1/2j)$. 

   In other words, it is enough to show that for any fixed $j$, there exists a half-space $H^-$ with the hyperplane $H$ containing $[o,v]$ such that $H^- \cap F \subset [0,v]+B(o,1/2j)$. 

   Assuming for a contradiction this is not the case, we deduce that there exists  $\co(F \setminus ([0,v]+B(o,1/2j))) \cap \text{span}(v) \neq \emptyset $. As $[o,v]$ is an edge of $F$ we obtain the desired contradiction.

   This concludes the proof of the lemma.
\end{proof}

\begin{proof}[Proof of \Cref{linear_already_shaved}]
    Fix $m=m_{n,\lambda,p}>0$ large and $c=c_{n,\lambda,p}>0$ small.  

    As $f$ is continuous with bounded support, so is $\co_p(f)$. Let $F$ be a $p$-face of $\co_p(f)$ and let $[o,v]$ be an edge of $F$ (after translation) such that $\co_p(o)>\co_p(v)$. Let $h_{y,d} \colon \mathbb{R}^n \rightarrow \mathbb{R}_+$ be the tangent $p$-plane corresponding to $F$. 

    Assume for a contradiction that there exist $0 \leq a <b \leq 1$ such that $f(av) >mf(bv)$, $f(av)= \co_p(f)(av)$ and $f(bv)= \co_p(f)(bv)$ and for every $t \in (a,b)$ we have $f(tv)<  \co_p(f)(tv)$. 

    By \Cref{carving_convex},  $\text{sign}(p)<y,v> < 0$ and there exists a sequence of vectors $z_i \rightarrow o$ as $i \rightarrow \infty$  with $<z_i,v>=0$ such that the  $p$-planes $h_{y + z_i,d}\colon \mathbb{R}^n\rightarrow \mathbb{R}_+$ have the following property.  For $x \in [o,v]$, we have $h_{y+  z_i,d}(x)=h_{y,d}(x)=\co_p(x)$. Moreover, the set $S_i=\{x \in \overline{\text{supp}(co_p(f))}\colon \co_p(f)(x) \geq h_{y +  z_i,d}(x)\}$ satisfies $ S_i \subset [o,v]+B(o, 1/i)$.

    Recall that $f$ and $\co_p(f)$ are continuous and $f(tv)<\co_p(f)(tv)$ for $t \in (a,b)$. Hence, there exists a sequence of vectors $z_i \rightarrow o$ as $i \rightarrow \infty$  with $<z_i,v>=0$ such that the  $p$-planes $h_{y + z_i,d}\colon \mathbb{R}^n\rightarrow \mathbb{R}_+$ have the following property.  For $x \in [o,v]$, we have $h_{y+  z_i,d}(x)=h_{y,d}(x)=\co_p(x)$. Moreover, the set $S_i=\{x \in \overline{\text{supp}(f)} \colon f(x) \geq h_{y +  z_i,d}(x)\}$ satisfies $ S_i \subset ([o,av]+B(o, 1/i)) \cup ([bv,v]+B(o, 1/i))$. Furthermore, $f(av)=\co_p(f)(av)=h_{y,d}(av)=h_{y+  z_i,d}(av)$.

    There exists $c\in(a,b)$ sufficiently close to $b$ such that $\co_p(f)(av)>2^{-1}m\co_p(f)(cv)$. 

    Partition the space in three regions $\mathbb{R}^n=H\sqcup H^+ \sqcup H^-$. Here $H=\{x \colon <x,y>= c<v,y>\}$. $H^+=\{x \colon \text{sign}(p)<x,y>> \text{sign}(p)c<v,y> \}$ and $H^-=\{x \colon \text{sign}(p)<x,y>< \text{sign}(p)c<v,y> \}$. Recalling $\text{sign}(p)<v,y><0$, we note $cv \in H$, $[o,av] \in H^+$ and $[bv,v] \in H^-$.  Moreover, for $i$ sufficiently large, $[o,av]+B(o,1/i) \subset H^+$ and $[bv,v]+B(o,1/i) \subset H^-$.

    For $s \in \mathbb{R}_+$, construct the $p$-plane $h_{s}=h_{y+z_i+ sy, d - sc<v,y>}$. Note that for $x \in H$, we have $h_s(x)=h_{y+z_i,d}(x)$; for $x \in H^+$, we have $h_s(x)>h_{y+z_i,d}(x)$ and for $x \in H^-$, we have $h_s(x)\leq h_{y+z_i,d}(x)$ (actually $h_s(x)< h_{y+z_i,d}(x)$ provided $h_{y+z_i,d}>0$). Moreover, for $x \in H^+$, we have $h_s(x)\rightarrow \infty$  increasing as $s \rightarrow \infty$.

    Let $s_i \in \mathbb{R}_+$ and $p_i \in \overline{[o,av]+B(o, 1/i)}$ be such that $h_{s_i}(p_i)= f(p_i)$ and for every $p \in \overline{[o,av]+B(o, 1/i)}$ , $h_{s_i}(p)\geq f(p)$. 

    \begin{rmk}
         $f_i'=\min(f,h_{s_i})$ is continuous and $f' \neq f$
    \end{rmk}
    \begin{proof}[Proof of Remark]
        This follows from the fact that $$\co_p\left(\frac{b+c}{2}v\right)=h_{y+z_i,d}\left(\frac{b+c}{2}v\right)> h_{s_i}\left(\frac{b+c}{2}v\right)$$
        as $\frac{b+c}{2}v \in H^-$ and $h_{y+z_i,d}(\frac{b+c}{2}v)= \co_p(f)(\frac{b+c}{2})>0$.
    \end{proof}

    \begin{clm}
        For $i$ large enough, the $p$-concave continuous functions $h_{s_i} \colon \mathbb{R}^n \rightarrow \mathbb{R} \cup \{\infty\}$ have the following properties
        \begin{itemize}
            \item $h_{s_i}(p_i)=f(p_i) >2^{-1}f(av)>0$, and
            \item  if $f(q)>h_{s_i}(q)$, then $f(p_i)\geq 8^{-1}m \co_p(f)(q) \geq 8^{-1}mf(q)$.
        \end{itemize}
    \end{clm}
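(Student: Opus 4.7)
The key idea is to view $s_i$ as the critical sliding value at which the one-parameter family of $p$-planes $h_s$ first touches $f$ from above on the shrinking neighborhood $\overline{[o,av]+B(o,1/i)}$. I will first show that $s_i\to 0$ and, along a subsequence, the touching points $p_i$ converge to a point $p_\infty\in[o,av]$ where $f(p_\infty)=\co_p(f)(p_\infty)$; this yields the first bullet. Then I will use the thin-set structure from \Cref{carving_convex} to locate any potential violator $q$ in a neighborhood of $[bv,v]$, where $\co_p(f)$ is controlled by $f(bv)<f(av)/m$; this yields the second bullet.

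For $s_i\to 0$: by construction $h_s$ strictly increases in $s$ on the open half-space $H^+$, with a gap that is uniformly bounded below by some $c_\epsilon>0$ (independent of $i$) on sets at positive distance from $H$. Since $a<c$, the segment $[o,av]$ lies at positive distance from $H$, and so does $\overline{[o,av]+B(o,1/i)}$ for large $i$. On the other hand $h_{y+z_i,d}\to h_{y,d}$ locally uniformly as $z_i\to 0$, and $h_{y,d}=\co_p(f)\geq f$ on the face $F\supset[o,av]$; hence $h_{y+z_i,d}\geq f-o_i(1)$ on the shrinking neighborhood. Combining the two estimates gives $h_\epsilon\geq f$ on $\overline{[o,av]+B(o,1/i)}$ for $i$ large, whence $s_i\leq\epsilon$. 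Passing to a subsequence with $p_i\to p_\infty\in[o,av]$ and using $h_{s_i}\to h_{y,d}$ locally uniformly, the relation $h_{s_i}(p_i)=f(p_i)$ passes to the limit: $f(p_\infty)=h_{y,d}(p_\infty)=\co_p(f)(p_\infty)$. Since $\co_p(f)=h_{y,d}$ is monotone decreasing along $[o,v]$ (as $\co_p(f)(o)>\co_p(f)(v)$), we get $\co_p(f)(p_\infty)\geq\co_p(f)(av)=f(av)$. Thus $f(p_i)\to f(p_\infty)\geq f(av)>0$ (note that $f(av)>mf(bv)\geq 0$ forces $f(av)>0$), and for $i$ large, $f(p_i)>f(av)/2$.

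For the second bullet: if $f(q)>h_{s_i}(q)$, then $f(q)>0$ and, by the defining property of $s_i$, $q\notin\overline{[o,av]+B(o,1/i)}$. Moreover, since $h_{s_i}\geq h_{y+z_i,d}$ on $H^+$, the strict inequality forces $f(q)>h_{y+z_i,d}(q)$, i.e., $q\in S_i$. The inclusion $S_i\subset([o,av]+B(o,1/i))\cup([bv,v]+B(o,1/i))$ from \Cref{carving_convex}, combined with $[bv,v]+B(o,1/i)\subset H^-$ for $i$ large (since $b>c$), leaves only the option $q\in[bv,v]+B(o,1/i)$. By continuity of $\co_p(f)$ and its monotonicity along $[o,v]$, $\co_p(f)(q)\leq\co_p(f)(bv)+o_i(1)=f(bv)+o_i(1)<2f(av)/m$ for $i$ large. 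Combining with the first bullet yields $f(p_i)>f(av)/2>(m/4)\co_p(f)(q)\geq(m/8)\co_p(f)(q)\geq(m/8)f(q)$, as desired.

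The main obstacle is the uniform control in the first step: one needs the gap $h_\epsilon-h_{y+z_i,d}$ to dominate the excess $\max(f-h_{y+z_i,d},0)$ on $S_i\cap\overline{[o,av]+B(o,1/i)}$ in the limit, and the estimates must be carried out uniformly across the three regimes $p>0$, $p=0$, and $p<0$. The geometric picture is clear, but translating it into quantitative comparisons that commute with the three simultaneous limits $z_i\to 0$, $s_i\to 0$, and the shrinking of $B(o,1/i)$ is the technical heart of the argument.
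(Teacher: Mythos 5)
Your proof of the first bullet is essentially the paper's one-line argument, spelled out via a compactness/subsequence argument for why $s_i\to0$ and $f(p_i)\to f(p_\infty)\geq f(av)$; this is fine and correct. For the second bullet you take a genuinely different route from the paper, and it has a real gap.

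The paper first establishes $q\in H^-$ (by eliminating the other two cases), and then bounds $\co_p(f)(q)$ via the monotonicity of $h_{s_i}$ across $H$ plus the convergence $h_{s_i}\to h_{y,d}\geq \co_p(f)$. You instead try to pin down $q$ inside the tube $[bv,v]+B(o,1/i)$ by showing $q\in S_i$ and invoking the inclusion $S_i\subset([o,av]+B(o,1/i))\cup([bv,v]+B(o,1/i))$ from the earlier analysis, and then use continuity of $\co_p(f)$ near $[bv,v]$. The gap is in the step ``since $h_{s_i}\geq h_{y+z_i,d}$ on $H^+$, the strict inequality forces $f(q)>h_{y+z_i,d}(q)$, i.e., $q\in S_i$''. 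This chain of inequalities requires $q\in H\cup H^+$, which you have not established. If $q\in H^-$, then $h_{s_i}(q)\leq h_{y+z_i,d}(q)$, so $f(q)>h_{s_i}(q)$ gives no information about whether $q\in S_i$; indeed the paper's own case analysis explicitly allows $q$ anywhere in $H^-$, not just near $[bv,v]$. Consequently the assertion ``leaves only the option $q\in[bv,v]+B(o,1/i)$'' does not follow, and the continuity argument that rests on it is not justified for all admissible $q$.

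The gap is fixable without changing the flavor of your argument: split into $q\in H\cup H^+$ and $q\in H^-$. In the first case your argument runs (and in fact yields a contradiction, since $[bv,v]+B(o,1/i)\subset H^-$ for large $i$, so this case is vacuous). In the second case $q\in H^-$, use the tangent-$p$-plane inequality $\co_p(f)(q)\leq h_{y,d}(q)$ together with the monotonicity of $h_{y,d}$ in the $y$-direction, which gives $h_{y,d}(q)\leq h_{y,d}(cv)=\co_p(f)(cv)<2f(av)/m$ by the choice of $c$; this is exactly the bound you need, and it is direct. Once patched in this way, your route is correct and actually delivers a marginally cleaner constant ($m/4$ rather than $m/8$) before weakening to the stated $m/8$. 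What your approach buys is that it avoids the paper's two-step estimate (``$h_{s_i}(p_i)\geq 4^{-1}m\,h_{s_i}(q)$'' combined with ``$h_{s_i}(p_i)\geq 4^{-1}m(\co_p(f)(q)-h_{s_i}(q))$''); what the paper's approach buys is that it never needs to place $q$ precisely in space, only on one side of $H$.
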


    \begin{proof}[Proof of Claim]
        The first part follows from the fact that $f$ is continuous, $f(av)>0$ and $p_i \in \overline{[o,av]+B(o, 1/i)}$. 

        The second part follows in several steps. First, note that $q \in H^-$. Indeed, by construction, if $q \in  \overline{[o,av]+B(o, 1/i)}$, then $f(q) \leq h_{s_i}(q)$, which is a contradiction. If $q \in \overline{[bv,v]+B(o, 1/i)}$, then $q \in H^-$ as noted above. If $q \in (H \cup H^+) \cap ([0,av]+B(o,1/i))^c \cap ([bv,v]+B(o,1/i))^c$, we have $H_{s_i}(q)\geq H_{y+z_i,d}(q)\geq f(q)$, which is a contradiction.  
        
        Second  note that $h_{s_i}(cv)\geq h_{s_i}(q)$ by the definition of $h_{s_i}$ and the fact that $\text{sign}(p)<v,y><0$. 
        
        Third, note that  $h_{s_i}(p_i)\geq 2^{-1}f(av)=2^{-1}\co_p(f)(av)\geq 4^{-1}m \co_p(f)(cv)=4^{-1}h_{y+z_i,d}(cv)= 4^{-1}h_{s_i}(cv)$. Here, the first relation holds by the first part, the second and third relations hold by hypothesis. The last two relations hold by construction.

        Combining the second and third observations, we get $h_{s_i}(p_i) \geq 4^{-1}m h_{s_i}(q)$.

        To conclude the second part, it would be enough to argue that $h_{s_i}(p_i) \geq 4^{-1}m (\co_p(f)(q)-h_{s_i}(q))$. Actually, by the first part, it is enough to show $f(av) \geq 2^{-1}m (\co_p(f)(q)-h_{s_i}(q))$

        The last part follows from the fact that as $i \rightarrow \infty$, we have $z_i \rightarrow 0$ and $[o,av]+B(o,1/i) \rightarrow [o,av]$. These force $s_i \rightarrow 0$. Thus $h_{s_i} \rightarrow h_{y,d}$ and $\co_p(f) \leq h_{y,d}$. This concludes the claim.
    \end{proof}

    \begin{clm}
        If $p \in (-1/n,1]$ and $i$ is sufficiently large, then $$\int_{\mathbb{R}^n} \left(\M^*(f,f)-\M^*(f_i',f_i')\right)\,dx\geq (1+c)\int_{\mathbb{R}^n} (f-f_i')\,dx.$$
    \end{clm}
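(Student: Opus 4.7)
Define $X := \{q \in \mathbb{R}^n : f(q) > h_{s_i}(q)\}$, so that $f_i' = h_{s_i}$ on $X$ and $f_i' = f$ off $X$; in particular $\int(f - f_i')\,dx = \int_X (f - h_{s_i})\,dq$. The plan is to produce the claimed multiplicative gap by localizing both decreases around the touch point $p_i$. I would first record the fundamental property of the $p$-plane $h_{s_i}$: since $h_{s_i}^p$ is affine where finite, one has the identity $M_{\lambda,p}(h_{s_i}(x),h_{s_i}(y)) = h_{s_i}(\lambda x + (1-\lambda) y)$, which combined with $f_i' \leq h_{s_i}$ yields the universal pointwise bound $\M^*(f_i',f_i')(z) \leq h_{s_i}(z)$.

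For the main step I would pair each $q \in X$ with the touch point $p_i$ and evaluate at $z = \lambda p_i + (1-\lambda) q$. Using the admissible pair $(p_i,q)$ in the sup defining $\M^*(f,f)$, together with the above bound and the identity $h_{s_i}(p_i) = f(p_i)$, gives
\begin{equation*}
\M^*(f,f)(z) - \M^*(f_i',f_i')(z) \;\geq\; M_{\lambda,p}(f(p_i),f(q)) - M_{\lambda,p}(f(p_i),h_{s_i}(q)).
\end{equation*}
Set $\phi(t) := M_{\lambda,p}(f(p_i),t)$, which is concave in $t$ for $p \leq 1$, with explicit derivative $\phi'(t) = (1-\lambda)(\phi(t)/t)^{1-p}$. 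The second claim gives the height separation $f(p_i)/f(q) \geq m/8$ for every $q \in X$, so $\phi(f(q))/f(q)$ is bounded below by an explicit quantity that diverges with $m$ when $p \in [0,1)$ and tends to $(1-\lambda)^{1/p}$ as $m \to \infty$ when $p \in (-1/n,0)$. Concavity of $\phi$ then produces $\phi(f(q)) - \phi(h_{s_i}(q)) \geq K\,(f(q) - h_{s_i}(q))$ for a constant $K = K(n,\lambda,p,m)$ satisfying the just-stated lower bound.

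A change of variables $q \mapsto \lambda p_i + (1-\lambda)q$ with Jacobian $(1-\lambda)^n$ integrates the pointwise inequality over $X$ to
\begin{equation*}
\int_{\mathbb{R}^n}\bigl(\M^*(f,f)-\M^*(f_i',f_i')\bigr)\,dz \;\geq\; (1-\lambda)^n K \int_{\mathbb{R}^n}(f-f_i')\,dx,
\end{equation*}
and it remains to verify $(1-\lambda)^n K \geq 1 + c$ for a fixed $c = c(n,\lambda,p) > 0$ once $m$ is chosen sufficiently large depending on $n,\lambda,p$. For $p \in [0,1)$ this is immediate because $K \to \infty$. For $p \in (-1/n,0)$ the limiting factor as $m \to \infty$ is $(1-\lambda)^{n+1/p}$, which exceeds $1$ precisely because $n + 1/p < 0$ is equivalent to $p > -1/n$; continuity then yields a fixed gap above $1$.

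The main obstacle is the borderline case $p = 1$: then $\phi$ is affine with $\phi' \equiv 1-\lambda$ independent of $m$, and a single slice only produces $(1-\lambda)^{n+1}<1$. Symmetrizing with the parallel slice $z = \lambda q + (1-\lambda) p_i$ (Jacobian $\lambda^n$) adds at most $\lambda^{n+1}\int(f-f_i')$, and the combined coefficient $\lambda^{n+1}+(1-\lambda)^{n+1}$ is still $\leq 1$ for $n \geq 1$. I expect this case to require broadening the Minkowski sum from a single translate $\lambda p_i + (1-\lambda)X$ to $\lambda P + (1-\lambda)X$ with $P$ a positive-measure neighborhood in $[o,av]$ on which $h_{s_i}\geq f$, and then invoking Brunn--Minkowski stability to upgrade the Jacobian into a strict $(1+c)$ gain. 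This is where I would concentrate the technical effort.
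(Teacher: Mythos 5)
Your approach is correct and aligns with the paper's for $p \in (-1/n,0]$: fix the touch point $p_i$, exploit the $p$-plane identity $h_{s_i}(\lambda p_i + (1-\lambda)q) = M_{\lambda,p}(f(p_i), h_{s_i}(q))$, and control the increment of $t \mapsto M_{\lambda,p}(f(p_i),t)$ via the mean-value / concavity bound, with the separation $f(p_i)\geq 8^{-1}m\co_p(f)(q)$ from the previous claim driving the constant above $1$ once $m$ is large. Your accounting in the negative-$p$ regime is exactly right: the limiting coefficient is $(1-\lambda)^{n+1/p}>1$ precisely because $p>-1/n$. Your observation that $K\to\infty$ as $m\to\infty$ also gives a valid (and slightly more elementary) treatment of $p\in(0,1)$, where the paper instead goes a different route.

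Where you get stuck, and where the paper has a better idea, is $p=1$. You correctly observe that the derivative is $1-\lambda$ on the nose, so a single slice (or even the symmetrized pair of slices) caps out at $\lambda^{n+1}+(1-\lambda)^{n+1}\leq 1$. Your plan to upgrade via Brunn--Minkowski stability on a positive-measure neighborhood would be considerably heavier machinery than needed. The paper's key observation, which works for all $p\in(0,1]$ at once, is a disjointness statement: once $m$ is large (concretely $m>8\lambda^{-1/p}$), the sets $A=\{x: f(x)>h_{s_i}(x)\}$ and $(1-\lambda)A$ are disjoint. Indeed if $x,(1-\lambda)x\in A$, then by the previous claim both $\co_p(f)(x)$ and $\co_p(f)((1-\lambda)x)$ are at most $8m^{-1}\co_p(f)(o)$, while $p$-concavity gives $\co_p(f)((1-\lambda)x)\geq (\lambda\co_p(f)(o)^p + (1-\lambda)\co_p(f)(x)^p)^{1/p}>\lambda^{1/p}\co_p(f)(o)$; these are incompatible once $8/m<\lambda^{1/p}$. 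Disjointness means you can add the two contributions
$$\int_A\bigl(\M^*(f,f)-h_{s_i}\bigr)\,dx \geq \int_A (f-h_{s_i})\,dx \quad\text{and}\quad \int_{(1-\lambda)A}\bigl(\M^*(f,f)-h_{s_i}\bigr)\,dx \geq c\int_A (f-h_{s_i})\,dx,$$
where the first uses only $\M^*(f,f)\geq f$, and the second needs merely a positive (not large) lower bound on the derivative. Their sum delivers the $(1+c)$ factor with no need for a diverging $K$, covering the endpoint $p=1$ cleanly. This is the missing idea; you should use it in place of the stability detour.
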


    \begin{proof}[Proof of Claim]
        After translating, we can assume wlog $p_i=o$. Let $A_i =\{x \colon f(x)>h_{s_i}(x)\}$ be an open set. We have $$\int_{\mathbb{R}^n} (f-f_i')\,dx= \int_{A} (f-h_{s_i})\,dx.$$ 
        
        As $h_{s_i}$ is a $p-$ plane, we also have $$\int_{\mathbb{R}^n} \left(\M^*(f,f)-\M^*(f_i',f_i')\right)\,dx\geq \int_{\mathbb{R}^n} \left(\M^*(f,f)-\M^*(h_{s_i},h_{s_i})\right)\,dx= \int_{\mathbb{R}^n} \left(\M^*(f,f)-h_{s_i}\right)\,dx.$$

        By the previous claim we have $h_{s_i}(o)=f(o)$, so we further get
        
        $$ \int_{(1-\lambda)A} \left(\M^*(f,f)-h_{s_i}\right)\,dx \geq (1-\lambda)^{n} \int_A \left(\M(h_{s_i}(o),f(x))-h_{s_i}((1-\lambda)x)\right) \, dx$$

        It is enough to argue that for $i$ sufficiently large and for every $x \in A$ we have 
        $$\M(h_{s_i}(o),f(x))-h_{s_i}((1-\lambda)x) \geq (1+c)(1-\lambda)^{-n}(f(x)-h_{s_i}(x))$$

        By the previous claim, when $i$ is sufficiently large, we have $h_{s_i}(o)>8^{-1}mf(x)$. Denoting $a=h_{s_i}(o)$, $b=h_{s_i}(x)$ and $\mu=f(x)-h_{s_i}(x)$, we have $a>8^{-1}m(\mu+b)>0$ and the above inequality can be rewritten as

$$(1+c)(1-\lambda)^{-n}\mu \leq \M(a,b+\mu) - \M(a,b).$$

Consider now the case $p \in (-1/n,0)$. In this case both $a,b>0$. If we let $r(x)=  \M(a,x)$, then $r'(x)=(1-\lambda)\bigg(\lambda (a/x)^p+(1-\lambda)\bigg)^{\frac{1-p}{p}}$.  Hence, there exist $0 \leq  \nu \leq \mu$ such that $$\M(a,b+\mu) - \M(a,b) = \mu r'(b+\nu)= \mu (1-\lambda)\bigg(\lambda (a/(b+\nu))^p+(1-\lambda)\bigg)^{\frac{1-p}{p}}.$$

By the conditions on $a,b,\mu$, we have

$$\M(a,b+\mu) - \M(a,b) \geq \mu (1-\lambda)\bigg(\lambda (m/10)^p+(1-\lambda)\bigg)^{\frac{1-p}{p}} \geq (1+c)(1-\lambda)^{-n}\mu.$$

This concludes the proof in the case $p \in (-1/n,0)$. Consider now the case $p=0$. In this case both $a,b>0$. If we let $r(x)=  \M(a,x)$, then $r'(x)=(1-\lambda)a^\lambda x^{-\lambda}$.  Hence, there exist $0 \leq  \nu \leq \mu$ such that $$\M(a,b+\mu) - \M(a,b) = \mu r'(b+\nu)= \mu (1-\lambda)a^{\lambda}(b+\nu)^{-\lambda}.$$

By the conditions on $a,b,\mu$, we have

$$\M(a,b+\mu) - \M(a,b) \geq \mu (1-\lambda)(m/10)^{\lambda} \geq (1+c)(1-\lambda)^{-n}\mu.$$

This concludes the case $p=0$. Consider now the case $p\in (0,1]$. The important observation is that $A \cap (1-\lambda)A= \emptyset$.

Indeed, assume for a contradiction there exists $x \in A \cap (1-\lambda)A$. By the previous claim we have $\co_p(f)(o)>8^{-1}m\co_p(f)(x)>0$ and $\co_p(f)(o)>8^{-1}m\co_p(f)((1-\lambda)x)>0$. However, as $\co_p(f)$ is $p$-concave, we deduce that 
$$\co_p(f)((1-\lambda)x) \geq (\lambda (\co_p(f)(o))^p+(1-\lambda)(\co_p(f)(x))^p)^{1/p} > \lambda^{1/p} \co_p(f)(o).$$ This gives the desired contradiction. We conclude the observation.

As $\M^*(f,f) \geq f$, we have 
$$ \int_{A}\left( \M^*(f,f)-h_{s_i}\right)\,dx \geq \int_{A} \left(f-h_{s_i}\right)\,dx.$$

Combining this last inequality with the first three centered inequalities and the fact that $A \cap (1-\lambda)A =\emptyset$, it is enough to show that for $i$ sufficiently large and for every $x \in A$, we have 

        $$\M(h_{s_i}(o),f(x))-h_{s_i}((1-\lambda)x) \geq c(1-\lambda)^{-n}(f(x)-h_{s_i}(x))$$

        By the previous claim, when $i$ is sufficiently large, we have $h_{s_i}(o)>8^{-1}mf(x)$. Denoting $a=h_{s_i}(o)$, $b=h_{s_i}(x)$ and $\mu=f(x)-h_{s_i}(x)$, we have $a>8^{-1}m(\mu+b)>0$ and the above inequality can be rewritten as

$$c(1-\lambda)^{-n}\mu \leq \M(a,b+\mu) - \M(a,b).$$

We can assume wlog both $a,b>0$. If we let $r(x)=  \M(a,x)$, then $r'(x)=(1-\lambda)\bigg(\lambda (a/x)^p+(1-\lambda)\bigg)^{\frac{1-p}{p}}$.  Hence, there exist $0 \leq  \nu \leq \mu$ such that $$\M(a,b+\mu) - \M(a,b) = \mu r'(b+\nu)= \mu (1-\lambda)\bigg(\lambda (a/(b+\nu))^p+(1-\lambda)\bigg)^{\frac{1-p}{p}}.$$

By the conditions on $a,b,\mu$, we have

$$\M(a,b+\mu) - \M(a,b) \geq \mu (1-\lambda)^p\geq c(1-\lambda)^{-n}\mu.$$

This concludes the proof in the case $p \in (0,1]$.
\end{proof}

Now redefine $c \in (a,b)$ such that $\co_p(cv)>2^{-1}m\co_p(bv)$. For $s \in \mathbb{R}_-$ construct the $p$-plane $h_{s}=h_{y+z_i+sy,d-sc<v,y>}$. Redefine $s_i \in \mathbb{R}_-$ and $p_i \in \overline{[bv,v]+B(o,1/i)}$ such that $h_{s_i}(p_i)=f(p_i)$ and for every  $p \in \overline{[bv,v]+B(o,1/i)}$, we have $h_{s_i}(p)\geq f(p)$. 

By an argument identical to the one in the case $p \in (0,1)$, the following holds.

    \begin{rmk}
         $f_i'=\min(f,h_{s_i})$ is continuous and $f' \neq f$
    \end{rmk}

    \begin{clm}
        If $p >1$, $i$ is sufficiently large, then $\int_{\mathbb{R}^n}\left( \M^*(f,f)-\M^*(f_i',f_i')\right)\,dx\geq (1+c)\int_{\mathbb{R}^n} \left(f-f_i'\right)\,dx$
    \end{clm}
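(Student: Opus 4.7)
The plan is to mirror the preceding argument for $p\in(0,1]$, taking account that for $p>1$ the tilt $h_{s_i}$ is constructed around a point $p_i$ on the \emph{small} side of the edge (near $bv$ rather than near $av$). First, by an argument parallel to the earlier claim with the endpoints swapped, I will establish a modified ``first claim'': $h_{s_i}(p_i) = f(p_i)$ is close to $f(bv)$ (hence small) for $i$ large, and for every $q$ with $f(q) > h_{s_i}(q)$ one has $\co_p(f)(q) \geq (m/8)\,f(p_i)$. This uses that $c \in (a,b)$ is chosen so $\co_p(f)(cv) > (m/2)f(bv)$, together with $A \subset H^+$ which ensures every such $q$ lies in the region where $\co_p(f)$ exceeds $\co_p(f)(cv)$.

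Next, after translating so that $p_i = o$, I set $A = \{f > h_{s_i}\}$ and write $\int (f - f'_i)\,dx = \int_A (f - h_{s_i})\,dx$. Because $f'_i \leq h_{s_i}$ and $h_{s_i}$ is $p$-concave, $\M^*(f'_i,f'_i) \leq \M^*(h_{s_i},h_{s_i}) = h_{s_i}$. Combined with $\M^*(f,f) \geq f$, this produces the ``factor $1$'' contribution $\int_A(f-h_{s_i})\,dx$. To obtain the extra $c$, I integrate over the region $(1-\lambda)A$: the sup-convolution bound $\M^*(f,f)((1-\lambda)x) \geq \M(f(o),f(x))$, the ceiling $\M^*(f'_i,f'_i) \leq h_{s_i}$, and the change of variables $z = (1-\lambda)x$ together give
\[
\int_{(1-\lambda)A}\!\bigl(\M^*(f,f) - \M^*(f'_i,f'_i)\bigr)\,dx \geq (1-\lambda)^n\!\int_A\!\bigl(\M(h_{s_i}(o),f(x)) - h_{s_i}((1-\lambda)x)\bigr)\,dx.
\]

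The key pointwise inequality to close the estimate is $\M(a,b+\mu) - \M(a,b) \geq c(1-\lambda)^{-n}\mu$, where $a = h_{s_i}(o)$ and $b = h_{s_i}(x)$ now satisfy $a < b$ (the reverse of the $p\in(0,1]$ regime, since $h_{s_i}(o)$ is the small value and $x\in A$ lies on the big side). Using
\[
\tfrac{d}{db}\M(a,b) = (1-\lambda)\bigl(\lambda(a/b)^p + (1-\lambda)\bigr)^{(1-p)/p},
\]
for $p>1$ the exponent $(1-p)/p$ is negative and the base lies in $[1-\lambda,1]$, so the derivative is bounded below by $(1-\lambda)\cdot(1-\lambda)^{(1-p)/p} = (1-\lambda)^{1/p}$. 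Hence $\M(a,b+\mu)-\M(a,b) \geq (1-\lambda)^{1/p}\mu$, and picking $c = c_{n,\lambda,p}$ with $c \leq (1-\lambda)^{n+1/p}$ finishes this step. Adding the contributions from the two disjoint regions $A$ and $(1-\lambda)A$ then yields the desired $(1+c)\int(f-f'_i)\,dx$.

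The hard part will be the disjointness $A \cap (1-\lambda)A = \emptyset$ (using the convention that $x \in (1-\lambda)A$ means $(1-\lambda)x \in A$). In the $p\in(0,1]$ regime the first claim gave an \emph{upper} bound $\co_p(f)(o) > (m/8)\co_p(f)(q)$ on $\co_p(f)$ over $A$, which contradicted the $p$-concave \emph{lower} bound $\co_p(f)((1-\lambda)x) \geq \lambda^{1/p}\co_p(f)(o)$. Here the first-claim bound is reversed (a \emph{lower} bound $\co_p(f)(q) \geq (m/8)f(p_i)$), so the contradiction must instead be extracted from the tangent-plane \emph{upper} bound $\co_p(f) \leq h_{y,d}$: for $x$ and $(1-\lambda)x$ both in a shrinking neighborhood of the face $F$, the identity $h_{y,d}((1-\lambda)x) = M_{\lambda,p}(h_{y,d}(o), h_{y,d}(x))$ combined with the first-claim lower bound again forces the algebraic inequality $m < 8\lambda^{-1/p}$, contradicting the choice of $m = m_{n,\lambda,p}$ sufficiently large. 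Verifying that this upper-bound variant of the $p$-concavity argument remains valid uniformly as $i\to\infty$ (where the relevant neighborhoods shrink but the tangent plane controls $\co_p(f)$ only on $F$ itself) is the most delicate point of the proof.
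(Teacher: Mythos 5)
Your skeleton is the natural one: mirror the $p\in(0,1]$ argument, with $p_i$ at the small end of the edge and $A=\{f>h_{s_i}\}$ near the large end, so that $a=h_{s_i}(p_i)\leq b=h_{s_i}(x)$ and you control the derivative $\frac{\partial}{\partial b}\M(a,b)=(1-\lambda)\bigl(\lambda(a/b)^p+(1-\lambda)\bigr)^{(1-p)/p}$ from below. A small algebraic slip: with $\mathrm{base}\in[1-\lambda,1]$ and negative exponent $(1-p)/p$, the map $t\mapsto t^{(1-p)/p}$ is \emph{decreasing}, so the infimum of $\mathrm{base}^{(1-p)/p}$ on $[1-\lambda,1]$ is attained at $\mathrm{base}=1$ and equals $1$; the guaranteed lower bound for the derivative is therefore $1-\lambda$, and $(1-\lambda)^{1/p}$ is the \emph{upper} endpoint, not the lower one. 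This is harmless in the end (take $c\leq(1-\lambda)^{n+1}$), but the inequality is stated backwards.

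The substantive gap is the disjointness $A\cap(1-\lambda)A=\emptyset$, which you correctly flag as the delicate step but do not close. In the $p\in(0,1]$ case the first claim supplies an \emph{upper} bound $\co_p(f)(x)<(8/m)\co_p(f)(o)$ on $A$, while $p$-concavity supplies a \emph{lower} bound $\co_p(f)((1-\lambda)x)\geq\lambda^{1/p}\co_p(f)(o)$; these clash when $m>8\lambda^{-1/p}$. For $p>1$ the first-claim inequality reverses to a \emph{lower} bound $\co_p(f)(x)\geq(m/8)f(p_i)$, and the tangent-plane identity $h_{y,d}((1-\lambda)x)^p=\lambda h_{y,d}(o)^p+(1-\lambda)h_{y,d}(x)^p$ only produces \emph{another} lower bound, $h_{y,d}((1-\lambda)x)\geq(1-\lambda)^{1/p}h_{y,d}(x)$, in the same direction. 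Two lower bounds carry no contradiction. Since $\co_p(f)^p$ is affine on the face $F$, contracting towards $p_i$ by $(1-\lambda)$ only divides $\co_p(f)$ by roughly $(1-\lambda)^{-1/p}$; the contracted point leaves the half-space $H^+=\{\co_p(f)>\co_p(f)(cv)\}$ only when $\co_p(f)(x)\leq(1-\lambda)^{-1/p}\co_p(f)(cv)$, and for $x$ in the high-level part of $A$ (with $\co_p(f)(x)$ much larger) the point $(1-\lambda)x$ can perfectly well fall back into the touching set $X=\{f=\co_p(f)\}\cap F$ and hence into $A$ for large $i$. Your proposal needs a genuinely new ingredient here — for instance a telescoping over the iterated dilates $(1-\lambda)^kA$ to reach the layer where disjointness does hold, or an argument that the contribution of $A$ above level $(1-\lambda)^{-1/p}\co_p(f)(cv)$ to $\int_A(f-h_{s_i})$ is negligible — and the sketch as written does not supply one.
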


    The last remark and the last claim combine to give the desired contradiction in the case $p>1$. The previous remark and the previous claim combine to give the desired contradiction in the case $p \in (-1/n,1]$. The conclusion of the proposition now follows.

\end{proof}

\bigskip
\noindent
{\it Acknowledgments:} AF is grateful to the Marvin V. and Beverly J. Mielke Fund for supporting his stay at IAS Princeton, where part of this work has been done. AF is partially supported by the Lagrange Mathematics and
Computation Research Center.  PvH is grateful to the Institute for Advanced Study in Princeton and, in particular, the Director's Discretionary Fund for supporting his stay during the 2024-2025 academic year.

\bibliographystyle{alpha}
\bibliography{references}

\end{document}